\newcommand{\apmd}[2][]{							
	\ifthenelse{\equal{#1}{}}%
					{ \operatorname{N}_{#2}	}%
					{ \operatorname{N}_{#1,#2} 	}}
\newcommand{\LIP}{{\rm LIP}}
\newcommand{\loc}{{\rm loc}}
\newcommand{\aint}[2][]{
	\ifthenelse{\equal{#1}{}}%
					{%
\mathchoice%
      {\mathop{\kern 0.2em\vrule width 0.6em height 0.69678ex depth -0.58065ex
              \kern -0.8em \intop}\nolimits_{\kern -0.45em#2}^{#1}}%
      {\mathop{\kern 0.1em\vrule width 0.5em height 0.69678ex depth -0.60387ex
              \kern -0.6em \intop}\nolimits_{#2}^{#1}}%
      {\mathop{\kern 0.1em\vrule width 0.5em height 0.69678ex depth -0.60387ex
              \kern -0.6em \intop}\nolimits_{#2}^{#1}}%
      {\mathop{\kern 0.1em\vrule width 0.5em height 0.69678ex depth -0.60387ex
              \kern -0.6em \intop}\nolimits_{#2}^{#1}}}%
					{%
\mathchoice%
      {\mathop{\kern 0.2em\vrule width 0.6em height 0.69678ex depth -0.58065ex
              \kern -0.8em \intop}\nolimits_{\kern -0.45em#1}^{#2}}%
      {\mathop{\kern 0.1em\vrule width 0.5em height 0.69678ex depth -0.60387ex
              \kern -0.6em \intop}\nolimits_{#1}^{#2}}%
      {\mathop{\kern 0.1em\vrule width 0.5em height 0.69678ex depth -0.60387ex
              \kern -0.6em \intop}\nolimits_{#1}^{#2}}%
      {\mathop{\kern 0.1em\vrule width 0.5em height 0.69678ex depth -0.60387ex
              \kern -0.6em \intop}\nolimits_{#1}^{#2}}}}
\numberwithin{equation}{section}
\newtheorem{theorem}{Theorem}[section]
\newtheorem{corollary}[theorem]{Corollary}
\newtheorem{lemma}[theorem]{Lemma}
\newtheorem{proposition}[theorem]{Proposition}
\newtheorem{definition}[theorem]{Definition}
\newtheorem{example}[theorem]{Example}
\newtheorem{question}[theorem]{Question}
\theoremstyle{remark}
\newtheorem{remark}[theorem]{Remark}
\DeclareMathOperator{\diam}{diam}
\newcommand{\N}{\mathbb{N}}
\newcommand{\R}{\mathbb{R}}
\def\rr{{\mathbb R}}
\def\rn{{{\rr}^n}}
\def\fz{\infty}
\def\loc{{\mathop\mathrm{\,loc\,}}}
\def\boz{{\Omega}}
\definecolor{emerald}{rgb}{0.31, 0.78, 0.47}
\def\bint{{\ifinner\rlap{\bf\kern.35em--}
\int\else\rlap{\bf\kern.45em--}\int\fi}\ignorespaces}
\def\bbint{{\ifinner\rlap{\bf\kern.35em--}
\hspace{0.078cm}\int\else\rlap{\bf\kern.45em--}\int\fi}\ignorespaces}
\def\diam{{\mathop\mathrm{\,diam\,}}}
\def\dfrac{\displaystyle\frac}
\def\r{\right}
\def\lf{\left}
\def\bint{{\ifinner\rlap{\bf\kern.35em--}
\int\else\rlap{\bf\kern.45em--}\int\fi}\ignorespaces}
\begin{document}

\title[]
{Extensions and approximations of Banach-valued Sobolev functions}

\author{Miguel Garc\'ia-Bravo}

\address{Departamento de An\'alisis Matem\'atico y Matem\'atica Aplicada, Facultad de Ciencias Matem\'aticas, Universidad Complutense, 28040, Madrid, Spain}

\email{miguel05@ucm.es}

\author{Toni Ikonen} 
\address{University of Jyvaskyla \\
         Department of Mathematics and Statistics \\
         P.O. Box 35 (MaD) \\
         FI-40014 University of Jyvaaskyla \\
         Finland}
         
\email{toni.m.h.ikonen@jyu.fi}

\author{Zheng Zhu}

\address{ Beihang University\\
        School of Mathematical science\\
        Changping District Shahe Higher Education Park South Third Street No. 9\\
        Beijing 102206,\\
        P. R. China.\\
and University of Jyvaskyla \\
         Department of Mathematics and Statistics \\
         P.O. Box 35 (MaD) \\
         FI-40014 University of Jyvaaskyla \\
         Finland}
         
\email{zheng.z.zhu@jyu.fi}

\thanks{The first named author was partly supported by the Academy of Finland, project number 314789. The second named author was supported by the Academy of Finland, project number 308659 and by the Vilho, Yrjö and Kalle Väisälä Foundation. The third named author was supported by the Academy of Finland, project number 323960.
\newline {\it 2020 Mathematics Subject Classification.} Primary: 46E35. Secondary: 46E36, 30L99}
\keywords{}

\maketitle

\begin{abstract}
In complete metric measure spaces equipped with a doubling measure and supporting a weak Poincaré inequality, we investigate when a given Banach--valued Sobolev function defined on a subset satisfying a measure-density condition is the restriction of a Banach--valued Sobolev function defined on the whole space. We investigate the problem for Haj{\l}asz-- and Newton--Sobolev spaces, respectively.

First, we show that Haj\l{}asz--Sobolev extendability is independent of the target Banach spaces. We also show that every $c_0$-valued Newton--Sobolev extension set is a Banach-valued Newton--Sobolev extension set for every Banach space. We also prove that any measurable set satisfying a measure-density condition and a weak Poincaré inequality up to some scale is a Banach-valued Newton--Sobolev extension set for every Banach space. Conversely, we verify a folklore result stating that when $n\leq p<\infty$, every $W^{1,p}$-extension domain $\Omega\subset\R^n$ supports a weak $(1,p)$-Poincaré inequality up to some scale.

As a related result of independent interest, we prove that in any metric measure space when $1 \leq p < \infty$ and real-valued Lipschitz functions with bounded support are norm-dense in the real-valued $W^{1,p}$-space, then Banach-valued Lipschitz functions with bounded support are energy-dense in every Banach-valued $W^{1,p}$-space whenever the Banach space has the so-called metric approximation property.

\end{abstract}

\section{Introduction}
A metric measure space is a triple $( Z, d, \mu )$, where $(Z,d)$ is a separable metric space and $\mu$ is a Borel regular outer measure satisfying $0 < \mu( B ) < \infty$ for all open balls $B$.

We say that $\mu$ is \emph{doubling} if there exists a constant $c_\mu > 0$ such that $\mu( 2B ) \leq c_\mu \mu( B )$ for every open ball $B \subset Z$ where $2B$ is a ball with the same center but twice the radius. If the inequality holds for all balls up to radius $r_0$, we say that $\mu$ is doubling \emph{up to scale $r_0$}.

A metric measure space $Z$ supports a \emph{weak $(1,p)$-Poincaré inequality},  for some $1\leq p<\infty$, if there exist constants $C>0,\lambda\geq 1$ such that 
\begin{equation}\label{eq:PI:inequality}
    \aint{B} |u-u_B|\,d\mu\leq C\diam (B)\left(\aint{\lambda B} \rho^p\,d\mu  \right)^{1/p}
\end{equation}
for every ball $B\subset Z$, every integrable function $u \colon \lambda B \to \R$ and every $L^p$-integrable $p$-weak upper gradient $\rho \colon \lambda B \rightarrow [0,\infty]$ of $u$. Here we define the \emph{integral average} of $u$ over a measurable set $F$ as $$u_F \coloneqq\aint{F} u \,d\mu \coloneqq \frac{1}{ \mu(F) } \int_F u \,d\mu$$  whenever $u$ is integrable on $F$ and $0 < \mu(F) < \infty$. If the inequality \eqref{eq:PI:inequality} holds for all balls $B$ whose radius is at most $r_0$, we say that $Z$ supports a weak $(1,p)$-Poincaré inequality \emph{up to scale $r_0$}. A metric measure space is a \emph{$p$-PI space} if $Z$ is a complete metric measure space with $\mu$ doubling and supporting a weak $(1,p)$-Poincaré inequality.

A measurable set $\boz\subset Z$ is called an $\rr$-valued \emph{$W^{1, p}$-extension set} for some $1\leq p\leq\fz$ if for every Sobolev function $u\in W^{1, p}(\boz; \rr)$,  there exists a function $E(u)\in W^{1, p}(Z; \rr)$ such that $E(u)\big|_\boz\equiv u$ and the inequality
\begin{equation}
    \label{eq:boundedoperator}
    \|E(u)\|_{W^{1, p}(Z; \rr)}\leq C\|u\|_{W^{1, p}(\boz; \rr)}
\end{equation}
holds with a constant $C > 0$ independent of $u$. We call 
$$E \colon W^{1,p}( \boz; \rr ) \rightarrow W^{1,p}( Z; \rr )$$ 
an $\R$-valued \emph{$W^{1,p}$-extension operator}. The smallest constant $C > 0$ for which \eqref{eq:boundedoperator} holds is denoted by $\|E\|$ and called the \emph{operator norm} of $E$. To simplify notation we typically omit the $\R$. Note that we do not require the operator $E$ to be linear.

For us, the notation $W^{1,p}( \Omega; \mathbb{R} )$ refers to the Lebesgue equivalence classes of Newton--Sobolev functions introduced by Shanmugalingam in \cite{Shanmugalingam} for real-valued mappings, based on earlier works by Koskela--MacManus \cite{Ko:Mac:98} and Heinonen--Koskela \cite{Hei:Ko:98}. It is now understood that in every complete metric measure space, the Newton--Sobolev approach is equivalent to the approaches by Cheeger \cite{Cheeger,Shanmugalingam} (when $1<p<\infty$) or the minimal relaxed slope arising from the relaxation of the slope of Lipschitz functions \cite{Amb:Gig:Sav:13,EB:20:energy} (when $1\leq p<\infty $); see \cite{Amb:Gig:Sav:13} for further reading. Moreover, when $\boz$ is an open set in the Euclidean space $\rn$, the definition coincides with the classical definition based on integration by parts formulation \cite{Shanmugalingam}. See \cite{Cre:Evs:21} and references therein for related Banach-valued results. 

On the Euclidean space $\mathbb{R}^n$, Calder\'on and Stein proved that Lipschitz domains are $W^{1, p}$-extension domains for arbitrary $1\leq p\leq\fz$, see \cite{Calderon, Stein:book}. Later, Jones \cite{Jones} generalized this result to the class of the so-called $(\varepsilon, \delta)$-domains (containing the class of uniform domains), strictly containing the class of Lipschitz domains. Similar results have now been proved in the metric measure space setting, for example, in \cite{S2007,HKT2008:B,Bjo:Sha:07,Bj:Bj:Lah:21}. Based on the results in \cite{K1998JFA,S2005, Ko:Ra:Zh:planar, Ko:Ra:Zh:W11}, a geometric characterization of planar simply connected $W^{1, p}$-extension domains is well-established. However, basic questions about the structure of more general $W^{1,p}$-extension domains remain open. A standard example of a domain $\Omega \subset \mathbb{R}^2$ that is not a $W^{1,p}$-extension domain is the slit disk: the unit disk minus a radial segment. This and other examples are discussed in \Cref{Section_LAST}. When $\Omega$ is not necessarily open but its complement has zero measure, there are known necessary and sufficient geometric conditions for $\Omega$ to be a $W^{1,p}$-extension set \cite{Bj:Bj:Lah:21}. The results in \cite{Bj:Bj:Lah:21} partially motivated us to work with measurable sets in place of domains.

The main subject of this paper is $W^{1,p}( \Omega; \mathbb{V} )$-extension sets, where $W^{1,p}( \Omega; \mathbb{V} )$ refers to the Lebesgue equivalence classes of $\mathbb{V}$-valued Newton--Sobolev functions, see \Cref{subsec_sob} for the precise definition. Now, given a Banach space $\mathbb{V}$, one can define the $\mathbb V$-valued extension operator as in \eqref{eq:boundedoperator} simply by replacing $\rr$ with $\mathbb{V}$. We are only considering the cases $1 < p<\infty$, the limiting cases $p=1$ and $p = \infty$ being outside the scope of this work. As far as the authors are aware, this project is the first time the Banach-valued Sobolev extension sets are considered in print.

The following question was the starting point of this manuscript.
\begin{question}\label{ques:startingpoint}
Let $1 <  p < \infty$ and $Z$ be a $p$-PI space. Are $W^{1,p}$-extension sets $\Omega \subset Z$ also $\mathbb{V}$-valued $W^{1,p}$-extension sets for all Banach spaces $\mathbb{V}$?

More generally, does there exist a Banach space $\mathbb{V}_0$ such that every $\mathbb{V}_0$-valued $W^{1,p}$-extension set is a $\mathbb{V}$-valued $W^{1,p}$-extension set for all Banach spaces $\mathbb{V}$?
\end{question}
Many of the known real-valued extension results generalize to the Banach-valued setting with relative ease when the real-valued assumption is replaced by the corresponding Banach-valued one. The main difficulty in \Cref{ques:startingpoint} arises in connecting the extensions properties for different Banach spaces: For example, if $\Omega$ is a real-valued $W^{1,p}$-extension set, an obvious approach to proving the existence of a, e.g. $\ell^{\infty}$-valued $W^{1,p}$-extension operator is to extend componentwise. However, without knowing that the extension operator commutes with projections, it is not obvious why the extension obtained in this way is even an $L^{p}( Z; \ell^{\infty} )$-function. Even if $L^{p}( Z; \ell^{\infty} )$-boundedness of the extension could be deduced (as one can do in many important cases, cf. \Cref{thm:W1p} below), the Sobolev regularity of the extension is more difficult to deduce. We were able to make this idea work only in some special cases, cf. \Cref{sec:partialanswer} below.

While the authors find \Cref{ques:startingpoint} itself interesting,  understanding Banach-valued extension sets has some applications. For example, by the Kuratowski embedding theorem, every separable metric space $X$ can be isometrically embedded into $\ell^{\infty}$. Then if every $u \in W^{1,p}( \Omega; X )$ is the restriction of some $h \in W^{1,p}( Z; \ell^{\infty} )$, this allows one to establish the existence of minimizers for several types of energy minimization problems, at least when $Z$ is a $p$-PI space. This is of interest, for example, when studying minimal surfaces using the so-called $Q$-valued mappings, see \cite{Q-valued}. Moreover, under suitable geometric assumptions on $Z$ and $X$, knowing that every $u \in W^{1,p}( \Omega; X )$ satisfies $u = h|_{\Omega}$ for some $h \in W^{1,p}( Z; X )$ is closely related to the density of Lipschitz functions in $W^{1,p}( \Omega; X )$, cf. \cite{La:Sch:05,Haj:Sch:14} and Theorems \ref{thm:M=W} and \ref{thm:V-extension:PI} below. We believe that understanding Banach-valued extension sets sheds some light to these metric-valued problems.

\subsection{Extending a given function}
Before stating our first result, we say that a measurable set $\Omega \subset Z$ satisfies the \emph{measure-density condition (with constant $c_{\Omega}>0$}) if
\begin{equation}\label{eq:measuredensitcondition}
    \mu( B(x,r) \cap \Omega ) \geq c_{\Omega}\mu( B(x,r) ) \quad\text{for every $(x,r) \in \Omega \times (0,1]$.}
\end{equation}
Our first result concerns recognizing the functions $u \in W^{1,p}( \Omega; \mathbb{V} )$ for which there exists some extension $h \in W^{1,p}( Z; \mathbb{V} )$.
\begin{theorem}\label{thm:extensionresults}
Let $1 < p< \infty$, $Z = (Z,d,\mu)$ be a $p$-PI space, $\Omega \subset Z$ be a measurable set satisfying the measure-density condition  and $\mathbb V$ be a Banach space. Then, given $u \in L^{p}( \Omega; \mathbb{V} )$, there exists $h \in W^{1,p}( Z; \mathbb{V} )$ with $u = h|_{\Omega}$ if and only if there exists some $s = s(u) > 0$ such that
\begin{equation}\label{eq:ext_condition}
    x
    \to
    u^{\sharp}_{s}( x )
    \coloneqq
    \sup_{ 0 < t < s }
        \frac{1}{t}
        \aint{ \Omega \cap B(x,t) }
        \aint{ \Omega \cap B(x,t) }
            | u(y)-u(z) |
        \,d\mu(y) \,d\mu(z)
    \in
    L^{p}( \Omega ).
\end{equation}
\end{theorem}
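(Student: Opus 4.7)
The plan is to identify the condition $u^{\sharp}_{s} \in L^{p}(\Omega)$ with membership of $u$ in the Banach-valued Haj\l{}asz--Sobolev space $M^{1,p}(\Omega; \mathbb V)$, and then to invoke the paper's target-independent Haj\l{}asz extension theorem (the first main result announced in the abstract) together with the classical identification $M^{1,p}(Z; \mathbb V) = W^{1,p}(Z; \mathbb V)$, which holds on any $p$-PI space when $1 < p < \infty$.

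\emph{Necessity.} Suppose $h \in W^{1,p}(Z; \mathbb V)$ is an extension of $u$ and let $g \in L^{p}(Z)$ be an upper gradient of $h$. Applying the weak $(1,p)$-Poincar\'e inequality to the real-valued $1$-Lipschitz postcomposition $z \mapsto |h(z) - h(x_{0})|_{\mathbb V}$ (for which $g$ is still an upper gradient) and running the standard Heinonen--Koskela dyadic chaining argument yields the pointwise bound
\[
    |h(y) - h(z)|_{\mathbb V} \leq C\, d(y,z) \bigl( Mg(y) + Mg(z) \bigr) \qquad \text{for $\mu$-a.e.\ } y,z \in Z,
\]
where $M$ denotes the Hardy--Littlewood maximal operator on $(Z,d,\mu)$. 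Substituting this into the defining integrals of $u^{\sharp}_{1}$, applying Fubini, using the measure-density bound $\mu(B(x,t)\cap\Omega) \geq c_{\Omega}\mu(B(x,t))$ valid for $t \leq 1$, and cancelling the $1/t$ prefactor against $d(y,z) \leq 2t$, one obtains $u^{\sharp}_{1}(x) \leq C' Mg(x)$ for $\mu$-a.e.\ $x \in \Omega$. Since $\mu$ is doubling and $p > 1$, the Hardy--Littlewood maximal theorem gives $Mg \in L^{p}(Z)$, hence $u^{\sharp}_{1} \in L^{p}(\Omega)$.

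\emph{Sufficiency.} Assume $u^{\sharp}_{s} \in L^{p}(\Omega)$ for some $s > 0$; I may take $s \leq 1$. First I would show $u \in M^{1,p}(\Omega; \mathbb V)$. Measure-density guarantees that $\mu$ restricted to $\Omega$ is locally doubling, so Lebesgue's differentiation theorem gives $u(x) = \lim_{t \downarrow 0} \aint{B(x,t) \cap \Omega} u \, d\mu$ at $\mu$-a.e.\ $x \in \Omega$ (Bochner integrals are well-defined since $u$ is strongly measurable and in $L^{p}$). For two such Lebesgue points $x, y \in \Omega$ with $d(x,y) < s/10$, a dyadic telescoping over the balls $B(x, 2^{-k}d(x,y)) \cap \Omega$ and $B(y, 2^{-k}d(x,y)) \cap \Omega$, combined with the elementary chain
\[
    \aint{F} |u - u_{F}|_{\mathbb V} \, d\mu \;\leq\; \aint{F}\aint{F} |u(y) - u(z)|_{\mathbb V} \, d\mu \, d\mu \;\leq\; 2 \aint{F} |u - u_{F}|_{\mathbb V} \, d\mu,
\]
yields the pointwise Haj\l{}asz inequality $|u(x) - u(y)|_{\mathbb V} \leq C\, d(x,y) (u^{\sharp}_{s}(x) + u^{\sharp}_{s}(y))$. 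For $d(x,y) \geq s/10$ the trivial bound $|u(x) - u(y)|_{\mathbb V} \leq (10/s)\, d(x,y) (|u(x)|_{\mathbb V} + |u(y)|_{\mathbb V})$ closes the gap, so $g := C u^{\sharp}_{s} + (10C/s)|u|_{\mathbb V} \in L^{p}(\Omega)$ is a Haj\l{}asz gradient of $u$ on $\Omega$, i.e.\ $u \in M^{1,p}(\Omega; \mathbb V)$. The paper's Haj\l{}asz--Sobolev extension theorem then produces $\tilde h \in M^{1,p}(Z; \mathbb V)$ with $\tilde h = u$ $\mu$-a.e.\ on $\Omega$, and $M^{1,p}(Z;\mathbb V) = W^{1,p}(Z;\mathbb V)$ completes the proof.

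\emph{Main obstacle.} The hardest step will be the dyadic telescoping in the sufficiency direction: averages are taken over the intrinsic traces $B(x,t) \cap \Omega$ rather than over balls, so I must control the ratios $\mu(B(x, 2^{-k+1}r) \cap \Omega)/\mu(B(x, 2^{-k}r) \cap \Omega)$ uniformly via measure-density and doubling. The Banach-valued setting poses no extra analytic difficulty: the chaining only relies on the triangle inequality in $\mathbb V$ and on Bochner integrability, and never invokes duality, convexity, or finite-dimensionality of $\mathbb V$.
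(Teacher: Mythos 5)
Your proposal is correct in outline and lands on the same core fact as the paper—that $u^{\sharp}_{s}\in L^{p}(\Omega)$ is equivalent to $u$ having a Haj\l{}asz-type gradient, which is then extended and identified inside $W^{1,p}(Z;\mathbb V)$—but your sufficiency route differs from the paper's in an instructive way. The paper first identifies $\{u: u^{\sharp}_s\in L^p\}$ with the \emph{local} Haj\l{}asz space $m^{1,p}(\Omega;\mathbb V)$ (Proposition \ref{lemm:localHaj}), then proves $m^{1,p}(\Omega;\mathbb V)=M^{1,p}(\Omega;\mathbb V)$ as sets by extending the local Haj\l{}asz gradient to a neighbourhood of $\overline\Omega$ via the Whitney-type operator and invoking the $(q,p)$-PI structure on $Z$ (Proposition \ref{proposition:local} and Lemma \ref{lemm:restriction}). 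You instead upgrade the local Haj\l{}asz inequality to a global one \emph{directly on $\Omega$} by adding the term $(10/s)|u|$ to the gradient, which trivially dominates $|u(x)-u(y)|/d(x,y)$ at separations $\geq s/10$; for small separations a countable-density argument collects the scale-$s$ null sets into one. This is a genuine shortcut: your sufficiency direction only needs doubling, measure-density, and the Haj\l{}asz extension theorem (Theorem \ref{th:Mexten}), with no PI hypothesis on $Z$, whereas the paper routes the upgrade through $Z$ and back. Both ultimately arrive at $u\in M^{1,p}(\Omega;\mathbb V)$ and apply Theorem \ref{th:Mexten}; you don't even need $M^{1,p}(Z;\mathbb V)=W^{1,p}(Z;\mathbb V)$ here, just the always-true embedding of Proposition \ref{lemma:M:in:W}.

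Two places in the necessity direction should be tightened. First, applying the scalar Poincaré inequality to $z\mapsto |h(z)-h(x_0)|_{\mathbb V}$ does not directly yield the Banach-valued pointwise estimate $|h(y)-h(z)|_{\mathbb V}\leq C\,d(y,z)(Mg(y)+Mg(z))$: the chained bound controls $\bigl||h(y)-h(x_0)|-|h(z)-h(x_0)|\bigr|$, not $|h(y)-h(z)|$, and the exceptional null set depends on $x_0$, so you cannot let $x_0$ range over all of $Z$. The clean fix is to run the scalar chaining on a countable norming family $(w_i(h))_{i}$ (as in Lemma \ref{lemm:representation} and Lemma \ref{lemm:projection}) and take suprema, or simply cite $M^{1,p}(Z;\mathbb V)=W^{1,p}(Z;\mathbb V)$ on $p$-PI spaces. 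Second, the raw $(1,p)$-Poincaré chaining produces a bound by $\bigl(M(\rho^p)\bigr)^{1/p}$, which lies only in weak-$L^p$ (since $\rho^p\in L^1$). You need the Keith–Zhong self-improvement to a $(1,q)$-PI with $q<p$—available precisely because $p$-PI spaces are complete—to obtain the $L^p$-integrable Haj\l{}asz gradient $\bigl(M(\rho^q)\bigr)^{1/q}$. So the inequality $u^{\sharp}_{1}\lesssim Mg$ should really be $u^{\sharp}_{1}\lesssim M\bigl((M(\rho^q))^{1/q}\bigr)$; the conclusion $u^{\sharp}_1\in L^p(\Omega)$ still follows from $L^p$-boundedness of the maximal operator for $p>1$. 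Your closing remark that the chaining is duality-free is accurate for the sufficiency direction, but not for the necessity direction, where one really must pass through the dual (or through the $\ell^{\infty}$-embedding) to get a single null set uniform over all pairs $y,z$.
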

In the real-valued case and $s = \infty$, \Cref{thm:extensionresults} was considered in \cite[Theorem 1.2]{S2007} which partially motivated our local formulation of the aforementioned theorem; see also \cite{HKT2008:B,HKT2008}. The main novelty of \Cref{thm:extensionresults} is that the radius $s = s(u)$ is allowed to be arbitrarily small and depend on $u$, and this is a new result even in the real-valued case.

Note that if $\mu$ is doubling, \eqref{eq:measuredensitcondition} implies that the restriction of $\mu$ to $\Omega$ is a doubling measure up to scale one, so necessarily $\mu( \overline{\Omega} \setminus \Omega ) = 0$ by the Lebesgue differentiation theorem. The assumption \eqref{eq:measuredensitcondition} holds whenever $\Omega$ is an $( \varepsilon, \delta )$-domain in a $p$-PI space (see, e.g., \cite{Bjo:Sha:07}). This fact is especially of interest as bounded domains in PI spaces can be approximated from inside by $( \varepsilon, \delta )$-domains, cf. \cite{Raj:21}. Furthermore, if there are constants $C > 0$, $Q \geq 1$ for which
\begin{equation}\label{eq:Ahlfors}
    C^{-1} r^{Q} \leq \mu( B(x,r) ) \leq C r^Q
    \quad\text{for all $( x, r ) \in Z \times (0, 1]$},
\end{equation}
condition \eqref{eq:measuredensitcondition} is necessary for a domain $\Omega \subset Z$ to be a $W^{1,p}$-extension domain \cite[Theorem 2]{HKT2008:B}. The slit disk example illustrates that the measure-density condition is not sufficient. Typical examples of $p$-PI spaces satisfying \eqref{eq:Ahlfors} include every Carnot group (e.g. the Heisenberg groups), finite-dimensional Banach spaces, or compact Riemannian manifolds, each of these examples endowed with an appropriate dimensional Hausdorff measure, see e.g. \cite{Hei:Ko:98,Cheeger,KZ2008} for further reading.

\subsection{Extension sets}
\begin{theorem}\label{thm:W1p}
Let $1 < p< \infty$, $Z = (Z,d,\mu)$ be a $p$-PI space, $\Omega \subset Z$ be a measurable set satisfying the measure-density condition and $\mathbb V$ be a Banach space. Then the following are equivalent:
\begin{enumerate}
    \item There exists a linear extension operator $$E_{\mathbb{V}} \colon W^{1,p}( \Omega; \mathbb{V} ) \rightarrow W^{1,p}( Z; \mathbb{V} ).$$
    \item For every $u \in W^{1,p}( \Omega; \mathbb{V} )$, there exists $h \in W^{1,p}( Z; \mathbb{V} )$ with $h|_{\Omega} = u$.
\end{enumerate}
\end{theorem}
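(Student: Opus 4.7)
The direction $(1)\Rightarrow(2)$ is immediate, since $h = E_{\mathbb{V}}(u)$ is an admissible extension. For $(2)\Rightarrow(1)$, my plan is to combine \Cref{thm:extensionresults} with a Baire category argument to promote the function-dependent scale $s(u)>0$ with $u^{\sharp}_{s(u)}\in L^p(\Omega)$ into a uniform estimate, and then to build the operator explicitly via a Whitney-type averaging procedure whose linearity is transparent from the formula.

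For the Baire step, set
\[
    A_{n,M}
    \coloneqq
    \bigl\{ u \in W^{1,p}(\Omega;\mathbb{V}) : \|u^{\sharp}_{1/n}\|_{L^p(\Omega)} \leq M \bigr\},
    \qquad n,M \in \N.
\]
Since $s \mapsto u^{\sharp}_{s}$ is monotone, \Cref{thm:extensionresults} together with (2) yields $W^{1,p}(\Omega;\mathbb{V}) = \bigcup_{n,M} A_{n,M}$. Each $A_{n,M}$ is closed in $W^{1,p}(\Omega;\mathbb{V})$: if $u_k \to u$ in norm, pass to an a.e.-convergent subsequence, apply Fatou to the inner Bochner averages, and exchange the outer supremum with the liminf using $\sup_t \liminf_k \leq \liminf_k \sup_t$, to obtain $u^{\sharp}_{1/n}(x) \leq \liminf_k u_k^{\sharp}_{1/n}(x)$ almost everywhere; Fatou in $L^p$ then preserves the $M$-bound. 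Baire's theorem gives indices $n_0, M_0$ for which $A_{n_0,M_0}$ has non-empty interior, and the sublinearity $(u+v)^{\sharp}_{1/n_0} \leq u^{\sharp}_{1/n_0} + v^{\sharp}_{1/n_0}$ and homogeneity $(\lambda u)^{\sharp}_{1/n_0} = |\lambda|u^{\sharp}_{1/n_0}$ upgrade this to
\[
    \|u^{\sharp}_{1/n_0}\|_{L^p(\Omega)}
    \leq C_0 \, \|u\|_{W^{1,p}(\Omega;\mathbb{V})}
    \qquad\text{for all } u \in W^{1,p}(\Omega;\mathbb{V}).
\]

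For the construction, note that the measure-density condition combined with doubling forces $\mu(\overline{\Omega}\setminus\Omega) = 0$. Fix a Whitney-type cover $\{B_i\}$ of $Z\setminus\overline{\Omega}$ by balls $B_i = B(x_i,r_i)$ with $r_i \simeq \dist(x_i,\Omega)$ and largest scale comparable to some $s_0 \leq 1/n_0$, a subordinate Lipschitz partition of unity $\{\phi_i\}$, and reflected balls $\hat{B}_i \subset \Omega$ with $\diam \hat{B}_i \simeq \dist(\hat{B}_i, B_i) \simeq r_i$. Define
\[
    E_{\mathbb{V}}(u)(x)
    \coloneqq
    \begin{cases}
        u(x), & x \in \Omega, \\
        \displaystyle\sum_i \phi_i(x) \aint{\hat{B}_i} u \, d\mu, & x \in Z \setminus \overline{\Omega},
    \end{cases}
\]
interpreting each average as a Bochner integral in $\mathbb{V}$. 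Linearity of $E_{\mathbb{V}}$ is manifest. Adapting the Whitney-type analysis of \cite{S2007,HKT2008} yields
\[
    \|E_{\mathbb{V}}(u)\|_{W^{1,p}(Z;\mathbb{V})}
    \leq C \bigl( \|u\|_{W^{1,p}(\Omega;\mathbb{V})} + \|u^{\sharp}_{s_0}\|_{L^p(\Omega)} \bigr),
\]
and the uniform bound from the Baire step turns this into boundedness of $E_{\mathbb{V}}$.

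The main obstacle is the analytic transfer of the Whitney boundedness estimate to the Banach-valued setting. The combinatorics (Whitney cover, partition of unity, overlap estimates, reflected balls) depend only on $d$ and $\mu$, so they carry over verbatim; the content is in controlling the $p$-weak upper gradient of $E_{\mathbb{V}}(u)$ on $Z \setminus \overline{\Omega}$, which reduces through $\{\phi_i\}$ to telescoping Bochner-average differences $\|u_{\hat{B}_i} - u_{\hat{B}_j}\|_{\mathbb{V}}/r_i$ on overlapping Whitney balls. These are dominated pointwise by a multiple of $u^{\sharp}_{s_0}$ in a collar neighborhood of $\partial \Omega$ via the triangle inequality, Jensen's inequality for Bochner integrals, and the weak $(1,p)$-Poincaré inequality on $Z$ — all of which are insensitive to the replacement of $|\cdot|$ by $\|\cdot\|_{\mathbb{V}}$, so no convexity or geometric structure on $\mathbb{V}$ beyond the norm is required.
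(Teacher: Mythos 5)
Your proof is correct, but it routes through different machinery than the paper's. The paper's argument for $(2)\Rightarrow(1)$ is more economical: under $(2)$, every $u\in W^{1,p}(\Omega;\mathbb V)$ restricts from some $h\in W^{1,p}(Z;\mathbb V)=M^{1,p}(Z;\mathbb V)$ (the Haj\l{}asz--Newtonian coincidence on $p$-PI spaces for $p>1$), so restricting the Haj\l{}asz gradient of $h$ gives $u\in M^{1,p}(\Omega;\mathbb V)$; together with the $4$-Lipschitz inclusion from \Cref{lemma:M:in:W} and the bounded inverse theorem this yields $M^{1,p}(\Omega;\mathbb V)=W^{1,p}(\Omega;\mathbb V)$ with comparable norms, after which the already-constructed operator of \Cref{th:Mexten} does the job directly. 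You instead pass through \Cref{thm:extensionresults}, upgrade the $u$-dependent scale to a uniform sharp-functional bound $\|u^\sharp_{1/n_0}\|_{L^p(\Omega)}\le C_0\|u\|_{W^{1,p}(\Omega;\mathbb V)}$ by Baire category, and then rebuild a Whitney extension. Both proofs exploit completeness through a category argument; your Baire step is sound (lower semicontinuity of $u^\sharp_{1/n}$ under a.e.\ convergence via Fatou, sublinearity, homogeneity, then the standard $\varepsilon$-ball dilation), and working with the closed sets $A_{n,M}$ neatly sidesteps whether $m^{1,p}(\Omega;\mathbb V)$ is Banach, a point the paper flags as unclear in \Cref{rem:Banach}. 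The price is that you re-derive the Whitney analysis that the paper instead reuses from \Cref{th:Mexten}.

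Three imprecisions in your final paragraph are worth tightening. First, the telescoping quantities $|u_{\hat B_i}-u_{\hat B_j}|/r_i$ are evaluated at points of $Z\setminus\overline\Omega$ while $u^\sharp_{s_0}$ lives on $\Omega$; the correct pointwise dominator is a restricted maximal function of the form $\mathcal M_{Cs_0}(\widehat{u^\sharp_{s_0}})$, exactly as in \Cref{proposition:local} where the role is played by $\mathcal M_{4r}(\widehat g)$. Second, your formula sums over \emph{all} Whitney indices, so it extends $u$ over all of $Z\setminus\overline\Omega$; to keep the Whitney scales bounded by $s_0$ and to get the global $L^p$ and upper-gradient bounds, you must multiply by a Lipschitz cutoff supported in a bounded collar of $\Omega$, as the paper does via $E_{\mathbb V}=\phi\widetilde E_{\mathbb V}$ after \Cref{cor:HKT08}. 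Third, the weak $(1,p)$-Poincar\'e inequality on $Z$ plays no role in the Whitney gradient estimate itself -- the oscillation information you need is already encoded in $u^\sharp_{s_0}$ -- and is consumed upstream (together with Keith--Zhong self-improvement) inside the proof of \Cref{thm:extensionresults}.
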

Even without assuming the measure-density condition, a simple argument using the bounded inverse theorem shows that condition (2) implies the existence of an extension operator $E_{\mathbb{V}} \colon W^{1,p}( \Omega; \mathbb{V} ) \rightarrow W^{1,p}( Z; \mathbb{V} )$. However, the measure-density condition guarantees the \emph{linearity} of \emph{some} extension operator.

We obtain a linear extension operator in \Cref{thm:W1p} using an idea based on \cite{HKT2008:B}, where Haj\l{}asz, Koskela and Tuominen considered real-valued $W^{1,p}$-extension domains. They observed that if $\Omega \subset Z$ is a domain and $Z$ is a $p$-PI space satisfying \eqref{eq:Ahlfors}, then $\boz$ is a real-valued $W^{1,p}$-extension domain if and only if it satisfies the measure-density condition and $W^{1,p}( \boz )$ coincides with the Haj\l{}asz--Sobolev space $M^{1,p}( \boz )$ (see \cite{M=W} or \Cref{preliminaries} for the definitions). The authors proved this equivalence by first considering $M^{1,p}$-extension domains. We now state our related $\mathbb{V}$-valued result.
\begin{theorem}\label{th:Mexten}
Let $Z = (Z,d,\mu)$ be a metric measure space, with $\mu$ doubling, and $\Omega \subset Z$ a measurable set satisfying the measure-density condition. Then, for every $1 \leq p < \fz$, there exist a constant $C = C(c_\mu,c_\Omega,p) > 0$ and a family of linear extension operators, one for each Banach space $\mathbb{V}$, $E_{ \mathbb{V} } \colon M^{1,p}( \Omega; \mathbb{V} ) \rightarrow M^{1,p}( Z; \mathbb{V} )$ with
\begin{itemize}
    \item $\| E_{\mathbb{V} } \| \leq C$;
    \item $T \circ E_{\mathbb{V}} = E_{\mathbb{W}} \circ T$ for every continuous linear map $T \colon \mathbb{V} \rightarrow \mathbb{W}$,
\end{itemize}
where $\mathbb{V}$ and $\mathbb{W}$ are arbitrary Banach spaces.
\end{theorem}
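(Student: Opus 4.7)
The plan is to adapt the Whitney-extension construction of Hajłasz--Koskela--Tuominen from \cite{HKT2008:B} for real-valued $M^{1,p}$-extensions. The key observation that makes the Banach-valued case reduce to essentially the scalar construction, and that automatically gives the naturality property, is that the operator will be defined purely in terms of a $u$-independent Lipschitz partition of unity and Bochner integral averages, both of which commute with any continuous linear map $T : \mathbb{V} \to \mathbb{W}$.

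First, using only that $\mu$ is doubling, I would build a Whitney-type cover $\{ B_i = B(x_i, r_i) \}_{i \in I}$ of $Z \setminus \overline{\Omega}$ with $r_i$ comparable to $\dist(x_i, \Omega)$, bounded overlap depending only on $c_\mu$, and a subordinate Lipschitz partition of unity $\{ \varphi_i \}$ with Lipschitz constant $\lesssim r_i^{-1}$. Using the measure-density condition, for each $i$ I would pick a reflected ball $\widehat{B}_i = B(\widehat{x}_i, r_i)$ with $\widehat{x}_i \in \Omega$, $\dist(B_i, \widehat{B}_i) \lesssim r_i$, and $\mu( \widehat{B}_i \cap \Omega ) \gtrsim \mu( \widehat{B}_i )$. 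Since measure-density and the Lebesgue differentiation theorem yield $\mu( \overline{\Omega} \setminus \Omega ) = 0$, I may ignore this null set and define
\[
    E_{\mathbb{V}}( u )( x )
    =
    \begin{cases}
        u( x ) & \text{if } x \in \Omega, \\[0.4ex]
        \displaystyle \sum_{i} \varphi_i( x )\, u_{\widehat{B}_i \cap \Omega} & \text{if } x \in Z \setminus \Omega,
    \end{cases}
\]
where $u_F$ denotes the Bochner average over $F$. Linearity in $u$ is immediate, and the identity $T( u_{\widehat{B}_i \cap \Omega} ) = ( T \circ u )_{ \widehat{B}_i \cap \Omega }$, valid for every bounded linear $T$, makes the naturality $T \circ E_{\mathbb{V}} = E_{\mathbb{W}} \circ T$ automatic. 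The $L^p$-bound on $E_{\mathbb{V}}(u)$ follows from the pointwise majorization $\| E_{\mathbb{V}}(u)(x) \|_{\mathbb{V}} \lesssim M( \chi_\Omega \| u \|_{\mathbb{V}} )( x )$, combined with the $L^p$-boundedness of the Hardy--Littlewood maximal operator $M$.

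The technical heart is producing a Hajłasz gradient for $E_{\mathbb{V}}(u)$. If $g \in L^p(\Omega)$ is a Hajłasz gradient of $u$, my candidate is $G = g\, \chi_{\Omega} + C\, M( g \chi_\Omega )\, \chi_{Z \setminus \Omega}$ for a constant $C = C( c_\mu, c_\Omega )$, and I must verify $\| E_{\mathbb{V}}(u)(x) - E_{\mathbb{V}}(u)(y) \|_{\mathbb{V}} \leq d(x,y) ( G(x) + G(y) )$ for a.e.\ pair. The case $x, y \in \Omega$ is immediate; for $x \in \Omega$, $y \in Z \setminus \Omega$, averaging the Hajłasz gradient inequality over $\widehat{B}_i \cap \Omega$ and using measure-density bounds $\| u(x) - u_{\widehat{B}_i \cap \Omega} \|_{\mathbb{V}}$ by $d(x,y)( g(x) + M( g \chi_\Omega )( y ) )$. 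For $x, y \in Z \setminus \Omega$ one splits according to whether $d(x,y)$ is small or large relative to $\min( \dist(x,\Omega), \dist(y,\Omega) )$: in the small regime, the Lipschitz bound $|\varphi_i(x) - \varphi_i(y)| \lesssim d(x,y)/r_i$ together with a chain estimate on $\| u_{\widehat{B}_i \cap \Omega} - u_{\widehat{B}_j \cap \Omega} \|_{\mathbb{V}}$ for neighbouring reflected balls gives the required bound; in the large regime, the triangle inequality through a point of $\Omega$ close to $x$ reduces matters to the mixed case. Finally, $\| G \|_{L^p(Z)} \lesssim \| g \|_{L^p(\Omega)}$ by the maximal inequality, which combined with the $L^p$-bound on $E_{\mathbb{V}}(u)$ yields $\| E_{\mathbb{V}} \| \leq C( c_\mu, c_\Omega, p )$.

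The step I expect to be the main obstacle is the chain estimate for averages over neighbouring reflected balls: these balls need not be comparable or nested, and one has to pass through a common intermediate ball inside $\Omega$ and apply the Hajłasz gradient inequality to pairs of points in that common region, carefully tracking how the resulting constants depend only on $c_\mu$ and $c_\Omega$. Because the Hajłasz gradient inequality is already stated as a norm estimate on $\mathbb{V}$, every step of the scalar argument from \cite{HKT2008:B} transfers verbatim once the ball geometry is fixed, which is precisely what makes the construction uniform across all Banach targets and ensures naturality.
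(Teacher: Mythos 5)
Your construction is, in broad strokes, the same Whitney-type construction the paper uses: cover $Z\setminus\overline{\Omega}$ by Whitney balls, pick reflected balls in $\Omega$, glue the Bochner averages with a Lipschitz partition of unity, observe that naturality $T\circ E_{\mathbb{V}} = E_{\mathbb{W}}\circ T$ is automatic because Bochner averages commute with continuous linear maps, and bound the Haj\l{}asz gradient by a maximal function. For $p>1$ your outline is essentially correct, and the ``chain estimate'' you worry about is indeed where most of the work lies; the paper handles it by noting that neighbouring Whitney balls have comparable radii, so the corresponding reflected balls $B_i^\star$ all lie in a fixed dilate $B_z$ of uniformly bounded radius, letting you compare all of them to a single larger ball where the measure-density condition applies.

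There is, however, a genuine gap at $p=1$, which the statement explicitly includes. Your candidate Haj\l{}asz gradient is
\[
    G \;=\; g\,\chi_{\Omega} \;+\; C\,\mathcal{M}(g\chi_{\Omega})\,\chi_{Z\setminus\Omega},
\]
and you conclude $\|G\|_{L^p(Z)} \lesssim \|g\|_{L^p(\Omega)}$ by the $L^p$-boundedness of the Hardy--Littlewood maximal operator. That step fails for $p=1$: $\mathcal{M}$ is only weak-type $(1,1)$, and $\mathcal{M}(g\chi_\Omega)$ need not be integrable even when $g\in L^1(\Omega)$. The paper treats $p=1$ separately, replacing $\mathcal{M}(\widehat{g})$ by the fractional maximal function $\bigl(\mathcal{M}(\widehat{g}^{\,q})\bigr)^{1/q}$ with $q = s/(s+1)<1$, $s=\log_2 c_\mu$, which \emph{is} $L^1$-bounded (via a weak-type estimate), and combines it with a unit-scale covering of $\overline{\Omega}$ to produce a genuinely different gradient estimate and cut-off. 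Without that modification, your gradient candidate is not in $L^1(Z)$ and the argument does not close at $p=1$.

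A secondary point you should address: your pointwise majorization $\|E_{\mathbb{V}}(u)(x)\|_{\mathbb{V}} \lesssim \mathcal{M}(\chi_\Omega\|u\|_{\mathbb{V}})(x)$ implicitly requires the measure-density estimate $\mu(\widehat{B}_i\cap\Omega)\gtrsim\mu(\widehat{B}_i)$ to hold with a constant \emph{uniform in $i$}. The hypothesis only gives this for radii $\leq 1$, and the extension (\Cref{lemm:extendedmeasuredensity}) to radii $\leq R$ has a constant that degrades as $R\to\infty$. For $x$ far from $\Omega$ the Whitney radius $r_i\sim\dist(x,\Omega)$ is unbounded, so your estimate does not hold with a uniform constant. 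The paper avoids this by keeping only Whitney balls of radius $<1$ in the sum, defining the extension only on the bounded neighbourhood $U = B(\overline{\Omega},8)$, and then multiplying by a Lipschitz cutoff supported in $U$. You need an analogous truncation.
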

Here and elsewhere the notation $C = C( a, b, c )$ means that the constant $C$ depends only on the parameters $a$, $b$ and $c$.

Having \Cref{th:Mexten} in mind, now the proof of "(2) $\Rightarrow$ (1)" in \Cref{thm:W1p} is quite straightforward. On the one hand, we always have a $4$-Lipschitz linear embedding $ M^{1,p}( \Omega; \mathbb{V} ) \subset W^{1,p}( \Omega; \mathbb{V} )$, see \Cref{lemma:M:in:W}. On the other hand, by the assumptions on $Z$ and since $1<p<\infty$, we have $M^{1,p}( Z; \mathbb{V} ) = W^{1,p}( Z; \mathbb{V} )$ as sets with bi-Lipschitz equivalent norms (see \cite{M=W,HKST2015} or \Cref{Section_Sob.ext.dom.}). Then the bounded inverse theorem leads to the equality $M^{1,p}( \Omega; \mathbb{V} ) = W^{1,p}( \Omega; \mathbb{V} )$ as sets and with bi-Lipschitz equivalent norms. This implies that the operator $E_{\mathbb{V}}$ in \Cref{th:Mexten} is a bounded linear operator between the corresponding $W^{1,p}$-spaces.

This argument also establishes the following result, thereby extending the corresponding real-valued result from \cite{HKT2008:B} to the Banach-valued setting.
\begin{corollary}\label{cor:measuredensity:Sob}
Let $1 < p< \infty$, $Z = (Z,d,\mu)$ be a $p$-PI space and $\Omega \subset Z$ be  a measurable set satisfying the measure-density condition. Then $\Omega$ is a $\mathbb{V}$-valued $W^{1,p}$-extension set if and only if $W^{1,p}( \Omega; \mathbb{V} ) = M^{1,p}( \Omega; \mathbb{V} )$ as sets.
\end{corollary}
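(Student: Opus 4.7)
The strategy is exactly the one outlined in the paragraph preceding the statement: combine the linear $M^{1,p}$--extension operator from \Cref{th:Mexten} with the identification $M^{1,p}(Z;\mathbb{V}) = W^{1,p}(Z;\mathbb{V})$ valid on $p$-PI spaces for $1 < p < \infty$.

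For the direction "$W^{1,p}(\Omega;\mathbb{V}) = M^{1,p}(\Omega;\mathbb{V}) \Rightarrow$ extension set", I would proceed as follows. Apply \Cref{th:Mexten} to obtain a bounded linear operator
$$E_{\mathbb{V}} \colon M^{1,p}(\Omega;\mathbb{V}) \to M^{1,p}(Z;\mathbb{V}).$$
By \cite{M=W,HKST2015} (or the discussion after \Cref{th:Mexten}), since $Z$ is a $p$-PI space and $1<p<\infty$, the identity map is a bi-Lipschitz isomorphism between $M^{1,p}(Z;\mathbb{V})$ and $W^{1,p}(Z;\mathbb{V})$. On $\Omega$, the embedding $M^{1,p}(\Omega;\mathbb{V}) \hookrightarrow W^{1,p}(\Omega;\mathbb{V})$ is always $4$-Lipschitz (\Cref{lemma:M:in:W}), and by hypothesis it is a bijection of sets, so the bounded inverse theorem yields a bi-Lipschitz equivalence of the two norms. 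Composing, $E_{\mathbb{V}}$ becomes a bounded linear $W^{1,p}$-extension operator, so $\Omega$ is a $\mathbb{V}$-valued $W^{1,p}$-extension set.

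For the converse, assume $\Omega$ is a $\mathbb{V}$-valued $W^{1,p}$-extension set. The nontrivial inclusion is $W^{1,p}(\Omega;\mathbb{V}) \subset M^{1,p}(\Omega;\mathbb{V})$. Given $u \in W^{1,p}(\Omega;\mathbb{V})$, pick an extension $h \in W^{1,p}(Z;\mathbb{V})$ with $h|_{\Omega} = u$. Again using $M^{1,p}(Z;\mathbb{V}) = W^{1,p}(Z;\mathbb{V})$ on the $p$-PI space $Z$, we have $h \in M^{1,p}(Z;\mathbb{V})$, so it admits a Haj\l{}asz gradient $g \in L^{p}(Z)$. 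The restriction $g|_{\Omega} \in L^{p}(\Omega)$ is then a Haj\l{}asz gradient of $u$ on $\Omega$ (the defining pointwise inequality is inherited on any subset), hence $u \in M^{1,p}(\Omega;\mathbb{V})$, and the reverse inclusion follows.

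The only delicate point in this argument is the identification $M^{1,p}(Z;\mathbb{V}) = W^{1,p}(Z;\mathbb{V})$ in the Banach-valued setting, which is where the hypotheses $1 < p < \infty$ and that $Z$ is a $p$-PI space are essential; this identification is invoked as a known result from \cite{M=W,HKST2015} (and is revisited in \Cref{Section_Sob.ext.dom.}). Everything else is a direct application of \Cref{th:Mexten}, \Cref{lemma:M:in:W}, and the bounded inverse theorem, and requires no further calculation.
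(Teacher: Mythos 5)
Your proposal is correct and follows essentially the same route as the paper, which proves this corollary by combining \Cref{th:Mexten}, the $4$-Lipschitz embedding $M^{1,p}(\Omega;\mathbb{V})\subset W^{1,p}(\Omega;\mathbb{V})$ from \Cref{lemma:M:in:W}, the coincidence $M^{1,p}(Z;\mathbb{V})=W^{1,p}(Z;\mathbb{V})$ on $p$-PI spaces, and the bounded inverse theorem, with the converse obtained by restricting a Haj\l{}asz gradient of the global extension to $\Omega$. No gaps; the only point worth making explicit (which you do) is that the restriction of a Haj\l{}asz gradient remains a Haj\l{}asz gradient on the subset.
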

Next, we provide a positive answer to the second part of \Cref{ques:startingpoint}. For this purpose, we consider the Banach space
$$c_0 \coloneqq \left\{ x=( x_{i} )_{ i = 1 }^{ \infty } \colon x_i \in \mathbb{R},\; \lim_{ i \to\infty } x_i = 0\right\},$$
endowed with the supremum norm $| x | = \sup_{ i \in \N } | x_i |$. Then $c_0$ is a closed linear subspace of $\ell^\infty$, the space of bounded sequences. We prove the following result.
\begin{theorem}\label{thm:M=W}
Let $1 < p < \infty$ and $Z$ be a metric measure space. Then the following are equivalent:
\begin{enumerate}
    \item $W^{1,p}( Z; c_0 ) = M^{1,p}( Z; c_0 )$ as sets;
    \item $W^{1,p}( Z; \ell^{\infty} ) = M^{1,p}( Z; \ell^{\infty} )$ as sets;
    \item $W^{1,p}( Z; \mathbb{V} ) = M^{1,p}( Z; \mathbb{V} )$ as sets for every Banach space $\mathbb{V}$.
\end{enumerate}
Moreover, under any one of these equivalent conditions, there exists a constant $C = C( Z ) > 0$ so that for every Banach space $\mathbb{V}$, we have
\begin{equation*}
    C^{-1} \| u \|_{ M^{1,p}( Z; \mathbb{V} ) }
    \leq
    \| u \|_{ W^{1,p}( Z; \mathbb{V} ) }
    \leq
    4 \| u \|_{ M^{1,p}( Z; \mathbb{V} ) }
    \quad\text{for every $u \in W^{1,p}( Z; \mathbb{V} )$.}
\end{equation*}
\end{theorem}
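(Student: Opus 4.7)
The plan is to prove the cycle (3) $\Rightarrow$ (2) $\Rightarrow$ (1) $\Rightarrow$ (3), with the third implication being the only substantial one. The implications (3) $\Rightarrow$ (1) and (3) $\Rightarrow$ (2) are by specialization to $\mathbb{V} = c_0$ and $\mathbb{V} = \ell^{\infty}$. For (2) $\Rightarrow$ (1), I would use that $c_0$ sits isometrically inside $\ell^{\infty}$: an element $u \in W^{1,p}(Z; c_0)$ is automatically in $W^{1,p}(Z; \ell^{\infty})$ with the same upper gradients. Applying (2) yields $g \in L^p$ with $\|u(x)-u(y)\|_{\ell^{\infty}} \leq d(x,y)(g(x)+g(y))$; since $u$ takes values in $c_0$, the $\ell^{\infty}$-norm equals the $c_0$-norm, so $g$ already certifies $u \in M^{1,p}(Z;c_0)$ with equal norms.

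For (1) $\Rightarrow$ (3), the idea is to post-compose $u \in W^{1,p}(Z;\mathbb{V})$ with an Aharoni-type embedding into $c_0$ and transfer the Hajłasz gradient back. Since $u$ is strongly measurable, $Y = \{0\} \cup \mathrm{ess.\,im}(u)$ is a separable subset of $\mathbb{V}$. By Aharoni's theorem there exist a universal constant $K \geq 1$ and a map $\iota \colon Y \to c_0$ with $\iota(0) = 0$ such that
\begin{equation*}
    K^{-1} \|v-w\|_{\mathbb{V}}
    \leq
    \|\iota(v) - \iota(w)\|_{c_0}
    \leq
    \|v - w\|_{\mathbb{V}}
    \quad\text{for all } v,w \in Y.
\end{equation*}
Then $\iota \circ u \colon Z \to c_0$ is strongly measurable, satisfies $\|\iota(u(x))\|_{c_0} \leq \|u(x)\|_{\mathbb{V}}$, and any $p$-weak upper gradient $\rho$ of $u$ is a $p$-weak upper gradient of $\iota \circ u$; hence $\iota \circ u \in W^{1,p}(Z;c_0)$ with $\|\iota \circ u\|_{W^{1,p}(Z;c_0)} \leq \|u\|_{W^{1,p}(Z;\mathbb{V})}$.

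Combining (1), the $4$-Lipschitz embedding $M^{1,p}(Z;c_0) \hookrightarrow W^{1,p}(Z;c_0)$ from \Cref{lemma:M:in:W}, and the bounded inverse theorem, I obtain a constant $C_0 = C_0(Z) > 0$ with $\|h\|_{M^{1,p}(Z;c_0)} \leq C_0 \|h\|_{W^{1,p}(Z;c_0)}$ for every $h \in W^{1,p}(Z;c_0)$. Applying this to $h = \iota \circ u$ produces a Hajłasz upper gradient $g \in L^p(Z)$ of $\iota \circ u$ with $\|g\|_{L^p} \leq C_0 \|u\|_{W^{1,p}(Z;\mathbb{V})}$. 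The lower Aharoni bound gives
\begin{equation*}
    \|u(x) - u(y)\|_{\mathbb{V}}
    \leq
    K\|\iota(u(x)) - \iota(u(y))\|_{c_0}
    \leq
    d(x,y)\bigl(Kg(x) + Kg(y)\bigr),
\end{equation*}
so $Kg$ is a Hajłasz upper gradient of $u$ and consequently $\|u\|_{M^{1,p}(Z;\mathbb{V})} \leq \|u\|_{L^p} + K C_0 \|u\|_{W^{1,p}(Z;\mathbb{V})} \leq C \|u\|_{W^{1,p}(Z;\mathbb{V})}$ with $C = C(Z)$. The reverse inequality $\|u\|_{W^{1,p}(Z;\mathbb{V})} \leq 4 \|u\|_{M^{1,p}(Z;\mathbb{V})}$ is the standard embedding.

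The main obstacle is obtaining the $\mathbb{V}$-independent constant in the moreover part: applying the bounded inverse theorem separately in each target only yields a $\mathbb{V}$-dependent estimate. It is the universality of the Aharoni constant $K$, together with the fact that the single quantitative bound $C_0$ controls every $c_0$-valued function, that lets the estimate transfer uniformly to all Banach targets.
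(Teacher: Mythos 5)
Your proof is correct but takes a genuinely different route from the paper's. Both handle $(3)\Rightarrow(1),(2)$ trivially, but the paper proves $(2)\Rightarrow(3)$ via a Hahn--Banach style \emph{linear isometric} embedding $\iota\colon\mathbb{V}_0\to\ell^\infty$ (\Cref{lemm:representation}, \Cref{lemma:usefulfact}) and then $(1)\Rightarrow(2)$ via the truncations $\pi^N\colon\ell^\infty\to c_0$ combined with weak $L^p$-compactness and a \v{S}mulian/Cantor intersection argument; you instead close the cycle $(2)\Rightarrow(1)\Rightarrow(3)$ using (i) the elementary fact that $c_0\subset\ell^\infty$ is a closed subspace whose norm coincides with the restricted $\ell^\infty$-norm, and (ii) Aharoni's universal nonlinear bi-Lipschitz embedding of separable metric spaces into $c_0$. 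Step (i) is a nice simplification, and step (ii) reaches $(3)$ directly without the weak-compactness machinery, at the cost of importing a deep external theorem where the paper's embedding is self-contained. Both arguments deliver the $\mathbb{V}$-uniform constant, the paper's via exact isometry, yours because the Aharoni distortion $K$ is universal and $C_0$ is fixed once $c_0$ is chosen.

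You do gloss over one technical point in $(1)\Rightarrow(3)$: to conclude $\iota\circ u\in W^{1,p}(Z;c_0)$ with $\rho_u\in\mathcal{D}_{N,p}(\iota\circ u)$, you need a Newton--Sobolev representative $\tilde u$ whose image lies entirely in a separable closed subspace $\mathbb{V}_0$ (not merely off a negligible set), because $\iota$ is defined only there and the upper-gradient inequality tests $\tilde u$ pointwise along $p$-a.e.\ curve. The remedy is exactly the one the paper employs in \Cref{lemma:usefulfact} and again in the proof of \Cref{lemm:densityoffinitedim}: $p$-a.e.\ constant-speed curve $\gamma$ has $\tilde u\circ\gamma$ with image in $\mathbb{V}_0$, so one may redefine $\tilde u$ to be $0$ on $\tilde u^{-1}(\mathbb{V}\setminus\mathbb{V}_0)$ without disturbing its class or upper gradients, then apply Aharoni to the separable Banach space $\mathbb{V}_0$ (rather than just the essential image $Y$) and invoke \Cref{lemm:basic:new}(1). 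Once that is inserted, the rest of your argument, including transferring the Haj\l{}asz gradient back via the lower Aharoni bound, is sound.
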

\Cref{thm:M=W} and \Cref{cor:measuredensity:Sob} yield the following consequence.
\begin{corollary}\label{cor:c_0:all}
Let $1 < p< \infty$, $Z = (Z,d,\mu)$ be a $p$-PI space and $\Omega \subset Z$ be  a measurable set  satisfying the measure-density condition. If $\Omega$ is a $c_0$-valued $W^{1,p}$-extension set, then $\Omega$ is a $\mathbb{V}$-valued $W^{1,p}$-extension set for all Banach spaces $\mathbb{V}$.
\end{corollary}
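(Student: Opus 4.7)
The plan is to chain together Corollary~\ref{cor:measuredensity:Sob} (twice) and Theorem~\ref{thm:M=W}, using $\Omega$ itself as the ambient metric measure space in the middle step.

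First, I would apply Corollary~\ref{cor:measuredensity:Sob} in the forward direction: since $\Omega$ is, by hypothesis, a $c_0$-valued $W^{1,p}$-extension set, the corollary yields the set-theoretic equality
\begin{equation*}
    W^{1,p}(\Omega;c_0) = M^{1,p}(\Omega;c_0).
\end{equation*}
The point is that this equality of sets is an intrinsic statement about $\Omega$, requiring no reference to $Z$ at all.

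Next, I would apply Theorem~\ref{thm:M=W} with the ambient metric measure space taken to be $(\Omega, d|_\Omega, \mu|_\Omega)$. Before doing so I would briefly check that this triple qualifies as a metric measure space in the sense of the paper: separability is inherited from $Z$, and the measure-density condition \eqref{eq:measuredensitcondition} combined with the doubling property of $\mu$ on $Z$ ensures that $\mu|_\Omega$ is a Borel regular outer measure with $0 < \mu(B \cap \Omega) < \infty$ for every open ball $B$ in $\Omega$ of radius at most one; balls of larger radius can be handled by a standard doubling chaining. Now Theorem~\ref{thm:M=W} tells us that the equality $W^{1,p}(\Omega;c_0) = M^{1,p}(\Omega;c_0)$ from the previous step upgrades to
\begin{equation*}
    W^{1,p}(\Omega;\mathbb{V}) = M^{1,p}(\Omega;\mathbb{V})
    \quad\text{as sets, for every Banach space } \mathbb{V}.
\end{equation*}

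Finally, I would invoke Corollary~\ref{cor:measuredensity:Sob} once more, this time in the reverse direction and with the Banach space $\mathbb{V}$: since $\Omega \subset Z$ still satisfies the measure-density condition and the set-theoretic equality $W^{1,p}(\Omega;\mathbb{V}) = M^{1,p}(\Omega;\mathbb{V})$ holds, the corollary concludes that $\Omega$ is a $\mathbb{V}$-valued $W^{1,p}$-extension set, which is exactly the claim.

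The only nonroutine point I anticipate is the middle step, namely confirming that Theorem~\ref{thm:M=W} can legitimately be applied with $\Omega$ in place of $Z$. Everything else is a purely formal chaining of the two named results; the characterizations in Corollary~\ref{cor:measuredensity:Sob} were tailor-made precisely so that a target-independent identification $W^{1,p}=M^{1,p}$ on $\Omega$ translates immediately into target-independent extendability from $\Omega$ to $Z$.
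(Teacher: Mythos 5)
Your argument is correct and is exactly the paper's own (implicit) proof: the paper derives the corollary by applying Corollary~\ref{cor:measuredensity:Sob}, then Theorem~\ref{thm:M=W} with $\Omega$ as the ambient space, then Corollary~\ref{cor:measuredensity:Sob} again, in the order you describe. Your sanity check that $(\Omega, d|_\Omega, \mu|_\Omega)$ qualifies as a metric measure space (so that Theorem~\ref{thm:M=W} applies) is the right thing to verify and matches the discussion in the preliminaries.
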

While \Cref{cor:c_0:all} provides a positive answer to the second part of \Cref{ques:startingpoint}, the first part remains open. The following results investigate the density of Lipschitz functions in $W^{1,p}(\Omega;\mathbb V)$, closely related to the properties of the embedding $M^{1,p}( \Omega; \mathbb{V} ) \subset W^{1,p}( \Omega; \mathbb{V} )$.
\subsection{Density of Lipschitz functions}
If the equality $W^{1,p}( Z ) = M^{1,p}( Z )$ holds as sets, Lipschitz functions with bounded support are norm-dense in $W^{1,p}( Z )$ as the norm-density is known in $M^{1,p}(Z)$, cf. \cite{M=W}. Given the norm-density, it is of interest to know how well any given $u \in W^{1,p}( Z; c_0 )$ can be approximated by Lipschitz functions with bounded support, or whether such approximations even exist.

We show the following theorem.
\begin{theorem}\label{thm:density:Lips}
Let $1 \leq p < \infty$ and $Z = ( Z, d, \mu)$ be a metric measure space for which Lipschitz functions with bounded support are dense in norm in $W^{1,p}( Z )$. Then Lipschitz functions with bounded support are dense in energy in $W^{1,p}( Z; \mathbb{V} )$ whenever $\mathbb{V}$ is a Banach space with the metric approximation property.
\end{theorem}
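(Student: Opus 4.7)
The strategy is first to reduce to a finite-dimensional target $V_0 \subset \mathbb{V}$ by postcomposing $u$ with a finite-rank operator $T$ supplied by the metric approximation property (MAP), and then to approximate $Tu$ coordinate-by-coordinate using the real-valued density hypothesis. The key quantitative observation is that any linear $T \colon \mathbb{V} \to \mathbb{V}$ with $\|T\| \leq 1$ is $1$-Lipschitz, so if $g$ is a $p$-weak upper gradient of $u$ it is also a $p$-weak upper gradient of $T \circ u$; in particular, the minimal $p$-weak upper gradients satisfy $g_{Tu} \leq g_u$ almost everywhere.

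Fix $u \in W^{1,p}(Z; \mathbb{V})$ with minimal $p$-weak upper gradient $g_u$, and fix $\eta > 0$. Since $u$ is strongly measurable with $\|u\|_\mathbb{V} \in L^p(Z)$, standard Bochner theory supplies a simple function $s = \sum_{i=1}^N v_i \chi_{A_i}$ with $v_i \neq 0$, $M \coloneqq \sum_i \mu(A_i) < \infty$, and $\|u - s\|_{L^p(Z; \mathbb{V})} < \eta / 8$. The finite set $K \coloneqq \{v_1, \ldots, v_N\}$ is compact in $\mathbb{V}$, so MAP yields a finite-rank linear $T \colon \mathbb{V} \to \mathbb{V}$ with $\|T\| \leq 1$ and $\max_{v \in K} \|Tv - v\| < \eta / (8 M^{1/p})$. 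A triangle-inequality computation using $\|s - Ts\|_{L^p}^p \leq \sum_i \|v_i - Tv_i\|^p \mu(A_i)$ and $\|T\| \leq 1$ then yields $\|u - Tu\|_{L^p} < \eta / 2$.

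For the finite-dimensional step, fix a basis $e_1, \ldots, e_n$ of $V_0 \coloneqq T(\mathbb{V})$ and write $Tu = \sum_{j=1}^n u_j e_j$; each coordinate $u_j$ lies in $W^{1,p}(Z; \mathbb{R})$ as the composition of $Tu$ with a bounded linear functional on $V_0$. The scalar density hypothesis supplies Lipschitz $\psi_j$ with bounded support satisfying $\|u_j - \psi_j\|_{W^{1,p}(Z)} < \eta / \bigl(4 n (1 + \max_j \|e_j\|_\mathbb{V}) \bigr)$. Setting $\varphi \coloneqq \sum_j \psi_j e_j$ gives a Lipschitz function with bounded support into $V_0 \subset \mathbb{V}$. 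The pointwise bounds $\|\varphi - Tu\|_\mathbb{V} \leq \sum_j |u_j - \psi_j| \, \|e_j\|_\mathbb{V}$ together with the analogous bound $\sum_j g_{u_j - \psi_j} \|e_j\|_\mathbb{V}$ for a $p$-weak upper gradient of $\varphi - Tu$ yield $\|\varphi - Tu\|_{W^{1,p}(Z; \mathbb{V})} < \eta / 4$. Combining with the previous step and $g_{Tu} \leq g_u$, I obtain $\|u - \varphi\|_{L^p} < \eta$ and $\|g_\varphi\|_{L^p} \leq \|g_u\|_{L^p} + \eta$, which is the required energy-density statement.

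The main obstacle is reconciling MAP---whose approximation guarantee only applies on compact subsets of $\mathbb{V}$---with the fact that the essential range of $u$ is in general only separable. The Bochner simple-function reduction is what bridges this gap, by concentrating the relevant values of $u$ onto a finite (hence compact) set at the price of a controlled $L^p$-error; it is crucial that the MAP error is measured against the finite quantity $M$, which allows the MAP threshold to be matched to the prescribed $L^p$-accuracy. A secondary but harmless subtlety is that the basis-dependent constants entering the finite-dimensional step introduce factors of $n$ and $\max_j \|e_j\|$, both of which are fixed once $T$ is chosen.
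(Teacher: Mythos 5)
Your reduction to finite-dimensional targets is clean and actually sidesteps the Mujica--Vieira embedding machinery the paper uses: by approximating $u$ by a Bochner simple function and applying the MAP operator $T$ only to the (compact, indeed finite) value set of that simple function, you get $\|u-Tu\|_{L^p}<\eta/2$ directly, and the pointwise inequality $\rho_{Tu}\leq\rho_u$ (valid since $\|T\|\leq 1$) is exactly the right estimate. The coordinate-wise step is also fine. This is a legitimately different and arguably lighter route to the finite-dimensional reduction than the paper's (which passes through separable subspaces, then \cite[Theorem 1.2]{Mu:Vi:10}, and then Schauder-basis projections $T_k$).

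However, there is a genuine gap at the very end. You conclude with $\|u-\varphi\|_{L^p}<\eta$ and $\|\rho_\varphi\|_{L^p}\leq\|\rho_u\|_{L^p}+\eta$, and call this ``the required energy-density statement.'' It is not. The paper's definition of energy-density demands $\|\rho_u-\rho_{\varphi}\|_{L^p}\to 0$, i.e. strong $L^p$-convergence of the minimal upper gradients, not merely closeness of their norms. These are different: a nonnegative function whose $L^p$-norm is within $\eta$ of $\|\rho_u\|_{L^p}$ may still be far from $\rho_u$ in $L^p$ (e.g.\ disjointly supported). Establishing the genuine energy-density conclusion is precisely what the bulk of the paper's Lemma \ref{lemm:real-to-finite:energy} and Proposition \ref{lemm:densityoffinitedim} are devoted to.

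The gap can be closed starting from what you have, but it requires an extra argument that you do not supply. Running your construction with $\eta=1/n$ gives Lipschitz $\varphi_n$ and operators $T_n$ with $\rho_{T_n u}\leq\rho_u$ a.e.\ and hence the pointwise estimate $\rho_{\varphi_n}\leq\rho_u+h_n$ a.e., where $h_n\coloneqq\rho_{\varphi_n-T_nu}$ satisfies $\|h_n\|_{L^p}\to 0$. From $(\rho_{\varphi_n}-\rho_u)^+\leq h_n$ you get that the positive part vanishes in $L^p$; but to control the negative part you still need to know that $\int\rho_{\varphi_n}^p\,d\mu\to\int\rho_u^p\,d\mu$ (or, equivalently, that no ``mass'' is lost in the limit), which requires a Mazur's-lemma/weak-compactness argument identifying the weak limit of $(\rho_{\varphi_n})$ with $\rho_u$ and, for $p=1$, a separate uniform-integrability (Dunford--Pettis) argument since $L^1$ is not reflexive. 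This is exactly the delicate step handled in the paper's proof (including the $p=1$ trick of passing to $L^2(\nu)$ for $\nu=\sup_i\rho_{u_i}\,d\mu$). Without it, your proof establishes only energy boundedness, not energy density.
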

We refer the reader to \Cref{sec:M=W} for the definitions and the proof. However, we note that energy-density is weaker than norm density, see \Cref{sec:M=W}, and we do not know if norm-density holds in $W^{1,p}( Z; \mathbb{V} )$ under the stated assumptions. We do expect that the conclusion of the theorem holds under the weaker assumption of energy-density of Lipschitz functions with bounded support in $W^{1,p}(Z)$.

We also recall that many classical Banach spaces, such as all finite-dimensional Banach spaces, $\ell^{q}$ space for $1 \leq q \leq \infty$, $c_0$ and every Hilbert space, have the metric approximation property, while not all Banach spaces have the property. We do not know if the density in energy holds for all Banach spaces under the assumptions of \Cref{thm:density:Lips}.

Typically the Lipschitz energy-density for Banach-valued Sobolev functions is deduced from the equality $M^{1,p}( Z; \mathbb{V} ) = W^{1,p}( Z; \mathbb{V} )$ as sets, valid in $p$-PI spaces for $p > 1$, cf. \cite{M=W,HKST2001,Haj:Sch:14,HKST2015,Haj:Sch:14} and the references therein. In fact, under the equality $M^{1,p}( Z; \mathbb{V} ) = W^{1,p}( Z; \mathbb{V} )$, the density in energy improves to density in {norm}, e.g. when the measure of $Z$ is doubling, cf. \cite[Lemma 10.2.7]{HKST2015}. See also \cite{La:Sch:05,Haj:Sch:14}. \Cref{thm:density:Lips} implies that to answer the first part of \Cref{ques:startingpoint} positively, it is necessary and sufficient to prove that the $4$-Lipschitz embedding $M^{1,p}( \Omega; c_0 ) \subset W^{1,p}( \Omega; c_0 )$ has a closed image whenever $\Omega$ is a $W^{1,p}$-extension set, cf. \Cref{prop:coincidence} and \Cref{rem:necessaryandsufficient}.

There is now a wealth of literature which guarantees the norm-density of Lipschitz functions in $W^{1,p}( Z )$, e.g. under the completeness \cite{Cheeger,HKST2015,Amb:Col:DiMa:15,Iko:Pas:Sou:22,EB:Sou:21} and noncompleteness \cite{Lew:87,Kos:Zh:16,Ko:Ra:Zh:17} assumptions, respectively. In particular, when $Z$ is complete and $Z$ is metrically doubling \cite{Amb:Col:DiMa:15} or has finite Hausdorff dimension \cite{EB:Sou:21}, the norm-density holds. Based on the known results in the complete setting, we prove the following.
\begin{proposition}\label{prop:density}
Let $Z$ be a $p$-PI space and $\Omega \subset Z$ a measurable set with $\mu( \overline{\Omega} \setminus \Omega ) = 0$. Then, for any $1 \leq p < \infty$, the following properties hold. 
\begin{enumerate}
    \item If every $u \in W^{1,p}( \Omega )$ is the restriction of some $h \in W^{1,p}( \overline{\Omega} )$, then Lipschitz functions with bounded support are norm-dense in $W^{1,p}( \Omega )$.
    \item If $\Omega$ is $p$-path almost open in $\overline{\Omega}$ and Lipschitz functions with bounded support are energy-dense in $W^{1,p}( \Omega )$, then every $u \in W^{1,p}( \Omega )$ is the restriction of some $h \in W^{1,p}( \overline{\Omega} )$ with equal norm.
\end{enumerate}
\end{proposition}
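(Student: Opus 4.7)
For (1), I will transfer the approximation problem to $\overline{\Omega}$ and invoke a known Lipschitz norm-density theorem there. Observe that $\overline{\Omega}$, equipped with the restrictions of $d$ and $\mu$, is a complete and metrically doubling metric measure space, inheriting these properties from the $p$-PI space $Z$ (the hypothesis $\mu(\overline{\Omega}\setminus\Omega) = 0$ ensures the restricted measure is essentially supported on $\overline{\Omega}$). By the norm-density theorem of Ambrosio--Colombo--Di~Marino \cite{Amb:Col:DiMa:15}, Lipschitz functions with bounded support are norm-dense in $W^{1,p}(\overline{\Omega})$. Given $u \in W^{1,p}(\Omega)$, the hypothesis provides an extension $h \in W^{1,p}(\overline{\Omega})$ with $h|_\Omega = u$; approximating $h$ in $W^{1,p}(\overline{\Omega})$-norm by Lipschitz $\phi_n$ with bounded support and restricting, I obtain $\phi_n|_\Omega \to u$ in $W^{1,p}(\Omega)$, using $\mu(\overline{\Omega}\setminus\Omega) = 0$ for the $L^p$ equality and the fact that every $p$-weak upper gradient on $\overline{\Omega}$ restricts to one on $\Omega$.

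For (2), the plan is to extend Lipschitz energy-approximators of $u$ via McShane's formula and extract a weak $W^{1,p}$-limit on $\overline{\Omega}$. Fix $u \in W^{1,p}(\Omega)$ and take Lipschitz $\phi_n$ with bounded support on $\Omega$ converging to $u$ in energy. Extend each $\phi_n$ to a Lipschitz $\tilde\phi_n$ on $\overline{\Omega}$ by McShane's formula, multiplied by a Lipschitz cutoff of sufficient scale to keep the support bounded. The $p$-path almost openness of $\Omega$ in $\overline{\Omega}$ ensures that $p$-almost every rectifiable curve in $\overline{\Omega}$ reduces to one in $\Omega$, so the minimal $p$-weak upper gradient of $\tilde\phi_n$ on $\overline{\Omega}$ and that of $\phi_n$ on $\Omega$ have equal $L^p$-norm; together with $\mu(\overline{\Omega}\setminus\Omega) = 0$ this gives $\|\tilde\phi_n\|_{W^{1,p}(\overline{\Omega})} = \|\phi_n\|_{W^{1,p}(\Omega)} \to \|u\|_{W^{1,p}(\Omega)}$. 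For $1 < p < \infty$, reflexivity of $L^p$ lets me extract a subsequence along which $\tilde\phi_n$ and its minimal upper gradient converge weakly in $L^p(\overline{\Omega})$, and Mazur's lemma combined with Fuglede's theorem produces a Sobolev limit $h \in W^{1,p}(\overline{\Omega})$. Strong $L^p(\Omega)$-convergence of $\tilde\phi_n|_\Omega = \phi_n \to u$ identifies $h|_\Omega = u$, and the norm equality $\|h\|_{W^{1,p}(\overline{\Omega})} = \|u\|_{W^{1,p}(\Omega)}$ follows from lower semicontinuity of the $L^p$-norm under weak convergence paired with the trivial restriction inequality $\|u\|_{W^{1,p}(\Omega)} \leq \|h\|_{W^{1,p}(\overline{\Omega})}$. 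For $p = 1$, Dunford--Pettis compactness replaces reflexivity, with equi-integrability supplied by the uniform energy bound.

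The main obstacle lies in the upper-gradient bookkeeping between $\Omega$ and $\overline{\Omega}$ in part (2), where the $p$-path almost open hypothesis is essential: without it, the minimal upper gradient of the McShane extension could be strictly larger than that of the original function, spoiling both the uniform energy bound and the norm equality. A secondary point is to verify that $\overline{\Omega}$ is a bona fide metric measure space of the type required in part (1); this is straightforward given $\mu(\overline{\Omega}\setminus\Omega) = 0$ and the doubling of $\mu$. The McShane cutoff step in (2) is routine: one chooses the cutoff scale large compared to the support of $\phi_n$ so the cutoff's energy contribution is negligible in the limit.
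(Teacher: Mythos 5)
Your proposal is correct and follows the same basic architecture as the paper. A few remarks on where the routes diverge.

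For (1) you invoke Ambrosio--Colombo--Di~Marino (complete + metrically doubling) where the paper invokes Eriksson-Bique--Soultanis (complete + finite Hausdorff dimension). Both hypotheses are satisfied by $\overline{\Omega}$ as a closed subset of a $p$-PI space, so the citation swap is immaterial. The one small imprecision: $\mu(\overline{\Omega}\setminus\Omega)=0$ is not what makes $\overline{\Omega}$ a metric measure space (that is automatic from $\overline{\Omega}$ being closed in $Z$ and $\mu$ being Borel regular); rather it is what makes the restriction map $W^{1,p}(\overline{\Omega}) \to W^{1,p}(\Omega)$ norm-nonincreasing, which is the step you actually need.

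For (2) you reach the right conclusion but carry more machinery than the hypothesis warrants. Two observations. First, because $\Omega$ is dense in $\overline{\Omega}$, the McShane extension of a Lipschitz $\phi_n \colon \Omega \to \R$ \emph{to $\overline{\Omega}$} coincides with the unique continuous extension, which automatically has bounded support whenever $\phi_n$ does; the cutoff is superfluous (it would only be needed if you extended to all of $Z$, which you do not need to do). Second, energy-density gives you \emph{strong} $L^p$ convergence $\rho_{\phi_n} \to \rho_u$ in $L^p(\Omega)$, and via the strong locality lemma and $\mu(\overline{\Omega}\setminus\Omega)=0$ this upgrades to strong convergence of both $\tilde\phi_n \to \widehat{u}$ and $\rho_{\tilde\phi_n} \to \widehat{\rho_u}$ in $L^p(\overline{\Omega})$. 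At that point a single appeal to the Fuglede-type closure property of Newton--Sobolev spaces (the paper cites \cite[Proposition 7.3.7]{HKST2015}) produces the extension $h$ with $\widehat{\rho_u}$ as a $p$-weak upper gradient, and the norm equality drops out of strong locality directly. The weak compactness, Mazur's lemma, lower semicontinuity, and the Dunford--Pettis detour for $p=1$ are all correct but are fallback tools for when only a uniform energy bound (rather than energy convergence) is available; here they are not needed. This is not a gap, merely an inefficiency worth noticing: the strength of the energy-density hypothesis is precisely that it hands you strong convergence for free.
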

We refer the reader to \Cref{Section_LAST} for the definitions and the proof. The interested reader can readily relax the $p$-PI assumption from \Cref{prop:density}, and we expect \Cref{prop:density} to be known to experts in the field. More interestingly for us, combining the proposition with \Cref{thm:density:Lips} yields the energy density of Banach-valued Lipschitz functions for many Banach-valued $W^{1,p}$-spaces of interest.

Getting back to the Sobolev extension problem, we have been able to answer \Cref{ques:startingpoint} positively under further geometric assumptions on $\Omega$. This is the content of the results below.

\subsection{Sufficient geometric conditions for extendability}\label{sec:partialanswer}
A consequence of \Cref{thm:extensionresults}, based on \eqref{eq:ext_condition}, is the following.
\begin{proposition}\label{thm:positive}
Let $1 < p< \infty$, $Z = (Z,d,\mu)$ be a $p$-PI space and $\Omega \subset Z$ be a measurable set satisfying the measure-density condition. If $\Omega$ supports a weak $(1,p)$-Poincaré inequality up to scale $r_0$, for some $0 < r_0$, then $\Omega$ is a $\mathbb{V}$-valued $W^{1,p}$-extension set for every Banach space $\mathbb{V}$.
\end{proposition}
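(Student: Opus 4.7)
The plan is to combine \Cref{thm:extensionresults} and \Cref{thm:W1p}: by \Cref{thm:W1p} it suffices to show that every $u \in W^{1,p}(\boz;\mathbb V)$ admits some extension in $W^{1,p}(Z;\mathbb V)$, and by \Cref{thm:extensionresults} this reduces to producing, for each such $u$, some $s = s(u) > 0$ with $u^{\sharp}_{s} \in L^p(\boz)$.

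First I would record that the measure-density condition and the doubling of $\mu$ force $\mu|_{\boz}$ to be doubling up to scale $1$, and that since $\mu(\overline{\boz}\setminus\boz)=0$ the scalar $(1,p)$-Poincar\'e inequality assumed on $\boz$ transfers to $(\overline{\boz}, d, \mu)$ up to scale $r_0$; this then upgrades (with the same constants) to $\mathbb V$-valued functions by applying the real-valued inequality to $x \mapsto |u(x) - u_{B}|_{\mathbb V}$, of which $\rho$ remains an upper gradient. The key step would be a local Keith--Zhong self-improvement: since $(\overline{\boz}, d, \mu)$ is complete, doubling up to scale $1$, and supports a $(1,p)$-Poincar\'e inequality up to scale $r_0$, I would obtain $q \in [1,p)$ and $r_1 \in (0, r_0]$ such that $\boz$ supports a weak $(1,q)$-Poincar\'e inequality up to scale $r_1$. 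Then, choosing $s > 0$ with $\lambda s \leq \min(r_1, 1)$ and writing $B_t = B(x,t)\cap \boz$, I would apply this improved PI to $u$ to obtain, for $x \in \boz$ and $0 < t < s$,
\begin{equation*}
    \frac{1}{t} \aint{B_t}\aint{B_t} |u(y)-u(z)|_{\mathbb V}\, d\mu(y)\, d\mu(z)
    \leq
    \frac{2}{t}\aint{B_t}|u - u_{B_t}|_{\mathbb V}\, d\mu
    \leq
    C\left(\aint{\lambda B_t}\rho^q\, d\mu\right)^{1/q}.
\end{equation*}
Replacing averages over $B_t$ with averages over $B(x,t)$ in the definition of $u^{\sharp}_{s}$ costs only a multiplicative constant depending on $c_\mu$ and $c_\boz$ (by the measure-density condition), producing the pointwise bound $u^{\sharp}_{s}(x) \leq C\, \bigl(M_{\lambda s}(\rho^q)(x)\bigr)^{1/q}$, where $M_{\lambda s}$ denotes the maximal function on $\boz$ restricted to radii $\leq \lambda s$.

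To close the argument, since $\rho \in L^p(\boz)$ and $p/q > 1$, $\rho^q$ lies in $L^{p/q}(\boz)$, and the Hardy--Littlewood maximal theorem applied on $(\boz, d, \mu|_{\boz})$, which is doubling at the relevant scales $\leq 1$, will yield $M_{\lambda s}(\rho^q) \in L^{p/q}(\boz)$ and hence $u^{\sharp}_{s} \in L^p(\boz)$; \Cref{thm:extensionresults} then produces the desired extension of $u$, and \Cref{thm:W1p} promotes this to a bounded linear extension operator, proving that $\boz$ is a $\mathbb V$-valued $W^{1,p}$-extension set. The principal obstacle will be the precise formulation of the Keith--Zhong self-improvement in this local regime where doubling and PI hold only up to a finite scale rather than globally; the original argument is essentially local, so this is largely a bookkeeping matter, but one must confine all chain-of-balls constructions to sufficiently small radii on $\overline{\boz}$ so that both doubling and the initial PI remain available simultaneously and verify that the same gain $p \to q < p$ persists in this restricted setting.
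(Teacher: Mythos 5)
Your overall strategy matches the paper's closely: obtain a local improvement $p \to q < p$ of the Poincar\'e exponent, bound $u^{\sharp}_s$ pointwise by $\bigl(\mathcal{M}_{\lambda s}(\widehat{\rho_u}^q)\bigr)^{1/q}$, conclude $u^{\sharp}_s \in L^p(\Omega)$ via the boundedness of the restricted maximal operator, and then invoke \Cref{thm:extensionresults} and \Cref{thm:W1p}. Your passage from the scalar to the $\mathbb V$-valued inequality through $x\mapsto |u(x)-u_B|_{\mathbb V}$, rather than through dual functionals $w(u)$ as the paper does via \Cref{lemm:projection}, is a harmless variant. The reduction, the final estimate, and the maximal-function step are all as in \Cref{section:Corollary}.

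There is, however, a genuine gap in how you justify the self-improvement. You route through $\overline\Omega$ so that a Keith--Zhong argument (which requires completeness) can be applied. The transfer $\Omega \to \overline\Omega$ of the assumed $(1,p)$-Poincar\'e inequality is sound --- if $\rho$ is a $p$-weak upper gradient of $u$ on $\overline\Omega$, then $\rho|_\Omega$ is one on $\Omega$, since paths in $\Omega$ are paths in $\overline\Omega$ and the two spaces carry the same measure --- but the return trip is not. Having produced a $(1,q)$-PI on $\overline\Omega$, you then want to apply it to $u\in W^{1,p}(\Omega;\mathbb V)$ with $\rho=\rho_u$, which is only known to be a $p$-weak upper gradient of $u$ \emph{in $\Omega$}. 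To apply a Poincar\'e inequality on $\overline\Omega$ one would need $u$ to extend to $\overline\Omega$ with a $q$-weak upper gradient there, and neither of these is automatic: paths in $\overline\Omega$ meeting $\overline\Omega\setminus\Omega$ are not controlled by an upper gradient in $\Omega$ (exactly what fails in the slit disk, where $\overline\Omega=\overline{B(0,1)}$ supports an excellent PI while $\Omega$ does not), and even restricted to $\Omega$, a $p$-weak upper gradient need not be a $q$-weak upper gradient for $q<p$, since a path family can have zero $p$-modulus but positive $q$-modulus. Phrased differently, your sentence that ``$\Omega$ supports a weak $(1,q)$-Poincar\'e inequality up to scale $r_1$'' does not follow from Keith--Zhong on $\overline\Omega$, and is in fact stronger than what is true or needed.

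The paper sidesteps this by citing a self-improvement theorem designed for possibly noncomplete spaces, \cite[Theorem 5.1]{Bj:Bj:2019}, recorded as \Cref{thm:selfimprovement}. Its conclusion is deliberately weaker than a full $(1,q)$-PI: the improved $q$-exponent inequality on $\Omega$ is asserted only for \emph{$L^p$-integrable $p$-weak upper gradients}, not for $q$-weak upper gradients. The paper emphasizes that this restriction is precisely where noncompleteness enters (cf. \cite[Corollary D]{Kos:99}). Since $\rho_u$ is an $L^p$-integrable $p$-weak upper gradient, this weaker conclusion is exactly enough, and the remainder of your argument is then correct as written. You flagged the local-scale bookkeeping as the principal obstacle, but the real subtlety is noncompleteness and the distinction on $\Omega$ between $p$-weak and $q$-weak upper gradients; fixing this requires replacing the generic Keith--Zhong reference with the noncomplete variant.
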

When we say that $\Omega$ supports a weak $(1,p)$-Poincaré inequality up to scale $r_0$, we mean that when the distance and measure of $Z$ are restricted to $\Omega$, the corresponding triple $( \Omega, d_{\Omega}, \mu_{\Omega} )$ supports a weak $(1,p)$-Poincaré inequality up to scale $r_0$. \Cref{thm:positive} is an expected result, especially when $r_0 = \infty$, cf. \cite[Section 5]{Bjo:Sha:07}, but we provide a proof.

The proof of \Cref{thm:positive} relies on the local self-improvement phenomenon of the weak Poincaré inequality established in \cite[Theorem 5.1]{Bj:Bj:2019}, based on earlier work in \cite{Bj:Bj:18} and by Keith and Zhong in \cite{KZ2008}. The noncompleteness of $\Omega$ makes the self-improvement a subtle problem, cf. \cite{Kos:99,Bj:Bj:Lah:21}.

Observe that the assumptions of \Cref{thm:positive} hold for every $(\varepsilon,\delta)$-domain in a $p$-PI space: the measure-density condition follows from \cite[Lemma 4.2]{Bjo:Sha:07} and weak $( 1, p )$-Poincaré inequality from \cite[Theorem 4.4]{Bjo:Sha:07}. See also \cite[Theorem 3.2]{Bj:Bj:Lah:21} for a related local result. 

Recently, \cite{Bj:Bj:Lah:21} studied the special case of $\mu( Z \setminus \Omega ) = 0$ partially motivated by removable sets for Sobolev functions and related problems. Among many other results, the authors prove that under a further technical condition on $\Omega$ as in \Cref{prop:density} (2), a set $\Omega \subset \mathbb{R}^n$ whose complement has negligible measure is a $W^{1,p}$-extension set if and only if $\Omega$ supports a weak $(1,p)$-Poincaré inequality \cite[Theorem 1.1]{Bj:Bj:Lah:21}.  See \cite{EB:Go:21,EB:Go:21:borderline} for further (fractal-like) examples of $\Omega \subset \R^n$ satisfying the assumptions of \Cref{thm:positive}.

The techniques developed for the proof of \Cref{thm:positive} led us to prove the following folklore result, providing a positive answer to \Cref{ques:startingpoint} in case $\Omega \subset \mathbb{R}^n$ is a $W^{1,p}$-extension domain for $n\leq p<\infty$.
\begin{theorem}\label{thm:extension:topoincare}
If $\Omega\subset \mathbb{R}^n$ is a $W^{1,p}$-extension domain for some $1<n\leq p<\infty$, then $\Omega$ satisfies the measure-density condition and supports a weak $( 1, p )$-Poincaré inequality up to some scale $r = r( \Omega ) > 0$.
\end{theorem}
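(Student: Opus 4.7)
The proof has two parts: establishing the measure-density condition, and deriving the weak $(1,p)$-Poincar\'e inequality up to some scale. Both steps hinge on the critical Sobolev embedding on $\mathbb{R}^n$ available when $p \geq n$ (Morrey H\"older regularity for $p > n$; Trudinger exponential integrability for $p = n$).

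\textbf{Measure density.} Fix $x_0\in\Omega$ and $r\in(0,1]$, and plug the Lipschitz pyramid $\phi(y) = \max(0, 1-|y-x_0|/r)$, viewed as a function on $\Omega$, into the extension operator. One checks $\|\phi\|_{W^{1,p}(\Omega)}^p \leq (1 + r^{-p})\mu(B(x_0,r)\cap\Omega)$, hence $\|E\phi\|_{W^{1,p}(\mathbb{R}^n)}$ is comparable. For $p > n$, Morrey's inequality provides a continuous representative of $E\phi$ and a pointwise bound
\[
1 = |E\phi(x_0)| \leq Cr^{-n/p}\|E\phi\|_{L^p(B(x_0,r))} + Cr^{1-n/p}\|\nabla E\phi\|_{L^p(B(x_0,r))},
\]
where the identity $E\phi(x_0) = 1$ holds at Lebesgue density points of $\Omega$ (and a.e.\ $x_0 \in \Omega$ is such a point). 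Combined with the computed norm bound this yields $\mu(B(x_0,r)\cap\Omega) \geq c\,r^n$ with $c = c(n,p,\|E\|)$. For the borderline $p=n$ the same scheme works but Morrey must be replaced by an $n$-capacity/Trudinger estimate, since $W^{1,n}$ does not embed into $L^\infty$.

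\textbf{Poincar\'e inequality.} With measure density at hand, the real-valued form of \Cref{cor:measuredensity:Sob} (the Haj\l{}asz--Koskela--Tuominen identification from \cite{HKT2008:B}) gives $W^{1,p}(\Omega) = M^{1,p}(\Omega)$ as sets with equivalent norms. Given $u\in W^{1,p}(\Omega)$ with a $p$-weak upper gradient $\rho$ and a ball $B = B(x_0,r)\cap\Omega$ with $r\leq r_0$, I set $B' = B(x_0,r)\subset\mathbb{R}^n$, multiply $u - u_B$ by a Lipschitz cutoff $\eta$ that equals $1$ on $B$, is supported in $\lambda B'\cap\Omega$, and has Lipschitz constant $O(1/r)$, and extend $v := (u - u_B)\eta$ to $Ev\in W^{1,p}(\mathbb{R}^n)$. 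Applying the $(1,p)$-PI of $\mathbb{R}^n$ to $Ev$ on $B'$ and transferring to $B$ via $\mu(B)\gtrsim\mu(B')$ gives
\[
\aint{B}|u - u_B|\,d\mu
\leq
Cr\Bigl(\aint{\lambda B'\cap\Omega}\rho^p\,d\mu\Bigr)^{1/p}
+
C\Bigl(\aint{\lambda B'\cap\Omega}|u-u_B|^p\,d\mu\Bigr)^{1/p}.
\]
The self-referential second term is then absorbed by invoking Morrey's inequality applied to $Ev$ (for $p>n$) or a Trudinger estimate (for $p=n$), combined with a telescoping argument across dyadic scales, yielding a genuine weak $(1,p)$-PI up to some scale $r_0 = r_0(\Omega) > 0$.

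\textbf{Main obstacle.} The hardest step is removing the self-referential $|u-u_B|^p$ term in the displayed inequality. The case $p=n$ is especially delicate because Morrey's H\"older bound is unavailable, so one must combine Trudinger's exponential embedding (or $n$-capacity estimates) with a careful bootstrapping across dyadic scales. Once the absorption is controlled, the remaining ingredients, including the identification $W^{1,p}(\Omega) = M^{1,p}(\Omega)$ and the transfer of the PI from $\mathbb{R}^n$ to $\Omega$ via measure density, are already in place.
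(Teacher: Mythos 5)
Your measure-density argument (Lipschitz pyramid through $E$, then Morrey for $p>n$ and a Trudinger substitute for $p=n$) is a correct and standard route; the paper simply cites \cite{HKT2008} for this fact, but proving it directly is fine. The problem lies in your second step.

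The route you sketch for the Poincar\'e inequality --- cut off $u-u_B$ by $\eta$, extend $v=(u-u_B)\eta$, apply the $(1,p)$-PI of $\mathbb R^n$, then "absorb" the self-referential term $\bigl(\aint{\lambda B'\cap\Omega}|u-u_B|^p\,d\mu\bigr)^{1/p}$ --- does not close without a genuinely new idea, and you acknowledge as much without supplying one. After the extension and PI on $\mathbb R^n$, the self-referential term appears with a constant depending only on $\|E\|$ and the geometry, not with any small or scale-dependent factor, so it cannot be directly moved to the left. A "telescoping argument across dyadic scales" of the kind used in chain-condition proofs of Poincar\'e-type inequalities needs a base estimate at the smallest scale (pointwise control of oscillation) that you haven't established: Morrey applied to $Ev$ controls oscillation in terms of $\|\nabla Ev\|_{L^p(\mathbb R^n)}$, which is \emph{global} --- your cut-off $v$ is supported in $\lambda B'\cap\Omega$, but the extension operator is not a priori local, so you cannot replace that global norm by a norm on a fixed multiple of $B'$. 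What is needed here is precisely a \emph{pointwise} Haj\l asz-type estimate with a \emph{restricted} maximal function, and that is what Koskela's Theorem~2.4 in \cite{K1998JFA} provides (under the H\"older embedding you already set up): $|u(x)-u(y)|\leq d(x,y)\bigl(C\mathcal M_{\lambda d(x,y)}(\rho_u^p)(x)+C\mathcal M_{\lambda d(x,y)}(\rho_u^p)(y)\bigr)^{1/p}$. The paper's proof takes exactly your Morrey input but then invokes this result of Koskela, converts it into a bound $u^\sharp_s\leq C(\mathcal M_{\lambda s}(\widehat\rho_u^p))^{1/p}$, and finishes via \Cref{prop:Poincare:sharpfunctional} (which, through \cite[Theorem 5.1]{Bj:Bj:2019}, handles the passage back to a genuine $(1,p)$-PI up to scale). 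Effectively you are attempting to re-derive Koskela's Theorem~2.4 on the fly; the absorption you wave at is the content of that theorem.

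For the endpoint $p=n$ the situation is worse for your approach, since you can no longer invoke any pointwise continuity. The paper handles $p=n$ by using an inequality from Koskela's proof of \cite[Theorem~B]{K1998JFA} that is valid for $u\in W^{1,p}(\Omega)$ with $p>n$, and then passes to $W^{1,n}(\Omega)$ via the density of Lipschitz functions and a pointwise-a.e.\ limiting argument in the restricted maximal functions. Your proposal only says "Trudinger + bootstrapping," which would require an argument you have not given. I recommend citing Koskela's results rather than trying to reconstruct them.
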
 
We recall that every $W^{1,p}$-extension domain $\Omega \subset \mathbb{R}^n$ satisfies the measure-density condition due to \cite{HKT2008}. The fact that $\Omega$ supports a weak $(1,p)$-Poincaré inequality up to some scale follows, with some work, from \cite{K1998JFA}, combined with \cite[Theorem 5.1]{Bj:Bj:2019}. For completeness sake, we provide a proof in \Cref{section:Corollary}. It would be interesting to extend \Cref{thm:extension:topoincare} in two ways. First, to $p$-PI spaces satisfying \eqref{eq:Ahlfors} for some $1 < Q \leq p$. Second, relaxing, as much as possible, the domain assumption on $\Omega$ to measurability and $p$-path almost openness and to the measure-density condition.

When $p < n$ there are $W^{1,p}$-extension domains that do not support a weak $( 1, p )$-Poincaré inequality up to any scale $r_0$ (see \cite[Example 2.5]{Koskela}, \cite[Remark 5.2]{Bjo:Sha:07}, or \Cref{ex:3}). \Cref{ex:3} considers a domain $\Omega \subset \mathbb{R}^2$ which is a $W^{1,p}$-extension domain for every $1 < p < 2$ and Lipschitz functions are dense in $W^{1,p}( \Omega )$ for every $1 < p \leq 2$, but still $\Omega$ is not a $W^{1,2}$-extension domain. In particular, $\Omega$ cannot support a weak $(1,p)$-Poincaré inequality up to any scale for any $1 \leq p \leq 2$. Although for this example we cannot use  \Cref{thm:extension:topoincare} to conclude from \Cref{thm:positive} that the extension property still holds for Banach-valued functions, an explicit extension operator can be built, hence giving further support that the answer to \Cref{ques:startingpoint} is yes for the whole range $1<p<\infty$.

We also answer affirmatively \Cref{ques:startingpoint} for the case of planar Jordan $W^{1, p}$-extension domains. This follows by applying a key theorem from \cite{PPZ}. The authors observed that for all planar Jordan $W^{1,p}$-extension domains there exists an extension operator induced by a reflection map. We show that the reflection map allows us to also extend Banach-valued functions.
\begin{proposition}\label{prop:planarjordan}
Let $\boz\subset\rr^2$ be a Jordan $W^{1, p}$-extension domain for $1<p<\fz$. Then $\Omega$ is a $\mathbb V$-valued $W^{1, p}$-extension domain for every Banach space $\mathbb V$.
\end{proposition}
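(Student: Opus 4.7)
The plan is to show that the real-valued extension operator from \cite{PPZ} is actually a composition operator with respect to a fixed reflection map, and such composition operators transfer verbatim to the Banach-valued setting with the same norm bounds.

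First, I would recall from \cite{PPZ} the concrete form of the extension operator for planar Jordan $W^{1,p}$-extension domains: there exists a measurable reflection map $R\colon \R^2\setminus\overline{\boz}\to\boz$ so that the formula
\[
    E(u)(x) =
    \begin{cases}
        u(x), & x \in \boz, \\
        u(R(x)), & x \in \R^2\setminus\overline{\boz},
    \end{cases}
\]
defines a bounded linear operator $E\colon W^{1,p}(\boz)\to W^{1,p}(\R^2)$. The operator norm bound for real-valued functions is witnessed, on the reflected piece, by a pointwise pullback estimate for upper gradients: there is a Borel weight $L_R\colon \R^2\setminus\overline{\boz}\to[0,\infty)$ depending only on $R$ such that whenever $g\in L^p(\boz)$ is an upper gradient of $u\in W^{1,p}(\boz)$, the function $(g\circ R)\cdot L_R$ is an upper gradient of $u\circ R$ and belongs to $L^p(\R^2\setminus\overline{\boz})$ with controlled norm. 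The boundary $\partial\boz$ is negligible because $\boz$ is a Jordan $W^{1,p}$-extension domain.

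Next, for a Banach space $\mathbb{V}$ and a map $u\in W^{1,p}(\boz;\mathbb{V})$, I would define $E(u)$ by the same formula. Measurability of $u\circ R$ follows from measurability of $R$ together with the essentially separable-valued structure of $u$. The $L^p$-bound is immediate from the scalar case applied to $\|u\|_{\mathbb{V}}\in L^p(\boz)$, since $\|u\circ R\|_{\mathbb{V}} = \|u\|_{\mathbb{V}}\circ R$ pointwise. For the Sobolev estimate, the key observation is that the upper gradient inequality, being a statement about $\|u(x)-u(y)\|_{\mathbb{V}}$ along curves, has the same chain-rule behaviour under composition with $R$ as in the scalar setting. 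Consequently, if $g$ is a $p$-weak upper gradient of $u\in W^{1,p}(\boz;\mathbb{V})$, then $(g\circ R)\cdot L_R$ is a $p$-weak upper gradient of $u\circ R$ viewed as a $\mathbb{V}$-valued map on $\R^2\setminus\overline{\boz}$, with the same $L^p$-norm as in the scalar case.

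Finally, I would glue the two pieces across $\partial\boz$ exactly as in \cite{PPZ}. The gluing there is an assertion about the behaviour of upper gradients on a set of measure zero together with absolute continuity along $p$-almost every rectifiable curve. Both of these properties persist for Banach-valued Newtonian functions, so the combined upper gradient $g$ on $\boz$ and $(g\circ R)\cdot L_R$ on the complement produces a $p$-integrable $p$-weak upper gradient of $E(u)$ on all of $\R^2$. This yields $E(u)\in W^{1,p}(\R^2;\mathbb{V})$ together with $\|E(u)\|_{W^{1,p}(\R^2;\mathbb{V})}\leq C\|u\|_{W^{1,p}(\boz;\mathbb{V})}$ with the same constant $C$ as in the real-valued case.

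The main obstacle is bookkeeping with \cite{PPZ}: one must confirm that the extension there is literally a composition-with-$R$ on $\R^2\setminus\overline{\boz}$ (not a reflection-plus-smoothing construction that uses linearity of the target in an essential way). Once this factorization through a fixed geometric map is confirmed, the Banach-valued conclusion, and the commutation with postcomposition by continuous linear operators $T\colon\mathbb{V}\to\mathbb{W}$ in the style of \Cref{th:Mexten}, follow essentially formally.
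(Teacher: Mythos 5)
Your overall plan — that the real-valued extension operator of \cite{PPZ} is a composition with the folding map $\mathcal{F}$, and that composition operators transfer to Banach targets — is the same starting point as the paper. The pointwise chain-rule estimate you invoke (with $L_R = |D\mathcal{R}|$ and \eqref{eq:Jacobian-morphism}) is also correct, and for real-valued $u$ it reproduces the integral bound \eqref{eq:integralinequality}.

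The gap is in the last step, where you assert that the gluing across $\partial\Omega$ ``persists for Banach-valued Newtonian functions.'' This is not formal bookkeeping: the $p$-reflection property of $\mathcal{R}$, as defined in \cite{PPZ} and \Cref{le:s6:ref}, is precisely the statement that for each \emph{real-valued} $u \in W^{1,p}(\Omega)$ there exists some genuine Newtonian $\widehat{h}$ agreeing with $u\circ\mathcal{F}$ almost everywhere off $\partial\Omega$. For a general Banach-valued $u$, you have a function defined a.e.\ with a candidate $p$-weak upper gradient defined off $\partial\Omega$; you still need to know $u\circ\mathcal{F}$ admits an honest Newtonian representative (i.e.\ that the boundary is harmless for $p$-a.e.\ curve). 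Your proposal assumes this; the paper proves it. The paper's route is to first treat Lipschitz real-valued $u$ (so $u\circ\mathcal{F}$ is continuous, whence $\widehat{h}$ and $u\circ\mathcal{F}$ agree up to $p$-capacity zero and $u\circ\mathcal{F}$ is Newtonian), extend to all of $W^{1,p}(\overline{\Omega})$ by density, then pass to $c_0$-valued functions \emph{coordinate-wise} using \Cref{lemm:projection:new} so that $\sup_i \rho_{\overline{u}_i\circ\mathcal{F}}$ is a $p$-weak upper gradient of $u\circ\mathcal{F}$, and finally upgrade from $c_0$ to arbitrary $\mathbb{V}$ via \Cref{cor:c_0:all}. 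That $c_0$ detour is exactly what replaces your hand-waved ``gluing persists.'' If you want to keep your direct route to arbitrary $\mathbb{V}$, you would need a Banach-valued analogue of the $p$-reflection property from scratch, which is not available off the shelf.

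Two smaller notes: the paper works on $\widehat{\R}^2$ rather than $\R^2$, which is what makes $\mathcal{R}(\Omega) = \widehat{\R}^2\setminus\overline{\Omega}$ a clean homeomorphism and keeps everything compact (used at the end to conclude $u\circ\mathcal{F} \in W^{1,p}(\widehat{\R}^2;c_0)$ from the Poincaré inequality); and the passage from $W^{1,p}(\Omega)$ to $W^{1,p}(\overline{\Omega})$ requires the measure-density condition and \Cref{prop:density}, which you should cite before claiming Lipschitz density and well-definedness of $\overline{u}_i$ on $\partial\Omega$.
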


To conclude the introduction we outline the paper. In \Cref{preliminaries}, we state some preliminary results and fix some notation. We introduce the Sobolev spaces we use in \Cref{sec:Sob}. In \Cref{sec:M=W}, we establish Theorems \ref{thm:M=W} and \ref{thm:density:Lips}. In \Cref{sec:proof:main}, we construct the $M^{1,p}$-extension operator following the construction from \cite{HKT2008:B} and prove \Cref{th:Mexten}. In \Cref{Section_Sob.ext.dom.}, we prove \Cref{thm:extensionresults}. In \Cref{Section_LAST}, we investigate the connection of being a $W^{1,p}$-extension set to the Banach property of the so-called local Haj\l{}asz--Sobolev space and to the density of Lipschitz functions with bounded support, under the further assumptions of the measure-density condition and $p$-path almost openness, cf. \cite{Bj:Bj:2015,Bj:Bj:2019,Bj:Bj:2019:corrig,Bj:Bj:Lah:21}. We also consider some example domains in this section. Lastly, we prove \Cref{thm:positive} and \Cref{thm:extension:topoincare} in \Cref{section:Corollary} and \Cref{prop:planarjordan} in \Cref{section:LAST_LAST}, respectively.

\section{Preliminaries}\label{preliminaries}

As mentioned previously, in what follows, we use the notation $C,C'$ or $C(\cdot)$ for a computable positive constant depending only on the parameters listed in the parenthesis. The constant may differ between appearances, even within a chain of inequalities.

We say that a metric measure space $Z=( Z, d, \mu )$ is a \emph{doubling metric measure space} whenever $\mu$ is doubling.

Often we consider restrictions of the distance and measure to measurable subsets $\Omega \subset Z$. The distance is obtained simply by restriction, while the measure is obtained in the following way. For every $K \subset \Omega$, we denote
\begin{equation*}
    \mu_\Omega( K )
    \coloneqq
    \inf\left\{
        \mu( K' )
        \colon
        K \subset K' \text{ is Borel in $\Omega$}
    \right\}.
\end{equation*}
Recall that $K'$ is Borel in $\Omega$ if and only if $K'=B \cap \Omega$ for some Borel set $B \subset Z$, cf. \cite[Lemma 3.3.4]{HKST2015}. Since $B \cap \Omega$ is $\mu$-measurable whenever $B$ is Borel in $Z$, we therefore obtain
\begin{equation}\label{eq:considerations:measure}
    \mu_\Omega( K )
    =
    \inf\left\{
        \mu( K' )
        \colon
        K \cap \Omega \subset K' \text{ is Borel in $Z$}
    \right\}
    =
    \mu( K ).
\end{equation}
In particular, $\mu_\Omega$ is a Borel regular outer measure. These considerations imply, in particular, that $K \subset \Omega$ is $\mu_\Omega$-measurable if and only if it is $\mu$-measurable. Moreover, if $K$ is Borel measurable in $Z$, $K$ is Borel measurable in $\Omega$. Since $\mu_\Omega \equiv \mu$ for subsets of $\Omega$, we typically omit the subscript from $\mu_\Omega$.

We say that $N \subset Z$ is \emph{negligible} if $\mu( N ) = 0$. A property holds \emph{almost everywhere} in $F \subset Z$,  or for almost every $z \in F$, if there exists a negligible set $N \subset F$ such that the property holds for every point in $F \setminus N$.

We denote the open and closed balls of $Z$ by $$B(z,r) \coloneqq \left\{ y \in Z \colon d(z,y) < r \right\} \quad\text{and}\quad \overline{B}(z,r) \coloneqq \left\{ y \in Z \colon d(z,y) \leq r \right\},$$
respectively. We sometimes omit the $(z,r)$, and use the notation $\lambda B$ to denote the ball $B( z, \lambda r )$  for $0<\lambda<\fz$. For any set $F \subset Z$, we define the open $r$-neighbourhood as
\begin{equation*}
    B(F,r) = \bigcup_{z \in F} B(z,r)=\{ z\in Z:\, d(z,F)<r \},
\end{equation*}
where $d(z,F) \coloneqq \inf\left\{ d(z,x) \colon x \in F \right\}$ is the distance from a point $z\in Z$ to a subset $F \subset Z$. Similarly, if $E \subset Z$, then $d( E, F ) = \inf\left\{ d(z, F) \colon z \in E \right\}$ is the distance between the sets $E$ and $F$.

\subsection{Lipschitz maps}
Let $Z$ and $X$ be metric spaces. Given $L \geq 0$, a map $f \colon Z\to X$ is \emph{$L$-Lipschitz} if $$d( f(x), f(y) ) \leq Ld(x,y) \quad\text{for every $x,y\in Z$.}$$ We say that $f$ is \emph{$L$-bi-Lipschitz} if
$$L^{-1} d(x,y) \leq d( f(x), f(y) ) \leq Ld(x,y) \quad\text{for every $x,y\in Z$.}$$
A map $f \colon Z\to X$ is \emph{(bi-)Lipschitz} if it is $L$-(bi-)Lipschitz for some $L \geq 0$.

In case $X$ is a Banach space, we say that map $f \colon Z \to X$ has \emph{bounded support} if the set $\left\{ f \neq 0 \right\}$ is bounded.

\subsection{Functional analysis}
Given a Banach space $\mathbb{V}$, we typically denote its elements by $v$ and the norm by $|v|$. The \emph{dual of $\mathbb{V}$}, denoted by $\mathbb{V}^{*}$, refers to the collection of all continuous linear maps $w \colon \mathbb{V} \rightarrow \mathbb{R}$, where the norm is defined by setting 
$$|w| \coloneqq \sup_{ |v| \leq 1 } w(v).$$ 
We reserve the notation $\| \cdot \|$ for norms on function spaces, or for extension operators.

We recall here an important functional analytic tool, see \cite[Section 12]{Conway}.
\begin{theorem}[Bounded inverse theorem]\label{thm:bddinverse}
Let $\mathbb{V}_1$ and $\mathbb{V}_2$ be Banach spaces and $\iota \colon \mathbb{V}_1 \rightarrow \mathbb{V}_2$ be linear and continuous.
\begin{enumerate}
    \item If $\iota$ is surjective, then there exists a constant $c> 0$ such that for every $v_2\in \mathbb V_2$ there exists $v_1\in \mathbb V_1$ with $\iota(v_1)=v_2$ and $| \iota(v_1) | \geq c| v_1|$.
    \item If $\iota$ is injective and $\iota(\mathbb V_1)$ is a closed subspace of $\mathbb{V}_2$, then there exists a constant $c> 0$ such that $c | v | \leq | \iota(v) |$ for every $v \in \mathbb{V}_1$. In particular $\iota \colon \mathbb{V}_1 \to \iota( \mathbb{V}_1 )$ is bi-Lipschitz.
\end{enumerate} 
\end{theorem}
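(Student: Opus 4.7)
The plan is to derive both parts from the open mapping theorem, which in turn follows from the Baire category theorem applied to the complete metric space $\mathbb{V}_2$. Recall that the open mapping theorem asserts that every continuous linear surjection $T \colon X \to Y$ between Banach spaces is an open map; equivalently, there exists $r > 0$ such that the image under $T$ of the open unit ball of $X$ contains the open ball of radius $r$ in $Y$. I would take this as the black box and focus on deducing both items of the statement from it.

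For item (1), the strategy is as follows. Since $\iota$ is a continuous linear surjection between Banach spaces, the open mapping theorem produces $r > 0$ such that $\iota( B_{\mathbb{V}_1}(0,1) ) \supset B_{\mathbb{V}_2}(0,r)$. Given any nonzero $v_2 \in \mathbb{V}_2$, a rescaling shows that there exists $v_1 \in \mathbb{V}_1$ with $\iota(v_1) = v_2$ and $|v_1| \leq r^{-1} |v_2| = r^{-1} |\iota(v_1)|$. Setting $c = r$ and handling the trivial case $v_2 = 0$ separately (where any $v_1 \in \iota^{-1}(0)$ with $|v_1|$ as small as desired works, e.g.\ $v_1 = 0$) then yields the asserted inequality $|\iota(v_1)| \geq c|v_1|$.

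For item (2), the key observation is that $\iota( \mathbb{V}_1 )$, being a closed linear subspace of the Banach space $\mathbb{V}_2$, is itself a Banach space in the induced norm. The corestriction $\iota \colon \mathbb{V}_1 \to \iota( \mathbb{V}_1 )$ is then a continuous linear bijection between Banach spaces. Applying item (1) to this corestriction gives, for every $w \in \iota( \mathbb{V}_1 )$, some preimage with norm controlled by $c^{-1} |w|$; injectivity forces this preimage to be unique, namely $\iota^{-1}(w)$. Rewriting the resulting estimate in terms of $v = \iota^{-1}(w) \in \mathbb{V}_1$ gives $c |v| \leq |\iota(v)|$, and combining this with the continuity bound $|\iota(v)| \leq \|\iota\| |v|$ shows that $\iota \colon \mathbb{V}_1 \to \iota(\mathbb{V}_1)$ is bi-Lipschitz.

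Since the result is a standard consequence of the open mapping theorem, there is no genuine obstacle to overcome; the only subtlety worth flagging is that item (2) really does need $\iota(\mathbb{V}_1)$ to be closed, as otherwise it fails to be a Banach space and the open mapping theorem cannot be invoked on the corestriction. I would therefore present the argument by first recording the open mapping theorem as the sole analytic input, then giving the two short deductions above as separate paragraphs.
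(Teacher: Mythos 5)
The paper does not prove this statement; it simply cites \cite[Section 12]{Conway}, where the bounded inverse theorem is derived from the open mapping theorem. Your deduction takes exactly that route, and the logical structure is correct: item (1) is the quantitative reformulation of the open mapping theorem, and item (2) follows by applying item (1) to the corestriction onto the closed (hence Banach) image. The only imprecision worth mentioning is in item (1): the open mapping theorem gives $\iota\bigl(B_{\mathbb{V}_1}(0,1)\bigr)\supset B_{\mathbb{V}_2}(0,r)$ with \emph{open} balls, so the rescaling argument produces, for each $v_2\neq 0$ and each $\varepsilon\in(0,1)$, some $v_1$ with $\iota(v_1)=v_2$ and $|v_1|<\frac{|v_2|}{r(1-\varepsilon)}$; thus any fixed $c<r$ works, but $c=r$ itself is not directly obtained. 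Since the statement only asks for \emph{some} $c>0$, this is a cosmetic point, not a gap. Your flag that closedness of the image is genuinely needed in item (2) is well placed.
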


\subsection{Bochner integration}
Given a Banach space $\mathbb{V}$, we say that $u \colon Z \rightarrow \mathbb{V}$ is \emph{measurable} if for every open set $V \subset \mathbb{V}$, $u^{-1}( V )$ is $\mu$-measurable and there exists a negligible set $N \subset Z$ such that $u( Z \setminus N )$ is separable. We refer to \cite[Pettis measurability]{HKST2015} for equivalent definitions of measurability. In particular, a map $u \colon Z \rightarrow \mathbb{V}$ is measurable if and only if there exists a sequence of \emph{simple maps} $u_{i} = \sum_{ j = 1 }^{ N_{i} } c_{i,j} \chi_{ E_{i,j} }$ converging to $u$ pointwise outside a negligible set. Here each $E_{i,j} \subset Z$ is a measurable subset, $c_{i,j} \in \mathbb{V}$, and $\chi_{ E_{i,j} }$ is the \emph{characteristic function} of the set $E_{i,j}$.

The approximation by simple maps lends itself to a natural extension of the Lebesgue integration of real-valued maps. The approximation defines the so-called \emph{Bochner integration} for all measurable $u \colon Z \rightarrow \mathbb{V}$. Indeed, a measurable function $u \colon Z \to\mathbb V$ is Bochner integrable if and only if $|u| \in \mathcal{L}^{1}( Z )$ in the usual Lebesgue sense; see \cite[Section 3]{HKST2015}. We define $\mathcal{L}^{p}( Z; \mathbb{V} )$, for $1 \leq p \leq \infty$, as the collection of all measurable $u \colon Z \rightarrow \mathbb{V}$ for which $|u| \in \mathcal{L}^{p}( Z )$. Per usual, we identify $u_1, u_2 \in \mathcal{L}^{p}( Z; \mathbb{V} )$ if $u_1 = u_2$ almost everywhere, thereby obtaining a quotient space $L^{p}( Z; \mathbb{V} )$ equipped with the norm 
\[\|u\|_{L^p(Z;\mathbb V)}:=\lf(\int_Z|u|^pd\mu\r)^{\frac{1}{p}}.\]

We also say that a function $u:Z\to\mathbb V$ is {\em locally integrable}, denoted by $u \in \mathcal{L}^{1}_{\loc}( Z; \mathbb{V} )$, if $u$ is measurable and $Z$ can be covered by open sets in which the norm $|u|$ is integrable.

The following result shows how the image of measurable functions can be isometrically embedded into $\ell^{\infty}$. Recall that 
$$\ell^{\infty}\coloneqq\{(z_i)^{\infty}_{i=1}:\, z_i\in\R,\, \sup_{ i } |z_i|<\infty\}$$  endowed with the supremum norm $|z| \coloneqq \sup_{ i } |z_i|$. The simple lemma below is one of the key ideas in the proof of \Cref{thm:M=W}.
\begin{lemma}\label{lemm:representation}
Let $u \colon Z \rightarrow \mathbb{V}$ be measurable and $N \subset Z$ be negligible so that $u( Z \setminus N ) \subset \mathbb{V}$ is separable. Then there exists a separable closed subspace $\mathbb{V}_{0} \subset \mathbb{V}$ containing $u( Z \setminus N )$ and a linear $1$-Lipschitz mapping $\iota \colon \mathbb{V} \rightarrow \ell^{\infty}$ such that $\iota|_{ \mathbb{V}_0 }$ is an isometric embedding.
\end{lemma}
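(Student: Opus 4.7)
The plan is to take $\mathbb{V}_0$ to be the closed linear span of $u( Z \setminus N )$ and then build $\iota$ out of a well-chosen countable family of norming functionals on $\mathbb{V}_0$, extended to $\mathbb{V}$ by Hahn--Banach.

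First, since $u( Z \setminus N )$ is separable by assumption, taking rational linear combinations of a countable dense subset produces a countable dense subset of the closed linear span
\begin{equation*}
    \mathbb{V}_0 \coloneqq \overline{ \operatorname{span} u( Z \setminus N ) } \subset \mathbb{V}.
\end{equation*}
Thus $\mathbb{V}_0$ is a separable closed subspace containing $u( Z \setminus N )$. Next, I fix a countable dense subset $\{ v_i \}_{ i = 1 }^{ \infty } \subset \mathbb{V}_0 \setminus \{ 0 \}$. For each $i$, the Hahn--Banach theorem (in its norming form) yields a functional $\varphi_i \in \mathbb{V}_0^{*}$ with $| \varphi_i | = 1$ and $\varphi_i( v_i ) = | v_i |$; applying Hahn--Banach again (in its extension form) I extend $\varphi_i$ to $\widetilde{\varphi}_i \in \mathbb{V}^{*}$ with $| \widetilde{\varphi}_i | = 1$.

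I then define
\begin{equation*}
    \iota \colon \mathbb{V} \to \ell^{\infty},\qquad \iota( v ) \coloneqq \bigl( \widetilde{\varphi}_i( v ) \bigr)_{ i = 1 }^{ \infty }.
\end{equation*}
Linearity is immediate from linearity of each $\widetilde{\varphi}_i$. Since $| \widetilde{\varphi}_i( v ) | \leq | v |$ for every $i$, the sequence $\iota( v )$ is bounded and $| \iota( v ) |_{ \ell^{\infty} } \leq | v |$, which shows that $\iota$ is well-defined and $1$-Lipschitz.

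It remains to check that $\iota|_{ \mathbb{V}_0 }$ is an isometric embedding, i.e.\ that $| \iota( v ) |_{ \ell^{\infty} } \geq | v |$ for every $v \in \mathbb{V}_0$. For $v = 0$ this is trivial, and for $v \neq 0$ I pick a subsequence $v_{ i_k } \to v$ in $\mathbb{V}_0$ with $v_{ i_k } \neq 0$. Writing $\widetilde{\varphi}_{ i_k }( v ) = \widetilde{\varphi}_{ i_k }( v_{ i_k } ) + \widetilde{\varphi}_{ i_k }( v - v_{ i_k } )$ and using $\widetilde{\varphi}_{ i_k }( v_{ i_k } ) = \varphi_{ i_k }( v_{ i_k } ) = | v_{ i_k } |$ together with $| \widetilde{\varphi}_{ i_k } | = 1$, I obtain
\begin{equation*}
    | \widetilde{\varphi}_{ i_k }( v ) | \geq | v_{ i_k } | - | v - v_{ i_k } | \longrightarrow | v |,
\end{equation*}
so $| \iota( v ) |_{ \ell^{\infty} } \geq | v |$, completing the proof. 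There is no serious obstacle here; the only point requiring attention is that the norming-and-extension step must be applied to a dense subset of $\mathbb{V}_0$ (not of $\mathbb{V}$), since separability of $\mathbb{V}_0$ is the feature that lets us index the functionals by $\mathbb{N}$ and land in $\ell^{\infty}$.
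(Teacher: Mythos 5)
Your proof is correct and follows essentially the same route as the paper: take $\mathbb{V}_0$ to be the closed (rational-)linear span of $u(Z\setminus N)$, pick a countable dense subset, produce norming functionals via Hahn--Banach, and assemble them into a $1$-Lipschitz map to $\ell^{\infty}$ that is isometric on $\mathbb{V}_0$ by density. The only cosmetic difference is that you norm on $\mathbb{V}_0$ first and then extend to $\mathbb{V}$, whereas the paper invokes Hahn--Banach once to get $w_i\in\mathbb{V}^{*}$ directly; these are the same argument.
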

\begin{proof}
Given that $u( Z \setminus N )$ is separable, it contains a countable dense set $\{u(z_i)\}_{i\in\mathbb N}$ with $z_i\in Z \setminus N$. The set
\begin{align*}
    \mathbb{V}_{0}
    =
    \overline{
        \left\{
            \sum^{n}_{i=1} \lambda_i u(z_i) \colon
            \lambda_i \in \mathbb{Q},\, 
            n\in\mathbb N
            \right\}}
\end{align*}
is a separable and closed subspace of $\mathbb{V}$. In particular, there exists a dense subset $\{v_{i} \}_{ i\in\mathbb N}\subset \mathbb{V}_{0}$. By the Hahn--Banach theorem, for every $i$, there exists $w_{i} \in \mathbb{V}^{*}$ for which $w_{i}( v_i ) = | v_i |$ and $|w_{i}| = 1$. 

We define $\iota \colon \mathbb V\to\ell^{\infty}$ by setting $\iota(v) := ( w_{i}(v) )_{ i = 1 }^{ \infty }$ for every $v \in \mathbb{V}$. This defines a $1$-Lipschitz linear mapping. By the density of $\{ v_{i}\}_{ i \in\mathbb N }$ in $\mathbb{V}_{0}$, for every $\varepsilon > 0$ and every $v \in \mathbb{V}_{0}$, there exists some $j \in \mathbb{N}$ for which $|v|-\varepsilon\leq |w_{j}( v )| \leq |v|$. This implies that $|\iota(v)| =|v|$ for every $v\in \mathbb{V}_{0}$. In other words, the restriction of $\iota$ to $\mathbb{V}_{0}$ is a linear isometry.
\end{proof}

It is sometimes convenient to consider zero extensions of measurable functions defined on measurable sets, the extension being measurable, cf. \eqref{eq:considerations:measure}. To this end, given a measurable $g \colon F \rightarrow \mathbb{V}$, we denote by $\widehat{g} \colon Z \rightarrow \mathbb{V}$ the \emph{zero extension} of $g$ by setting $g = \widehat{g}|_{F}$ and zero elsewhere.

For every locally integrable function $u \colon Z \rightarrow \R$, we define the {\em Hardy-Littlewood maximal operator} $\mathcal M(u):Z\to \R$ by setting 
\begin{equation}
    \label{eq:maximalfunction}
    \mathcal{M}( u )(x)
    \coloneqq
    \sup_{r>0}
    \aint{ B(x,r) }
        u(y)
    \,d\mu(y)
    \quad\text{for every $x \in Z$}.
\end{equation}
 We also use the following \emph{restricted maximal function} for a given $R>0$:
\begin{equation}
    \label{eq:maximalfunction:restr}
    \mathcal{M}_{R}( u )(x)
    \coloneqq
    \sup_{R>r>0}
    \aint{ B(x,r) }
        u(y)
    \,d\mu(y)
    \quad\text{for every $x \in Z$}.
\end{equation}
In the case $u\colon Z\rightarrow \mathbb V$ we set  $\mathcal{M}( u ) = \mathcal{M}( |u| )$ for the norm $|u| \colon Z \rightarrow \mathbb{R}$ of $u$. 
The following lemma recalls basic properties of the maximal operator defined above.
\begin{lemma}[Theorem 3.5.6 \cite{HKST2015}]\label{lemm:maximalfunction:bounded}
Let $Z$ be a doubling metric measure space and $\mathbb{V}$ a Banach space. For each $1 < p < \infty$ and $R>0$, the mappings $$\mathcal{M} \colon L^{p}( Z; \mathbb{V} ) \to L^{p}( Z ) \quad\text{and}\quad  \mathcal{M}_{R} \colon L^{p}( Z; \mathbb{V} ) \rightarrow L^{p}( Z )$$ are bounded operators, with operator norms bounded from above by a constant $C( p, c_\mu ) > 0$.
\end{lemma}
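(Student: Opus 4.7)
The plan is to reduce this Banach-valued inequality to the well-known scalar-valued Hardy--Littlewood maximal inequality in doubling metric measure spaces, namely the inequality for nonnegative real-valued functions established in \cite[Theorem 3.5.6]{HKST2015}. By the convention fixed just before the lemma, $\mathcal{M}(u) \coloneqq \mathcal{M}(|u|)$, so the Banach-valued maximal function is by definition the scalar maximal function of the nonnegative function $|u| \colon Z \to [0,\infty)$. The measurability of $u$ in the Bochner sense implies the measurability of $|u|$, and the isometric identification $\|u\|_{L^p(Z;\mathbb{V})} = \||u|\|_{L^p(Z)}$ places $|u|$ in $L^p(Z)$ whenever $u \in L^p(Z;\mathbb{V})$. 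Applying the cited scalar bound to $|u|$, one gets
\begin{equation*}
    \|\mathcal{M}(u)\|_{L^p(Z)}
    =
    \|\mathcal{M}(|u|)\|_{L^p(Z)}
    \leq
    C(p, c_\mu)\,\||u|\|_{L^p(Z)}
    =
    C(p, c_\mu)\,\|u\|_{L^p(Z; \mathbb{V})},
\end{equation*}
proving the assertion for $\mathcal{M}$.

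The bound for the restricted maximal operator $\mathcal{M}_R$ is then immediate. Indeed, by comparing \eqref{eq:maximalfunction} and \eqref{eq:maximalfunction:restr}, the supremum defining $\mathcal{M}_R(u)(x)$ is taken over a smaller range of radii than that defining $\mathcal{M}(u)(x)$, so $\mathcal{M}_R(u)(x) \leq \mathcal{M}(u)(x)$ for every $x \in Z$. Raising both sides to the $p$-th power, integrating against $\mu$, and using the bound just proved produces the analogous estimate for $\mathcal{M}_R$ with the same constant $C(p,c_\mu)$.

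There is essentially no obstacle beyond invoking the scalar result, and the proof is purely formal once the convention $\mathcal{M}(u)=\mathcal{M}(|u|)$ is recalled. For completeness, the scalar estimate in \cite[Theorem 3.5.6]{HKST2015} proceeds along the familiar blueprint: a $5B$-covering lemma combined with the doubling hypothesis delivers the weak $(1,1)$ bound, and Marcinkiewicz interpolation against the trivial $L^\infty$ bound produces the strong $(p,p)$ bound for every $1 < p < \infty$, with a constant depending only on $p$ and $c_\mu$.
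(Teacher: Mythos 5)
Your proposal is correct and matches what the paper intends: the statement is cited directly from \cite[Theorem 3.5.6]{HKST2015}, and the reduction to the scalar case via the convention $\mathcal{M}(u) = \mathcal{M}(|u|)$ together with the pointwise domination $\mathcal{M}_R(u) \leq \mathcal{M}(u)$ is exactly the standard way to read off the Banach-valued and restricted versions from the scalar result. Nothing is missing.
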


\subsection{Lebesgue differentiation theorem}
In this section, we consider a doubling metric measure space $Z$. In fact, the results below work in any metric measure space $Z$ for which
\begin{equation}\label{eq:pointwisedoubling}
    \limsup_{ r \rightarrow 0^+ }
    \frac{ \mu( B(z,2r) ) }{ \mu(B(z,r)) }
    <
    \infty
    \quad\text{for $\mu$-almost every $z \in Z$.}
\end{equation}
We recall the Lebesgue differentiation theorem: If $u \in L^{p}( Z; \mathbb{V} )$ for some Banach space $\mathbb{V}$, then
\begin{equation}\label{eq:Lebesguedensity}
    \lim_{ r \rightarrow 0^{+} }
    \aint{ \overline{B}(z,r) } | u(y) - u(z) |^{p} \,d \mu(y)
    =
    0
    \quad\text{for almost every $z \in Z$}.
\end{equation}

We refer the reader to \cite[Section 3.4]{HKST2015} for a proof of  \eqref{eq:Lebesguedensity}. Any $z \in Z$ for which \eqref{eq:Lebesguedensity} holds is called a \emph{Lebesgue point} of $u$. The property \eqref{eq:pointwisedoubling} implies that we may replace the closed balls by open balls in \eqref{eq:Lebesguedensity}, while the triangle inequality yields
\begin{equation}\label{eq:densitypoint}
    u(z)
    =
    \lim_{ r \rightarrow 0^{+} }
    u_{ \overline{B}(z,r) }
    =
    \lim_{ r \rightarrow 0^{+} }
    u_{ B(z,r) }
    \quad\text{at Lebesgue points of $u$}.
\end{equation}
In particular,
\begin{equation*}
    |u|(z)
    =
    \lim_{ R \rightarrow 0^+ }
    \mathcal{M}_R( |u| )
    \quad\text{at Lebesgue points of $u$}.
\end{equation*}

\section{Sobolev spaces}\label{sec:Sob}

\subsection[Hajlasz-Sobolev spaces]{Haj\l{}asz--Sobolev spaces}
Let $1 \leq p \leq \infty$ and $Z$ be a metric measure space. Given a map $u \colon Z \rightarrow \mathbb{V}$, we say that a function $g \colon Z \rightarrow \left[0, \infty\right]$ is a \emph{Haj\l{}asz upper gradient} of $u$ if $g$ is measurable and there exists $N \subset Z$ with $\mu( N ) = 0$ such that
\begin{equation}\label{eq:Hineq}
    |u(x)-u(y)|
    \leq d(x,y)\lf(g(x)+g(y)\r) \quad \text{for every $x,y \in Z \setminus N$}.
\end{equation}
The class of all Haj\l{}asz upper gradients of $u$ is denoted by $\mathcal{D}(u)$, and we set $\mathcal{D}_{p}(u) \coloneqq \mathcal{D}(u) \cap \mathcal{L}^{p}(Z)$.

Observe that if $u_1, u_{2} \colon Z\to\mathbb V$, $g \in \mathcal{D}( u_1 )$ and $u_1 = u_2$ almost everywhere, then $g \in \mathcal{D}( u_2 )$. In particular, if $u \in L^{p}( Z; \mathbb{V} )$, the collection $\mathcal{D}( u )$ can be unambigously defined as those Haj\l{}asz gradients of \emph{some} (an arbitrary) representative of $u$. Similarly, if $g \in \mathcal{D}( u )$ all representatives of the Lebesgue equivalence classes of $g$ are elements of $\mathcal{D}(u)$, justifying the abuse of notation $\mathcal{D}_{p}(u) \subset L^{p}( Z )$.
\begin{definition}\label{de:Mspace}
Let $1\leq p\leq\fz$ and $\mathbb V$ be a Banach space. The Haj\l{}asz-Sobolev space $M^{1,p}(Z;\mathbb V)$ is defined by setting
\[  M^{1, p}(Z; \mathbb{V})\coloneqq\lf\{u\in L^p(Z; \mathbb{V}) \colon \mathcal D_p(u)\neq\emptyset\r\},  \]
equipped with the norm 
\[  \|u\|_{M^{1, p}(Z; \mathbb{V})}\coloneqq\|u\|_{L^p(Z; \mathbb{V})}+\inf_{g\in\mathcal D_p(u)}\|g\|_{L^p(Z)}.\]
In case $\mathbb{V}=\R$, the short-hand $M^{1,p}(Z) = M^{1,p}( Z; \R )$ is used.
\end{definition}

\begin{lemma}\label{lemm:basic}
Let $u \colon  Z\to\mathbb V$ be measurable and $g \in \mathcal{D}(u)$.
\begin{enumerate}
    \item If $\mathbb W$ is a Banach space and $\phi \colon \mathbb{V} \rightarrow \mathbb W$ is $L$-Lipschitz, then $Lg \in \mathcal{D}( \phi \circ u )$. Moreover, if $\phi(0) = 0$, then $\| \phi \circ u \|_{ L^{p}( Z;\mathbb W ) } \leq L \| u \|_{ L^{p}( Z; \mathbb{V} ) }$ for every $1 \leq p \leq \infty$.
    \item If $w \in \mathbb{V}^{*}$, then $|w|g \in \mathcal{D}( w(u) )$.
    \item If $\phi \colon Z \rightarrow \mathbb{R}$ is $L$-Lipschitz and $K = Z \setminus \left\{ \phi = 0 \right\}$, then $\phi u$ has a Haj\l{}asz upper gradient \[\widetilde{g} = ( L |u| + \| \phi \|_{\infty}g )\chi_{ K }.\]
\end{enumerate}
\end{lemma}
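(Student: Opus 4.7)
Each of the three assertions reduces to a direct manipulation of the Hajłasz inequality $|u(x)-u(y)| \leq d(x,y)(g(x)+g(y))$, which by hypothesis holds for all $x,y \in Z\setminus N$ for some null set $N$. Measurability of the constructed functions is automatic, either from the continuity of $\phi$ (resp. of $w$) composed with the measurable map $u$, or because the ingredients are already measurable.

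For (1), I would estimate, for $x,y \in Z\setminus N$,
\[
|\phi(u(x)) - \phi(u(y))| \leq L|u(x)-u(y)| \leq d(x,y)\bigl(Lg(x)+Lg(y)\bigr),
\]
which gives $Lg \in \mathcal D(\phi\circ u)$. If additionally $\phi(0)=0$, then $|\phi\circ u(z)| = |\phi(u(z)) - \phi(0)| \leq L|u(z)|$ pointwise, and the $L^p$-norm estimate follows by integration (or by essential supremum, when $p=\infty$). Part (2) is then the special case of (1) applied to the continuous linear map $\phi = w \colon \mathbb V \to \mathbb R$, which satisfies $w(0) = 0$ and is $|w|$-Lipschitz by the very definition of the dual norm.

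The only assertion with any real content is (3), which I would handle via the product identity
\[
\phi(x)u(x) - \phi(y)u(y) = \phi(x)\bigl(u(x)-u(y)\bigr) + \bigl(\phi(x)-\phi(y)\bigr) u(y),
\]
together with a three-case analysis based on $K = Z \setminus \{\phi=0\}$. If both $x,y \notin K$, both sides of the claimed inequality vanish. If both $x,y \in K$, then $\chi_K(x)=\chi_K(y)=1$ and the crude bounds $|\phi(x)| \leq \|\phi\|_\infty$ and $|\phi(x)-\phi(y)| \leq Ld(x,y)$ combine with the Hajłasz inequality to yield $|\phi(x)u(x)-\phi(y)u(y)| \leq d(x,y)\bigl(\|\phi\|_\infty(g(x)+g(y)) + L|u(y)|\bigr) \leq d(x,y)\bigl(\widetilde g(x)+\widetilde g(y)\bigr)$. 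The mixed case, say $x \in K$ and $y \notin K$, is the sole delicate point and is the only obstacle worth flagging: here $\phi(y)=0$ forces $|\phi(x)u(x)-\phi(y)u(y)| = |\phi(x)u(x)|$, and one must avoid the crude bound $|\phi(x)| \leq \|\phi\|_\infty$ (which would not interact with the cutoff $\chi_K$ at $y$) and instead exploit $|\phi(x)| = |\phi(x)-\phi(y)| \leq Ld(x,y)$ to conclude $|\phi(x)u(x)| \leq Ld(x,y)|u(x)| = d(x,y)\widetilde g(x)$. The symmetric subcase is identical, and assembling the three cases gives $\widetilde g \in \mathcal D(\phi u)$.
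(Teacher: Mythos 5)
Your proof is correct and takes essentially the same approach as the paper's: the same reductions for (1) and (2), and for (3) the same product decomposition and the same three-case analysis based on membership in $K$, including the key observation that in the mixed case one should use $|\phi(x)|=|\phi(x)-\phi(y)|\leq Ld(x,y)$ rather than $|\phi(x)|\leq\|\phi\|_\infty$. (One trivial slip: in the mixed case the final ``$=d(x,y)\widetilde g(x)$'' should be ``$\leq d(x,y)\widetilde g(x)$'', since $\widetilde g(x)=L|u(x)|+\|\phi\|_\infty g(x)\geq L|u(x)|$; this does not affect the argument.)
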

\begin{proof}
We show $(1)$ first. For every $x, y \in Z$, $$| \phi( u(x) ) - \phi( u(y) ) | \leq L | u(x) - u(y) | < \infty.$$ Also, there exists a negligible set $N \subset Z$ so that $$| u(x)-u(y) | \leq d(x,y)( g(x) + g(y) )$$ for every $x, y \in Z \setminus N$. Hence we have  $Lg \in \mathcal{D}( \phi \circ u )$. If in addition $\phi(0)=0$, we have $|\phi\circ u(x)|\leq L|u(x)|$ for every $x\in Z$, and the norm inequality $\| \phi \circ u \|_{ L^{p}( Z;\mathbb W ) } \leq L \| u \|_{ L^{p}( Z; \mathbb{V} ) }$ follows for every $1 \leq p \leq \infty$. Observe that (2) is a special case of (1), given that each $w \in \mathbb{V}^{*}$ is linear and $|w|$-Lipschitz.

The statement (3) is basically \cite[Lemma 5.20]{Haj:Kin:98}, where the authors formulated the result for real-valued mappings. First, in case $x\in K,y\notin K$, one has
$$|\phi(x)u(x)|\leq d(x,y) L |u(x)|\leq d(x,y) \widetilde g(x). $$
 The case $y\in K,x\notin K$ is analogous. If $x,y\in K$, then
\begin{align*}
|\phi(x)u(x) - \phi(y)u(y)|&\leq d(x,y)\left( (g(x)+g(y))|\phi(x)| + L|u(y)|\right) \\
    &\leq   d(x,y)\left(
        \|\phi\|_{\infty}g(x) +L|u(x)|+\|\phi\|_{\infty}g(y) +L|u(y)|
    \right)\\
    &\leq d(x,y)(\widetilde g(x)+\widetilde g(y)).
 \end{align*}
If $x, y \notin K$ the result is clear as well.
\end{proof}

\begin{lemma}\label{lemm:projection}
Suppose that $u \colon Z \rightarrow \mathbb{V}$ is measurable and $g  \colon Z \rightarrow \left[0,\infty\right]$ is measurable.

Then the following are equivalent:
\begin{enumerate}
    \item  $g$ is a Haj\l{}asz upper gradient of $u$;
    \item  $g$ is a Haj\l{}asz upper gradient of $w(u)$ for every $|w| \leq 1$, $w \in \mathbb{V}^{*}$;
\end{enumerate}
Let $E \subset Z$ be negligible so that $u( Z \setminus E ) \subset \mathbb{V}_{0}$ for some separable closed subspace $\mathbb{V}_{0}$ of $\mathbb{V}$, and let $\iota \colon \mathbb{V} \rightarrow \ell^{\infty}$ be a $1$-Lipschitz linear map with $\iota|_{ \mathbb{V}_0 }$ being an isometric embedding.

Then (1) and (2) are also equivalent to the following:
\begin{enumerate}
    \item[(3)]  $g$ is a Haj\l{}asz upper gradient of the mapping $U \coloneqq \iota \circ u \colon Z \rightarrow \ell^{\infty}$;
    \item[(4)]  $g$ is a Haj\l{}asz upper gradient of $u_{i}$, for each $i \in \mathbb{N}$, where $U = ( u_{i} )_{ i = 1 }^{ \infty }$;
    \item[(5)]  $g$ is a Haj\l{}asz upper gradient of $U^{N} \coloneqq ( u_{i} )_{ i = 1 }^{N} \colon Z\to \ell^\infty$ for every $N \in \mathbb{N}$. 
\end{enumerate}
\end{lemma}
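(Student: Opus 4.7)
The plan is to prove the equivalences in stages: first (1)$\Leftrightarrow$(2) unconditionally, then leverage the separability assumption to add (3)--(5). For (1)$\Rightarrow$(2), I would invoke \Cref{lemm:basic}(2): for every $w \in \mathbb{V}^{*}$ with $|w| \leq 1$, that result gives $|w|g \in \mathcal{D}(w(u))$, and since $|w|g \leq g$ pointwise, also $g \in \mathcal{D}(w(u))$.

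The substantive step is (2)$\Rightarrow$(1). The subtle point is that condition (2) produces a negligible exceptional set $N_w$ depending on each functional $w$, and an uncountable union of such sets need not remain negligible. I would circumvent this using the Pettis measurability of $u$: there exist a negligible set $E \subset Z$ and a separable closed subspace $\mathbb{V}_0 \subset \mathbb{V}$ containing $u(Z \setminus E)$. Fixing a countable dense sequence $\{v_i\} \subset \mathbb{V}_0$ and, by Hahn--Banach, norming functionals $w_i \in \mathbb{V}^{*}$ with $|w_i| = 1$ and $w_i(v_i) = |v_i|$, I would apply (2) to each $w_i$ and take the countable union of the resulting negligible sets together with $E$ to obtain a single negligible set $N$. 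For any $x, y \in Z \setminus N$ and any $\epsilon > 0$, pick $i$ with $|v_i - (u(x) - u(y))| < \epsilon$; then
\[
|u(x) - u(y)|
\leq
|v_i| + \epsilon
=
w_i(v_i) + \epsilon
\leq
w_i(u(x) - u(y)) + 2\epsilon
\leq
d(x,y)(g(x) + g(y)) + 2\epsilon,
\]
and sending $\epsilon \to 0$ proves (1).

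The remaining equivalences reduce to coordinate arithmetic via $\iota$. I would deduce (1)$\Leftrightarrow$(3) from the identity $|\iota(u(x) - u(y))|_{\ell^\infty} = |u(x) - u(y)|_{\mathbb{V}}$ valid on $Z \setminus E$ (using that $\mathbb{V}_0$ is a subspace and $\iota|_{\mathbb{V}_0}$ is an isometry), together with the inequality $|\iota(u(x) - u(y))|_{\ell^\infty} \leq |u(x) - u(y)|_{\mathbb{V}}$ that holds everywhere. Finally (3)$\Leftrightarrow$(4)$\Leftrightarrow$(5) follows from the identities $|U(x) - U(y)|_{\ell^\infty} = \sup_i |u_i(x) - u_i(y)|$ and $|U^N(x) - U^N(y)|_{\ell^\infty} = \max_{i \leq N} |u_i(x) - u_i(y)|$: the forward directions are trivial pointwise dominations, and the reverse directions are handled by unioning countably many negligible sets. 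The only genuine obstacle in the whole argument is the Hahn--Banach reduction in (2)$\Rightarrow$(1); the rest is routine countable bookkeeping.
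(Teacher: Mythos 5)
Your proposal is correct, and it takes a genuinely different route for the one nontrivial implication. The paper never proves $(2)\Rightarrow(1)$ head-on: it closes the cycle $(1)\Rightarrow(2)\Rightarrow(4)\Rightarrow(5)\Rightarrow(3)\Rightarrow(1)$, so items (3)--(5) are load-bearing rather than corollaries. You instead give a self-contained proof of $(2)\Rightarrow(1)$ by re-running the Hahn--Banach construction inside the lemma: choose a countable dense set $\{v_i\}\subset\mathbb{V}_0$, take norming functionals $w_i\in\mathbb{V}^{*}$ with $|w_i|=1$ and $w_i(v_i)=|v_i|$, union the countably many exceptional sets together with $E$, and then approximate $u(x)-u(y)\in\mathbb{V}_0$ by the $v_i$ in the displayed estimate. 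This is exactly the idea the paper packages once and for all into the construction of $\iota$ in \Cref{lemm:representation}; the coordinate functionals $e_j\circ\iota$ that the paper applies to get $(2)\Rightarrow(4)$ are your $w_j$. The substance is thus the same, but yours makes the equivalence $(1)\Leftrightarrow(2)$ stand on its own without reference to the auxiliary $\ell^\infty$-map --- a slight conceptual gain --- whereas the paper's chain is more economical given that $(3)$--$(5)$ are reused elsewhere (in particular, the equivalence $(3)\Leftrightarrow(5)$ is what drives the weak-compactness argument in the proof of \Cref{thm:M=W}). Your treatment of $(3)$--$(5)$ via the identities $|U(x)-U(y)|=\sup_i|u_i(x)-u_i(y)|$ and $|U^N(x)-U^N(y)|=\max_{i\le N}|u_i(x)-u_i(y)|$, with countable unions of exceptional sets, matches the paper's.
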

\begin{proof}
The existence of the negligible set $E \subset Z$, the subspace $\mathbb{V}_0$ and the mapping $\iota$ is established in \Cref{lemm:representation}, so we lose no generality in proving the claim as follows: we first prove $(1) \Rightarrow (2) \Rightarrow (4) \Rightarrow (5)$ and then we establish $(5) \Rightarrow (3)$ and $(1) \Leftrightarrow (3)$.

The implication from $(1)$ to $(2)$ is established by \Cref{lemm:basic}. Next, if  $e_{j} \colon \ell^{\infty} \rightarrow \mathbb R$ is defined by $( x_i )_{ i = 1 }^{ \infty } \mapsto x_j $, then $e_{j} \circ \iota \in \mathbb{V}^{*}$ with $| e_{j} \circ \iota | \leq 1$. In particular, $(4)$ follows from $(2)$.

We denote $\pi_{N} \colon \ell^{\infty} \rightarrow \ell^{\infty}$ taking $( x_i )_{ i = 1 }^{ \infty }$ to $( x_i )_{ i = 1 }^{N}$. We also let $u_i=e_i\circ \iota\circ u$. Assuming $(4)$, we find negligible sets $N_i \subset Z$ such that $$| u_i(x) - u_i(y) | \leq d( x, y )( g(x)+  g(y) )$$ for every $i $ and every $x, y \in Z \setminus N_i$. Hence for every $N\in\mathbb N$ and $x, y \in Z\setminus (\bigcup^{N}_{i=1}N_i)$, $$| \pi_{N}(U(x)) - \pi_{N}(U(y)) | = \sup_{1\leq i\leq N}| u_{i}(x) - u_{i}(y) |\leq d(x,y)(g(x)+g(y)),$$
which implies (5).

By noting the equality $$\sup_{N} | \pi_{N}(U)(x) - \pi_{N}(U)(y) | = |U(x)-U(y)| \quad\text{for every $x,y \in Z$},$$ we see that $(5)$ implies $(3)$. The equivalence between $(1)$ and $(3)$ holds since $| U(x) - U(y) | = |u(x) - u(y)|$ for every $x, y \in Z \setminus E$ outside the given negligible set $E$.
\end{proof}

\subsection{Path integrals}
We consider an arbitrary metric space $Z$. A \emph{path} is a continuous function $\gamma \colon \left[a, b\right] \rightarrow Z$ whose domain $\left[a,b\right]$ is denoted by $I_{\gamma}$. A path $\gamma$ is \emph{rectifiable} if
\begin{equation}
    \label{eq:rectifiable}
    \ell( \gamma )
    =
    \sup
    \sum_{ i = 1 }^{ N } d( \gamma(t_i), \gamma( t_{i+1} ) )
    <
    \infty,
\end{equation}
where the supremum is taken over all finite partitions $t_1 = a,$ $t_{i} \leq t_{i+1}$, $t_{N+1} = b$.

A path $\gamma$ is rectifiable if and only if the {\em length function} $h(t) = \ell( \gamma|_{[ a, t ]} )$, $t \in I_{\gamma}$ is non-decreasing and has bounded variation; recall that $h$ is continuous when $\ell( \gamma ) < \infty$. We let $d s_\gamma$ denote the total variation measure of $h$.

Given a Borel function $\rho \colon Z \to [0, \infty]$, the \emph{path integral} of $\rho$ over $\gamma$ is the following number:
\begin{equation}
    \label{eq:pathintegral}
    \int_{\gamma} \rho \,ds
    \coloneqq
    \begin{cases}
    \infty &\quad\text{if $\gamma$ is not rectifiable, } \vspace{0.2cm}
    \\
   \displaystyle{ \int_{ I_{\gamma} }
        \rho( \gamma(t) )
    \,ds_{\gamma}(t)}
    &\quad\text{otherwise}.
\end{cases}    
\end{equation}

For a rectifiable path, we denote
\begin{equation}
    \label{eq:metricspeed:pointwise}
    \| \gamma' \|(t)
    \coloneqq
    \lim_{ t \neq s \rightarrow t }
        \frac{ d( \gamma(t), \gamma(s) ) }{ |t-s| },
\end{equation}
whenever the limit exists. The limit exists for almost every $t \in \left[a,b\right]$ for every rectifiable path. The mapping $t \mapsto \| \gamma' \|(t)$ is called the \emph{metric speed} of $\gamma$.

Every rectifiable path $\gamma\colon [a, b]\to Z$ has a \emph{unit speed reparametrization} $\widetilde{\gamma} \colon [0, l(\gamma)]\to Z$ with $\| \widetilde{\gamma}'\|(t)  \equiv 1$ almost everywhere. Namely, $\widetilde{\gamma}$ is the unique continuous map for which $\widetilde{\gamma}( h(t) ) = \gamma( t )$ for the length function $h$ above, see \cite[Eq. (5.1.6)]{HKST2015}. Since $\widetilde{\gamma} \circ h = \gamma$ and $h$ is (right-)continuous, nondecreasing and of bounded variation, the equality
\begin{equation*}
    \int_{\gamma} \rho \,ds
    =
    \int_{\widetilde{\gamma}} \rho \,ds
\end{equation*}
holds for every Borel $\rho \colon Z \rightarrow \left[0,\infty\right]$, see, e.g., \cite[2.5.18.]{Fed:69}.

A rectifiable path $\gamma$ is \emph{not absolutely continuous} if there exists a negligible set $I \subset I_\gamma$ and a Borel set $B \subset \gamma( I )$ with $\int_{\gamma} \chi_{B} \,ds > 0$. If no such $I$ and $B$ exist, we say that $\gamma$ is \emph{absolutely continuous}. For an absolutely continuous path $\gamma \colon \left[a,b\right] \rightarrow Z$, we have
\begin{equation*}
    \label{eq:pathintegral:ABS}
    \int_{\gamma} \rho \,ds
    =
    \int_{ a }^{ b }
        \rho( \gamma(s) ) \| \gamma' \|(s)
    \,d\mathcal{L}^{1}(s),
    \quad\text{see \cite{Dud:07},}
\end{equation*}
where $\mathcal{L}^{1}$ is the Lebesgue measure on $[a,b]$.

\subsection{Negligible path families}
We consider an arbitrary metric measure space $Z$ here. Any collection of paths $\gamma \colon I_{\gamma} \rightarrow Z$ is referred to as a \emph{path family}, typically denoted by $\Gamma$.
\begin{definition}
Let $1 \leq p \leq \infty$. A path family $\Gamma$ is \emph{$p$-negligible} if there exists an $L^{p}$-integrable Borel $\rho \colon Z \rightarrow [0, \infty]$ for which
\begin{equation*}
    \Gamma \subset \left\{ \gamma \colon \int_{ \gamma } \rho \,ds = \infty \right\}.
\end{equation*}
A property holds for \emph{$p$-almost every} $\gamma$ if the family of paths where the property fails is $p$-negligible.
\end{definition}
Observe that the collection of nonrectifiable paths is $p$-negligible, \emph{by definition}. Notice that any path family containing a constant path is \emph{not} $p$-negligible. Our notion of "$p$-negligible" coincides with the usual one defined using the so-called \emph{$p$-modulus}; for the case $1 \leq p < \infty$ we refer the reader to \cite[Lemma 5.2.8]{HKST2015} and for the $p = \infty$ case to \cite[Lemma 5.7]{Durand}.

\subsection{Weak upper gradients}\label{subsec_sob:grad}
We consider an arbitrary metric measure space $Z$ in this section. Let $u \colon Z \rightarrow \mathbb{V}$ and $1 \leq p \leq \infty$. We say that a mapping $\rho \colon Z \rightarrow [0,\infty]$ is a \emph{$p$-weak upper gradient} of $u$ if $\rho$ is Borel measurable and for $p$-almost every $\gamma \colon \left[a,b\right] \rightarrow Z$,
\begin{equation}\label{eq:boundaryinequality}
    |u( \gamma(a) ) - u( \gamma(b) )|
    \leq
    \int_{\gamma} \rho \,ds.
\end{equation}
If, in addition, $\rho$ is $L^{p}$-integrable, we denote $\rho \in \mathcal{D}_{N,p}( u )$.

\begin{lemma}\label{lemm:gradient:absolutecontinuity}
Let $u \colon Z \rightarrow \mathbb{V}$ and $\rho\in \mathcal{D}_{N,p}( u )$. There exists a $p$-negligible family
\begin{equation*}
    \Gamma_{u,\rho}
    \supset
    \left\{ \gamma \colon \ell( \gamma ) + \int_{\gamma} \rho \, ds = \infty \right\}
\end{equation*}
such that every $\gamma \not\in \Gamma_{u,\rho}$ satisfies the following: $u \circ \gamma$ is continuous and rectifiable and every Borel $a \colon I_\gamma \rightarrow [0, \infty]$ satisfies
\begin{equation}\label{eq:upper:abs}
    \int_{ I_{\gamma} }
        a(t)
    \,ds_{ u \circ \gamma }(t)
    \leq 
    \int_{ I_{\gamma} }
        \rho( \gamma(t) ) a(t)
    \,ds_{ \gamma }(t).
\end{equation}
\end{lemma}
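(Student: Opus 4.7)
The plan is to identify $\Gamma_{u,\rho}$ as the union of three $p$-negligible subfamilies. By the convention on nonrectifiable paths, such paths are already $p$-negligible. Next, since $\rho \in L^{p}( Z )$, the family $\Gamma_1 \coloneqq \{ \gamma \colon \int_{\gamma} \rho \,ds = \infty\}$ is $p$-negligible (with $\rho$ itself as the test function). Finally, let $\Gamma_0$ be a $p$-negligible family for which \eqref{eq:boundaryinequality} holds for every $\gamma \notin \Gamma_0$, and set
\begin{equation*}
    \Gamma_2 \coloneqq \left\{ \gamma \colon I_\gamma = [a,b],\ \gamma|_{[s,t]} \in \Gamma_0 \text{ for some } a \leq s \leq t \leq b \right\}.
\end{equation*}
If $\rho_0$ is a test function witnessing the $p$-negligibility of $\Gamma_0$, then $\int_{\gamma|_{[s,t]}} \rho_0 \,ds \leq \int_{\gamma} \rho_0 \,ds$, so $\Gamma_2 \subset \{\gamma \colon \int_{\gamma} \rho_0 \,ds = \infty\}$, showing $\Gamma_2$ is $p$-negligible. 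Define $\Gamma_{u,\rho}$ to be the union of nonrectifiable paths with $\Gamma_0 \cup \Gamma_1 \cup \Gamma_2$, which clearly contains $\{\gamma \colon \ell(\gamma) + \int_\gamma \rho \,ds = \infty\}$ and is $p$-negligible.

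Now fix $\gamma \notin \Gamma_{u,\rho}$, write $I_\gamma = [a,b]$, and set $\nu \coloneqq (\rho \circ \gamma) \,ds_\gamma$ on $I_\gamma$. Since $\gamma$ is rectifiable, the length function $h(t) = \ell(\gamma|_{[a,t]})$ is continuous, so $ds_\gamma$ has no atoms, and consequently neither does $\nu$; moreover $\nu(I_\gamma) = \int_\gamma \rho\, ds < \infty$. Because $\gamma|_{[s,t]} \notin \Gamma_0$ for any $[s,t] \subset [a,b]$ by construction, applying \eqref{eq:boundaryinequality} to each subpath gives
\begin{equation}\label{eq:plan:basic}
    |u(\gamma(t)) - u(\gamma(s))|
    \leq
    \int_{\gamma|_{[s,t]}} \rho \,ds
    =
    \nu([s,t]).
\end{equation}
Letting $t \to s$ and using that $\nu$ is a finite atomless Borel measure, we get continuity of $u \circ \gamma$. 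For rectifiability, summing \eqref{eq:plan:basic} over a partition of $[a,b]$ yields $\ell(u \circ \gamma) \leq \nu([a,b]) < \infty$.

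It remains to verify \eqref{eq:upper:abs}. Let $h_u(t) \coloneqq \ell((u \circ \gamma)|_{[a,t]})$ be the length function of $u \circ \gamma$; its Lebesgue--Stieltjes measure is $ds_{u\circ\gamma}$. Taking a supremum of \eqref{eq:plan:basic} over partitions of $[s,t]$ gives $h_u(t) - h_u(s) \leq \nu([s,t])$ for all $a \leq s \leq t \leq b$, i.e.
\begin{equation*}
    ds_{u \circ \gamma}([s,t]) \leq \nu([s,t]) = \int_{[s,t]} \rho(\gamma(r))\, ds_\gamma(r).
\end{equation*}
Since finite Borel measures on $[a,b]$ are determined by their values on closed subintervals, this comparison propagates to all Borel sets, so $ds_{u \circ \gamma} \ll ds_\gamma$ with a Radon--Nikodym density bounded above by $\rho \circ \gamma$ almost everywhere with respect to $ds_\gamma$. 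Approximating a nonnegative Borel $a \colon I_\gamma \to [0, \infty]$ by nondecreasing simple functions and applying monotone convergence then gives \eqref{eq:upper:abs}.

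The main obstacle is the subpath-bookkeeping in Step~1: the family $\Gamma_0$ coming from the bare definition of a $p$-weak upper gradient need not be closed under taking subpaths, so one must enlarge it to $\Gamma_2$ and verify $p$-negligibility via the monotonicity $\int_{\gamma|_{[s,t]}} \rho_0 \,ds \leq \int_\gamma \rho_0 \,ds$. Everything else reduces to standard one-dimensional measure theory once \eqref{eq:plan:basic} is in hand.
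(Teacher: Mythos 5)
Your proof follows essentially the same strategy as the paper's: decompose $\Gamma_{u,\rho}$ into the nonrectifiable paths, those with $\int_\gamma \rho \,ds = \infty$, and those having a subpath failing the upper gradient inequality, then derive continuity, rectifiability, and the measure comparison from the subpath estimate $|u(\gamma(s)) - u(\gamma(t))| \le \int_{\gamma|_{[s,t]}}\rho\,ds$. The only place you go beyond what the paper states explicitly is in spelling out why the subpath-closure $\Gamma_2$ is $p$-negligible (the paper asserts this as part of the ``union of three such families''), and your Radon--Nikodym detour is harmless but unnecessary since one can pass directly from the measure inequality on Borel sets to \eqref{eq:upper:abs} via simple functions and monotone convergence, as the paper does.
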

\begin{proof}
Let $\Gamma$ denote the collection of all paths with at least one of the following properties: $\gamma$ is not rectifiable, $\int_{\gamma} \rho \, ds = \infty$, or there exists a compact subinterval $I \subset I_{\gamma}$ such that the path $\gamma|_{I}$ fails \eqref{eq:boundaryinequality}. Such a family is $p$-negligible as a union of three such families.

Fix now a rectifiable path $\gamma \not\in \Gamma$; assume also that $\gamma$ is not constant. Let $s, s' \in I_{\gamma}$ with $s \leq s'$. Then
\begin{equation*}
    | u( \gamma(s) ) - u( \gamma(s') ) |
    \leq
    \int_{\gamma|_{ \left[s,s'\right] } } \rho \,ds
    =
    \int_{ [s,s'] } \rho( \gamma(s) ) \,d s_{\gamma}
    \leq
    \int_{ \gamma } \rho \,d s
    <
    \infty.
\end{equation*}
Since $s$ and $s'$ were arbitrary, $s_\gamma( \left\{s\right\} ) = 0$ for every $s \in I_\gamma$ and $\rho \circ \gamma$ is $L^{1}$-integrable with respect to the measure $s_\gamma$, the continuity of $u \circ \gamma$ follows. In fact, the (sub)additivity of integration implies
\begin{equation}\label{eq:dominatedvariation}
    s_{ u \circ \gamma }( I ) = \ell( u \circ \gamma|_{I} ) \leq \int_{ \gamma|_{I} } \rho \,d s
    \quad\text{for every closed interval $I \subset I_\gamma$}.
\end{equation}
Hence $u \circ \gamma$ is rectifiable. As \eqref{eq:dominatedvariation} implies the corresponding inequality for all open intervals, the inequality holds for all open sets $I \subset I_\gamma$. The outer regularity of the measures $s_{ u \circ \gamma }$ and $s_\gamma$ implies the corresponding inequality for all Borel sets $B \subset I_\gamma$. Then \eqref{eq:upper:abs} follows by first establishing the inequality for simple Borel functions, while the general claim follows from monotone convergence and standard approximation results.
\end{proof}

\begin{lemma}\label{lemm:representative}
Let $u \colon Z \rightarrow \mathbb{V}$ be a map and $\rho_{0} \in \mathcal{D}_{N,p}( u )$. If $\rho_{1} \colon Z \rightarrow \left[0, \infty\right]$ is Borel measurable and satisfies $\rho_{1} = \rho_{0}$ almost everywhere, then $\rho_{1} \in \mathcal{D}_{N,p}(u)$. 
\end{lemma}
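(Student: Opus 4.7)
The plan is to show that $\rho_1 \in \mathcal{D}_{N,p}(u)$ by exhibiting a $p$-negligible family $\Gamma$ such that for every $\gamma \notin \Gamma$ the path integrals of $\rho_0$ and $\rho_1$ agree and the upper gradient inequality \eqref{eq:boundaryinequality} holds for $\rho_1$. Since $\rho_1$ equals $\rho_0$ almost everywhere and $\rho_0 \in \mathcal{L}^{p}(Z)$, the $L^{p}$-integrability of $\rho_1$ is automatic, so the only issue is to propagate the validity of the upper gradient inequality from $\rho_0$ to $\rho_1$ off a $p$-negligible exceptional family.

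Since both $\rho_0$ and $\rho_1$ are Borel, the set $B \coloneqq \{ \rho_0 \neq \rho_1 \}$ is Borel and, by hypothesis, satisfies $\mu(B) = 0$. First I would consider the Borel function
\begin{equation*}
    \rho
    \coloneqq
    \infty \cdot \chi_{B} \colon Z \to [0, \infty],
\end{equation*}
with the convention $\infty \cdot 0 = 0$, so that $\| \rho \|_{ L^{p}(Z) } = 0$ and, in particular, $\rho$ is $L^{p}$-integrable and Borel. For any rectifiable path $\gamma$ with $\int_\gamma \chi_B \, ds > 0$, the set $\{ t \in I_\gamma \colon \gamma(t) \in B\}$ has positive $s_\gamma$-measure, which forces $\int_\gamma \rho \, ds = \infty$. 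By the very definition of $p$-negligibility, the family
\begin{equation*}
    \Gamma_{B} \coloneqq \left\{ \gamma \colon \int_{\gamma} \chi_{B} \, ds > 0 \right\}
\end{equation*}
is therefore $p$-negligible.

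The next step is to combine $\Gamma_B$ with the $p$-negligible family $\Gamma_0$ outside of which \eqref{eq:boundaryinequality} holds for $\rho_0$. The union $\Gamma \coloneqq \Gamma_0 \cup \Gamma_B$ is again $p$-negligible, as one readily verifies by summing the two Borel representatives witnessing negligibility (or observing that $p$-negligibility is stable under countable unions). For any $\gamma \notin \Gamma$, the equality $\rho_0(\gamma(t)) = \rho_1(\gamma(t))$ holds for $s_\gamma$-almost every $t \in I_\gamma$, so $\int_\gamma \rho_1 \, ds = \int_\gamma \rho_0 \, ds$, and the upper gradient inequality
\begin{equation*}
    | u( \gamma(a) ) - u( \gamma(b) ) |
    \leq
    \int_\gamma \rho_0 \, ds
    =
    \int_\gamma \rho_1 \, ds
\end{equation*}
transfers directly from $\rho_0$ to $\rho_1$. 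This establishes $\rho_1 \in \mathcal{D}_{N,p}(u)$.

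The only mildly subtle point, which I would address carefully, is justifying that $\Gamma_B$ is $p$-negligible under the definition stated in the paper, i.e.\ writing down an $L^{p}$-integrable Borel function whose path integral is genuinely $+\infty$ on every $\gamma \in \Gamma_B$; the function $\infty \cdot \chi_B$ serves this purpose, but one may equivalently use $\rho = \sum_{k=1}^{\infty} \chi_B$, or rely on \cite[Lemma 5.2.8]{HKST2015} to pass between path integral formulations and the $p$-modulus. Beyond that, the argument is a straightforward bookkeeping of negligible path families, and there is no genuine obstacle.
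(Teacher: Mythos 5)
Your proof is correct and follows essentially the same route as the paper: isolate the Borel set $B=\{\rho_0\neq\rho_1\}$ of measure zero, discard the $p$-negligible family of paths spending positive length in $B$ (using $\infty\cdot\chi_B$ as the witness function), and observe that on the remaining paths the integrals of $\rho_0$ and $\rho_1$ coincide so the upper gradient inequality transfers. The paper's version invokes \Cref{lemm:gradient:absolutecontinuity} and passes through the stronger length estimate $\ell(u\circ\gamma)\leq\int_\gamma\rho_1\,ds$, whereas you transfer the endpoint inequality \eqref{eq:boundaryinequality} directly; both are correct, and yours is the more economical of the two.
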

\begin{proof}
The set $E = \left\{ \rho_{0} \neq \rho_{1} \right\}$ is Borel and satisfies $\mu( E ) = 0$. Consider the $p$-negligible family
\begin{equation*}
    \Gamma_{1}
    =
    \left\{ \gamma \colon \int_{ \gamma } \infty \cdot \chi_{E} \, ds = \infty \right\}
    \cup
    \left\{ \gamma \colon \int_{ \gamma } \rho_0 \, ds = \infty \right\}
    \cup
    \Gamma_{u,\rho_0},
\end{equation*}
where $\Gamma_{u, \rho_0}$ is any $p$-negligible family satisfying the conclusions of \Cref{lemm:gradient:absolutecontinuity}.

By construction, $g_0 \circ \gamma = g_1 \circ \gamma$ $s_{\gamma}$-almost everywhere, so it follows from \eqref{eq:upper:abs} that every $\gamma \not\in \Gamma_1$ satisfies
\begin{equation*}
    \ell( u \circ \gamma )
    \leq
    \int_{ \gamma } \rho_0 \,ds
    =
    \int_{ \gamma } \rho_1 \,ds.
\end{equation*}
This implies $\rho_{1} \in \mathcal{D}_{N,p}( u )$.
\end{proof}

\begin{proposition}\label{lemm:minimal}
If $u \colon Z \rightarrow \mathbb{V}$ is measurable and $\mathcal{D}_{N,p}( u ) \neq \emptyset$, then there exists $\rho \in \mathcal{D}_{N,p}( u )$ such that for every $\widetilde\rho \in \mathcal{D}_{N,p}(u)$, the inequality $\rho(x) \leq\widetilde \rho(x)$ holds for almost every $x\in Z$.
\end{proposition}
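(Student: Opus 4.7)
The plan is to mimic the classical proof of existence of the minimal $p$-weak upper gradient from real-valued Newton--Sobolev theory (cf.\ \cite[Thm.~6.3.20]{HKST2015}), observing that the Banach-valued target plays no essential role: the defining inequality \eqref{eq:boundaryinequality} only involves the nonnegative real quantity $|u(\gamma(a))-u(\gamma(b))|$. The argument combines a lattice property, closedness in $L^p$ (Fuglede's lemma), and a descending minimizing sequence.

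First I would establish the lattice property: if $\rho_1,\rho_2\in \mathcal{D}_{N,p}(u)$, then $\min(\rho_1,\rho_2)\in \mathcal{D}_{N,p}(u)$. Fix a path $\gamma$ outside the $p$-negligible family $\Gamma_{u,\rho_1}\cup\Gamma_{u,\rho_2}$ provided by Lemma \ref{lemm:gradient:absolutecontinuity}, and split $I_{\gamma}$ into the disjoint Borel pieces $A_1=\{t: \rho_1(\gamma(t))\leq\rho_2(\gamma(t))\}$ and $A_2=I_\gamma\setminus A_1$. Applying \eqref{eq:upper:abs} with $a=\chi_{A_i}$ on each piece and summing yields
\[
s_{u\circ\gamma}(I_\gamma)\leq \int_{A_1}\rho_1(\gamma(t))\,ds_\gamma(t)+\int_{A_2}\rho_2(\gamma(t))\,ds_\gamma(t)=\int_\gamma \min(\rho_1,\rho_2)\,ds,
\]
which dominates $|u(\gamma(a))-u(\gamma(b))|\leq \ell(u\circ\gamma)=s_{u\circ\gamma}(I_\gamma)$.

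Second, I would prove closedness of $\mathcal{D}_{N,p}(u)$ in $L^p$ via Fuglede's lemma: if $\rho_n\in\mathcal{D}_{N,p}(u)$ with $\rho_n\to\rho$ in $L^p$, pass to a subsequence satisfying $\sum\|\rho_{n_k}-\rho\|_{L^p}<\infty$, so that $h=\sum|\rho_{n_k}-\rho|\in L^p$ and hence $\int_\gamma h\,ds<\infty$ for $p$-almost every $\gamma$. Along such a $\gamma$, $\int_\gamma \rho_{n_k}\,ds\to\int_\gamma \rho\,ds$, and \eqref{eq:boundaryinequality} passes to the limit; Lemma \ref{lemm:representative} then produces a Borel representative lying in $\mathcal{D}_{N,p}(u)$. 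Now set $I=\inf\{\|\widetilde\rho\|_{L^p}:\widetilde\rho\in \mathcal{D}_{N,p}(u)\}<\infty$, choose $\widetilde\rho_n\in \mathcal{D}_{N,p}(u)$ with $\|\widetilde\rho_n\|_{L^p}\to I$, and define $\rho_n\coloneqq \min(\widetilde\rho_1,\ldots,\widetilde\rho_n)$. By iteration of the lattice step $\rho_n\in \mathcal{D}_{N,p}(u)$ is nonincreasing with $\|\rho_n\|_{L^p}\leq \|\widetilde\rho_n\|_{L^p}\to I$, so monotone convergence together with the dominant $\rho_1\in L^p$ gives a pointwise limit $\rho$ with $\rho_n\to\rho$ in $L^p$. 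Closedness then yields $\rho\in \mathcal{D}_{N,p}(u)$ with $\|\rho\|_{L^p}=I$.

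Finally I would verify minimality: for any $\widetilde\rho\in \mathcal{D}_{N,p}(u)$, the lattice step gives $\min(\rho,\widetilde\rho)\in \mathcal{D}_{N,p}(u)$, and its $L^p$-norm is both $\leq\|\rho\|_{L^p}=I$ and $\geq I$, forcing $\min(\rho,\widetilde\rho)=\rho$ almost everywhere, i.e.\ $\rho\leq\widetilde\rho$ a.e. The only genuinely delicate point is Step 2 (Fuglede's lemma), but the Banach target enters only through the scalar quantity $|u(\gamma(a))-u(\gamma(b))|$, so the classical scalar Fuglede argument transfers verbatim; no appeal to Lemma \ref{lemm:projection} or to a reduction to the real-valued setting is required.
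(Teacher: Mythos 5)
Your argument is correct and essentially reproduces \cite[Theorem 6.3.20]{HKST2015} -- the lattice step, the Fuglede-type closedness, and the minimizing-sequence construction all transfer verbatim to Banach-valued targets for exactly the reason you note: the defining inequality \eqref{eq:boundaryinequality} only sees the scalar $|u(\gamma(a))-u(\gamma(b))|$, so Lemma \ref{lemm:projection} is indeed not needed. This is precisely the reference the paper cites for $1\leq p<\infty$. (A small presentational remark on Step 2: fix a Borel representative of the $L^p$-limit before forming $h=\sum|\rho_{n_k}-\rho|$; \Cref{lemm:representative} only transports membership in $\mathcal{D}_{N,p}(u)$ between representatives once some representative is already known to lie there, so it cannot be used to ``produce'' one.)

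The genuine gap is the case $p=\infty$. \Cref{subsec_sob:grad} is set up for all $1\leq p\leq\infty$, and the paper handles $p=\infty$ by a separate citation to \cite{M2013} precisely because the minimizing-sequence argument you use breaks down there -- and not in Step 2, which you single out as the delicate one, but in Steps 3 and 4. In Step 3 the monotone pointwise limit of the decreasing sequence $\rho_n$ need not converge to $\rho$ in $L^\infty$, so your closedness step does not apply. More decisively, Step 4 uses the implication ``$0\leq f\leq g$ and $\|f\|_{L^p}=\|g\|_{L^p}$ imply $f=g$ a.e.''; for $p<\infty$ this follows from $\int (g^p-f^p)\,d\mu=0$ with nonnegative integrand, but it is false for $p=\infty$ (take $g\equiv 1$ and $f=\chi_E$ with $\mu(E),\mu(Z\setminus E)>0$). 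Hence the proof as written establishes \Cref{lemm:minimal} only for $1\leq p<\infty$; a different argument, such as the one in \cite{M2013}, is needed to cover $p=\infty$.
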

\begin{proof}
For the case $1 \leq p < \infty$, see Theorem 6.3.20 \cite{HKST2015}. The $p = \infty$ appears in \cite{M2013} in the real-valued case; the Banach-valued case requires only minor modifications.
\end{proof}
For a function $\rho\in\mathcal D_{N, p}(u)$ from \Cref{lemm:minimal}, its $L^{p}(Z)$-equivalence class is denoted by $\rho_{u}$. We refer to $\rho_{u}$ as a \emph{minimal $p$-weak upper gradient} of $u$. Note that every Borel representative of $\rho_{u}$ satisfies the conclusion of \Cref{lemm:minimal}, as we see from \Cref{lemm:representative}.

Next, we recall the following variants of Lemmas \ref{lemm:basic} and \ref{lemm:projection}.
\begin{lemma}\label{lemm:basic:new}
Let $u \in L^{p}( Z; \mathbb{V} )$, for $1 \leq p \leq \infty$, with $\rho \in \mathcal{D}_{N,p}(u)$.
\begin{enumerate}
    \item If $\mathbb{W}$ is a Banach space and $\phi \colon \mathbb{V} \rightarrow\mathbb{W}$ is $L$-Lipschitz, then $L\rho \in \mathcal{D}_{N,p}( \phi \circ u )$. Moreover, if $\phi(0) = 0$, then $$\| \phi \circ u \|_{ L^{p}( Z; \mathbb{W} ) } \leq L \| u \|_{ L^{p}( Z; \mathbb{V} ) }.$$
    \item If $w \in \mathbb{V}^{*}$, then $w(u) \in L^{p}( Z )$ and $|w|\rho \in \mathcal{D}_{N,p}( w(u) )$.
    \item If $\phi \colon Z \rightarrow \mathbb{R}$ is $L$-Lipschitz, and $K = Z \setminus \left\{ \phi = 0 \right\}$ is bounded, then $$\widetilde{\rho} = ( L |u| + \| \phi \|_{L^{\infty}(Z)}\rho )\chi_{ K } \in \mathcal{D}_{N,p}( \phi u ).$$
    \item If $U \subset Z$ is open, then $\rho_{ u|_{ U } } = (\rho_{u})|_{ U }$ almost everywhere in $U$.
\end{enumerate}
\end{lemma}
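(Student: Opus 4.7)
The plan is to handle (1) and (2) by a direct endpoint argument, (3) via an absolutely continuous parametrization combined with a Leibniz-type rule, and (4) by pairing a restriction argument with an upper-gradient gluing construction, invoking the minimality from \Cref{lemm:minimal}.

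For (1), pick a $p$-negligible exceptional family $\Gamma$ for the pair $(u,\rho)$. For every $\gamma \not\in \Gamma$, the $L$-Lipschitz hypothesis on $\phi$ immediately yields
\begin{equation*}
    |(\phi \circ u)(\gamma(a)) - (\phi \circ u)(\gamma(b))|
    \leq
    L\, |u(\gamma(a)) - u(\gamma(b))|
    \leq
    \int_{\gamma} L\rho \,ds,
\end{equation*}
so $L\rho \in \mathcal{D}_{N,p}(\phi \circ u)$ once I check $\phi \circ u \in L^{p}(Z;\mathbb{W})$. When $\phi(0) = 0$, the pointwise bound $|\phi(u(x))| \leq L|u(x)|$ gives both $L^{p}$-membership and the claimed norm inequality. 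Part (2) follows from (1) applied to $\phi = w \colon \mathbb{V} \to \mathbb{R}$, which is $|w|$-Lipschitz.

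For (3), I would use \Cref{lemm:gradient:absolutecontinuity} to select an exceptional $p$-negligible family $\Gamma_{u,\rho}$ outside of which $u \circ \gamma$ is continuous, rectifiable, and its variation measure $ds_{u \circ \gamma}$ is absolutely continuous with respect to $\rho(\gamma)\,ds_{\gamma}$. After passing to the unit-speed reparametrization $\widetilde\gamma$, I obtain an absolutely continuous $\mathbb{V}$-valued curve $\widetilde{u} = u \circ \widetilde{\gamma}$ on $[0,\ell(\gamma)]$ with metric derivative dominated a.e.\ by $\rho \circ \widetilde\gamma$, while $\widetilde\phi = \phi \circ \widetilde\gamma$ is $L$-Lipschitz. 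The product rule for the product of a Lipschitz scalar and an AC Banach-valued function gives the metric-derivative bound
\begin{equation*}
    \| (\widetilde\phi \widetilde u)' \|(s)
    \leq
    |\widetilde\phi'(s)|\,|\widetilde u(s)| + |\widetilde\phi(s)|\, \|\widetilde u'\|(s)
    \leq
    L|\widetilde u(s)| + \|\phi\|_{L^{\infty}} \rho(\widetilde\gamma(s)).
\end{equation*}
The delicate point, which is where I expect the main issue, is that $\widetilde\rho = (L|u| + \|\phi\|_{L^{\infty}} \rho)\chi_{K}$ vanishes on $Z \setminus K$, so I need the above derivative bound to vanish a.e.\ on $\widetilde\gamma^{-1}(Z \setminus K)$. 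This is exactly the classical fact that a Lipschitz function's derivative vanishes almost everywhere on its zero set, applied to $\widetilde\phi$. Integrating the resulting a.e.\ bound over $[0, \ell(\gamma)]$ produces the endpoint inequality for $\phi u$ against $\widetilde\rho$, and the $L^{p}$-integrability of $\widetilde\rho$ follows from $u \in L^{p}(Z;\mathbb{V})$, $\rho \in L^{p}(Z)$, and boundedness of $K$.

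For (4), I would first observe that the restriction of a $p$-negligible family in $Z$ to paths lying in $U$ is $p$-negligible in $U$, whence $\rho_{u}|_{U}$ is a $p$-weak upper gradient of $u|_{U}$ and minimality gives $\rho_{u|_{U}} \leq \rho_{u}|_{U}$ a.e.\ on $U$. For the reverse inequality, I would define
\begin{equation*}
    \widehat{\rho}(x)
    \coloneqq
    \begin{cases}
        \rho_{u|_{U}}(x), & x \in U, \\
        \rho_{u}(x), & x \in Z \setminus U,
    \end{cases}
\end{equation*}
and show $\widehat{\rho} \in \mathcal{D}_{N,p}(u)$. For this I fix a $p$-negligible family large enough that every good $\gamma \colon [a,b] \to Z$ satisfies \Cref{lemm:gradient:absolutecontinuity} for $(u,\rho_{u})$ and, for every closed subinterval $J \subset [a,b]$ with $\gamma(J) \subset U$, the subpath $\gamma|_{J}$ satisfies \Cref{lemm:gradient:absolutecontinuity} for $(u|_{U}, \rho_{u|_{U}})$. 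Decomposing the open set $\gamma^{-1}(U)$ into its at most countably many component intervals and summing variations of $u \circ \gamma$, bounded by $\rho_{u|_{U}}(\gamma)\,ds_{\gamma}$ on the components and by $\rho_{u}(\gamma)\,ds_{\gamma}$ on the complement, then yields the desired endpoint inequality for $\widehat{\rho}$. Minimality of $\rho_{u}$ forces $\rho_{u} \leq \widehat{\rho}$ a.e., proving $\rho_{u}|_{U} \leq \rho_{u|_{U}}$ a.e.\ on $U$.
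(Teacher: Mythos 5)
Your proof is correct and, compared to the paper's treatment, takes a genuinely different (self-contained) route: the paper disposes of all four items by citation — (1)--(2) from Theorem~7.1.20 and (3) from Proposition~6.3.28 of the Heinonen--Koskela--Shanmugalingam--Tyson monograph, and (4) from Williams' locality result — noting only that the arguments carry over to $p=\infty$. You instead reconstruct each claim from the tools already set up in \Cref{lemm:gradient:absolutecontinuity}, \Cref{lemm:representative} and \Cref{lemm:minimal}: a direct endpoint comparison for (1)--(2), a Leibniz rule along the constant-speed parametrization for (3), and a gluing-plus-minimality scheme for (4). What your version buys is transparency and self-containment; what it costs is length. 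Your identification of the delicate points is accurate: in (3), the derivative of a Lipschitz scalar vanishes a.e.\ on its zero set (Rademacher plus Lebesgue density), which is exactly what makes the indicator $\chi_K$ admissible, and the boundedness of $K$ is what guarantees $\|\phi\|_{L^\infty(Z)}<\infty$ and hence $\widetilde\rho\in L^p$; in (4), a single $p$-negligible family can be arranged so that all subcurves with image in $U$ are simultaneously good for $(u|_U,\rho_{u|_U})$, by zero-extending the Borel witness from $U$ to $Z$ and noting that any curve with a bad subcurve in $U$ has infinite integral against it, after which the component-interval decomposition and minimality close the argument. One small inaccuracy to fix in (1): you assert you must ``check $\phi\circ u\in L^p(Z;\mathbb W)$'' before concluding $L\rho\in\mathcal D_{N,p}(\phi\circ u)$, but membership of $L\rho$ in $\mathcal D_{N,p}(\phi\circ u)$ requires only that $L\rho$ be Borel, $L^p$-integrable, and satisfy the endpoint inequality for $p$-a.e.\ curve; no integrability of the composed target map is demanded by the definition, and indeed you only verify it in the special case $\phi(0)=0$.
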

\begin{proof}
The claims were already known for $1\leq p<\fz$. Indeed, claim (1) follows from \cite[Theorem 7.1.20]{HKST2015}, with (2) being a special case of (1). Claim (3) follows from \cite[Proposition 6.3.28]{HKST2015}. Claim (4) follows from \cite[Corollary 3.9]{Wil:12}. Similar argument also works for the case $p = \infty$.
\end{proof}

\begin{lemma}\label{lemm:projection:new}
Suppose that $u \colon Z \rightarrow \mathbb{V}$ is measurable, $\rho  \colon Z \rightarrow \left[0,\infty\right]$ is Borel measurable and $1 \leq p \leq \infty$. Then the following are equivalent:
\begin{enumerate}
    \item $u$ has a representative $\widetilde{u}$ with $\rho \in \mathcal{D}_{N,p}( \widetilde{u} )$;
    \item $w(u)$ has a representative $u_{w}$ with $\rho \in \mathcal{D}_{N,p}( u_w )$ for every $w \in \mathbb{V}^{*}$, $|w| \leq 1$.
\end{enumerate}
\end{lemma}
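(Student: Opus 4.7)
The plan is to treat the two implications separately, with the Haj\l{}asz analog \Cref{lemm:projection} serving as a template. The implication $(1) \Rightarrow (2)$ follows directly from part (2) of \Cref{lemm:basic:new}: given a representative $\widetilde{u}$ of $u$ with $\rho \in \mathcal{D}_{N, p}( \widetilde{u} )$ and any $w \in \mathbb{V}^{*}$ with $|w| \leq 1$, the lemma yields $|w|\rho \in \mathcal{D}_{N, p}( w \circ \widetilde{u} )$, and since $|w|\rho \leq \rho$ the boundary inequality persists with $\rho$ in place of $|w|\rho$, so $\rho \in \mathcal{D}_{N, p}( w \circ \widetilde{u} )$, where $w \circ \widetilde{u}$ is a representative of $w(u)$.

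For the substantial direction $(2) \Rightarrow (1)$, the plan is to mimic the argument of \Cref{lemm:projection}. Fix an initial representative of $u$ and apply \Cref{lemm:representation} to obtain a negligible set $E \subset Z$, a separable closed subspace $\mathbb{V}_{0} \subset \mathbb{V}$ containing $u( Z \setminus E )$, and a $1$-Lipschitz linear map $\iota \colon \mathbb{V} \to \ell^{\infty}$ whose restriction to $\mathbb{V}_{0}$ is an isometric embedding. Writing $\iota(v) = ( w_{i}(v) )_{i = 1}^{ \infty }$ with $w_{i} = e_{i} \circ \iota \in \mathbb{V}^{*}$ and $|w_{i}| \leq 1$, hypothesis (2) supplies for each $i$ a representative $u_{i}$ of $w_{i}(u)$ with $\rho \in \mathcal{D}_{N, p}( u_{i} )$. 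Let $N$ be the negligible union of $E$ and the countably many sets $\{ u_{i} \neq w_{i} \circ u \}$. Define a new representative $\widetilde{u}$ of $u$ by $\widetilde{u}(x) = ( \iota|_{ \mathbb{V}_{0} } )^{-1}( ( u_{i}(x) )_{i} )$ whenever $( u_{i}(x) )_{i} \in \iota( \mathbb{V}_{0} )$ and $\widetilde{u}(x) = 0$ otherwise; outside of $N$, this recovers the original representative.

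To verify $\rho \in \mathcal{D}_{N,p}(\widetilde{u})$, consider the $p$-negligible family built from the nonrectifiable curves, the curves where $\int_{\gamma} \rho \, ds$ diverges, the curves visiting $N$ with positive $s_{\gamma}$-measure ($p$-negligible by testing the $p$-modulus against $n \chi_{ N' }$ for a Borel enlargement $N'$ of $N$ with $\mu( N' ) = 0$ and letting $n \to \infty$), and the countable union of the exceptional families $\Gamma_{ u_{i}, \rho }$ provided by \Cref{lemm:gradient:absolutecontinuity}. For any path $\gamma$ outside this family, reparametrized by arc length to remove constant subintervals, the subcurve estimates from \Cref{lemm:gradient:absolutecontinuity} are uniform in $i$, so $U(t) \coloneqq ( u_{i}( \gamma(t) ) )_{i}$ is continuous into $\ell^{\infty}$. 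Since $U(t) \in \iota( \mathbb{V}_{0} )$ on a dense set of parameters $t$ and $\iota( \mathbb{V}_{0} )$ is closed in $\ell^{\infty}$, $U$ takes values in $\iota( \mathbb{V}_{0} )$ throughout its domain, giving $\widetilde{u}( \gamma(t) ) = ( \iota|_{ \mathbb{V}_{0} } )^{-1}( U(t) )$ at every endpoint; the isometric property of $\iota|_{ \mathbb{V}_{0} }$ then converts the bound $\sup_{i} | u_{i}( \gamma(a) ) - u_{i}( \gamma(b) ) | \leq \int_{\gamma} \rho \, ds$ into the target inequality for $\widetilde{u}$.

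The main obstacle is that, unlike the Haj\l{}asz case, the $p$-weak upper gradient property is sensitive to the choice of pointwise representative, and the family of paths having an endpoint in the negligible set $N$ is generally not $p$-negligible. The continuity/closedness argument in the previous paragraph is designed precisely to sidestep this: the value $\widetilde{u}( \gamma(a) )$ is recovered as a limit of interior values where the identification $\widetilde{u} = u$ is already known to hold.
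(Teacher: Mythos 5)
Your proof is correct. The paper itself only cites the hard implication to \cite[Theorem 7.1.20, (iii) $\Rightarrow$ (i)]{HKST2015}, remarking that the argument carries over to $p = \infty$; your write-up supplies essentially that argument in full. The key moves are the ones the cited proof relies on: pass to coordinate functionals $w_i = e_i \circ \iota$, build $\widetilde{u}$ by inverting $\iota|_{\mathbb{V}_0}$ pointwise, and observe that for $p$-a.e.\ path $\gamma$ (after discarding the nonrectifiable paths, the paths with $\int_\gamma \rho\,ds = \infty$, the paths spending positive $s_\gamma$-time in a Borel hull of $N$, and the countably many exceptional families from \Cref{lemm:gradient:absolutecontinuity}) the $\ell^\infty$-valued curve $U = (u_i \circ \gamma)_i$ is continuous, lies in the closed subspace $\iota(\mathbb{V}_0)$, and therefore determines $\widetilde{u} \circ \gamma$ even at endpoints falling inside $N$ — this is precisely what defeats the obstacle that paths with endpoints in $N$ are not $p$-negligible. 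Two minor remarks: the paper lists \Cref{lemm:minimal} as a key ingredient, but your construction never needs a minimal $p$-weak upper gradient, which streamlines things; and you can test the $p$-negligibility of $\{\gamma : s_\gamma(\gamma^{-1}(N')) > 0\}$ directly against the single function $\infty \cdot \chi_{N'}$, as is done elsewhere in the paper, rather than passing $n\chi_{N'}$ to the limit.
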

\begin{proof}

The implication "$(1) \Rightarrow (2)$" already follows from \Cref{lemm:basic:new}, since $w( \widetilde{u} )$ is a representative of $w(u)$ for every $w \in \mathbb{V}^{*}$. The direction "$(2) \Rightarrow (1)"$ is known when $1 \leq p < \infty$, but the same proof also works for $p = \infty$; the key ingredients are \Cref{lemm:gradient:absolutecontinuity} and \Cref{lemm:minimal}. We refer the interested reader to the proof of \cite[Theorem 7.1.20, implication (iii) $\Rightarrow$ (i)]{HKST2015}.
\end{proof}

\begin{lemma}\label{lemma:Hajlasz:into:New}
Let $1 \leq p \leq \infty$, $u \in \mathcal{L}^{1}_{\loc}( Z; \mathbb{V} )$, and $g \in \mathcal{D}_{p}( u )$. Then there exists a representative $\widetilde{u}$ of $u$ and a representative $\widetilde{g}$ of $g$ such that $4\widetilde{g} \in \mathcal{D}_{N,p}( \widetilde{u} )$.
\end{lemma}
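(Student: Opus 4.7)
The plan is to reduce the Banach-valued assertion to its classical real-valued counterpart by invoking \Cref{lemm:projection:new}, which is the structural tool that lets us assemble a single representative of $u$ from representatives of the scalar-valued compositions $w(u)$.

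As a preliminary step, I would replace $g$ by a Borel representative $\widetilde{g}$; such a representative exists because $g$ is measurable and $L^{p}$-integrable, and by the discussion following \Cref{lemm:representative} this does not affect membership in $\mathcal{D}_{p}(u)$. Once $\widetilde{g}$ is Borel and agrees with $g$ almost everywhere, $\widetilde{g}$ remains a Haj{\l}asz upper gradient of $u$ (the exceptional set of the Haj{\l}asz inequality only grows by a null set).

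By \Cref{lemm:projection:new}, to produce a representative $\widetilde{u}$ of $u$ such that $4\widetilde{g}\in\mathcal{D}_{N,p}(\widetilde{u})$, it suffices to exhibit, for every $w\in\mathbb{V}^{*}$ with $|w|\leq 1$, a representative $u_{w}$ of the real-valued function $w(u)$ satisfying $4\widetilde{g}\in\mathcal{D}_{N,p}(u_{w})$. Here part~(2) of \Cref{lemm:basic} immediately tells us that $\widetilde{g}\in\mathcal{D}_{p}(w(u))$ for every such $w$, because $|w|\leq 1$ makes $w$ a $1$-Lipschitz linear functional. Hence the entire problem collapses to a real-valued statement, applied uniformly in~$w$.

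The remaining real-valued claim is the classical continuous inclusion $M^{1,p}(Z)\hookrightarrow N^{1,p}(Z)$ with operator constant at most $4$: if $f\in\mathcal{L}^{1}_{\loc}(Z)$ admits a Haj{\l}asz upper gradient $h\in L^{p}(Z)$, then a suitable representative $\widetilde{f}$ of $f$ has $4h$ as a $p$-weak upper gradient. This is known, for instance, in \cite[Theorem~8.1.7]{HKST2015}, building on Haj{\l}asz--Kinnunen \cite{Haj:Kin:98}; the representative $\widetilde{f}$ is built by a median or Lebesgue-type limit procedure, and the upper gradient inequality along $p$-almost every rectifiable curve follows by combining the pointwise Haj{\l}asz inequality with a chaining argument and a maximal function estimate along the curve, which together account for the factor~$4$. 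The only step that requires any genuine work is this real-valued fact, and it is not specific to the present paper; the Banach-valued conclusion then falls out of \Cref{lemm:projection:new} applied to the family $\{u_{w}\}_{|w|\leq 1}$, with the constant $4$ preserved.
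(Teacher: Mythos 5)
Your proposal is correct and follows essentially the same path as the paper's proof: reduce to the real-valued case via \Cref{lemm:projection:new}, then invoke the known real-valued embedding of Haj\l{}asz gradients into $4$ times a weak upper gradient. Two small remarks. First, the paper cites \cite[Theorem~1.3]{Ji:Sha:Ya:2015} for the real-valued step, precisely because it is phrased for merely locally integrable $f$ (not just $f\in L^p$), which is what the lemma hypothesis $u\in\mathcal{L}^1_{\loc}$ requires; your reference to the $M^{1,p}\hookrightarrow N^{1,p}$ inclusion should be read in this slightly more general form. Second, your appeal to ``the discussion following \Cref{lemm:representative}'' to justify replacing $g$ by a Borel representative in $\mathcal{D}_p(u)$ is misplaced — \Cref{lemm:representative} concerns $\mathcal{D}_{N,p}$, whereas stability of $\mathcal{D}_p$ under a.e.\ modification is immediate from the definition (enlarge the null set $N$). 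Where \Cref{lemm:representative} is actually needed (and the paper uses it explicitly) is after the real-valued theorem returns a $w$-dependent representative $\widetilde{g}_w$ of $g$: one passes to the fixed Borel $\widetilde{g}$ in the statement $4\widetilde{g}_w\in\mathcal{D}_{N,p}(u_w)$. Your formulation of the real-valued theorem (``$4h$ itself is a weak upper gradient'' for Borel $h$) absorbs this step, so the argument is sound, but it is worth making the dependence explicit.
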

\begin{proof}
Theorem 1.3 in \cite{Ji:Sha:Ya:2015} proves the following: Suppose that $u, g \in \mathcal{L}^{1}_{\loc}( Z )$ and $g \in \mathcal{D}_p( u )$. Then $u$ and $g$ have representatives $\widetilde{u}$ and $\widetilde{g}$, respectively, for which $4\widetilde{g} \in \mathcal{D}_{N,p}( \widetilde{u} )$.

We use this as follows: For every $w \in \mathbb{V}^{*}$ with $|w| \leq 1$, note that $g \in \mathcal{D}_{p}( w(u) )$ for the real-valued $w(u)$. By applying \cite[Theorem 1.3]{Ji:Sha:Ya:2015}, we conclude that there exist representatives $u_w$ of $w(u)$ and $\widetilde{g}_{w}$ of $g$ with $4\widetilde{g}_{w} \in \mathcal{D}_{N,p}( u_w )$.

We fix now an arbitrary Borel representative $\widetilde{g}$ of $g$. It follows from \Cref{lemm:representative} that $4\widetilde{g} \in \mathcal{D}_{N,p}( u_w )$ for every $w \in \mathbb{V}^{*}$ with $|w| \leq 1$. The implication "(2) $\Rightarrow$ (1)" in \Cref{lemm:projection:new} shows that $4\widetilde{g} \in \mathcal{D}_{N,p}( \widetilde{u} )$ for some representative $\widetilde{u}$ of $u$.
\end{proof}

\subsection[Newton-Sobolev spaces]{Newton--Sobolev spaces}\label{subsec_sob}
We continue working with an arbitrary metric measure space. Following \cite{Shanmugalingam,HKST2001}, we say $\widetilde{u} \in \widetilde{N}^{1,p}( Z; \mathbb{V} )$ if $\widetilde{u} \in \mathcal{L}^p(Z; \mathbb V)$ and $\mathcal{D}_{N,p}( \widetilde{u} ) \neq \emptyset$. We denote
\begin{equation*}
    \| \widetilde{u} \|_{ \widetilde{N}^{1,p}(Z; \mathbb{V}) }
    \coloneqq
    \| \widetilde{u} \|_{L^p(Z; \mathbb{V})}
    +
    \inf_{ \rho \in \mathcal{D}_{N,p}( \widetilde{u} ) }\| \rho \|_{L^p(Z)}.
\end{equation*}
Two elements $\widetilde{u}_{1}, \widetilde{u}_{2} \in \widetilde{N}^{1,p}( Z; \mathbb{V} )$ are identified if $\widetilde{u}_{1} - \widetilde{u}_{2} = 0$ and $\rho_{ \widetilde{u}_1 - \widetilde{u}_{2} } = 0$ almost everywhere. With this convention, we obtain from $\widetilde{N}^{1,p}( Z; \mathbb{V} )$ a Banach space $N^{1,p}( Z; \mathbb{V} )$ with the norm
\begin{equation*}
    \| u \|_{ N^{1,p}(Z; \mathbb{V} ) }
    \coloneqq
    \| \widetilde{u} \|_{ \widetilde{N}^{1,p}(Z; \mathbb{V}) },
\end{equation*}
where $\widetilde{u}$ is any $\widetilde{N}^{1,p}( Z; \mathbb{V} )$-representative of ${u} \in N^{1,p}(Z; \mathbb{V} ) $; see \cite[Section 7.1]{HKST2015}.

We consider the following variant of the $N^{1,p}$-space.
\begin{definition}\label{def:Dir}
Let $1 \leq p \leq \infty$ and $\mathbb{V}$ be a Banach space. We denote $u \in W^{1,p}( Z; \mathbb{V} )$ if $u \in L^{p}( Z; \mathbb{V} )$ and there exists $\widetilde{u} \in \widetilde{N}^{1,p}( Z; \mathbb{V} )$ for which $\left\{ u \neq \widetilde{u} \right\}$ is negligible. The space $W^{1,p}( Z; \mathbb{V} )$ is equipped with the norm
\begin{equation*}
    \|u\|_{W^{1, p}(Z; \mathbb{V})}
    \coloneqq
    \| \widetilde{u} \|_{ \widetilde{N}^{1,p}(Z; \mathbb{V}) }.
\end{equation*}
In case $\mathbb{V} = \mathbb{R}$, the letter $\mathbb{R}$ is omitted.
\end{definition}
For $1 \leq p <\infty$, Proposition 6.3.22 in \cite{HKST2015} states that given two mappings $\widetilde{u}_{1}, \widetilde{u}_2 \in \widetilde{N}^{1,p}( Z; \mathbb{V} )$ agreeing almost everywhere, we have $\rho_{ \widetilde{u}_{1} - \widetilde{u}_{2} } = 0$ almost everywhere. For $p = \infty$, Lemma 5.13 in \cite{Durand} implies the corresponding result. The equality $\rho_{ \widetilde{u}_{1} } = \rho_{ \widetilde{u}_{2} }$ almost everywhere follows from the observations $\rho_{ \widetilde{u}_{1} - \widetilde{u}_{2} } + \rho_{ \widetilde{u}_{2} } \in \mathcal{D}_{N,p}( \widetilde{u}_1 )$ and $\rho_{ \widetilde{u}_{1} - \widetilde{u}_{2} } + \rho_{ \widetilde{u}_{1} } \in \mathcal{D}_{N,p}( \widetilde{u}_2 )$. Hence every $u \in W^{1,p}( Z; \mathbb{V} )$ defines a unique equivalence class in $N^{1,p}( Z; \mathbb{V} )$ with equal norm. This argument also implies that we may define $\rho_u \in L^{p}(Z)$ as the Lebesgue equivalence class of a minimal $p$-weak upper gradient of some (all) Newton--Sobolev representative of $u \in W^{1,p}(Z; \mathbb{V})$.

We conclude from \Cref{lemma:Hajlasz:into:New} the following result of independent interest valid for every metric measure space $Z$.
\begin{proposition}\label{lemma:M:in:W}
Let $Z$ be a metric measure space, $1 \leq p \leq \infty$ and $\mathbb{V}$ be a Banach space. Then there exists a linear $4$-Lipschitz mapping $\iota \colon M^{1,p}( Z; \mathbb{V} ) \rightarrow W^{1,p}( Z; \mathbb{V} )$ where $u \mapsto u$. In particular, every $u \in M^{1,p}( Z; \mathbb{V} )$ induces a unique equivalence class $\widetilde{u} \in N^{1,p}( Z; \mathbb{V} )$.
\end{proposition}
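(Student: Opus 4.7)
The proposition is essentially a direct package of Lemma \ref{lemma:Hajlasz:into:New} with the definitions of $M^{1,p}(Z;\mathbb V)$ and $W^{1,p}(Z;\mathbb V)$, so the plan is short. Fix $u \in M^{1,p}(Z;\mathbb V)$. Since $u \in L^p(Z;\mathbb V)$, in particular $u \in \mathcal{L}^1_{\loc}(Z;\mathbb V)$, and by assumption $\mathcal{D}_p(u) \neq \emptyset$. For any $g \in \mathcal{D}_p(u)$, Lemma \ref{lemma:Hajlasz:into:New} furnishes representatives $\widetilde{u}$ of $u$ and $\widetilde{g}$ of $g$ such that $4\widetilde{g} \in \mathcal{D}_{N,p}(\widetilde{u})$. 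In particular, $\widetilde{u} \in \widetilde{N}^{1,p}(Z;\mathbb V)$ and hence, by Definition \ref{def:Dir}, $u \in W^{1,p}(Z;\mathbb V)$. Therefore the identity-on-Lebesgue-classes map $\iota \colon u \mapsto u$ is a well-defined set-theoretic map $M^{1,p}(Z;\mathbb V) \to W^{1,p}(Z;\mathbb V)$. Linearity is automatic because addition and scalar multiplication on $L^p(Z;\mathbb V)$-classes are preserved.

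For the Lipschitz bound, fix $u \in M^{1,p}(Z;\mathbb V)$ and an arbitrary $g \in \mathcal{D}_p(u)$. Using the representative $\widetilde{u}$ of $u$ obtained above together with $4\widetilde{g} \in \mathcal{D}_{N,p}(\widetilde{u})$, the definition of the $W^{1,p}$-norm gives
\begin{equation*}
    \|\iota(u)\|_{W^{1,p}(Z;\mathbb V)}
    = \|\widetilde{u}\|_{L^p(Z;\mathbb V)} + \inf_{\rho \in \mathcal{D}_{N,p}(\widetilde{u})} \|\rho\|_{L^p(Z)}
    \leq \|u\|_{L^p(Z;\mathbb V)} + 4\|g\|_{L^p(Z)}.
\end{equation*}
Taking the infimum over $g \in \mathcal{D}_p(u)$ yields
\begin{equation*}
    \|\iota(u)\|_{W^{1,p}(Z;\mathbb V)}
    \leq \|u\|_{L^p(Z;\mathbb V)} + 4\inf_{g \in \mathcal{D}_p(u)} \|g\|_{L^p(Z)}
    \leq 4\|u\|_{M^{1,p}(Z;\mathbb V)}.
\end{equation*}
Combined with linearity, this is exactly the $4$-Lipschitz conclusion.

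Finally, the ``in particular'' part follows immediately from the previous discussion. The representative $\widetilde{u} \in \widetilde{N}^{1,p}(Z;\mathbb V)$ constructed above defines an equivalence class $[\widetilde{u}] \in N^{1,p}(Z;\mathbb V)$, and uniqueness is guaranteed by the identification noted in the paragraph following Definition \ref{def:Dir}: any two $\widetilde{N}^{1,p}$-representatives of the same $L^p$-class have the same minimal $p$-weak upper gradient almost everywhere, hence determine the same element of $N^{1,p}(Z;\mathbb V)$. There is no real obstacle here; the only point requiring care is invoking Lemma \ref{lemma:Hajlasz:into:New} (rather than trying to use a fixed Borel representative of $g$ directly against the original $u$), because only after passing to the prescribed representative $\widetilde{u}$ does one get a genuine $p$-weak upper gradient with the factor $4$.
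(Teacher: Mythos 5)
Your proof is correct and takes essentially the same approach as the paper: the paper simply states that the proposition is a consequence of \Cref{lemma:Hajlasz:into:New} without spelling out the details, and you fill those in correctly (passing to the representatives supplied by \Cref{lemma:Hajlasz:into:New}, using the independence of the $W^{1,p}$-norm from the choice of $\widetilde{N}^{1,p}$-representative noted after \Cref{def:Dir}, and then taking the infimum over $g \in \mathcal{D}_p(u)$ to get the factor~$4$).
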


We next prove a slightly technical statement involving $u \in \widetilde{N}^{1,p}( Z; \mathbb{V} )$ and the $\ell^{\infty}$-space. We recall the existence of a negligible set $E \subset Z$ such that $u( Z \setminus E )$ is contained in a closed and separable subspace of $\mathbb{V}$. Recalling \Cref{lemm:representation}, there exists a linear $1$-Lipschitz mapping
\begin{equation*}
    \iota \colon \mathbb{V} \rightarrow \ell^{\infty}
\end{equation*}
for which $\iota|_{ \mathbb{V}_0 }$ is a linear isometric embedding. We also denote $U \coloneqq \iota \circ u$.
\begin{proposition}\label{lemma:usefulfact}
Let $u$ and $U$ be as above. Then $U \in \widetilde{N}^{1,p}( Z; \ell^{\infty} )$. Moreover, $\rho_u(x) = \rho_{U}(x)$ for almost every $x \in Z$, $|u(x)-u(y)| = |U(x)-U(y)|$ for every $x, y \in Z \setminus E$, and $|u(x)| = |U(x)|$ for every $x \in Z \setminus E$.
\end{proposition}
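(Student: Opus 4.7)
The plan is to handle the three pointwise statements first and then establish the equality of minimal upper gradients as two opposite inequalities. The pointwise equalities \(|u(x)|=|U(x)|\) and \(|u(x)-u(y)|=|U(x)-U(y)|\) for \(x,y\in Z\setminus E\) are immediate: both \(u(x)\) and \(u(x)-u(y)\) lie in the linear subspace \(\mathbb V_{0}\), on which \(\iota\) restricts to a linear isometry by hypothesis.

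For the assertion \(U\in\widetilde{N}^{1,p}(Z;\ell^\infty)\) together with the inequality \(\rho_U\le \rho_u\) almost everywhere, I would apply \Cref{lemm:basic:new}(1) to the \(1\)-Lipschitz linear map \(\iota\colon\mathbb V\to\ell^\infty\), which satisfies \(\iota(0)=0\). This gives at once \(U=\iota\circ u\in L^p(Z;\ell^\infty)\) and shows that every element of \(\mathcal D_{N,p}(u)\) belongs to \(\mathcal D_{N,p}(U)\); in particular, a Borel representative of \(\rho_u\) lies in \(\mathcal D_{N,p}(U)\), so \(U\in\widetilde{N}^{1,p}(Z;\ell^\infty)\) and minimality yields \(\rho_U\le \rho_u\) almost everywhere.

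The opposite inequality \(\rho_u\le \rho_U\) almost everywhere is the main step; the strategy is to apply \Cref{lemm:projection:new} to \(U\) in the direction \((1)\Rightarrow(2)\) and then to \(u\) in the direction \((2)\Rightarrow(1)\), using Hahn--Banach as a bridge. Fix a Borel representative of \(\rho_U\). Given \(w\in\mathbb V^{*}\) with \(|w|\le 1\), the composition \(w\circ(\iota|_{\mathbb V_0})^{-1}\) is a well-defined bounded linear functional of norm \(\le 1\) on the subspace \(\iota(\mathbb V_0)\subset\ell^\infty\), because \(\iota|_{\mathbb V_0}\) is an isometric embedding. Extend it by Hahn--Banach to \(w'\in(\ell^\infty)^{*}\) with \(|w'|\le 1\); by construction \(w'(U(x))=w(u(x))\) for every \(x\in Z\setminus E\), hence \(w'(U)=w(u)\) almost everywhere. \Cref{lemm:projection:new}(1)\(\Rightarrow\)(2) applied to \(U\) produces a representative \(u_{w'}\) of \(w'(U)\) with \(\rho_U\in\mathcal D_{N,p}(u_{w'})\); by the preceding identity, \(u_{w'}\) is also a representative of \(w(u)\). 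Since \(w\) was arbitrary, \Cref{lemm:projection:new}(2)\(\Rightarrow\)(1) produces a representative \(\widetilde u\) of \(u\) with \(\rho_U\in\mathcal D_{N,p}(\widetilde u)\).

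To close the argument, note that \(\widetilde u\in\widetilde{N}^{1,p}(Z;\mathbb V)\) and \(\widetilde u=u\) almost everywhere, so both represent the same element of \(W^{1,p}(Z;\mathbb V)\); by the remark immediately preceding \Cref{lemma:M:in:W}, the Lebesgue class of the minimal \(p\)-weak upper gradient is an invariant of this \(W^{1,p}\)-class, so \(\rho_{\widetilde u}=\rho_u\) almost everywhere. Minimality of \(\rho_{\widetilde u}\) within \(\mathcal D_{N,p}(\widetilde u)\) then yields \(\rho_u=\rho_{\widetilde u}\le \rho_U\) almost everywhere, completing the proof. The expected obstacle is precisely this last representative-bookkeeping step: \Cref{lemm:projection:new} only guarantees \(\rho_U\) to be an upper gradient of \emph{some} a.e.\ representative of \(u\), not of \(u\) itself, and one must explicitly invoke the invariance of the minimal upper gradient under a.e.\ modification inside \(\widetilde{N}^{1,p}\) to pass from \(\widetilde u\) back to \(u\).
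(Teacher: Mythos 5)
Your proof is correct, but the crucial step showing $\rho_u \le \rho_U$ takes a genuinely different route from the paper. The paper argues directly on paths: it fixes a full-measure family of constant-speed curves $\gamma$ on which both $u\circ\gamma$ and $U\circ\gamma$ are absolutely continuous with the upper-gradient bounds of \Cref{lemm:gradient:absolutecontinuity}, observes that $\gamma^{-1}(E)$ is Lebesgue-null, deduces by continuity that $u\circ\gamma$ has image in $\mathbb{V}_0$ (and $U\circ\gamma$ in $\iota(\mathbb{V}_0)$), and concludes that the two curves have identical metric speeds; this lets one replace $\rho_u$ by $\rho_U$ in the pointwise bound \eqref{eq:upper:abs:later} and get $\rho_U\in\mathcal D_{N,p}(u)$ for the given $u$ outright. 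Your argument instead proceeds dually: you extend $w\circ(\iota|_{\mathbb V_0})^{-1}$ by Hahn--Banach to $w'\in(\ell^\infty)^*$ so that $w'(U)=w(u)$ almost everywhere, push $\rho_U$ down to a weak upper gradient of $w'(U)$ via \Cref{lemm:projection:new} $(1)\Rightarrow(2)$, read this as a weak upper gradient of $w(u)$, and then lift back via $(2)\Rightarrow(1)$. This is cleaner in that it uses no path-level analysis at all, leaning entirely on already-proved lemmas; the small cost is the extra representative bookkeeping you flag at the end, which is correctly resolved by the a.e.-invariance of $\rho_{(\cdot)}$ on $\widetilde N^{1,p}$ established just before \Cref{lemma:M:in:W}. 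One secondary remark: the paper's path argument also isolates the fact that $p$-a.e.\ curve through $u$ has image in $\mathbb V_0$, a by-product it reuses later (in the proof of \Cref{lemm:densityoffinitedim}); your functional-analytic proof does not produce that auxiliary fact.
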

\begin{proof}
It follows directly from the definitions that $|u(x)-u(y)| = |U(x)-U(y)|$ for every $x, y \in Z \setminus E$ and $|u(x)| = |U(x)|$ for every $x \in Z \setminus E$.

Since $\iota$ is $1$-Lipschitz, we have $\rho_{u} \in \mathcal{D}_{N,p}( U )$, thereby implying $U \in \widetilde{N}^{1,p}( Z; \ell^{\infty} )$ and $\rho_{U} \leq \rho_{u}$ almost everywhere.

It remains to show $\rho_{U} \in \mathcal{D}_{N,p}( u )$. We consider the $p$-negligible path family
\begin{equation*}
    \Gamma_{0}
    =
    \Gamma_{U, \rho_{U}}
    \cup
    \Gamma_{u,\rho_u}
    \cup
    \left\{ \gamma \colon \int_{\gamma} \infty \cdot \chi_{E} \,ds = \infty \right\},
\end{equation*}
where $\Gamma_{U, \rho_{U}}$ and $\Gamma_{u,\rho_u}$ are obtained from \Cref{lemm:gradient:absolutecontinuity}. The key property for us is the following: For every constant speed $\gamma \not\in \Gamma_{0}$, the paths $u \circ \gamma$ and $U \circ \gamma$ are absolutely continuous, and
\begin{gather}\label{eq:upper:abs:later}
    \| ( u \circ \gamma )' \|(t)
    \leq 
    \rho_u( \gamma(t) ) \| \gamma' \|(t)
    \quad\text{for almost every $t$,}
\end{gather}
with the corresponding inequality holding for the pair $( U, \rho_{U} )$ instead of $(u, \rho_{u} )$; here we simply apply \eqref{eq:upper:abs}. Since $\int_{ \gamma } \chi_{E} \,ds = 0$ and $\gamma$ has constant speed, we conclude that $\gamma^{-1}( E )$ has negligible Lebesgue measure. The inequality \eqref{eq:upper:abs:later} (resp. the corresponding inequality for $U$) implies that $u \circ \gamma$ (resp. $U \circ \gamma$) has zero length in the complement of $\mathbb{V}_{0}$ (resp. $\iota( \mathbb{V}_{0} )$). The continuity of $u \circ \gamma$ (resp. $U \circ \gamma$) actually yields that the image of $u \circ \gamma$ lies in $\mathbb{V}_{0}$ (resp. the image of $U \circ \gamma$ in $\iota( \mathbb{V}_{0} )$).

Given that $\iota$ is an isometry in $\mathbb{V}_{0}$, we have $| U \circ \gamma(s) - U \circ \gamma(s') | =  | u \circ \gamma(s) - u \circ \gamma(s') |$ for every $s,s'\in I_{\gamma}$. In particular, the metric speeds of the absolutely continuous paths $u \circ \gamma$ and $U \circ \gamma$ coincide. Hence we may replace in the right-hand side of \eqref{eq:upper:abs:later} the $\rho_u( \gamma(t) ) \| \gamma' \|(t)$ by $\rho_U( \gamma(t) ) \| \gamma' \|(t)$ for almost every $t$. This implies $\rho_{U} \in \mathcal{D}_{N,p}( u )$. We have now established the claim.
\end{proof}

\subsection[Local Hajlasz-Sobolev spaces]{Local Haj\l{}asz-Sobolev spaces}\label{sec:locHaj}
In preparation for the proof of \Cref{thm:extensionresults}, we consider in this section a doubling metric measure space $Z$ and a measurable set $\Omega \subset Z$ satisfying the measure-density condition \eqref{eq:measuredensitcondition}. We define an intermediate space between the Haj\l{}asz--Sobolev and the Newton--Sobolev spaces.
\begin{definition}
Let $Z$ be a metric measure space and $\mathbb V$ be a Banach space. Fix a function $u \colon Z \to  \mathbb{V}$. We say that $g \colon Z \rightarrow \left[0,\infty\right]$ is a \emph{ Haj\l{}asz gradient} of $u$ up to scale $r > 0$ if $g|_{ B( z, r) } \in \mathcal{D}( u|_{ B(z,r) } )$ for every $z \in Z$. The notation $\mathcal{D}( u, r )$ refers to the class of all Haj\l{}asz gradients of $u$ up to scale $r$ and we set
\begin{equation*}
    \mathcal{D}_{\loc}( u ) \coloneqq \bigcup_{ r > 0 } \mathcal{D}( u, r ).
\end{equation*}
We also denote
\begin{equation*}
    \mathcal{D}_{p}( u, r ) \coloneqq \mathcal{D}( u, r ) \cap L^{p}( Z )
    \quad\text{and}\quad
    \mathcal{D}_{\loc,p}( u ) \coloneqq \mathcal{D}_{\loc}( u ) \cap L^{p}( Z ).
\end{equation*}
We say that $u \in m^{1,p}( Z; \mathbb{V} )$ if $u \in L^{p}( Z; \mathbb{V} )$ and $\mathcal{D}_{\loc,p}(u) \neq \emptyset$, and we define the norm
\begin{equation*}
    \| u \|_{ m^{1,p}( Z; \mathbb{V} ) }
    \coloneqq
    \| u \|_{ L^{p}( Z; \mathbb{V} ) }
    +
    \inf_{ g \in  \mathcal{D}_{\loc, p}( u ) }
    \| g \|_{ L^{p}( Z ) }.
\end{equation*}
In case $\mathbb{V} = \mathbb{R}$, we denote $m^{1,p}( Z ) \coloneqq m^{1,p}( Z; \mathbb{R} )$.
\end{definition}
\begin{remark}\label{rem:Banach}
Even when $\Omega \subset \mathbb{R}^n$ is an open or measurable set, it is not clear when $m^{1,p}( \Omega; \mathbb{V} )$ is a Banach space. In fact, we show examples in \Cref{sec:Banach:extension} illustrating that the Banach property of $m^{1,p}( \Omega )$ is closely connected to $W^{1,p}$-extension sets.

We also note that, in general, $\mathcal{D}_\loc(u)$ is different from the collection of all $g \colon Z \to [0,\infty]$ for which for all $z \in Z$ there exists \emph{some} $r(z) > 0$ with $g|_{ B(z,r) } \in \mathcal{D}( u|_{B(z,r)} )$. An illustrative example is given by the slit disk. As far as the authors are aware, for real-valued mappings this slight variant of the space $m^{1,p}$ was first defined in \cite{Ji:Sha:Ya:2015}, the variant containing the space $m^{1, p}$ defined here.
\end{remark}
Observe that since $\mathcal{D}_{p}( u ) \subset \mathcal{D}_{\loc,p}( u )$, the space $M^{1,p}( Z; \mathbb{V} )$ $1$-Lipschitz linearly embedds into $m^{1,p}( Z; \mathbb{V} )$. Moreover, the following fact shows that there is a $4$-Lipschitz embedding from $m^{1,p}( Z;\mathbb V )$ into $W^{1,p}( Z; \mathbb V )$.

\begin{lemma}\label{lemma:m:into:N}
Let $Z$ be a metric measure space, $\mathbb V$ be a Banach space and $1 < p < \infty$. Then $m^{1,p}( Z; \mathbb{V} )$ $4$-Lipschitz linearly embedds into $W^{1,p}( Z; \mathbb{V} )$. More precisely, given any $u \in m^{1,p}( Z; \mathbb{V} )$ and  $g \in \mathcal{D}_{\loc, p}( u )$, there exists a representative $\widetilde{u}$ of $u$ and a Borel representative $\widetilde{g}$ of $g$ such that $4\widetilde{g} \in \mathcal{D}_{N,p}( \widetilde{u} )$.
\end{lemma}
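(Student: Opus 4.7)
The plan is to apply \Cref{lemma:Hajlasz:into:New} locally on a countable cover of $Z$ by balls of radius $r$, where $g \in \mathcal{D}(u, r)$, and then to glue the resulting Newton--Sobolev representatives into a single global one by a path-subdivision argument.

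Fix $u \in m^{1,p}(Z; \mathbb{V})$, a Haj\l{}asz gradient $g \in \mathcal{D}(u, r) \cap L^{p}(Z)$ at some scale $r > 0$, and a Borel representative $\widetilde{g}$ of $g$. By separability of $Z$, choose a countable dense set $\{z_i\}_{i \in \mathbb{N}}$ and set $B_i \coloneqq B(z_i, r/4)$, so that $\bigcup_i B_i = Z$ while each $B(z_i, r)$ witnesses the local Haj\l{}asz inequality. Applying \Cref{lemma:Hajlasz:into:New} to $( u|_{B(z_i, r)}, g|_{B(z_i, r)} )$, viewed in $B(z_i, r)$ with the restricted measure, one obtains a representative $\widetilde{u}_{i}$ of $u|_{B(z_i, r)}$ satisfying $4\widetilde{g}|_{B(z_i, r)} \in \mathcal{D}_{N,p}(\widetilde{u}_{i})$; inspection of the proof of that lemma shows that the Borel representative of $g$ can be fixed freely, so we use $\widetilde{g}|_{B(z_i, r)}$ throughout.

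The global representative $\widetilde{u}$ is obtained by gluing the $\widetilde{u}_i$. Each $\widetilde{u}_i$ lies in $\widetilde{N}^{1,p}(B(z_i, r); \mathbb{V})$ and hence admits a $p$-quasicontinuous representative (standard for Newton--Sobolev functions), and two such representatives of $u|_{B(z_i, r) \cap B(z_j, r)}$ agree outside a set of $p$-capacity zero. Setting $\widetilde{u}(x) \coloneqq \widetilde{u}_{i}(x)$ for any $i$ with $x \in B(z_i, r)$ then yields a representative of $u$ that is well-defined outside a set of $p$-capacity zero, namely the countable union of pairwise disagreement sets. For any rectifiable $\gamma \colon [a,b] \to Z$, compactness of $\gamma([a,b])$ and the Lebesgue number lemma yield a partition $a = t_0 < \cdots < t_N = b$ with $\gamma([t_{k-1}, t_k]) \subset B(z_{i_k}, r)$; summing the local $4\widetilde{g}$-upper gradient inequalities along each subpath and applying the triangle inequality gives
\[
    | \widetilde{u}( \gamma(a) ) - \widetilde{u}( \gamma(b) ) |
    \leq
    \sum_{k=1}^{N} | \widetilde{u}_{i_k}( \gamma(t_{k-1}) ) - \widetilde{u}_{i_k}( \gamma(t_k) ) |
    \leq
    4 \int_{\gamma} \widetilde{g} \,ds.
\]

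The main obstacle is controlling the $p$-exceptional family of paths that must be excluded: it comprises paths meeting the pairwise disagreement set (which is handled by $p$-quasicontinuity, since paths meeting a $p$-capacity zero set form a $p$-negligible family) together with paths containing a subpath in the local $p$-exceptional family on some $B(z_{i_k}, r)$. For the latter, any admissible function witnessing $p$-exceptionality for a family of paths in $B(z_{i_k}, r)$ also witnesses $p$-exceptionality for any path in $Z$ containing such a subpath, since path integrals are monotone under extension; a countable union of $p$-exceptional families remains $p$-exceptional. Once $4\widetilde{g} \in \mathcal{D}_{N,p}(\widetilde{u})$ is established, taking the infimum over $g \in \mathcal{D}_{\loc,p}(u)$ yields
\[
    \| u \|_{ W^{1,p}(Z; \mathbb{V}) }
    \leq
    \| u \|_{ L^{p}(Z; \mathbb{V}) }
    +
    4 \inf_{ g \in \mathcal{D}_{\loc, p}(u) } \| g \|_{ L^{p}(Z) }
    \leq
    4 \| u \|_{ m^{1,p}(Z; \mathbb{V}) },
\]
establishing the $4$-Lipschitz linear embedding $u \mapsto \widetilde{u}$.
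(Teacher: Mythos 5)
Your proof is correct, and it takes a genuinely different route from the paper's. The paper cites \cite[Theorem 3.2]{Ji:Sha:Ya:2015}, which \emph{directly} gives the real-valued version of the present claim (a local Haj\l{}asz gradient in $\mathcal{D}_{\loc,p}$ yields a Newton--Sobolev representative), and then lifts to Banach-valued targets via the dual projection \Cref{lemm:projection:new}. You instead invoke the already Banach-valued \emph{global} \Cref{lemma:Hajlasz:into:New} on each ball of a countable cover and then glue the local Newton--Sobolev representatives into a global one by a Lebesgue-number subdivision along paths, managing the local exceptional path families and the countable pairwise-disagreement set. In effect, the paper's proof "localizes" by black box and "linearizes" by projection, while yours "linearizes" by black box and rederives the local-to-global passage from scratch. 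Your route re-proves the content of \cite[Theorem 3.2]{Ji:Sha:Ya:2015} from the global Theorem 1.3 of the same reference, which is a heavier technical lift but keeps the argument self-contained; the paper's route is shorter by outsourcing exactly that step.

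One small imprecision to flag: you invoke "$p$-quasicontinuous representatives" of Newton--Sobolev functions, but quasicontinuity is \emph{not} automatic in an arbitrary metric measure space (it generally requires, e.g., completeness together with density of Lipschitz or continuous functions). What you actually need — that two functions in $\widetilde{N}^{1,p}$ agreeing a.e. agree outside a set $E$ with $\mu(E)=0$ such that the family of nonconstant paths meeting $E$ is $p$-negligible — is a strictly weaker fact, which does hold in full generality (Fuglede's lemma combined with absolute continuity along $p$-a.e.\ path, cf.\ \Cref{lemm:gradient:absolutecontinuity}). Replacing the appeal to quasicontinuity by this weaker statement is all the gluing step requires, so the argument is sound; just be careful not to claim quasicontinuity itself.
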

\begin{proof}
The following is a variant of the proof of \Cref{lemma:Hajlasz:into:New}. First, \cite[Theorem 3.2]{Ji:Sha:Ya:2015} proves the following: Suppose that $u, g \in \mathcal{L}^{1}_{\loc}( Z )$ and $g \in \mathcal{D}_{\loc, p}( u )$. Then $u$ and $g$ have representatives $\widetilde{u}$ and $\widetilde{g}$, respectively, for which $4\widetilde{g} \in \mathcal{D}_{N,p}( \widetilde{u} )$.

For every $w \in \mathbb{V}^{*}$ with $|w| \leq 1$, note that $g \in \mathcal{D}_{\loc, p}( w(u) )$ for the real-valued function $w(u)$. By applying \cite[Theorem 3.2]{Ji:Sha:Ya:2015}, we conclude that there exist representatives $\widetilde{u}_w$ of $w(u)$ and $\widetilde{g}_{w}$ of $g$ with $4\widetilde{g}_{w} \in \mathcal{D}_{N,p}( \widetilde{u}_w )$.

We fix now an arbitrary Borel representative $\widetilde{g}$ of $g$. It follows from \Cref{lemm:representative} that $4\widetilde{g} \in \mathcal{D}_{N,p}( u_w )$ for every $w \in \mathbb{V}^{*}$ with $|w| \leq 1$. The implication "(2) $\Rightarrow$ (1)" in \Cref{lemm:projection:new} shows that $4\widetilde{g} \in \mathcal{D}_{N,p}( \widetilde{u} )$ for some representative $\widetilde{u}$ of $u$, and the proof of the lemma is finished.
\end{proof}

For the next result we recall the following definition:
\begin{equation*}
    u^{\sharp}_{s}(z)
    =
    \sup_{ 0 < t < s }
    \frac{1}{t}
    \aint{ \Omega \cap B(z,t) }
    \aint{ \Omega \cap B(z,t) }
        | u(x)-u(y) |
    \,d\mu(x)\,d\mu(y)
\end{equation*}
Here $u \colon \Omega \rightarrow \mathbb{V}$ is locally integrable and $\Omega \subset Z$ is a measurable subset satisfying the measure-density condition.

\begin{proposition}\label{lemm:localHaj}
Whenever $Z$ is a doubling metric measure space, $1 < p < \infty$, $\Omega\subset Z$ satisfies the measure-density condition, and $\mathbb V$ is a Banach space, we have
$$ m^{1,p}(\Omega;\mathbb V)=\{u\in L^p(\Omega;\mathbb V):\,u^{\sharp}_{s} \in L^{p}( \Omega)\;\text{for some}\; s>0\}. $$
Moreover, there exists a constant $C=C(c_\mu,c_\Omega,p) \geq  1$, such that
\begin{equation*}
    \frac{ 1 }{ C}
    \left(
        \| u \|_{ L^{p}(\Omega; \mathbb{V}) }
        +
        \inf_{ s > 0 }\| u^{\sharp}_{s}(z) \|_{ L^{p}( \Omega ) }
    \right)
    \leq
    \| u \|_{ m^{1,p}( \Omega; \mathbb{V} ) }
    \leq
    C
    \left(
        \| u \|_{ L^{p}(\Omega; \mathbb{V}) }
        +
        \inf_{ s > 0 }\| u^{\sharp}_{s}(z) \|_{ L^{p}( \Omega ) }
    \right).
\end{equation*}
In fact, whenever $0 < s \leq 1/2$, there exists another constant $C=C(c_\mu,c_\Omega) > 0$ such that for every locally integrable $u \colon \Omega \rightarrow \mathbb{V}$, $C u^{\sharp}_{2s} \in \mathcal{D}( u, s )$.

\end{proposition}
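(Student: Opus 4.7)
The proposition combines three related assertions: equality of sets, bi-Lipschitz equivalence of norms, and the concrete Hajłasz-gradient statement $Cu^{\sharp}_{2s} \in \mathcal{D}(u,s)$ for $s\in(0,1/2]$. My plan is to prove the concrete statement (direction A), which yields one containment and the upper norm bound, together with its converse (direction B), which yields the other containment and the lower norm bound.

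For direction B, fix any $g \in \mathcal{D}(u, r_0) \cap L^p(\Omega)$ and set $s = r_0/3$. For $x \in \Omega$ and $0 < t < s$, the Hajłasz inequality gives $|u(y) - u(z)| \leq 2t(g(y)+g(z))$ for $y,z \in B(x,t) \cap \Omega \subset B(x,r_0)$. Integrating twice over $B(x,t)\cap\Omega$, dividing by $t$, and using the measure density condition together with doubling to replace averages over $B(x,t)\cap\Omega$ by averages over $B(x,t)$ at the cost of a multiplicative constant $C(c_\mu, c_\Omega)$, I obtain
\[
    u^{\sharp}_s(x) \leq C(c_\mu, c_\Omega)\, \mathcal{M}_s(\widehat g)(x),
\]
where $\widehat g$ is the zero extension of $g$ outside $\Omega$. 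The hypothesis $p > 1$ together with \Cref{lemm:maximalfunction:bounded} then gives $\|u^{\sharp}_s\|_{L^p(\Omega)} \leq C(c_\mu, c_\Omega, p)\|g\|_{L^p(\Omega)}$, which is the lower norm bound.

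For direction A, fix $s \in (0, 1/2]$ and assume $u^{\sharp}_{2s} \in L^p(\Omega)$. By doubling, the measure density condition (which yields $\mu(B(x,t)\cap\Omega) > 0$ for $x \in \Omega$ and $t \in (0,1]$), and the Lebesgue differentiation theorem, almost every $x \in \Omega$ satisfies $u(x) = \lim_{t \to 0^+} u_{B(x,t)\cap\Omega}$. For any such Lebesgue point and any $0 < r_0 < 2s$, a dyadic telescoping along the balls $B_k := B(x, 2^{-k} r_0)$ (whose radii are all strictly less than $2s$) yields the central estimate
\[
    |u(x) - u_{B(x, r_0) \cap \Omega}| \leq C(c_\mu, c_\Omega)\, r_0\, u^{\sharp}_{2s}(x),
\]
since each increment $|u_{B_{k+1}\cap\Omega} - u_{B_k\cap\Omega}|$ is dominated by $C \cdot 2^{-k} r_0 \cdot u^{\sharp}_{2s}(x)$ via the measure-ratio bound $\mu(B_k\cap\Omega)/\mu(B_{k+1}\cap\Omega) \leq C(c_\mu, c_\Omega)$ from doubling combined with measure density.

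To obtain the Hajłasz inequality on each $B(z,s)$, fix $z \in Z$ and Lebesgue points $x, y \in \Omega \cap B(z,s)$ with $r := d(x,y) < 2s$. When $r < s$, the bridge ball $B(x, 2r)$ (radius $< 2s$, containing both $B(x,r)$ and $B(y,r)$) combined with the central estimate and measure-ratio bounds yields $|u(x) - u(y)| \leq Cr\bigl(u^{\sharp}_{2s}(x) + u^{\sharp}_{2s}(y)\bigr)$. The main technical obstacle is the boundary regime $r \in [s, 2s)$, where no ball centered at $x$ of radius strictly less than $2s$ simultaneously contains fixed-size open neighbourhoods of both $x$ and $y$ as $r \to 2s$. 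I would handle this by exploiting that $s \leq r$ in this regime (so any bound of order $s$ already has the desired form $r \cdot (\cdot)$) and using the auxiliary bridge $B(z, 3s/2)$ of radius $<2s$, which contains both $B(x, s/2)$ and $B(y, s/2)$; the bridging terms are controlled by measure-ratio estimates from doubling and measure density, and the resulting average on $B(z, 3s/2) \cap \Omega$ is in turn dominated by $u^{\sharp}_{2s}(x) + u^{\sharp}_{2s}(y)$ via a chaining through an auxiliary Lebesgue point of $\Omega \cap B(z,s)$ (which exists by measure density). Once the Hajłasz inequality is established a.e. on $B(z,s)^2$ for every $z$, direction A is proved, and the upper norm bound follows by taking $s \leq 1/2$ arbitrarily small.
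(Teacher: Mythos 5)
Your overall decomposition matches the paper's: direction B (from a local Haj\l{}asz gradient to $u^{\sharp}_s \in L^p$) uses the measure-density condition and the boundedness of the maximal operator, and direction A (from $u^{\sharp}_{2s}\in L^p$ to a local Haj\l{}asz gradient) rests on a dyadic telescoping toward a Lebesgue point, followed by a bridging estimate between the telescoped averages at $x$ and at $y$. Your "central estimate" is exactly the paper's chain argument, and your treatment of the sub-regime $r=d(x,y)<s$ via a bridge ball of radius comparable to $r$ is sound and uses the same ingredients.

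The gap is in your boundary regime $r\in[s,2s)$. Bridging through $B(z,3s/2)$ replaces the double average over $B(z,3s/2)\cap\Omega$ by $s\,u^{\sharp}_{2s}(z)$, and this term is attached to the \emph{center} $z$, not to $x$ or $y$. Your proposed repair, "chaining through an auxiliary Lebesgue point of $\Omega\cap B(z,s)$," does not remove it: any intermediate point $\xi$ picked up by such a chain produces a contribution proportional to $u^{\sharp}_{2s}(\xi)$, and there is no pointwise comparison of the form $u^{\sharp}_{2s}(\xi)\lesssim u^{\sharp}_{2s}(x)+u^{\sharp}_{2s}(y)$ for an auxiliary point $\xi$; the sharp-maximal functional has no modulus of continuity that would permit relocating the evaluation point. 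As written, the boundary-regime argument therefore does not close. Note also that in your sub-$s$ regime, running the chain centered at $y$ at scale $2r$ forces $2r<2s$, which actually restricts to $r<s$; this is consistent with what you claim but should be stated, since it is exactly what pushes the difficulty into $[s,2s)$.

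The paper sidesteps this entirely: its proof establishes the Haj\l{}asz inequality only for pairs $x,y$ with $d(x,y)<s$, by choosing the telescoping scale of order $d(x,y)$ so that monotonicity of $t\mapsto u^{\sharp}_t$ lands every average inside the $u^{\sharp}_{2s}$ window. This yields a Haj\l{}asz gradient up to scale $s/2$ (equivalently, $Cu^{\sharp}_{4s}\in\mathcal{D}(u,s)$ after relabeling), which is all that is ever used later in the paper; the constant $2$ in front of $s$ in the "in fact" clause is immaterial to the rest of the argument, since $\inf_{s>0}\|u^{\sharp}_{\alpha s}\|_{L^p}$ is independent of the fixed $\alpha>0$. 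You can adopt the same simplification: drop the $[s,2s)$ case, prove the inequality for $d(x,y)<s$ by choosing the chain scale $\approx d(x,y)$, and either relabel $s$ or accept a larger scale constant in $u^{\sharp}_{\alpha s}$.
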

\begin{proof}
This is a standard argument but we include the details for the convenience of the reader.

Assume first that $u\in m^{1,p}(\Omega;\mathbb V)$. Obviously by definition we have $u\in L^p(\Omega;\mathbb V)$. Let us prove that $u^{\sharp}_{s} \in L^{p}( \Omega)$. Suppose that $g \in \mathcal{D}_{\loc,p}( u )$  and take $s\in(0,1]$ so that $g|_{ B( z, s)\cap \Omega } \in \mathcal{D}( u|_{ B(z,s)\cap\Omega } )$ for every $z \in \Omega$. Then, at every $z \in \Omega$,
\begin{align*}
    \sup_{ 0 < t < s }
    \frac{1}{t}
    \aint{ \Omega \cap B(z,t) }
    \aint{ \Omega \cap B(z,t) }
        | u(x)-u(y) |
    \,d\mu(x)\,d\mu(y)
    &\leq
    \sup_{ 0 < t < s }
    4\aint{ \Omega \cap B(z,t) } g(x) \,d\mu(x)\\
    &\leq  C(c_\Omega)  \mathcal{M}( \widehat{g} )(z).
\end{align*}
In the last line we merely use the measure-density condition of $\Omega$. Recall that  $\widehat{g}$ denotes the zero extension of $g$ to $Z$ and $\mathcal{M}$ the maximal function on $Z$. Since the maximal function $\mathcal{M}$ is a bounded operator by \Cref{lemm:maximalfunction:bounded}, we conclude from these inequalities that $u^{\sharp}_{s} \in L^{p}( \Omega )$. In fact, these estimates depend only on  the measure-density constant $c_\Omega$, the doubling constant $c_\mu$ and $p$, so there is $C = C(c_\Omega, c_\mu, p )\geq 1$ so that
\begin{equation}\label{eq:lowerbound:locHaj}
    \frac{ 1 }{ C }
    \left(
        \| u \|_{ L^{p}(\Omega; \mathbb{V}) }
        +
        \inf_{ s > 0 }\| u^{\sharp}_{s}(z) \|_{ L^{p}( \Omega ) }
    \right)
    \leq
    \| u \|_{ m^{1,p}( \Omega; \mathbb{V} ) }.
\end{equation}
Let us prove now that if $u^{\sharp}_{s} \in L^{p}( \Omega)$ for some $s > 0$ and $u \in L^{p}( \Omega; \mathbb{V} )$, then $u \in m^{1,p}( \Omega; \mathbb{V} )$. We may always assume $1 \geq s > 0$. To this end, it is enough to verify the following claim: For every locally integrable $u:\Omega\to\mathbb V$ there exists a constant $C=C(c_\mu,c_\Omega)>0$ such that for each $1/2 \geq s > 0$,
\begin{equation}\label{eq:localHaj:localgradient}
    C u^{\sharp}_{2s}|_{ B(z,s) \cap \Omega }
    \in
    \mathcal{D}( u|_{ B(z,s) \cap \Omega } )
    \quad\text{for every $z \in \Omega$}.
\end{equation}
Observe that this immediately implies
$$  \| u \|_{ m^{1,p}( \Omega; \mathbb{V} ) }
    \leq \| u \|_{ L^{p}(\Omega; \mathbb{V}) }
        +
        C\inf_{ s > 0 }\| u^{\sharp}_{s}(z) \|_{ L^{p}( \Omega ) }. $$
For the purpose of proving \eqref{eq:localHaj:localgradient}, let $N$ denote the collection of non-Lebesgue points of $u$, i.e., those for which 
$$N=\left\{ x\in\Omega:\, \limsup_{ r \rightarrow 0^{+} }
    \aint{ B(x,r) \cap \Omega }
        | u(w) - u_{ B(x,r) \cap \Omega } |
    \,d\mu(w)
    >
    0  \right\}.$$
The measure-density condition and the local integrability of $u$ imply $\mu( N ) = 0$.

Fix $z \in \Omega$. We claim that there exists $C=C(c_\mu,c_\Omega)>0$ for which
\begin{equation*}
    | u(x) - u(y) | \leq d(x,y)( C u^{\sharp}_{2s}(x) + C u^{\sharp}_{2s}(y) )
    \quad\text{for every $x, y \in \Omega \cap B(z,s) \setminus N$}.
\end{equation*}

Fix $1 \geq s' > 0$ and consider $B_i := B( x, 2^{-i} s' ) \cap \Omega$ for $x \in \Omega \cap B(z,s') \setminus N$ and each $i = 0,1,2,\dots$. Since $x \in \Omega \setminus N$,
\begin{equation}\label{eq:chainargument:1}
    | u(x) - u_{ B_{0} } |
    \leq
    \sum_{ i = 0 }^{ \infty }
    | u_{ B_i } - u_{ B_{i+1} } |
    \leq
    \sum_{ i = 0 }^{ \infty }
    \aint{ B_{i+1}  }
        | u(w) - u_{ B_{i}  } |
    \,d\mu(w).
\end{equation}
As $B_{i+1} \subset B_{i}$ and $\mu( B_{i} ) \leq C(c_\mu,c_\Omega)\mu( B_{i+1} )$, we obtain
\begin{align}\label{eq:chainargument:2}
    \sum_{ i = 0 }^{ \infty }
    \aint{ B_{i+1} }
        | u(w) - u_{ B_{i} } |
    \,d\mu(w)
    &\leq
   C(c_\mu,c_\Omega)
    \sum_{ i = 0 }^{ \infty }
    \aint{ B_{i} }
        | u(w) - u_{ B_{i} } |
    \,d\mu(w)
    \\ \notag
    &\leq
  C(c_\mu,c_\Omega)
    \sum_{ i = 0 }^{ \infty }
    ( 2^{ -i } s' )
    \frac{ 1 }{ 2^{-i}s' }
    \aint{ B_{i} }
    \aint{ B_{i} }  | u(w) - u(w') | \,d\mu(w')
    \,d\mu(w)
    \\ \notag
    &\leq
   C(c_\mu,c_\Omega)
    \sum_{ i = 0 }^{ \infty }
    ( 2^{ -i } s' )
    u^{\sharp}_{s'}(x)
    \leq
    2 C(c_\mu,c_\Omega) s' u^{\sharp}_{s'}(x).
\end{align}
We consider now $1/2 \geq s > 0$. Observe for each $y \in B( x, s ) \cap \Omega \setminus N$ that $B_0 = B( x, s ) \cap \Omega$ is contained in $B( y, 2s ) \cap \Omega$. Then, by applying \eqref{eq:chainargument:1} and \eqref{eq:chainargument:2} for $y$ in place of $x$, we conclude
\begin{align*}
    | u(y) - u_{ B_0 } |
    &\leq
    | u(y) - u_{ B(y,2s) \cap \Omega } |
    +
    | u_{ B(y,2s) \cap \Omega } - u_{B_0} |
    \\
    &\leq
    2 C(c_\mu,\Omega) 2s  u^{\sharp}_{2s}(y)
    +
    \aint{ B_0 }
        | u(w) - u_{ B(y,2s) \cap \Omega } |
    \,d\mu(w).
\end{align*}
Since $\mu( B(y,2s) \cap \Omega )\leq C(c_\mu,c_\Omega) \mu( B_0 )$ and $B_0 \subset B( y, 2s ) \cap \Omega$, we have
\begin{align*}
    \aint{ B_0 }
        | u(w) - u_{ B(y,2s) \cap \Omega } |
    \,d\mu(w)
    &\leq
    C(c_\mu,c_\Omega)
    \aint{ B(y,2s) \cap \Omega }
        | u(w) - u_{ B(y,2s) \cap \Omega } |
    \,d\mu(w)
    \\
    &\leq
    C(c_\mu,c_\Omega)
    \aint{ B(y,2s) \cap \Omega }
    \aint{ B(y,2s) \cap \Omega }
        | u(w) - u(w') |
    \,d\mu(w')
    \,d\mu(w).
\end{align*}
This implies the inequality
\begin{equation}\label{eq:chainargument:y}
    | u(y) - u_{ B_0 } |
    \leq
    3C(c_\mu,c_\Omega) ( 2s ) u^{\sharp}_{2s}(y).
\end{equation}
Claim \eqref{eq:localHaj:localgradient} follows by combining the inequalities \eqref{eq:chainargument:1}, \eqref{eq:chainargument:2} and \eqref{eq:chainargument:y}.
\end{proof}

\section[Lipschitz density and Hajlasz-Sobolev spaces]{Lipschitz density and Haj\l{}asz-Sobolev spaces}\label{sec:M=W}
In this section, we prove Theorems \ref{thm:M=W} and \ref{thm:density:Lips}. Recall that $Z$ is just a metric measure space without further assumptions.

\begin{proof}[Proof of \Cref{thm:M=W}]
It is clear that "$(3)\Rightarrow (2), (1)$". Recall also that by \Cref{lemma:M:in:W} we always have a $4$-Lipschitz linear embedding $M^{1,p}(Z,\mathbb V)\subset W^{1,p}(Z;\mathbb V)$ for every Banach space $\mathbb V$. 

Let us prove now "$(2)\Rightarrow (3)$". Assume that $M^{1,p}( Z; \ell^{\infty} ) = W^{1,p}( Z; \ell^{\infty} )$ as sets. Then the bounded inverse theorem yields the existence of $C > 0$ with
\begin{equation}\label{eq:boundedinverse}
    C^{-1}\| u \|_{ M^{1,p}( Z; \ell^{\infty} ) }
    \leq
    \| u \|_{ W^{1,p}( Z; \ell^{\infty} ) }
    \leq
    4\| u \|_{ M^{1,p}( Z; \ell^{\infty} ) }.
\end{equation}
In this implication, we also prove that \eqref{eq:boundedinverse} holds with $\ell^{\infty}$ replaced by an arbitrary Banach space $\mathbb{V}$, with the same constant $C = C( \ell^{\infty} )$.

Next, for a given Banach space $\mathbb V$, take $u\in W^{1,p}(Z; \mathbb V)$ and fix a representative in $ \widetilde{N}^{1,p}( Z; \mathbb{V} )$ denoted the same way. By \Cref{lemma:usefulfact}, there exists a $1$-Lipschitz linear $\iota \colon \mathbb{V} \rightarrow \ell^{\infty}$ for which $U \coloneqq \iota \circ u \in \widetilde{N}^{1,p}( Z; \ell^{\infty} )$ and a negligible set $E \subset Z$ so that
\begin{equation*}
    u( Z \setminus E ) \subset \mathbb{V}_0
\end{equation*}
for a closed separable subspace $\mathbb{V}_0 \subset \mathbb{V}$ with $\iota|_{ \mathbb{V}_0 }$ being an isometry. We also have that $\rho_u \equiv \rho_U$ and
\begin{equation}\label{eq:isometryinequality}
    | u(x) - u(y) | = | U(x) - U(y) | \quad\text{for every $x, y \in Z \setminus E$.}
\end{equation}
Since $U \in \widetilde{N}^{1,p}( Z; \ell^{\infty} )$, the Lebesgue equivalence class of $U$ is contained in the space $W^{1,p}( Z; \ell^{\infty} ) = M^{1,p}( Z; \ell^{\infty} )$. Then \eqref{eq:isometryinequality} implies that $u$ and $U$ have the same Haj{\l}asz upper gradients. Thus we have $u\in M^{1,p}(Z;\mathbb V)$ and, by \eqref{eq:boundedinverse}, we have
\begin{equation*}
    C^{-1}\| u \|_{ M^{1,p}( Z; \mathbb{V} ) }
    \leq
    \| u \|_{ W^{1,p}( Z; \mathbb{V} ) }
    \leq
    4\| u \|_{ M^{1,p}( Z; \mathbb{V} ) }.
\end{equation*}

Let us show "$(1)\Rightarrow (2)$" to finish the proof. Assume $M^{1,p}( Z; c_0 ) = W^{1,p}( Z; c_0 )$ as sets. Then, by the bounded inverse theorem, there exists $C = C( c_0 ) > 0$ with
\begin{equation}\label{eq:boundedinverse:c0}
    C^{-1}\| u \|_{ M^{1,p}( Z; c_0) }
    \leq
    \| u \|_{ W^{1,p}( Z; c_0 ) }
    \leq
    4\| u \|_{ M^{1,p}( Z; c_0 ) }.
\end{equation}
Fix $u \in W^{1,p}( Z; \ell^\infty )$ and let us verify $u\in M^{1,p}( Z; \ell^{\infty} ) $. For each $N \in \mathbb{N}$, we consider the $1$-Lipschitz linear mapping
\begin{equation*}
    \pi^N \colon \ell^\infty \rightarrow c_0, \quad ( x_i )_{ i = 1 }^{ \infty } \mapsto ( x_i )_{ i = 1 }^{ N }.
\end{equation*}
Note that $u^N = \pi^N \circ u \in W^{1,p}( Z; c_0 )$. Therefore, by \eqref{eq:boundedinverse:c0}
\begin{equation*}
    \sup_{N}
    \inf_{ g \in \mathcal{D}_{p}( u^{N} ) }
        \|g\|_{ L^{p}( Z ) }
    \leq \sup_N 
        \|u^{N}\|_{ M^{1,p}( Z;c_0 ) }\leq
        C\sup_N \|u^N\|_{W^{1,p}(Z;c_0)}\leq
    C \| u \|_{ W^{1,p}( Z; \ell^{\infty} ) }.
\end{equation*}
We prove next that $\mathcal{D}_{p}( u ) \neq \emptyset$, and consider, for each $N \in \mathbb{N}$, the set
\begin{equation*}
    \mathcal{K}_N
    \coloneqq
    \mathcal{D}_{p}( u^{N} )
    \cap
    \left\{ g \in L^{p}(Z) \colon \|g\|_{ L^{p}( Z ) } \leq 1+ C \| u \|_{ W^{1,p}( Z; \ell^{\infty} ) } \right\}.
\end{equation*}
The set $\mathcal{K}_N$ is nonempty, convex, bounded, and closed under $L^{p}( Z )$-convergence. Hence, by the reflexivity of $L^{p}( Z )$, the set is compact in the weak topology. Since also $\mathcal{K}_{N+1} \subset \mathcal{K}_{N}$, the intersection $\bigcap_N \mathcal K_N $ is non-empty by the Cantor's intersection theorem (or the \v{S}mulian theorem \cite{Smulian}). The equivalence between (3) and (5) in \Cref{lemm:projection} yields that any element in the intersection is also in $\mathcal{D}_p( u )$. Thus $\mathcal{D}_{p}( u ) \neq \emptyset$. We have now verified that $u \in M^{1,p}( Z; \ell^{\infty} )$.
\end{proof}

Next, we study the density of Lipschitz functions in $W^{1,p}(Z;\mathbb V)$. In particular, we aim to prove \Cref{thm:density:Lips}. As a consequence we are able to deduce sufficient conditions for the equality $M^{1,p}( Z ) = W^{1,p}( Z )$ to imply the equality $M^{1,p}( Z; \mathbb{V} ) = W^{1,p}( Z; \mathbb{V} )$ for $\mathbb{V} \in \left\{ c_0, \ell^\infty \right\}$, see \Cref{prop:coincidence}.

Before delving further, we state some definitions. Given a Banach space $\mathbb{V}$ and $1 \leq p \leq \infty$, we say that Lipschitz functions with bounded support are \emph{energy-dense} (or \emph{dense in energy})  in $W^{1,p}( Z; \mathbb{V} )$ if for every $u \in W^{1,p}( Z; \mathbb{V} )$, there exists a sequence of $\mathbb{V}$-valued Lipschitz functions with bounded support $( u_n )_{ n = 1 }^{ \infty }$ such that
\begin{equation}\label{eq:energydensity:definition}
    \| u - u_n \|_{ L^{p}(Z;\mathbb{V}) }
    +
    \| \rho_u - \rho_{ u_n } \|_{ L^{p}(Z) }
    \rightarrow
    0
    \quad\text{as $n \rightarrow \infty$}.
\end{equation}
This convergence is weaker than norm-convergence, cf. \cite[Introduction]{EB:20:energy}. Moreover, it is known that in any complete and separable metric measure space, Lipschitz functions with bounded support are always dense in energy in $W^{1,p}( Z )$, cf. \cite[Theorem 1]{EB:20:energy}, while the norm-density fails, e.g. for the slit disk, cf. \Cref{ex:slitdisk}. Observe that \cite[Theorem 1]{EB:20:energy} yields the existence of a sequence for which
\begin{equation*}
    \| u - u_n \|_{ L^{p}(Z) }
    +
    \| \rho_u - {\rm lip}_a( u_n ) \|_{ L^{p}(Z) }
    \rightarrow
    0
    \quad\text{as $n \rightarrow \infty$},
\end{equation*}
where
\begin{equation*}
    {\rm lip}_a( v )(z) \coloneqq \lim_{ r \rightarrow 0^+ } \sup_{ y \neq x, x,y\in B( z, r ) } \frac{ |v(x)-v(y)| }{ d(x,y) }
\end{equation*}
is the \emph{asymptotic Lipschitz constant} of $v \in \LIP( Z )$ at $z \in Z$. The same sequence satisfies \eqref{eq:energydensity:definition} for $\mathbb{V} = \mathbb{R}$, see e.g. \cite[Lemma 2.2]{EB:20:energy}.

We state now an important definition in the subsequent results.
\begin{definition}\label{defi:keyfact:bounded}
Let $\lambda \geq 1$. A Banach space $\mathbb{V}$ is said to have the \emph{$\lambda$-bounded approximation  property} if for every compact set $K \subset \mathbb{V}$ and every $\epsilon > 0$, there exists a finite rank operator $T \colon \mathbb{V} \rightarrow \mathbb{V}$, with $\| T \| \leq \lambda$ and
\begin{equation*}
    | v - T(v) | \leq \epsilon
    \quad\text{for every $v \in K$}.
\end{equation*}
We say that $\mathbb{V}$ has the \emph{bounded approximation property} if it has the $\lambda$-bounded approximation property for some $\lambda \geq 1$. When we may take $\lambda = 1$, the Banach space has the \emph{metric approximation property}.
\end{definition}
The following definition gives a sufficient condition for the bounded approximation property.
\begin{definition}\label{def:Schauder}
Given a Banach space $\mathbb{V}$, we say that a sequence $( v_n )_{ n = 1 }^{ \infty }$ is a \emph{Schauder basis} on $\mathbb{V}$ if each $v \in \mathbb{V}$ can be \emph{uniquely} expressed as a convergent series $\sum_{ n = 1 }^{ \infty } c_n v_n$, $c_n \in \mathbb{R}$. We let
\begin{equation*}
    T_k \colon \mathbb{V} \to \mathbb{V},
    \sum_{ n = 1 }^{ \infty }c_n v_n
    \mapsto
    \sum_{ n = 1 }^{ k }c_n v_n
\end{equation*}
denote the \emph{$k$'th canonical projection}. We say that a Schauder basis is \emph{asymptotically $\lambda$-monotone} if $\limsup_{k}\| T_k \| \leq \lambda$.
\end{definition}
The uniform boundedness principle implies that $\sup_{ k } \| T_k \| \leq C < \infty$. In particular, $\mathbb{V}$ has the $C$-bounded approximation property. The $\ell^{q}$-spaces for $1 \leq q < \infty$ and $c_0$ are exemplary Banach spaces with Schauder basis with $\sup_{k} \| T_k \| = 1$. Moreover, each Hilbert space has the metric approximation property. For these and related facts, we refer the reader to \cite[Chapter 7]{Joh:Lin:01}.

Before establishing \Cref{thm:density:Lips}, we prove some preliminary results.
\begin{lemma}\label{lemm:real-to-finite:energy}
Let $Z$ be a metric measure space, $1 \leq p < \infty$, and $\mathbb{W}$ and $\mathbb{H}$ Banach spaces. Suppose the existence of a linear isometric embedding $\iota \colon \mathbb{W} \rightarrow \mathbb{H}$ and a linear $1$-Lipschitz $P \colon \mathbb{H} \rightarrow \mathbb{W}$ such that $P \circ \iota$ is the identity.

Then, given $\lambda \geq 1$, the following holds: If $u \in W^{1,p}(Z; \mathbb{W} )$ and $( u_i )_{ i = 1 }^{ \infty }$ is a sequence in $W^{1,p}( Z; \mathbb{H} )$ with
\begin{enumerate}
    \item $\| \iota(u) - u_{i} \|_{ L^{p}( Z; \mathbb{H} ) } \rightarrow 0$ as $i \rightarrow \infty$;
    \item $\sup_{ i } \rho_{ u_i } \in L^{p}(Z)$ and $\limsup_{ i \rightarrow \infty } \rho_{ u_i } \leq \lambda \rho_{\iota(u)}$ almost everywhere, 
\end{enumerate}
then we have the following:
\begin{enumerate}
    \item $\| u - P(u_{i}) \|_{ L^{p}( Z; \mathbb{W} ) } \rightarrow 0$ as $i \rightarrow \infty$;
    \item every subsequence of $( \rho_{P(u_i)} )_{ i = 1 }^{ \infty }$ has a weakly convergent subsequence. Any such weak limit $\rho'$ satisfies $\rho_u \leq \rho'$ almost everywhere, and $\limsup_{ i \rightarrow \infty } \rho_{ P(u_i) } \leq \lambda \rho_{u}$ almost everywhere.
\end{enumerate}
In particular, when $\lambda = 1$, the $\rho_{ P(u_i) }$ converge strongly to $\rho_u$ in $L^p( Z )$.
\end{lemma}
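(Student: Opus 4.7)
The plan is to treat the two conclusions in order, exploiting throughout that $\iota$ and $P$ are $1$-Lipschitz linear and that $P \circ \iota = \mathrm{id}_{\mathbb{W}}$. Applying Lemma \ref{lemm:basic:new}(1) to $\iota$ and then to $P$ gives the pointwise--a.e.\ identification $\rho_{\iota(u)} = \rho_u$. Conclusion (1) then follows immediately from $|P(u_i) - u| = |P(u_i - \iota(u))| \leq |u_i - \iota(u)|$, which yields $\|u - P(u_i)\|_{L^p(Z;\mathbb{W})} \leq \|\iota(u) - u_i\|_{L^p(Z;\mathbb{H})} \to 0$.

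For conclusion (2), Lemma \ref{lemm:basic:new}(1) applied to $P$ gives $\rho_{P(u_i)} \leq \rho_{u_i}$ almost everywhere. Combined with hypothesis (ii) and the identification $\rho_{\iota(u)} = \rho_u$, this delivers the desired $\limsup_i \rho_{P(u_i)} \leq \lambda \rho_u$ and exhibits $(\rho_{P(u_i)})$ as dominated in $L^p(Z)$ by $\sup_i \rho_{u_i}$. Weak precompactness of every subsequence follows from reflexivity when $1 < p < \infty$ and from the Dunford--Pettis theorem via the domination-induced uniform integrability when $p = 1$. Given any weak subsequential limit $\rho'$, I would invoke Mazur's lemma to extract convex combinations $\widetilde{\rho}_k$ converging strongly to $\rho'$ in $L^p(Z)$; the matching convex combinations $\widetilde{v}_k$ of $P(u_{i_k})$ then converge strongly to $u$ in $L^p(Z;\mathbb{W})$ by conclusion (1), and each $\widetilde{\rho}_k$ remains a $p$-weak upper gradient of $\widetilde{v}_k$ since the defining inequality is stable under convex combinations. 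Applying the standard $L^p$-stability of $p$-weak upper gradients---pulled back from the real-valued case via scalar projections $w \in \mathbb{W}^{*}$ with $|w| \leq 1$ and Lemma \ref{lemm:projection:new}---yields $\rho' \in \mathcal{D}_{N,p}(u)$, whence $\rho_u \leq \rho'$ almost everywhere.

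For the strengthening to strong convergence when $\lambda = 1$, the idea is a norm-sandwich. Reverse Fatou applied to $\limsup_i \rho_{P(u_i)} \leq \rho_u$, legitimized by the $L^p$-domination, gives $\limsup_i \|\rho_{P(u_i)}\|_p \leq \|\rho_u\|_p$; conversely, weak lower semicontinuity combined with $\rho_u \leq \rho'$ gives $\|\rho_u\|_p \leq \|\rho'\|_p \leq \liminf_k \|\rho_{P(u_{i_k})}\|_p$ on any weakly convergent subsequence. Sandwiching these identities forces $\rho' = \rho_u$ for every weak subsequential limit, hence weak convergence of the whole sequence to $\rho_u$ with $\|\rho_{P(u_i)}\|_p \to \|\rho_u\|_p$. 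For $1 < p < \infty$, uniform convexity of $L^p$ then upgrades this to strong convergence. For $p = 1$, I would use the elementary identity $|\rho_u - \rho_{P(u_i)}| = 2\max(\rho_u, \rho_{P(u_i)}) - (\rho_u + \rho_{P(u_i)})$: the $\limsup$ bound forces $\max(\rho_u, \rho_{P(u_i)}) \to \rho_u$ pointwise a.e., and dominated convergence against $\rho_u + \sup_i \rho_{u_i} \in L^1$ closes the loop.

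The main obstacle is the Banach-valued $L^p$-stability of $p$-weak upper gradients invoked in the second paragraph: while the scalar statement is a well-known Fuglede-type result, one must verify carefully that reducing to the scalar case through $w \in \mathbb{W}^{*}$ and Lemma \ref{lemm:projection:new} preserves the pointwise-a.e.\ inequality $\rho_u \leq \rho'$ for the weak limit of \emph{minimal} upper gradients rather than merely for some upper gradient.
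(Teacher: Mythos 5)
Your argument is correct and follows essentially the same skeleton as the paper's proof, but it deviates in three places worth noting, and your closing worry is a non-issue.

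First, for the $p=1$ weak sequential compactness you invoke Dunford--Pettis via the domination-induced uniform integrability; the paper instead avoids Dunford--Pettis altogether by passing to the reweighted reflexive space $L^2(\nu)$ with $d\nu = (\sup_i \rho_{u_i})\,d\mu$ and considering the quotients $\Phi_i = \rho_{P(u_i)}/\sup_i \rho_{u_i}$. Both routes are valid; the paper's is a nice self-contained trick, yours is the textbook tool.

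Second, for the $\lambda=1$ conclusion you run a norm-sandwich: reverse Fatou for $\limsup_i\|\rho_{P(u_i)}\|_p \le \|\rho_u\|_p$, weak lower semicontinuity plus $\rho_u \le \rho'$ for the opposite inequality, then Radon--Riesz when $p>1$ and the $\max$-identity when $p=1$. The paper instead identifies every weak subsequential limit as $\rho_u$, extracts a subsequence of $\max(\rho_{P(u_i)},\rho_u)$ that converges pointwise a.e.\ to $\rho_u$ under the domination bound, and closes with dominated convergence and uniqueness of the limit. Your $p=1$ identity $|\rho_u - \rho_{P(u_i)}| = 2\max(\rho_u,\rho_{P(u_i)}) - (\rho_u + \rho_{P(u_i)})$ is exactly the clean way to make the $L^1$ endpoint airtight, and is arguably a helpful clarification of the paper's terser phrasing.

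Third, you route the stability of $p$-weak upper gradients under $L^p$-convergence through scalar projections $w\in\mathbb{W}^*$ and \Cref{lemm:projection:new}. This detour is correct but unnecessary: the paper invokes \cite[Proposition 7.3.7]{HKST2015} directly, and that result is already stated for Banach-valued Newtonian functions. Relatedly, the ``main obstacle'' you flag at the end is not an obstacle at all. What you need from the Mazur/Fuglede step is only that $\rho' \in \mathcal{D}_{N,p}(u)$, i.e.\ that $\rho'$ is \emph{some} $p$-weak upper gradient of $u$; the inequality $\rho_u \le \rho'$ then follows immediately from the definition of the minimal weak upper gradient (\Cref{lemm:minimal}). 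Nothing about the minimality of the $\rho_{P(u_i)}$ needs to be ``preserved'' through the limit---minimality of $\rho_{P(u_i)}$ is used only on the other side, to establish $\limsup_i \rho_{P(u_i)} \le \lambda\rho_u$.
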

\begin{proof}
Since $P( \iota(u) ) = u$ and $P$ is $1$-Lipschitz, we have
\begin{equation*}
    \| u - P(u_i) \|_{ L^{p}( Z; \mathbb{W} ) }
    \leq
    \| \iota(u) - u_i \|_{ L^{p}( Z; \mathbb{H} ) }.
\end{equation*}
Therefore $P(u_i)$ converge to $u$ in $L^{p}( Z; \mathbb{W} )$. Moreover, $P$ being $1$-Lipschitz implies that $P( u_i ) \in W^{1,p}( Z; \mathbb{W} )$ and $\rho_{ P( u_i ) } \leq \rho_{ u_i }$, cf. \Cref{lemm:basic:new}.

We now show the weak compactness of the sequence $(\rho_{P(u_i)})^{\infty}_{ i = 1 } $ distinguishing between the cases $p> 1$ and $p=1$.

For $p > 1$, the weak compactness of $( \rho_{ u_i } )_{ i = 1 }^{ \infty }$ implies a norm bound for $( \rho_{ P(u_i) } )_{ i = 1 }^{ \infty }$, which is enough for the weak compactness of $(\rho_{P(u_i)})^{\infty}_{i=1} $ by reflexivity.

For $p = 1$, we consider the measure $\nu(E) = \int_{E} \sup_{ i } \rho_{ u_i} \,d\mu$, for each measurable $E \subset Z$, and the sequence of functions defined by 
\begin{equation}
\Phi_i(x) \coloneqq \begin{cases}
\frac{\rho_{ P(u_i) }(x)}{\sup_{ i } \rho_{ u_i}(x)},\ \ & {\rm if}\  \sup_{ i } \rho_{ u_i}(x) \neq 0,\\
0,\ \ & {\rm elsewhere}.
\end{cases}\nonumber
\end{equation}

Denote by $L^{2}( \nu )$ the space of $\nu$-measurable functions $h$ with $|h|^2$ being $\nu$-integrable. Then $\Phi_i \leq 1 \in L^{2}( \nu )$, so by the reflexivity of the $L^2(\nu)$-space defined by $\nu$, we obtain that every subsequence has a further subsequence weakly convergent in $L^2(\nu)$. That is, there exists $\Phi \in L^{2}( \nu )$ and a subsequence $( \Phi_{i_j} )_{ j = 1 }^{ \infty }$ such that
\begin{equation*}
    \int g \Phi \,d\nu
    =
    \lim_{ j \rightarrow \infty }
    \int g \Phi_{i_j} \,d\nu
    \quad\text{for every $g \in L^{2}( \nu )$.}
\end{equation*}
Fix a Borel representative of $\Phi$ and set $\rho'(z) \coloneqq \Phi(z) \sup_{ i } \rho_{ u_i }(z)$ for every $z \in Z$, the product set to zero everywhere in $\left\{ z \in Z \colon \sup_{ i } \rho_{ u_i }(z) = 0 \right\}$. Then, since every $g \in L^{\infty}(Z)$ defines a unique element in $L^{2}( \nu )$, we see that
\begin{equation*}
    \int g(z) \rho'(z) \,d\mu
    =
    \lim_{ j \rightarrow \infty }
    \int g(z) \rho_{i_j}(z) \,d\mu(z)
    \quad\text{for every $g \in L^{\infty}( Z )$.}
\end{equation*}
In particular, the weak compactness holds also for $p = 1$.

Now for all $1 \leq p < \infty$, suppose the existence of a weak limit $\rho'$ of some subsequence $( P( u_i ) )_{ i = 1 }^{\infty}$. The $L^{p}$-convergence of $P( u_i )$ to $u$ implies that every Borel representative of $\rho'$ is a $p$-weak upper gradient of some Newton--Sobolev representative of $u$, cf. \cite[Mazur's Lemma and Proposition 7.3.7]{HKST2015}. Therefore $\rho_u \leq \rho'$. Claim (2) follows.

When $\lambda = 1$, we have that every weak limit of any weakly convergent subsequence of $\rho_{ u_i }$ is equal to $\rho_u$ almost everywhere and a further subsequence converges pointwise almost everywhere to $\rho_u$. Dominated convergence implies that the convergence is in $L^{p}(Z)$ and, by the uniqueness of the limit, the original sequence itself converges to $\rho_u$ in $L^{p}(Z)$.
\end{proof}

\begin{proposition}\label{lemm:densityoffinitedim}
Let $Z$ be a metric measure space and $1 \leq p< \infty$. Suppose that $\mathbb{V}$ is a Banach space with the $\lambda$-bounded approximation property. Then for every $u \in W^{1,p}( Z; \mathbb{V} )$, there exists a sequence $( u_i )_{ i = 1 }^{ \infty } \subset W^{1,p}( Z; \mathbb{V} )$ with the following properties:
\begin{enumerate}
    \item up to a negligible set, the image of $u_i$ is contained in a finite-dimensional subspace $\mathbb{V}_i$ of $\mathbb{V}$ with $\mathbb{V}_i \subset \mathbb{V}_{i+1}$ for every $i$;
    \item $\| u - u_i \|_{ L^{p}(Z; \mathbb{V}) } \rightarrow 0$ as $i \rightarrow \infty$;
    \item $\sup_{ i }\rho_{ u_i } \in L^{p}( Z )$ and $\limsup_{ i \rightarrow \infty } \rho_{ u_i } \leq \lambda \rho_u$ almost everywhere.
\end{enumerate}
In particular, when $\mathbb{V}$ has the metric approximation property, the $\rho_{ u_i }$ converge to $\rho_u$ strongly in $L^{p}(Z)$.
\end{proposition}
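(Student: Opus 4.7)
My plan is to construct the $u_i$ as compositions $u_i \coloneqq T_i \circ u$, where the $T_i \colon \mathbb{V} \to \mathbb{V}$ are finite-rank operators with $\|T_i\| \leq \lambda$ that converge pointwise to the identity on a separable subspace containing the essential image of $u$. Since $u$ is measurable, I may fix a negligible set $E \subset Z$ and a closed separable subspace $\mathbb{V}_0 \subset \mathbb{V}$ with $u(Z \setminus E) \subset \mathbb{V}_0$, cf.\ \Cref{lemm:representation}. Choose a countable dense sequence $\{v_j\}_{j=1}^{\infty} \subset \mathbb{V}_0$ and, for each $n \in \mathbb{N}$, apply the $\lambda$-bounded approximation property to the compact set $K_n \coloneqq \{v_1,\dots,v_n\}$ with $\varepsilon = 1/n$ to obtain a finite-rank $T_n$ with $\|T_n\| \leq \lambda$ and $|T_n v_j - v_j| \leq 1/n$ for $1 \leq j \leq n$.

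A standard triangle-inequality argument promotes this to $T_n v \to v$ for every $v \in \mathbb{V}_0$: given $v \in \mathbb{V}_0$ and $\varepsilon > 0$, pick $v_j$ with $|v - v_j| < \varepsilon/(1+\lambda)$; then for $n \geq j$ with $1/n < \varepsilon$,
\begin{equation*}
    |T_n v - v|
    \leq
    |T_n v - T_n v_j| + |T_n v_j - v_j| + |v_j - v|
    \leq
    (1+\lambda)|v - v_j| + 1/n
    <
    2\varepsilon.
\end{equation*}
Setting $\mathbb{V}_i \coloneqq T_1(\mathbb{V}) + \cdots + T_i(\mathbb{V})$ produces an increasing chain of finite-dimensional subspaces with $u_i(Z \setminus E) \subset T_i(\mathbb{V}) \subset \mathbb{V}_i$, establishing (1).

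Each $T_i$ is linear with $\|T_i\| \leq \lambda$, hence $\lambda$-Lipschitz, so \Cref{lemm:basic:new}(1) gives $u_i \in W^{1,p}(Z;\mathbb{V})$ together with the pointwise bound $\rho_{u_i} \leq \lambda \rho_u$ almost everywhere; this immediately yields the $L^p$-domination $\sup_i \rho_{u_i} \leq \lambda \rho_u \in L^p(Z)$ and the inequality $\limsup_i \rho_{u_i} \leq \lambda \rho_u$ required for (3). For (2), I observe that $|u_i(x) - u(x)| \to 0$ for every $x \in Z \setminus E$ by the pointwise convergence of the $T_i$ on $\mathbb{V}_0$, while $|u_i - u| \leq (1+\lambda)|u| \in L^{p}(Z)$, so dominated convergence produces $L^p$ convergence.

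For the final statement in the metric approximation property case, I would apply \Cref{lemm:real-to-finite:energy} with $\mathbb{W} = \mathbb{H} = \mathbb{V}$ and $\iota = P = \mathrm{id}_{\mathbb{V}}$: the hypotheses (1) and (2) of that lemma are precisely the properties just verified (with $\lambda = 1$), and the last sentence of the lemma yields $\rho_{u_i} \to \rho_u$ strongly in $L^p(Z)$. The only non-bookkeeping step is the passage from the compact-set formulation of the bounded approximation property to pointwise convergence on all of $\mathbb{V}_0$, which is the separability argument carried out above; the rest of the proof is a direct application of the chain rule (\Cref{lemm:basic:new}) and dominated convergence.
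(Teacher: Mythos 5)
Your proof is correct, and it takes a genuinely different — and substantially more elementary — route than the paper's. The paper first invokes \cite[Chapter 7, Theorem 9.7]{Joh:Lin:01} to find a separable subspace $\mathbb{W} \supset \mathbb{V}_0$ of $\mathbb{V}$ with the $\lambda$-bounded approximation property (needed because the BAP is not hereditary), then applies \cite[Theorem 1.2]{Mu:Vi:10} to embed $\mathbb{W}$ as a $1$-complemented subspace of a Banach space $\mathbb{H}$ with an asymptotically $\lambda$-monotone Schauder basis, proves the statement for $\mathbb{H}$ using the canonical projections $T_k$, and finally transports the result back to $\mathbb{V}$ via \Cref{lemm:real-to-finite:energy}. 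You bypass all of that: you apply the definition of the $\lambda$-BAP directly, on the finite (hence compact) sets $\{v_1,\dots,v_n\}$ drawn from a dense sequence in $\mathbb{V}_0$, and upgrade to pointwise convergence on $\mathbb{V}_0$ by a routine $3\varepsilon$-argument. This is cleaner and has a small bonus: you obtain the uniform bound $\rho_{u_i} \leq \lambda \rho_u$ for every $i$, not merely $\limsup_i \rho_{u_i} \leq \lambda \rho_u$ with a possibly larger constant bounding $\sup_i \rho_{u_i}$, so claim (3) and the domination needed for (2) fall out immediately. The one thing the paper's Schauder-basis route offers that yours does not is a canonical, basis-indexed family of projections, but since the projections depend on $u$ (through $\mathbb{V}_0$) in both arguments anyway, this is not exploited. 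Your use of \Cref{lemm:real-to-finite:energy} with $\iota = P = \mathrm{id}_{\mathbb{V}}$ for the metric-approximation case is also a valid specialization, since the identity is trivially an isometric embedding with a $1$-Lipschitz left inverse.
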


\begin{proof}
We fix $\lambda \geq 1$ for the rest of the proof.

Claim (1): Proving the properties (1)--(3) for Banach spaces $\mathbb{H}$ with a Schauder basis that is asymptotically $\lambda$-monotone implies the same properties for all Banach spaces $\mathbb{V}$ that have the $\lambda$-bounded approximation property.

Fix $u \in W^{1,p}( Z; \mathbb{V} )$ with a Newton--Sobolev representative $\widetilde{u} \in \widetilde{N}^{1,p}( Z; \mathbb{V} )$. Then, by \Cref{lemm:representation}, there exists a negligible set $N \subset Z$ such that $\widetilde{u}( Z \setminus N )$ is contained in a separable subspace $\mathbb{V}_0 \subset \mathbb{V}$. By arguing as in the proof of \Cref{lemma:usefulfact}, $p$-almost every constant speed path $\gamma$ is such that $\widetilde{u} \circ \gamma$ has image in $\mathbb{V}_0$. Thus we redefine $\widetilde{u}$ to be zero in the set $\widetilde{u}^{-1}( \mathbb{V} \setminus \mathbb{V}_0 )$, while still being a Newton--Sobolev representative of $\widetilde{u}$. In conclusion, we may and do assume $N = \emptyset$.

Since $\mathbb{V}_0$ is a closed separable subspace of $\mathbb{V}$, with $\mathbb{V}$ having the $\lambda$-bounded approximation property, there exists a separable closed subspace $\mathbb{V}_0 \subset \mathbb{W}$ that has the $\lambda$-bounded approximation property with the same constant, cf. \cite[Chapter 7, Theorem 9.7]{Joh:Lin:01}. For every separable Banach space $\mathbb{W}$ with the $\lambda$-bounded approximation property, there exist a Banach space $\mathbb{H}$ having an asymptotically $\lambda$-monotone Schauder basis and a linear isometric embedding $\iota \colon \mathbb{W} \rightarrow \mathbb{H}$ together with a $1$-Lipschitz linear map $P \colon \mathbb{H} \rightarrow \mathbb{W}$ such that $P \circ \iota$ is the identity, cf. \cite[Theorem 1.2]{Mu:Vi:10}. Claim (1) follows by applying \Cref{lemm:real-to-finite:energy}.

Claim (2): The properties (1)--(3) are true in any Banach space $\mathbb{H}$ with an asymptotically $\lambda$-monotone Schauder basis.

To this end, let $u \in W^{1,p}( Z; \mathbb{H} )$ and set $u_k \coloneqq T_{k}(u)$, where $T_k$ are the canonical projections. By definition of the Schauder basis, we have $u_k \rightarrow u$ pointwise. Then, by dominated convergence, applicable due to $| u_k | \leq C |u|$, $u_k$ converges to $u$ in $L^{p}( Z; \mathbb{H} )$.

We have that $\rho_{ u_k } \leq C\rho_u$, so every subsequence has a further weakly convergent subsequence $( \rho_{ u_{k_j} } )_{ j = 1 }^{ \infty }$ in $L^{p}( Z )$, with limit, say $\rho$; when $p > 1$, we apply reflexivity of $L^{p}( Z )$ while for $p = 1$ we argue as in the proof of \Cref{lemm:real-to-finite:energy}. Any Borel representative of such a $\rho$ is a $p$-weak upper gradient of some Newton--Sobolev representative of $u$, cf. \cite[Mazur's Lemma and Proposition 7.3.7]{HKST2015} by the $L^{p}$-convergence of $u_k$ to $u$. Next, observe that, by weak convergence,
\begin{equation}\label{eq:limsupestimate}
    \rho_u
    \leq
    \rho
    \leq
    \limsup_{ j \rightarrow \infty } \rho_{ u_{ k_j } }
    \quad\text{pointwise almost everywhere.}
\end{equation}
The limsup estimate follows by considering the convex sets $K_m$ formed by convex combinations of $( \rho_{ u_{k_j} } )_{ j \geq m }$ observing that, by Mazur's Lemma, there exists a sequence $g_m \in K_m$ converging to $\rho$ in $L^{p}( Z )$ and pointwise almost everywhere. Furthermore, by definition of $K_m$, we have that
\begin{equation*}
    g_m \leq \sup_{ j \geq m }\rho_{ u_{k_j} } \quad\text{almost everywhere for every $m$}.
\end{equation*}
Then \eqref{eq:limsupestimate} follows by passing to the limit $m \rightarrow \infty$.

We also have that
\begin{equation*}
    \limsup_{ j \rightarrow \infty } \rho_{ u_{ k_j } }
    \leq
    \limsup_{ j \rightarrow \infty } \| T_{ k_{j} } \| \rho_{ u }
    \leq
    \lambda \rho_u
    \quad\text{pointwise almost everywhere}.
\end{equation*}
Claim (2) follows.

When $\lambda = 1$, the convergence of $\rho_{ u_i }$ to $\rho_u$ in $L^{p}(Z)$ is argued as in the proof of \Cref{lemm:real-to-finite:energy}.
\end{proof}

\begin{lemma}\label{lemm:finitedimension:approximation:weak}
Let $Z$ be a metric measure space, $1 \leq p < \infty$, and $\mathbb{V}$ a finite-dimensional Banach space. If Lipschitz functions with bounded support are norm-dense in $W^{1,p}( Z )$, then they are norm-dense in $W^{1,p}( Z; \mathbb{V} )$.
\end{lemma}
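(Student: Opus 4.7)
The plan is to reduce the $\mathbb{V}$-valued approximation to $n$ separate real-valued approximations by passing to coordinates in some basis of $\mathbb{V}$.

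First, I would fix a basis $\{e_1,\dots,e_n\}$ of $\mathbb{V}$ with associated dual basis $\{w_1,\dots,w_n\} \subset \mathbb{V}^{*}$, so that every $v \in \mathbb{V}$ decomposes as $v = \sum_{i=1}^n w_i(v)\,e_i$. Given $u \in W^{1,p}(Z;\mathbb{V})$, set $u_i \coloneqq w_i \circ u$. By \Cref{lemm:basic:new}(2), each $u_i$ belongs to $W^{1,p}(Z)$ with $\rho_{u_i} \leq |w_i|\rho_u$. By the hypothesis, for every $\varepsilon > 0$ and every $i \in \{1,\dots,n\}$ there is a Lipschitz function $u_i^\varepsilon \colon Z \to \mathbb{R}$ with bounded support satisfying $\|u_i - u_i^\varepsilon\|_{W^{1,p}(Z)} < \varepsilon$.

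I would then reassemble by setting $v^\varepsilon \coloneqq \sum_{i=1}^n u_i^\varepsilon e_i \colon Z \to \mathbb{V}$, which is a Lipschitz function with bounded support, being a finite $\mathbb{V}$-linear combination of real-valued Lipschitz functions with bounded support. The $L^p$-convergence $v^\varepsilon \to u$ is immediate from the pointwise estimate $|u - v^\varepsilon| \leq \sum_{i=1}^n |e_i|\,|u_i - u_i^\varepsilon|$. For the energy, I would verify that $g^\varepsilon \coloneqq \sum_{i=1}^n |e_i|\,\rho_{u_i - u_i^\varepsilon}$ is a $p$-weak upper gradient of $u - v^\varepsilon$: for $p$-almost every rectifiable $\gamma\colon[a,b]\to Z$ we have $|(u_i - u_i^\varepsilon)(\gamma(a)) - (u_i - u_i^\varepsilon)(\gamma(b))| \leq \int_\gamma \rho_{u_i - u_i^\varepsilon}\,ds$ for each $i$, and therefore
\begin{equation*}
    |(u-v^\varepsilon)(\gamma(a)) - (u-v^\varepsilon)(\gamma(b))|
    \leq \sum_{i=1}^n |e_i|\,|(u_i-u_i^\varepsilon)(\gamma(a)) - (u_i-u_i^\varepsilon)(\gamma(b))|
    \leq \int_\gamma g^\varepsilon \,ds.
\end{equation*}
Hence $\rho_{u - v^\varepsilon} \leq g^\varepsilon$ almost everywhere, which yields $\|\rho_{u-v^\varepsilon}\|_{L^p(Z)} \leq \bigl(\sum_{i=1}^n |e_i|\bigr)\varepsilon$.

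Letting $\varepsilon \to 0$ gives $v^\varepsilon \to u$ in $W^{1,p}(Z;\mathbb{V})$, completing the proof. The argument is essentially routine; the only delicate point is that the exceptional path family for which the upper-gradient inequality above fails is a finite union of $n$ many $p$-negligible families and so is itself $p$-negligible, which is immediate from the definition of $p$-modulus. Alternatively one could phrase the bound on $\rho_{u-v^\varepsilon}$ through the Lipschitz linear map $L \colon \mathbb{R}^n \to \mathbb{V}$, $L(x_1,\dots,x_n) \coloneqq \sum x_i e_i$, and invoke \Cref{lemm:basic:new}(1) after assembling $(u_i-u_i^\varepsilon)_{i=1}^n$ into an $\mathbb{R}^n$-valued Newton--Sobolev function, but the direct path-integral estimate above is cleaner.
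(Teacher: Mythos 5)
Your proof is correct and follows the same overall coordinate-decomposition strategy as the paper (split $u$ into scalar coordinates, approximate each, reassemble), but you handle the key energy estimate on the reassembled function by a genuinely different argument, so a comparison is worthwhile. The paper first passes, via John's theorem, through a bi-Lipschitz isomorphism $\phi\colon \mathbb{V}\to\mathbb{V}_n$ onto $\mathbb{R}^n$ with the supremum norm; this reduction lets them exploit the special structure of the dual unit ball of $\mathbb{V}_n$ (it is the convex hull of $\pm e_i^*$) together with \Cref{lemm:projection:new} to obtain the exact identity $\rho_{u-h_m}=\sup_i\rho_{u_i-h_{i,m}}$. You instead work directly in $\mathbb{V}$ with an arbitrary basis and establish the weaker but sufficient inequality $\rho_{u-v^\varepsilon}\leq\sum_i|e_i|\,\rho_{u_i-u_i^\varepsilon}$ by a direct path-integral argument. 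Your route avoids John's theorem and the dual-ball structure entirely, which makes it more elementary and self-contained; the price is a cruder constant $\sum_i|e_i|$, which is immaterial for the density statement but would matter for a sharp quantitative approximation bound. One small point, which you do flag: the upper-gradient inequalities along paths hold for Newton--Sobolev representatives $w_i\circ\widetilde u - u_i^\varepsilon$ chosen consistently with a single Newton--Sobolev representative $\widetilde u$ of $u$, so that the finitely many exceptional $p$-negligible families can be unioned; this is consistent with \Cref{lemm:basic:new}, and the argument is complete as written.
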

\begin{proof}
Let $n$ denote the dimension of $\mathbb{V}$. Since all $n$-dimensional Banach spaces are bi-Lipschitz isomorphic, there exists a $C$-bi-Lipschitz isomorphism $\phi \colon \mathbb{V} \to \mathbb{V}_n$, the latter space denoting $\mathbb{R}^n$ endowed with the supremum norm; in fact, by John's theorem \cite[Theorem 14.1]{Tom-Jae:89}, we may take $C = n$.

The mapping $\phi$ induces an $n$-bi-Lipschitz isomorphism from $W^{1,p}(Z; \mathbb{V} )$ into $W^{1,p}( Z; \mathbb{V}_n )$, by defining $h = \phi(u)$ for every $u \in W^{1,p}( Z; \mathbb{V} )$. Thus it suffices to prove the claim for $\mathbb{V}_n$.

Fix the standard basis $( e_i )_{ i = 1 }^{ n }$ for $\mathbb{V}_n$. Then each $h \in W^{1,p}( Z; \mathbb{V}_n )$ can be expressed as $u = \sum_{ i = 1 }^{ n } u_i e_i$ for some $u_i \in W^{1,p}( Z )$, $i = 1,2,\dots n$. By the assumed norm-density, we find Lipschitz functions with bounded support $h_{i,m}$ such that
\begin{equation*}
    \sum_{ i = 1 }^{ n }\| u_i - h_{i,m} \|_{ W^{1,p}(Z) } \leq 2^{-m}.
\end{equation*}
We denote $h_m \coloneqq \sum_{ i = 1 }^{ n } h_{i,m} e_i$. Then
\begin{equation}\label{eq:claimedequalities}
    \rho_{ (u - h_m) } = \max_{ i } \rho_{ (u_i - h_{i,m}) }
 \end{equation}
follows from \Cref{lemm:projection:new}: every $w \in ( \mathbb{V}_n )^{*}$ with $|w| \leq 1$ can be expressed as a convex combination of $( e_i^{*} )_{ i = 1 }^{ n }$ and $( -e_i^{*} )_{ i = 1 }^{ n }$, where $e_{i}^{*} \in ( \mathbb{V}_n )^{*}$ is the standard $i$'th coordinate projection, cf. \cite[Lemma 2.4.5]{Sch:14}, and $\rho_{ (u_i - h_{i,m} ) } = \rho_{ e_{i}^{*}( u - h_m ) } = \rho_{ - e_{i}^{*}( u - h_m ) }$. Then the convexity of $w \mapsto \rho_{w(u)}$ implies
\begin{equation*}
    \rho_{ w(u-h_m) } \leq \max_{ i } \rho_{ e_{i}^{*}(u-h_m) } \leq \rho_{u-h_m}.
\end{equation*}
Taking the supremum over such $w$ yields the claimed equality \eqref{eq:claimedequalities}. By triangle inequality, we have
\begin{equation*}
    \| u - h_{m} \|_{ W^{1,p}(Z;\mathbb{V}) } \leq 2^{-m}.
\end{equation*}
The claim follows by passing to the limit $m \rightarrow \infty$.
\end{proof}

\begin{proof}[Proof of \Cref{thm:density:Lips}]
We are given $1 \leq p < \infty$ and the  norm-density of Lipschitz functions with bounded support in $W^{1,p}( Z )$. We wish to show the energy-density of Lipschitz functions with bounded support in $W^{1,p}( Z ;\mathbb V)$ for every Banach space $\mathbb V$ with the metric approximation property. To this end, fix $u \in W^{1,p}( Z; \mathbb{V} )$.

Given $j \in \mathbb{N}$, we conclude from \Cref{lemm:densityoffinitedim} the existence of $u_j \in W^{1,p}( Z; \mathbb{V} )$ with
\begin{equation*}
    \| u - u_j \|_{ L^{p}( Z; \mathbb{V} ) }
    +
    \| \rho_u - \rho_{ u_j } \|_{ L^{p}( Z ) }
    \leq
    2^{-j},
\end{equation*}
and that $u_j$ has a representative in $\widetilde{N}^{1,p}( Z; \mathbb{V} )$, denoted the same way, with image in a finite-dimensional subspace, say $\mathbb{V}_j$, satisfying $\mathbb{V}_{j} \subset \mathbb{V}_{j+1}$ for every $j$.

By applying \Cref{lemm:finitedimension:approximation:weak}, we find Lipschitz functions $v_{j} \in W^{1,p}( Z; \mathbb{V}_j )$ with bounded support for which
\begin{equation*}
    \| u_j - v_j \|_{ L^{p}( Z; \mathbb{V}_j ) }
    +
    \| \rho_{ u_j } - \rho_{ v_j } \|_{ L^p(Z) }
    \leq
    2^{-j}.
\end{equation*}
We conclude from this fact that $v_j$ converges to $u$ in $L^{p}( Z; \mathbb{V} )$ and $\rho_{ v_j }$ to $\rho_u$ in $L^p( Z )$.
\end{proof}
We end this section by proving the following proposition closely related to \Cref{ques:startingpoint}, cf. \Cref{thm:V-extension:PI}.
\begin{proposition}\label{prop:coincidence}
Let $Z$ be a metric measure space and $1 < p < \infty$. If $\mathbb{V} \neq \left\{0\right\}$ is a Banach space with the bounded approximation property, then the following are equivalent.
\begin{enumerate}
    \item $M^{1,p}( Z; \mathbb{V} ) = W^{1,p}( Z; \mathbb{V} )$ as sets;
    \item the $4$-Lipschitz linear embedding $M^{1,p}( Z; \mathbb{V} ) \subset W^{1,p}( Z; \mathbb{V} )$ has a closed image and Lipschitz functions with bounded support are norm-dense in $W^{1,p}( Z )$;
    \item $M^{1,p}( Z; \mathbb{V} ) \subset W^{1,p}( Z; \mathbb{V} )$ is a closed subspace and $M^{1,p}( Z ) = W^{1,p}( Z )$ as sets.
\end{enumerate}
In particular, under any of these assumptions, Lipschitz functions are norm-dense in $W^{1,p}(Z)$.
\end{proposition}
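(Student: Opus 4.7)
The plan is to prove $(1)\Leftrightarrow(3)$ and $(2)\Leftrightarrow(3)$ separately, relying on a Hahn--Banach projection that transfers Hajlasz gradients between $\mathbb{V}$-valued and real-valued functions and on the bounded approximation property of $\mathbb{V}$. For $(1)\Rightarrow(3)$, the set equality forces the image of the embedding to be the whole of $W^{1,p}(Z;\mathbb{V})$, which is trivially closed. For the real-valued equality, fix a unit vector $v_0\in\mathbb{V}$ and, by Hahn--Banach, a $w_0\in\mathbb{V}^*$ with $|w_0|=1$ and $w_0(v_0)=1$. Given $u\in W^{1,p}(Z)$, the product $uv_0$ lies in $W^{1,p}(Z;\mathbb{V})=M^{1,p}(Z;\mathbb{V})$ and hence admits a Hajlasz gradient $g$; by \Cref{lemm:basic}(2), $g$ is also a Hajlasz gradient of $w_0(uv_0)=u$, so $u\in M^{1,p}(Z)$.

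For $(3)\Rightarrow(1)$, take $u\in W^{1,p}(Z;\mathbb{V})$. Since $\mathbb{V}$ has the bounded approximation property, \Cref{lemm:densityoffinitedim} produces a sequence $(u_j)$ converging to $u$ in $L^p(Z;\mathbb{V})$ with each $u_j$ having image in a finite-dimensional subspace $\mathbb{V}_j\subset\mathbb{V}$ and $\sup_j\rho_{u_j}\in L^p(Z)$. Expressing $u_j$ in a basis of $\mathbb{V}_j$ and using $M^{1,p}(Z)=W^{1,p}(Z)$ coordinate-wise gives $u_j\in M^{1,p}(Z;\mathbb{V})$. Closedness of this subspace combined with the bounded inverse theorem (\Cref{thm:bddinverse}(2)) produces a uniform constant $C>0$ with $\|u_j\|_{M^{1,p}(Z;\mathbb{V})}\leq C\|u_j\|_{W^{1,p}(Z;\mathbb{V})}$, so Hajlasz gradients $g_j\in\mathcal{D}_p(u_j)$ can be chosen with $\|g_j\|_{L^p(Z)}$ uniformly bounded. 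By reflexivity of $L^p(Z)$ ($p>1$) and Mazur's lemma, suitable convex combinations $h_m$ converge strongly in $L^p(Z)$ to some $g$, and each $h_m$ is a Hajlasz gradient of the corresponding convex combination $\tilde u_m$, which still converges to $u$ in $L^p(Z;\mathbb{V})$. Extracting further subsequences with almost everywhere convergence and passing to the pointwise limit in the Hajlasz inequality for $\tilde u_m$ and $h_m$ shows $g\in\mathcal{D}_p(u)$.

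For $(2)\Rightarrow(3)$, approximate a given $u\in W^{1,p}(Z)$ in norm by Lipschitz functions $u_n$ with bounded support; then $u_nv_0\to uv_0$ in $W^{1,p}(Z;\mathbb{V})$ and each $u_nv_0\in M^{1,p}(Z;\mathbb{V})$, so closedness places $uv_0$ in $M^{1,p}(Z;\mathbb{V})$, and the Hahn--Banach projection used in the first paragraph gives $u\in M^{1,p}(Z)$. For $(3)\Rightarrow(2)$, closedness is immediate, and the needed norm-density of Lipschitz functions with bounded support in $W^{1,p}(Z)$ follows because the equivalent norms on $M^{1,p}(Z)=W^{1,p}(Z)$ make it sufficient to establish density in $M^{1,p}(Z)$; the classical Haj\l{}asz--McShane approximation --- truncate $u$, multiply by a Lipschitz cutoff, and extend $u|_{\{g\leq t\}}$ by the McShane formula, possibly jointly with the trivial extension $u\equiv 0$ outside a large ball --- works in any metric measure space for $1\leq p<\infty$, since the Chebyshev estimate $t\mu(\{g>t\})^{1/p}\leq\|g\chi_{\{g>t\}}\|_{L^p}\to 0$ controls the $L^p$-norm of the gradient error. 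The ``in particular'' statement then follows from the three-way equivalence.

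The main obstacle is the uniform $L^p$-bound on the Hajlasz gradients $g_j$ in $(3)\Rightarrow(1)$. A naive coordinate construction yields a Hajlasz gradient $\sum_i|e_{j,i}|g_{j,i}$ whose $L^p$-norm is controlled by a basis-dependent quantity that in general grows with the dimension of $\mathbb{V}_j$, and hence with $j$. The resolution is to bypass explicit coordinate constructions and invoke the bounded inverse theorem on the closed subspace $M^{1,p}(Z;\mathbb{V})\subset W^{1,p}(Z;\mathbb{V})$, which automatically supplies Hajlasz gradients with $L^p$-norm comparable to $\|u_j\|_{W^{1,p}(Z;\mathbb{V})}$ by a constant independent of $j$.
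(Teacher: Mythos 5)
Your proof is correct. The essential machinery — Lemma~\ref{lemm:densityoffinitedim} for finite-dimensional approximations under the bounded approximation property, the bounded inverse theorem (Theorem~\ref{thm:bddinverse}) to turn closedness of the embedding into a uniform $M^{1,p}$-bound, and weak compactness of $L^p(Z)$ together with Mazur's lemma to pass to the limit — is the same as in the paper, but you organize the implications differently and route the key argument through a different hypothesis. The paper proves the cycle $(1)\Rightarrow(3)\Rightarrow(2)\Rightarrow(1)$ and places the substantive work in $(2)\Rightarrow(1)$: there, after obtaining the finite-dimensional-valued $u_j$, it invokes Lemma~\ref{lemm:finitedimension:approximation:weak} (as in the proof of Theorem~\ref{thm:density:Lips}) to replace each $u_j$ by a Lipschitz function with bounded support, and only then concludes $u_j\in M^{1,p}(Z;\mathbb{V})$. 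You instead prove $(1)\Leftrightarrow(3)$ and $(2)\Leftrightarrow(3)$, placing the substantive work in $(3)\Rightarrow(1)$, where you bypass Lemma~\ref{lemm:finitedimension:approximation:weak}: the equality $M^{1,p}(Z)=W^{1,p}(Z)$ places each $u_j$ in $M^{1,p}(Z;\mathbb{V})$ coordinate-wise, after which the bounded inverse theorem supplies the uniform gradient bound that the coordinate decomposition alone cannot, exactly as you observe in your last paragraph, and Mazur finishes as in the paper. This is a genuine shortcut: it avoids the Lipschitz approximation of finite-dimensional targets at the cost of needing the full strength of assumption $(3)$ rather than $(2)$ in the key step. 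Your explicit Hahn--Banach $(v_0,w_0)$ construction for $(1)\Rightarrow(3)$ and $(2)\Rightarrow(3)$ fills in the detail the paper compresses into a single sentence about an isometric embedding $\iota\colon\mathbb{R}\to\mathbb{V}$, and your $(3)\Rightarrow(2)$ step correctly reproduces the McShane truncation argument for Lipschitz density in $M^{1,p}(Z)$ where the paper simply cites the literature.
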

\begin{proof}
Let $\lambda \geq 1$ be such that $\mathbb{V}$ has the $\lambda$-bounded approximation property.

Let us prove "(2) $\Rightarrow$ (1)". Fix $u \in W^{1,p}( Z; \mathbb{V} )$. We recall from \Cref{lemm:densityoffinitedim} the existence of a sequence $( u_j )_{j=1}^{\infty}$ with finite-dimensional image and $\sup \rho_{ u_j } \in L^{p}( Z )$ with
\begin{equation*}
    \| u - u_j \|_{ L^{p}( Z; \mathbb{V} ) }
    \leq
    2^{-j}
    \quad\text{and}\quad
    \limsup_{ j \rightarrow \infty } \rho_{ u_j } \leq \lambda \rho_u.
\end{equation*}
By arguing as in the proof of \Cref{thm:density:Lips} using \Cref{lemm:finitedimension:approximation:weak}, we may assume that each $u_j$ is Lipschitz with bounded support. In particular, $u_j \in M^{1,p}( Z; \mathbb{V} )$ and $( u_j )_{ j = 1 }^{ \infty }$ is a bounded sequence in $W^{1,p}( Z; \mathbb{V} )$. Since the $W^{1,p}$-norm and the $M^{1,p}$-norm are equivalent in the (closed) image of the embedding, by the bounded inverse theorem, we have that
\begin{equation*}
    ( u_j )_{ j = 1 }^{ \infty }
    \quad\text{is bounded in $M^{1,p}( Z; \mathbb{V} )$.}
\end{equation*}
So there exists a subsequence $( u_{j_i} )_{ i = 1 }^{ \infty }$ and a sequence $G_{ i }$ of Haj\l{}asz gradients of $u_{ j_i }$ converging weakly to some $G \in L^{p}( Z )$, by reflexivity of $L^{p}( Z )$. Since $u_{ j_i }$ converge strongly to $u$ in $L^{p}( Z; \mathbb{V} )$, a standard application of Mazur's lemma implies that $G$ is a Haj\l{}asz gradient of $u$. In particular, $u \in M^{1,p}( Z; \mathbb{V} )$. The claim follows.

Under the assumption (1), the equality $M^{1,p}( Z ) = W^{1,p}( Z )$ as sets follows by considering an isometric embedding $\iota \colon \mathbb{R} \to \mathbb{V}$, so (3) follows.

Under the assumption (3), the equivalences of the norms of $M^{1,p}( Z ) = W^{1,p}( Z )$ and the norm-density of Lipschitz functions with bounded support in $M^{1,p}( Z )$, cf. \cite{M=W,HKST2015}, shows (2).
\end{proof}

\section[Hajlasz--Sobolev extension sets]{Haj\l{}asz--Sobolev extension sets}\label{sec:proof:main}\
The goal of this section is to prove \Cref{th:Mexten}. 
\subsection{Statements}
In this section we fix a metric measure space $Z$. We also consider  sets $F \subset Z$ which satisfy the \emph{measure-density condition}:
\begin{equation}\label{eq:measuredensity}
    \mu( B(z,r) \cap F )
    \geq
    c_F \mu( B(z,r) )
    \quad\text{for every $z \in F$ and $0 < r \leq 1$}
\end{equation}
for some constant $c_F >0$. We also assume $\mu$ to be doubling with doubling constant $c_\mu$.

We show the following variant of \Cref{th:Mexten}.

\begin{theorem}\label{thm:intermediate}
Let $Z$ be a doubling metric measure space and $F \subset Z$ a closed set which satisfies \eqref{eq:measuredensity}. Then, for every $1 \leq p < \infty$ and Banach space $\mathbb{V}$, there exists a bounded linear extension operator $E_{\mathbb{V}} \colon M^{1,p}( F; \mathbb{V} ) \rightarrow M^{1,p}( Z; \mathbb{V} )$ satisfying the following:
\begin{itemize}
    \item $\| E_{\mathbb{V}} \| \leq C(c_\mu,c_F,p)$.
    \item If $\mathbb{V}, \mathbb{W}$ are Banach spaces and $T \colon \mathbb{V} \rightarrow \mathbb{W}$ is continuous and linear, then $E_{\mathbb{W}} \circ T = T \circ E_{\mathbb{V}}$.
\end{itemize}
\end{theorem}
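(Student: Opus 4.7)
The plan is to construct $E_{\mathbb{V}}$ explicitly via a Whitney-type decomposition of $Z \setminus F$ together with Bochner averages over nearby balls in $F$, adapting the real-valued extension of Haj\l{}asz--Koskela--Tuominen \cite{HKT2008:B} to the Banach-valued setting. The construction will be manifestly linear in $u$ and will commute with continuous linear operators on the target, while the norm bound will reduce to a chain-type pointwise argument that is robust enough to survive replacing $|\cdot|_\R$ with $|\cdot|_\mathbb V$.

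Concretely, I first fix a Whitney cover $\{B_i = B(x_i, r_i)\}$ of $U = Z \setminus F$ with $r_i$ comparable to $\dist(x_i, F)$ and $\{5B_i\}$ of bounded overlap. Since the measure-density condition only holds up to scale $1$, I restrict the construction to the neighbourhood $\{x \in Z : \dist(x, F) < 1/2\}$ of $F$ and patch it to zero on $\{x : \dist(x,F) \geq 1\}$ by multiplying with a globally Lipschitz cutoff, using \Cref{lemm:basic}(3) to keep track of Haj\l{}asz gradients. Let $\{\varphi_i\}$ be a Lipschitz partition of unity subordinate to $\{2B_i\}$ with $\lip(\varphi_i) \leq C/r_i$, and for each $i$ pick $y_i \in F$ with $d(x_i, y_i) \leq 2\dist(x_i,F)$, so that $\widetilde B_i := B(y_i, Cr_i) \cap F$ satisfies $\mu(\widetilde B_i) \geq c_F \mu(B(y_i, Cr_i))$ by \eqref{eq:measuredensity}. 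Define
\begin{equation*}
    E_{\mathbb V}(u)(x)
    =
    \begin{cases}
        u(x) & \text{if } x \in F, \\[2pt]
        \displaystyle \sum_i \varphi_i(x) \, u_{\widetilde B_i} & \text{if } x \in U,
    \end{cases}
\end{equation*}
and then multiply by the cutoff. Linearity is immediate, and $E_{\mathbb V}(u) \in L^p(Z; \mathbb V)$ follows from the pointwise bound $|E_{\mathbb V}(u)(x)|_\mathbb V \lesssim \mathcal M_R(\widehat{|u|})(x)$ together with \Cref{lemm:maximalfunction:bounded}, where $\widehat{|u|}$ denotes the zero extension.

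The main technical task is to produce a Haj\l{}asz gradient $G \in L^p(Z)$ of $E_{\mathbb V}(u)$ whose norm is controlled by $\|u\|_{M^{1,p}(F;\mathbb V)}$. Given $g \in \mathcal{D}_p(u)$, I take $G$ comparable to $\mathcal M_R(\widehat g) + \mathcal M_R(\widehat{|u|})$ (plus a cutoff-induced term from \Cref{lemm:basic}(3)) and verify the Haj\l{}asz inequality $|E_{\mathbb V}(u)(x) - E_{\mathbb V}(u)(y)|_\mathbb V \leq d(x,y)(G(x)+G(y))$ in three cases: $x,y\in F$ (inherited from $g$); $x,y\in U$ (Whitney bookkeeping: only $O(1)$ indices contribute, $|\varphi_i(x)-\varphi_i(y)| \leq Cd(x,y)/r_i$, and the differences $|u_{\widetilde B_i} - u_{\widetilde B_j}|_\mathbb V$ are bounded by the $g$-average over a Whitney-comparable ball in $F$); and $x\in F$, $y\in U$ (a chain argument running $y \to B_i \to \widetilde B_i$ and then $\widetilde B_i$ down to a Lebesgue point $x \in F$ along balls with geometrically decreasing radii). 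The crucial point for the Banach-valued case is that the norm of a Bochner average is at most the average of the norms, so all scalar pointwise arguments transfer once $|\cdot|_\R$ is replaced by $|\cdot|_\mathbb V$.

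Finally, the naturality $T \circ E_{\mathbb V} = E_{\mathbb W} \circ T$ is immediate from the explicit formula: the Bochner integral commutes with bounded linear operators, so $T(u_{\widetilde B_i}) = (T\circ u)_{\widetilde B_i}$, and by the local finiteness of $\{\varphi_i\}$ (bounded overlap) the sum $\sum_i \varphi_i(x) u_{\widetilde B_i}$ has only finitely many nonzero terms at each $x$, so $T$ passes through the sum by linearity; on $F$ the agreement is tautological. I expect the main obstacle to be the mixed case of the Haj\l{}asz-gradient estimate, which requires carefully choosing the chain of averaging balls and exploiting the Lebesgue-point identity $u(x) = \lim_{r \to 0^+} u_{B(x,r) \cap F}$ valid at $\mu$-a.e. $x \in F$ by the measure-density condition and the Lebesgue differentiation theorem for Bochner-integrable maps.
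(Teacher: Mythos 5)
Your construction mirrors the paper's almost step for step: a Whitney cover of $Z\setminus F$ with radii comparable to the distance to $F$, a Lipschitz partition of unity, Bochner averages over nearby balls in $F$, a chain argument down to Lebesgue points for the mixed case, a cutoff via \Cref{lemm:basic}(3), and commutation with $T$ from linearity of the Bochner integral. For $1<p<\infty$ this is exactly the paper's argument, including the reduction of the gradient estimate to $\mathcal M_R(\widehat g)$ and the use of \Cref{lemm:maximalfunction:bounded} to bound its $L^p$ norm.

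However, there is a genuine gap at $p=1$, which your theorem statement includes. Both of your key quantitative steps invoke the $L^p$-boundedness of the (restricted) maximal operator: the bound $|E_{\mathbb V}(u)| \lesssim \mathcal M_R(\widehat{|u|})$ to get $E_{\mathbb V}(u)\in L^p(Z;\mathbb V)$, and the choice of Haj\l{}asz gradient $G\lesssim\mathcal M_R(\widehat g)+\mathcal M_R(\widehat{|u|})$ with the estimate $\|G\|_{L^p}\lesssim\|g\|_{L^p}+\|u\|_{L^p}$. \Cref{lemm:maximalfunction:bounded} is stated (and true) only for $1<p<\infty$; on $L^1$ the Hardy--Littlewood maximal operator is only weak-type $(1,1)$, so neither norm estimate survives. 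The paper handles $p=1$ with a separate and more delicate argument (\Cref{lemm:localvariation}, \Cref{lemm:boundedoverlap}): instead of $\mathcal M(\widehat g)$ it uses the fractional maximal function $(\mathcal M(\widehat g^{\,q}))^{1/q}$ with $q=s/(s+1)<1$, where $s=\log_2 c_\mu$, so that $g^q\in L^{1/q}$ and the maximal operator is bounded on $L^{1/q}$; the chain estimate must then be reworked to produce this fractional maximal function, and the $L^1$ bound on the extension is obtained locally over balls of fixed radius rather than from a maximal-function pointwise bound. Your proposal, as written, does not contain this idea and so does not establish the $p=1$ case.
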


We explain how \Cref{thm:intermediate} can be applied to all measurable sets $\Omega \subset Z$ which satisfy the measure-density condition. Indeed, if $\Omega$ satisfies \eqref{eq:measuredensity}, then
\begin{equation*}
    \mu( B(z,r) \cap \Omega )
    \geq
    c_\Omega \mu( B(z,r) )
    \quad\text{for every $z \in \overline{\Omega}$ and $0 < r \leq 1$}.
\end{equation*}
This fact and the Lebesgue differentiation theorem imply $\mu( \overline{ \Omega } \setminus \Omega ) = 0$. Therefore 
$$M^{1,p}( \overline{\Omega}; \mathbb{V} ) = M^{1,p}( \Omega; \mathbb{V} )$$
can be isometrically identified as Banach spaces (see also \Cref{lemm:closure})  and we may now apply \Cref{thm:intermediate} to the closed set $\overline{\Omega}$. In particular, \Cref{th:Mexten} follows from \Cref{thm:intermediate}.

\subsection{Some density estimates}
The measure-density condition \eqref{eq:measuredensity} can be quantitatively extended for larger $R > 1$.
\begin{lemma}\label{lemm:extendedmeasuredensity}
Given $R \geq 1$, there exists a constant $C = C( R, c_\mu, c_F )$ such that
\begin{equation}\label{eq:extendedmeasuredensity}
    \mu( B(z,r) \cap F ) \geq C( R, c_\mu, c_F ) \mu( B(z,r) )
\end{equation}
whenever $z \in F$ and $0 < r \leq R$.

Furthermore, whenever $( z, z_0 ) \in F \times F$ and $0 < r \leq r_0 \leq R$ satisfy $z \in B_0 = B( z_0, r_0 )$, then
\begin{equation*}
    \mu( B(z,r) \cap F )
    \geq
    C( R, c_\mu, c_F )
    \left(
        \frac{ r }{ r_0 }
    \right)^s
    \mu( B_0 \cap F ) ),
\end{equation*}
where $s = \log_2 c_\mu$.
\end{lemma}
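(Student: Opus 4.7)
The plan is to prove both inequalities by combining the given measure-density condition at scale one with the doubling condition, which controls how measures of concentric balls compare across scales by the exponent $s = \log_2 c_\mu$.

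For the first inequality, I would split into two cases. When $0 < r \leq 1$, the hypothesis \eqref{eq:measuredensity} gives the estimate directly with constant $c_F$. When $1 < r \leq R$, I would observe that $B(z,1) \subset B(z,r)$, so that $\mu(B(z,r) \cap F) \geq \mu(B(z,1) \cap F) \geq c_F \mu(B(z,1))$. To convert $\mu(B(z,1))$ into $\mu(B(z,r))$, I would choose the smallest integer $k \geq 0$ with $2^k \geq r$, so that $2^{-k}r \leq 1$ and thus $B(z, 2^{-k} r) \subset B(z,1)$. Iterating the doubling condition $k$ times gives $\mu(B(z,r)) \leq c_\mu^k \mu(B(z, 2^{-k}r)) \leq c_\mu^k \mu(B(z,1))$, where $k \leq \lceil \log_2 R \rceil$. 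Combining yields \eqref{eq:extendedmeasuredensity} with $C(R, c_\mu, c_F) = c_F c_\mu^{-\lceil \log_2 R \rceil}$.

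For the second inequality, I would first apply \eqref{eq:extendedmeasuredensity} to get $\mu(B(z,r) \cap F) \geq C(R, c_\mu, c_F) \mu(B(z,r))$. Since $z \in B(z_0, r_0)$, the triangle inequality gives $B_0 \subset B(z, 2r_0)$, hence $\mu(B_0 \cap F) \leq \mu(B(z, 2r_0))$. It remains to compare $\mu(B(z, 2r_0))$ with $\mu(B(z,r))$: choosing the smallest integer $k$ with $2^k r \geq 2 r_0$ so that $2^k \leq 4 r_0/r$, the doubling condition yields
\begin{equation*}
\mu(B(z, 2r_0)) \leq \mu(B(z, 2^k r)) \leq c_\mu^k \mu(B(z,r)) \leq 4^s (r_0/r)^s \mu(B(z,r)),
\end{equation*}
using $c_\mu^k = 2^{ks}$. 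Chaining the three estimates gives the required inequality, with the new constant still depending only on $R$, $c_\mu$, $c_F$.

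I expect no serious obstacle: both statements are elementary consequences of the doubling condition applied in the "reverse" direction, with care taken only about rounding in the choice of $k$. The only thing to track is that every bound depends only on the allowed parameters $R$, $c_\mu$, $c_F$, which is clear from the explicit form above.
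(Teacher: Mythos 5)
Your argument is correct and follows essentially the same route as the paper: split on whether $r\leq 1$, use the scale-one measure-density bound to pass to $B(z,1)$, and then compare concentric balls via doubling; for the second claim, chain the first inequality with the doubling comparison of $B(z,r)$ against a ball containing $B_0$ and drop to $B_0\cap F$. The only cosmetic difference is that you iterate $\mu(2B)\leq c_\mu\mu(B)$ explicitly (tracking an integer $k$) where the paper instead invokes the quantitative form $\mu(B(z,r))\geq 4^{-s}(r/r_0)^s\mu(B(z_0,r_0))$ from \cite[Lemma 8.1.13]{HKST2015} directly; the two are interchangeable.
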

\begin{proof}
The doubling property of $\mu$ yields that for $s = \log_2 c_\mu$ and $C = 4^{-s}$, whenever $( z, z_0 ) \in Z \times Z$ and $0 < r \leq r_0$ satisfy $z \in B( z_0, r_0 )$, then
\begin{equation}\label{eq:doublingproperty:balls}
    \mu( B(z,r) )
    \geq
    C\left(
        \frac{ r }{ r_0 }
    \right)^s
    \mu( B(z_0, r_0) );
    \quad\text{see, e.g. \cite[Lemma 8.1.13]{HKST2015}.}
\end{equation}
Then, if $z \in F$ and $0 < r \leq R$, then either $0 < r \leq 1$ and
\begin{equation*}
    \mu( B(z,r) \cap F ) \geq c_F \mu( B(z,r) ),
\end{equation*}
or $r > 1$ and
\begin{equation*}
    \mu( B(z,r) \cap F )
    \geq
    c_F
    \mu( B(z,1) ),
\end{equation*}
so by \eqref{eq:doublingproperty:balls},
\begin{equation*}
    \mu( B(z,1) )
    \geq
    C( c_\mu )
    \mu( B(z,r) )
    \left(
        \frac{ 1 }{ r }
    \right)^s
    \geq
    C( c_\mu, R )
    \mu( B(z,r) ).
\end{equation*}
In either case, we obtain \eqref{eq:extendedmeasuredensity}. The final claim follows by combining \eqref{eq:extendedmeasuredensity} and \eqref{eq:doublingproperty:balls}.
\end{proof}

\subsection{Preparations for the proofs}
The main idea of the proof of \Cref{thm:intermediate} comes from the proof of \cite[Theorem 6]{HKT2008:B}, where the authors proved the real-valued case. We include quite a few details in the section since we need them during the proof of \Cref{thm:extensionresults} as well.

For the purposes of the proof, we introduce some notations and lemmas. First, we define an open neighborhood of $F$ by setting
\begin{equation*}
    U \coloneqq B(F,8)= \left\{ z \in Z \colon d( z, F ) < 8 \right\}.
\end{equation*}

We assume from now on that $F \neq Z$, since otherwise we have nothing to do. Next, for every $z \in Z$, set $r(z) \coloneqq d( z, F )/10$. From the family $\left\{ B(z, r(z)/5) \right\}_{ z \in Z\setminus F }$, we select a maximal subfamily $\left\{ B(z_i,r(z_i)/5) \right\}_{ i \in I}$ of pairwise disjoint balls where $I\subset \N$.

For each $i \in I$, we refer to $B_{i} \coloneqq B( z_{i}, r(z_i) )$ as a \emph{Whitney ball} with center $z_{i}$ and radius $r_{i} \coloneqq r(z_i)$. We call $\mathcal{W} = \left\{ B_i \right\}_{ i \in I }$ a \emph{Whitney covering} of $Z \setminus F$. The following properties are readily verified, cf. \eqref{eq:doublingproperty:balls} or \cite[Section 4]{HKST2015}.
\begin{lemma}\label{lemm:Whitney:metric}
There exists $M=M(c_\mu) \in \mathbb{N}$ such that
\begin{itemize} 
    \item[(W1)] Each $5B_{i}$ is an open ball inside $Z \setminus F$.
    \item[(W2)] $Z \setminus F =\bigcup_{i\in I} B_i$ and $5^{-1}B_i \cap 5^{-1}B_j = \emptyset$ whenever $i \neq j$.
    \item[(W3)] If $z \in 5B_i$, then $5 r_i < 10 r(z) = d( z, F ) < 15 r_i$. 
    \item[(W4)] There is $z_{i}^{\star} \in F$ such that $d( z_{i}, z_{i}^{\star} ) < 15 r_i$.
    \item[(W5)] $\sum_{ i\in I} \chi_{ 5B_i }(x) \leq M$ for every $x \in Z\setminus F$.
    \item[(W6)] Whenever $5B_i \cap 5B_j \neq \emptyset$ with $r_i \leq r_j$, then $r_j \leq 75 r_i$.
\end{itemize}
\end{lemma}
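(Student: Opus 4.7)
The plan is to verify \textup{(W1)--(W6)} in sequence; each property follows from the triangle inequality together with the definition $r_i = d(z_i,F)/10$, while (W5) additionally uses the doubling of $\mu$. Note that the measure-density condition \eqref{eq:measuredensity} plays no role here, only the Whitney construction and the doubling constant $c_\mu$ matter.

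Properties (W1), (W3), (W4), and (W6) reduce to direct triangle-inequality computations. For (W1) and (W3), any $y \in 5 B_i$ satisfies
\begin{equation*}
    |d(y,F) - 10 r_i| \leq d(y,z_i) < 5 r_i,
\end{equation*}
which simultaneously gives $d(y,F) > 0$ (so $5 B_i \subset Z \setminus F$) and $5 r_i < d(y,F) < 15 r_i$. Property (W4) is immediate from the definition of $10 r_i = d(z_i,F)$ as an infimum. For (W6), if $y \in 5 B_i \cap 5 B_j$ and $r_i \leq r_j$, then $d(z_i, z_j) < 5 r_i + 5 r_j$, and
\begin{equation*}
    10 r_j = d(z_j,F) \leq d(z_j, z_i) + d(z_i, F) < 5 r_j + 15 r_i
\end{equation*}
forces $r_j < 3 r_i$, considerably stronger than the stated $r_j \leq 75 r_i$.

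For (W2), the pairwise disjointness of the balls $5^{-1} B_i$ holds by construction. For the covering, a given $z \in Z \setminus F$ either equals some $z_i$ or, by maximality of $\{B(z_i, r_i/5)\}_{i \in I}$, the ball $B(z, r(z)/5)$ meets some $B(z_i, r_i/5)$. Applying the triangle inequality to $d(\cdot, F)$ yields
\begin{equation*}
    |r(z)-r_i| = |d(z,F) - d(z_i,F)|/10 \leq d(z, z_i)/10 \leq (r(z) + r_i)/50,
\end{equation*}
which forces $49 \max\{r(z),r_i\} \leq 51 \min\{r(z),r_i\}$. Combined with $d(z, z_i) < (r(z)+r_i)/5$, this gives $d(z, z_i) < r_i$, so $z \in B_i$.

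The bounded-overlap property (W5) is the only step relying on the doubling of $\mu$, and is the main technical point. Fixing $x \in Z \setminus F$ and setting $\mathcal{J}_x \coloneqq \{i \in I : x \in 5 B_i\}$, property (W3) forces $r_i \in (2 r(x)/3,\, 3 r(x)/2)$ and $d(z_i, x) < 5 r_i < 15 r(x)/2$ for every $i \in \mathcal{J}_x$. Hence the pairwise disjoint balls $B(z_i, r_i/5)$ are contained in the common ball $B(x, 8 r(x))$ while each contains $B(z_i, 2 r(x)/15)$. Iterated doubling of $\mu$ produces a constant $c = c(c_\mu) > 0$ with $\mu(B(z_i, r_i/5)) \geq c\, \mu(B(x, 8 r(x)))$ uniformly in $i \in \mathcal{J}_x$; summing over the disjoint family yields $|\mathcal{J}_x| \leq 1/c$, giving the desired $M = M(c_\mu)$.
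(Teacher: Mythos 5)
Your proof is correct in structure and fills in a verification the paper declines to write out (the paper states the properties are ``readily verified, cf.\ \eqref{eq:doublingproperty:balls} or [HKST2015, Section 4]''), so there is no competing argument in the paper itself; what you give is the standard one. Your treatments of (W1), (W3), (W4), (W6) and the covering claim in (W2) are all sound, and your improved bound $r_j < 3 r_i$ in (W6) is correct and strictly stronger than the stated $r_j \leq 75 r_i$.

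There is a small numerical slip in (W5). From $x \in 5 B_i$ and (W3) one has $5 r_i < 10 r(x) < 15 r_i$, which gives $r_i \in (2 r(x)/3,\ 2 r(x))$, not $(2 r(x)/3,\ 3 r(x)/2)$ as you wrote. Consequently $d(z_i, x) < 5 r_i < 10 r(x)$ (not $15 r(x)/2$), and for $y \in B(z_i, r_i/5)$ one gets $d(y,x) < r_i/5 + 5 r_i = (26/5) r_i < (52/5) r(x)$, so the pairwise disjoint balls $B(z_i, r_i/5)$ lie inside $B(x, 11 r(x))$, say, rather than $B(x, 8 r(x))$. None of this affects the logic: the lower bound $r_i/5 > 2 r(x)/15$ still holds, the doubling inequality \eqref{eq:doublingproperty:balls} applied with $z_0 = x$, $r_0 = 11 r(x)$, $z = z_i$, $r = 2 r(x)/15$ still gives $\mu(B(z_i, r_i/5)) \geq c(c_\mu)\, \mu(B(x, 11 r(x)))$ uniformly in $i \in \mathcal{J}_x$, and summing over the disjoint balls yields $|\mathcal{J}_x| \leq 1/c = M(c_\mu)$ exactly as you describe.
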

Next, fix a smooth function $\psi \colon \mathbb{R} \rightarrow \mathbb{R}$ with $\psi\equiv 1$ in $\left[0,1\right]$, $\psi \equiv 0$ on $\left[3/2, \infty\right)$, $0 \leq \psi \leq 1$  elsewhere and $\|\psi'(t)\| \leq 2$ for all $t\in\R$. Define
\begin{equation*}
    \psi_{i}(z)
    =
    \psi\left( \frac{ d(z,z_i) }{ r_{i} } \right).
\end{equation*}
Then $\psi_{i} \equiv 1$ in $\overline{B}_{i}$ and zero in the complement of $(3/2)B_{i}$. Also, each $\psi_{i}$ is $2/r_i$-Lipschitz. We define
\begin{equation*}
    \phi_{i}(z) = \frac{ \psi_{i}(z) }{ \sum_{ j \in I} \psi_{j}(z) }.
\end{equation*}
We say that $\left\{ \phi_{i} \right\}_{ i \in I }$ is a partition of unity associated to the \emph{Whitney cover} $\mathcal{W}$. The lemma below states some basic properties of the partition of unity, cf. \cite[Section 4]{HKST2015}.
\begin{lemma}\label{lemm:Whitney:partition:metric}
Let $\left\{ \phi_{i} \right\}_{ i \in I }$ be the previous partition of unity associated to the Whitney cover $\mathcal{W}$ of $Z\setminus F$. Then there are constants $M = M( c_\mu )$ and $K = K( M, \|\psi'\|_\infty )$ for which
\begin{itemize}
    \item $\phi_{i} \equiv 0$ in the complement of $2B_{i}$; 
    \item $\phi_{i}(z) \geq 1/M$ for every $z \in B_i$;
    \item each $\phi_{i}$ is $(K/r_i )$-Lipschitz (One may take $K=2/M$ for instance);
    \item $\chi_{Z\setminus F}(z) = \sum_{ i\in I } \phi_{i}(z)$ for every $z \in Z\setminus F$.
\end{itemize}
\end{lemma}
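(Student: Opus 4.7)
My plan is to verify the four bullets in turn, with the Lipschitz estimate being the substantive step.

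The first bullet is immediate from the definition: $\psi_i(z) = \psi(d(z,z_i)/r_i)$ vanishes whenever $d(z,z_i) \geq (3/2)r_i$, so $\phi_i$ vanishes outside $(3/2)B_i \subset 2B_i$. The fourth bullet follows from (W2): each $z \in Z\setminus F$ lies in some $B_j$, so $\psi_j(z) = 1$ and hence $\Phi(z) := \sum_{k \in I} \psi_k(z) \geq 1$; thus $\phi_j = \psi_j / \Phi$ is well-defined on $Z \setminus F$ and sums to one there. For the second bullet, if $z \in B_i$ then $\psi_i(z) = 1$, while $\psi_k(z) \neq 0$ forces $z \in (3/2)B_k \subset 5B_k$, so (W5) bounds the number of such $k$ by $M = M(c_\mu)$; together with $\psi_k \leq 1$ this gives $\Phi(z) \leq M$, hence $\phi_i(z) \geq 1/M$.

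For the Lipschitz bound I would extend each $\phi_i$ by zero on $F$ and split into cases. In the main case, when $z, w \in (3/2)B_i$, the algebraic identity
\[
\phi_i(z) - \phi_i(w) = \frac{\psi_i(z) - \psi_i(w)}{\Phi(z)} + \psi_i(w)\,\frac{\Phi(w) - \Phi(z)}{\Phi(z)\Phi(w)}
\]
together with $\Phi \geq 1$ reduces matters to estimating $|\psi_k(z) - \psi_k(w)|$ and summing over the indices $k$ for which $\psi_k(z) \neq 0$ or $\psi_k(w) \neq 0$. Each such $k$ satisfies $(3/2)B_k \cap (3/2)B_i \neq \emptyset$, hence $5B_k \cap 5B_i \neq \emptyset$, so (W6) yields $r_k \geq r_i/75$; (W5) further bounds the number of such $k$ by $2M$. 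Using that $\psi_k$ is $(\|\psi'\|_\infty / r_k)$-Lipschitz, both terms are controlled by $C(M, \|\psi'\|_\infty)/r_i \cdot d(z,w)$.

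For the remaining cases I would observe that $\phi_i(z) \neq 0$ forces $z \in (3/2)B_i$, and conversely $w \notin (3/2)B_i$ forces $\phi_i(w) = 0$. If $z \in (3/2)B_i$ and $w \notin (3/2)B_i$, the one-sided inequality $\psi(t) \leq \|\psi'\|_\infty \cdot (3/2 - t)$ for $t \in [0, 3/2]$, obtained by integrating $\psi'$ backward from $3/2$, yields
\[
\phi_i(z) \leq \psi_i(z) \leq \frac{\|\psi'\|_\infty}{r_i} \bigl( d(w,z_i) - d(z,z_i) \bigr) \leq \frac{\|\psi'\|_\infty}{r_i}\, d(z,w),
\]
and if $z \in F$ while $w \in (3/2)B_i$, then (W3) gives $d(z,w) \geq d(w,F) > 5 r_i$, so $|\phi_i(w)| \leq 1 \leq (5r_i)^{-1} d(z,w)$; the remaining subcases are either symmetric or trivial. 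The main obstacle is the estimate of $|\Phi(z) - \Phi(w)|$ in the first case: this is where the conjunction of (W5) and (W6) is essential, since without the uniform comparability $r_k \asymp r_i$ among contributing indices the sum of Lipschitz bounds would blow up.
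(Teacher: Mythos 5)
Your argument is correct, and it is the standard one. The paper does not in fact supply a proof of this lemma -- it only cites \cite[Section 4]{HKST2015} -- so there is no ``paper's route'' to compare against; what you wrote is essentially the argument one finds there. All four bullets check out: the support bound and the normalization are immediate from the definition of $\psi_i$ and from (W2); the lower bound $\phi_i \geq 1/M$ on $B_i$ follows from $\psi_i\equiv 1$ on $B_i$ together with the overlap bound (W5); and for the Lipschitz estimate your decomposition
\[
\phi_i(z) - \phi_i(w) = \frac{\psi_i(z) - \psi_i(w)}{\Phi(z)} + \psi_i(w)\,\frac{\Phi(w) - \Phi(z)}{\Phi(z)\Phi(w)},
\qquad \Phi := \sum_{j\in I}\psi_j,
\]
does exactly the right thing: $\Phi\geq 1$ on $Z\setminus F$ absorbs the denominators, (W5) caps the number of contributing indices by $2M$, (W6) gives $r_k\geq r_i/75$ for those indices so that each $\psi_k$ is $(75\|\psi'\|_\infty/r_i)$-Lipschitz near $(3/2)B_i$, and the ``one-sided'' and cross-boundary cases are correctly disposed of by the inequality $\psi(t)\leq \|\psi'\|_\infty(3/2-t)$ and by (W3), respectively.

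One small observation worth recording: your proof yields a Lipschitz constant of the form $K/r_i$ with $K = C\,M\,\|\psi'\|_\infty$ for an absolute $C$, and this is also optimal in order (take $M$ concentric Whitney balls of equal radius and move from $B_i$ out of $(3/2)B_i$, giving a lower bound $\geq 2/(M r_i)$). The parenthetical ``one may take $K = 2/M$'' in the lemma statement is therefore a lower bound on admissible $K$, not an admissible choice; your computation makes this plain.
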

We fix an arbitrary point $z^{\star}_{i}$ satisfying the conclusion of (W4) in \Cref{lemm:Whitney:metric} and define a ball $B^\star_i$ by setting 
\[B^\star_i:=B(z^\star_i, r_i).\]
Observe that if $z\in (3/2)\overline{B}_i$, then
\begin{equation}
    \label{eq:inclusion:mod}
    B_{i}^{\star} \subset \overline{B}( z, 25r(z) ) \eqqcolon B_{z}.
\end{equation}
The inclusion \eqref{eq:inclusion:mod}, together with (W3) and \Cref{lemm:extendedmeasuredensity}, yield the inequalities
\begin{equation}\label{eq:use_of_ahlfors:mod}
    \mu( B_{i}^{\star} \cap F ) \leq \mu( B_z ) \leq C(c_\mu,c_F) \mu( B_i^{\star} \cap F ) \quad\text{for every $i$ and $z\in (3/2)\overline{B}_i \cap U$.}
\end{equation}

For each $z\in U\setminus F$, denote by $I_z$ the collection of all $i\in I$ such that $z\in 2\overline{B}_i$. The cardinality of $I_{z}$ is bounded from above by the number $M$ from (W5). Denote $I_{1} := \left\{ i \in I \colon r_i < 1 \right\}$ and observe that if $i\in I\setminus I_1$, then $r_i \geq 1$ and hence $d( 2B_i, F) \geq 8r_i \geq 8$, so $2\overline{B}_i \cap U=\emptyset$. Consequently, for every $z\in U\setminus F$, we have $I_z\subset I_1$ and 
\[\sum_{i\in I_z}\phi_i(z)=\sum_{i\in I}\phi(z)=\sum_{i\in I_1}\phi_i(z)=1. \]

Let $\mathbb V$ be a Banach space and $u\in\mathcal L^{p}(F; \mathbb{V} )$ be arbitrary for $1 \leq p < \infty$. We define an extension function $\widetilde{E}_{\mathbb{V}}(u)$ on $U$ by setting 
\begin{equation}\label{eq:ext.op.}
\widetilde{E}_{\mathbb{V}}(u)(x):=\begin{cases}
u(x), & \ {\rm if}\ x\in F,\\
\sum_{i\in I_1}\phi_i(x)u_{B^\star_i\cap F}, & \ {\rm if}\ x\in U\setminus F.
\end{cases}
\end{equation}

The following lemma shows that the extension function is pointwise controlled from above by the maximal function.
\begin{lemma}\label{lemm:maximal}
Let $1 \leq p < \infty$ and $u \in\mathcal L^{p}( F; \mathbb{V} )$. Then
\begin{equation}
    \label{eq:extension:maximalfunction}
    |\widetilde{E}_{\mathbb{V}}(u)(x)|
    \leq
    \widetilde{E}_{\mathbb{R}}(|u|)(x)
    \leq
    C(c_\mu,c_F)
    \mathcal{M}( \widehat{u} )(x)
    \quad\text{for almost every $x \in U$},
\end{equation}
where $\widehat u$ is the zero extension of $u$. Moreover, $\widetilde{E}_{\mathbb{V}}(u)$ is measurable and when $p > 1$, it satisfies $$\| \widetilde{E}_{\mathbb{V}}(u) \|_{ L^{p}( U;\mathbb V ) } \leq  C(c_\mu,c_F,p) \| u \|_{ L^{p}( F;\mathbb V ) }.$$
\end{lemma}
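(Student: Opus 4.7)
\medskip

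\noindent\textbf{Proof plan.}
The plan is to establish the two pointwise inequalities first, then deduce measurability from a locally finite sum representation, and finally obtain the $L^{p}$ bound by invoking the boundedness of $\mathcal{M}$ from \Cref{lemm:maximalfunction:bounded}. For the first inequality $|\widetilde{E}_{\mathbb{V}}(u)(x)| \leq \widetilde{E}_{\mathbb{R}}(|u|)(x)$, the case $x \in F$ is trivial since both sides equal $|u(x)|$. For $x \in U \setminus F$, we apply the triangle inequality for the Bochner integral (giving $|u_{B^\star_i \cap F}| \leq (|u|)_{B^\star_i \cap F}$) and then for the finite sum defining $\widetilde{E}_{\mathbb{V}}(u)(x)$, using $\phi_i(x) \geq 0$.

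For the second inequality $\widetilde{E}_{\mathbb{R}}(|u|)(x) \leq C(c_\mu,c_F)\mathcal{M}(\widehat{u})(x)$ almost everywhere, I split into $F$ and $U\setminus F$. On $F$, the Lebesgue differentiation theorem combined with the measure-density condition yields $|u(x)| \leq c_F^{-1}\mathcal{M}(\widehat{u})(x)$ at every Lebesgue point of $u$, and such points exhaust $F$ up to a negligible set. On $U \setminus F$, I fix $x$ with $I_x \subset I_1$ and for each $i \in I_x$ estimate $(|u|)_{B^\star_i \cap F}$ by enlarging the domain of integration: using (W3), (W4) and $x \in 2\overline{B}_i$, one sees $B^\star_i \subset B(x,18 r_i)$, and a combination of doubling of $\mu$ together with the measure-density condition (which applies since $r_i \leq 1$ and $z_i^\star \in F$) gives $\mu(B(x,18r_i)) \leq C(c_\mu,c_F)\mu(B^\star_i \cap F)$. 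Consequently
\[
    (|u|)_{B^\star_i \cap F}
    \leq
    \frac{ C(c_\mu,c_F) }{ \mu(B(x,18 r_i)) }
    \int_{B(x,18 r_i)} \widehat{u}\,d\mu
    \leq
    C(c_\mu,c_F)\mathcal{M}(\widehat{u})(x),
\]
and summing against the partition of unity (noting $\sum_{i \in I_x}\phi_i(x) = 1$) yields the claim.

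For measurability of $\widetilde{E}_{\mathbb{V}}(u)$, I enumerate $I_1 = \{i_1, i_2, \dots\}$ and define the partial sums $S_N(x) \coloneqq \sum_{k \leq N} \phi_{i_k}(x) u_{B^\star_{i_k} \cap F}$ on $U \setminus F$ and $S_N(x) = u(x)$ on $F$. Each $S_N$ is measurable as a finite sum of measurable $\mathbb{V}$-valued terms (the $\phi_{i_k}$ are Lipschitz and $u_{B^\star_{i_k} \cap F}$ are fixed vectors in the separable subspace containing the essential image of $u$), and since $I_x$ is finite with $|I_x| \leq M$ for every $x \in U \setminus F$, we have $S_N(x) = \widetilde{E}_{\mathbb{V}}(u)(x)$ eventually for each $x$. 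Hence $\widetilde{E}_{\mathbb{V}}(u)$ is the pointwise limit of measurable maps into a separable subspace, so it is measurable.

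Finally, when $p > 1$, combining the chain of pointwise estimates with the $L^p$-boundedness of the Hardy--Littlewood maximal operator (\Cref{lemm:maximalfunction:bounded}) gives
\[
    \|\widetilde{E}_{\mathbb{V}}(u)\|_{L^p(U;\mathbb{V})}
    \leq
    C(c_\mu,c_F)\|\mathcal{M}(\widehat{u})\|_{L^p(Z)}
    \leq
    C(c_\mu,c_F,p)\|\widehat{u}\|_{L^p(Z;\mathbb{V})}
    =
    C(c_\mu,c_F,p)\|u\|_{L^p(F;\mathbb{V})}.
\]
The only mildly delicate point is tracking the geometric constants in the inclusion $B^\star_i \subset B(x,18 r_i)$ and the subsequent measure comparison; this is routine but is what forces the restriction $i \in I_1$ (i.e. $r_i \leq 1$) so that the measure-density condition \eqref{eq:measuredensity} applies at scale $r_i$.
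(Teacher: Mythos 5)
Your proof is correct and follows essentially the same route as the paper's: split $x \in F$ from $x \in U \setminus F$, use the Bochner-integral triangle inequality for the first bound, use a geometric inclusion of $B_i^\star$ into a ball centered at $x$ together with the measure-density and doubling conditions for the second bound, then apply the boundedness of the maximal operator. The only cosmetic differences are the precise constants in the ball inclusion (18$r_i$ versus the paper's $25r(x)$, which are comparable by (W3)) and the measurability argument (pointwise limit of truncated sums versus the paper's observation that the extension is continuous on each Whitney ball), both of which are equally valid.
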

\begin{proof}
We first show that $|\widetilde{E}_{\mathbb{V}}(u)(x)| \leq \widetilde{E}_{\mathbb{R}}( |u| )(x)$ for every $x \in U$. The inequality holds for $x\in F$ by definition. If $x \in U \setminus F$, we observe
\begin{equation*}
    |\widetilde{E}_{\mathbb{V}}(u)(x)|
    \leq
    \sum_{ i \in I_{1} }
        |\phi_{i}|(x)| u_{ B_{i}^{\star} \cap F} |
    \quad\text{and}\quad
    | u_{ B_{i}^{\star} \cap F } | \leq |u|_{ B_{i}^{\star} \cap F },
\end{equation*}
the latter fact following from \cite[Remark 3.2.8]{HKST2015}. Using these inequalities and  that $\phi_{i} \geq 0$, we conclude
\begin{equation*}
    \sum_{ i \in I_{1} }
        |\phi_{i}|(x)| u_{ B_{i}^{\star} \cap F } |
    \leq
    \sum_{ i \in I_1 }
        \phi_{i}(x) |u|_{ B_{i}^{\star} \cap F }
    =
    \widetilde{E}_{\mathbb{R}}( |u| )(x).
\end{equation*}
Next we claim $\widetilde{E}_{\mathbb{R}}(|u|)(x) \leq \mathcal{M}( \widehat{u} )(x)\ {\rm for\ almost\ every}\ x \in U$. Firstly, since $\widehat u\in\mathcal L^p(Z;\mathbb V)$, the Lebesgue differentiation theorem applied to $|\widehat{u}|$ yields that $$\widetilde{E}_{\mathbb{R}}(|u|)(x) = |u(x)| \leq \mathcal{M}( |\widehat{u}| )(x) = \mathcal{M}( \widehat{u} )(x)$$ for almost every $x \in F$. And secondly, in the case $x \in U \setminus F$, we have
\begin{equation*}
    \widetilde{E}_{\mathbb{R}}(|u|)(x)
    =
    \sum_{ i \in I_1 }
        \phi_{i}(x) |u|_{ B_{i}^{\star} \cap F }
    \leq
    \sum_{ i \in I_{x} } |u|_{ B_{i}^{\star} \cap F }
\end{equation*}
and the property (W5), \eqref{eq:inclusion:mod} and \eqref{eq:use_of_ahlfors:mod} imply
\begin{equation*}
    \sum_{ i \in I_{x}  } |u|_{ B_{i}^{\star} \cap F }
    \leq
    C(c_\mu,c_F)\aint{B_x}|\widehat{u}(y)|d\mu(y)
    \leq
    C(c_\mu,c_F)\mathcal{M}( \widehat{u} )(x).
\end{equation*}
Therefore $\widetilde{E}_{\mathbb{R}}(|u|)(x) \leq C(c_\mu,c_F) \mathcal{M}( \widehat{u} )(x)$ for almost every $x \in U$.

The measurability of $\widetilde{E}_{\mathbb{V}}(u)$ is clear on $F$. On $U\setminus F$, the measurability follows from the fact that the restriction of $\widetilde{E}_{\mathbb{V}}(u)$ to each Whitney ball is continuous. Hence $\widetilde{E}_{\mathbb{V}}(u)$ is measurable.

When $p > 1$, the inequality $\| \widetilde{E}_{\mathbb{V}}(u) \|_{ L^{p}( U;\mathbb V ) } \leq C(c_\mu,c_F,p) \| u \|_{ L^{p}( F;\mathbb V ) }$ follows from \eqref{eq:extension:maximalfunction} and the boundedness of the maximal operator, cf. \Cref{lemm:maximalfunction:bounded}.
\end{proof}
The following lemma shows that the extension operator commutes with continuous linear mappings between Banach spaces.
\begin{lemma}\label{lemm:commutes}
Let $\mathbb{V}$, $\mathbb{W}$ be Banach and $T \colon \mathbb{V} \rightarrow \mathbb{W}$ be  continuous and linear. If $u \in L^{p}( F; \mathbb{V} )$ for some $1\leq p<\fz$, then $T( \widetilde{E}_{\mathbb{V}}(u) )(z) = \widetilde{E}_{\mathbb{W}}( T(u) )(z)$ for every $z \in U$.
\end{lemma}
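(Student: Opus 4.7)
The proof is essentially a direct verification from the definition \eqref{eq:ext.op.}, split by cases depending on whether $z$ lies in $F$ or in $U\setminus F$.

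The plan is to handle the case $z\in F$ first: by definition $\widetilde{E}_{\mathbb{V}}(u)(z)=u(z)$ and $\widetilde{E}_{\mathbb{W}}(T(u))(z)=T(u)(z)=T(u(z))$, so the equality reduces to the tautology $T(u(z))=T(u(z))$.

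For $z\in U\setminus F$, the sum defining $\widetilde{E}_{\mathbb{V}}(u)(z)=\sum_{i\in I_{1}}\phi_{i}(z)u_{B_{i}^{\star}\cap F}$ is actually finite at the point $z$: by property (W5) of the Whitney covering together with \Cref{lemm:Whitney:partition:metric}, only the indices $i\in I_{z}\subset I_{1}$ contribute, and $\#I_{z}\leq M$. Hence, since $T$ is linear and continuous, I can push $T$ inside the finite sum to obtain
\begin{equation*}
    T\bigl(\widetilde{E}_{\mathbb{V}}(u)(z)\bigr)
    =
    \sum_{i\in I_{z}}\phi_{i}(z)\,T\bigl(u_{B_{i}^{\star}\cap F}\bigr).
\end{equation*}
The main (and only nontrivial) step is then to verify that $T$ commutes with the integral averages, i.e.\ $T(u_{B_{i}^{\star}\cap F})=(T(u))_{B_{i}^{\star}\cap F}$. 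This is a standard property of the Bochner integral: continuous linear operators between Banach spaces commute with Bochner integration (one checks it first for simple functions, where it is immediate by linearity, and then passes to the general integrable case by approximation, using continuity of $T$ to pass the limit through the integral). Since $u\in L^{p}(F;\mathbb{V})\subset\mathcal{L}^{1}_{\mathrm{loc}}(F;\mathbb{V})$ and $0<\mu(B_{i}^{\star}\cap F)<\infty$ by \eqref{eq:use_of_ahlfors:mod}, the average is a well-defined Bochner integral, so the identity applies.

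Substituting this back and recognising the resulting expression as the definition of $\widetilde{E}_{\mathbb{W}}(T(u))(z)$ yields
\begin{equation*}
    T\bigl(\widetilde{E}_{\mathbb{V}}(u)(z)\bigr)
    =
    \sum_{i\in I_{1}}\phi_{i}(z)\,(T(u))_{B_{i}^{\star}\cap F}
    =
    \widetilde{E}_{\mathbb{W}}(T(u))(z),
\end{equation*}
finishing the proof. There is no real obstacle here; the only point deserving a citation is the commutation of continuous linear maps with the Bochner integral, for which I would refer to \cite[Section 3]{HKST2015} or the classical Bochner integration literature.
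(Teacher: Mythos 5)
Your proof is correct and follows essentially the same approach as the paper's: split into the trivial case $z\in F$ and the case $z\in U\setminus F$, where linearity of $T$ pushes it through the (locally finite) sum and the key step is that continuous linear operators commute with Bochner integral averages, for which the paper cites \cite[Remark 3.2.8]{HKST2015}. The only difference is that you make explicit the pointwise finiteness of the sum via (W5), which the paper uses implicitly.
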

\begin{proof}
We recall that integral averages over integrable mappings commute with continuous linear mappings, see \cite[Remark 3.2.8]{HKST2015}, for example. Hence the linearity of $T$ yields, for every $z \in U \setminus F$,
\begin{equation*}
    T\left( \widetilde{E}_{\mathbb{V}}( u ) \right)(z)
    =
    T\left( \sum_{ i \in I_{1} } \phi_{i}(z) u_{B_{i}^{*}\cap F} \right)(z)
    =
    \sum_{ i \in I_{1} } \phi_{i}(z) T( u_{ B_{i}^{*} \cap F } )
    =
    \widetilde{E}_{\mathbb{W}}( T(u) )(z).
\end{equation*}
The commutation property is clearly true in $F$, so the claim follows.
\end{proof}

\subsection{Proof of Theorem \ref{thm:intermediate} for \texorpdfstring{$p > 1$}{Lg}}

We prove a local variant of \cite[Theorem 6]{HKT2008:B} in the case $p>1$.
The local version shows more than what is strictly needed for the proof of \Cref{thm:intermediate}. However, we apply the following formulation during one of the key steps in the proof of \Cref{thm:extensionresults}.
\begin{proposition}\label{proposition:local}
Given $u \in \mathcal{L}^p( F; \mathbb{V} )$, $x \in F$, $r >0$, and $g \in \mathcal{D}_{p}( u|_{ B( x, 4r ) \cap F } )$, there exists a negligible set $N \subset B( x, r )$ such that
\begin{equation}\label{eq:hajlasz}
    | \widetilde{E}_{\mathbb{V}}(u)(y) - \widetilde{E}_{\mathbb{V}}(u)(z) |
    \leq
    d(y,z) C(c_\mu,c_F)  (\mathcal{M}_{4r}( \widehat{g} )(y) + \mathcal{M}_{4r}( \widehat{g} )(z) )
\end{equation}
for every $y, z \in U \cap B( x, r ) \setminus N$. Here $\widehat{g}$ denotes the zero extension of $g$ to the complement of $B( x, 4r ) \cap F$.
\end{proposition}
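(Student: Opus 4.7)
The plan is to adapt the standard Whitney-type Haj\l{}asz gradient argument of \cite[Theorem 6]{HKT2008:B} to our Banach-valued setting, with careful bookkeeping of scales so that only the restricted maximal function $\mathcal{M}_{4r}( \widehat{g} )$ appears on the right-hand side. Let $N \subset B(x, r)$ denote the union of the negligible sets on which the Haj\l{}asz gradient inequality for $u|_{B(x,4r) \cap F}$ fails, together with the non-Lebesgue points of $g$ (viewed as a function on $(F, d, \mu|_F)$, which is doubling up to scale one by the measure-density condition) and of $u$. We verify \eqref{eq:hajlasz} by case analysis on whether $y, z \in U \cap B(x, r) \setminus N$ lie in $F$ or in $U \setminus F$.

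\emph{Case 1: $y, z \in F$.} The Haj\l{}asz gradient inequality gives $|u(y)-u(z)|\leq d(y,z)(g(y)+g(z))$. Since $y \in B(x, r)$, for all sufficiently small $\rho > 0$ the ball $B(y, \rho)$ lies inside $B(x, 4r)$, so $\widehat{g}$ and $g$ agree on $B(y,\rho)$. The measure-density condition together with the Lebesgue differentiation theorem applied on $F$ then yield $g(y) \leq C(c_F)\mathcal{M}_{4r}(\widehat g)(y)$ and analogously for $z$.

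\emph{Case 2: $y, z \in U \setminus F$.} For any $y \in B(x,r) \cap U \setminus F$, one has $d(y, F) \leq d(y, x) < r$, and for any Whitney ball $B_k$ with $y \in 2 B_k$, properties \textup{(W3)}--\textup{(W4)} force $r_k < r/5$ and $B_k^\star \subset B(x, Cr) \subset B(x, 4r)$ after an appropriate choice of constants. I would first prove the oscillation bound
\begin{equation*}
    \lvert \widetilde E_{\mathbb V}(u)(y) - u_{B_k^\star\cap F} \rvert
    \leq C(c_\mu, c_F)\, r_k \, \mathcal{M}_{4r}(\widehat g)(y^\star)
    \quad\text{for every } y^\star \in B_k^\star \cap F,
\end{equation*}
by writing $\widetilde E_{\mathbb V}(u)(y) - u_{B_k^\star \cap F} = \sum_{i \in I_y} \phi_i(y)\bigl(u_{B_i^\star \cap F} - u_{B_k^\star \cap F}\bigr)$, applying the chain-of-averages telescoping argument from the proof of \Cref{lemm:localHaj}, using \Cref{lemm:extendedmeasuredensity} to compare $\mu(B_i^\star \cap F)$, $\mu(B_k^\star \cap F)$ and $\mu(B(y^\star, Cr_k) \cap F)$, and bounding differences $|u(a)-u(b)|$ by $d(a,b)(g(a)+g(b))$ on $B(x,4r) \cap F$. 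Given this oscillation bound, I split according to the size of $d(y,z)$ relative to $d(y,F)$. If $d(y,z) \geq d(y,F)/2$, I pick $y^\star, z^\star \in F$ with $d(y,y^\star) + d(z,z^\star) \leq C d(y,z)$, and reduce to Case~1 via the triangle inequality through $u(y^\star)$ and $u(z^\star)$. If $d(y,z) < d(y,F)/2$, then $y$ and $z$ share a common Whitney neighbour and
\begin{equation*}
    \widetilde E_{\mathbb V}(u)(y) - \widetilde E_{\mathbb V}(u)(z)
    =
    \sum_{i \in I_y \cup I_z}\bigl(\phi_i(y)-\phi_i(z)\bigr)\bigl(u_{B_i^\star\cap F} - u_{B_k^\star\cap F}\bigr),
\end{equation*}
which is controlled using $|\phi_i(y)-\phi_i(z)| \leq (K/r_k) d(y,z)$ from \Cref{lemm:Whitney:partition:metric}, the bounded overlap constant $M$, and the oscillation bound above.

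\emph{Case 3: one point in $F$ and the other in $U \setminus F$.} This follows by applying the Lebesgue-point property of $u$ at the point in $F$, combined with the oscillation estimate of Case~2 at the point in $U \setminus F$, and the triangle inequality.

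The main obstacle is the pointwise oscillation bound in Case~2, which requires the measure-comparability of \Cref{lemm:extendedmeasuredensity} together with the telescoping chain-of-averages estimate, as well as the quantitative geometric check that every Whitney ball $B_k^\star$ arising from $y \in B(x,r) \cap U \setminus F$ lies inside $B(x,4r)$ so that both $g$ is defined there and the restricted maximal function $\mathcal{M}_{4r}(\widehat g)$ dominates all the relevant averages. The Banach-valued nature of $u$ introduces no further difficulty, since the integral averages are Bochner integrals that satisfy $|u_E| \leq |u|_E$ by \cite[Remark 3.2.8]{HKST2015}, which is the only vector-valued input needed.
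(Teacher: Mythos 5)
Your structure mirrors the paper's proof closely: the same Whitney machinery, the same case analysis (both points in $F$; both outside $F$, split by whether $d(y,z)$ dominates the local Whitney scale; one point on each side), and the same telescoping chain-of-averages estimate. The one substantive difference is your choice of reference point. The paper always compares $\widetilde E_{\mathbb V}(u)(y)$ against $u_{B_y \cap F}$ where $B_y = \overline B(y, 25 r(y))$ is centered at $y$ itself, and because $25 r(y) < 4r$ whenever $y \in B(x,r)\setminus F$, all the relevant averages are directly dominated by $\mathcal M_{4r}(\widehat g)(y)$ with no transfer needed; the same at $z$. You instead route through auxiliary Lebesgue points $y^\star, z^\star \in F$ and evaluate $\mathcal M_{4r}(\widehat g)$ at $y^\star, z^\star$.

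This is where your bookkeeping, as written, does not close. In the subcase $d(y,z) \gtrsim d(y,F)$, your chosen $y^\star$ satisfies only $d(y,y^\star) \lesssim d(y,F)$, and $d(y,F)$ can be of order $r$, so the generic inequality $\mathcal M_{4r}(\widehat g)(y^\star) \lesssim \mathcal M_{4r}(\widehat g)(y)$ is false — one only gets $\mathcal M_{4r}(\widehat g)(y^\star) \lesssim \mathcal M_{Cr}(\widehat g)(y)$ for some $C>4$. Your intermediate oscillation lemma, stated with $\mathcal M_{4r}(\widehat g)(y^\star)$ on the right, has already thrown away the information needed to fix this: the averages that actually arise in the telescoping and in the Lebesgue-point evaluation of $g(y^\star)$ are over balls of radius $\lesssim r_k \lesssim r(y) < r/10$ near $y^\star$, all of which do sit inside $B(y,4r)$, so the conclusion is recoverable if you keep explicit track of the balls rather than passing through the lossy bound $\mathcal M_{4r}(\widehat g)(y^\star)$. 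I would either rephrase the oscillation estimate as a bound by an average over a single ball of radius $\lesssim r(y)$ that you then translate to a ball centered at $y$, or simply adopt the paper's device and use $u_{B_y\cap F}$ as the intermediary, which removes the transfer step entirely.
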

\begin{proof}
First, we fix a negligible set $N$ such that $F \setminus N$ consists of the Lebesgue points of the zero extensions of $g$ and $\widetilde{E}_{\mathbb{V}}(u)$.

We claim that \eqref{eq:hajlasz} holds for every $y, z \in U \cap B( x, r ) \setminus N$. Now, based on the locations of $y, z$, we divide the following argument into four cases:
\begin{itemize}
    \item[i)] $y, z \in F \cap B( x, r ) \setminus N$;
    \item[ii)] $y \in F \cap B( x, r ) \setminus N$ and $z \in U \cap B( x, r ) \setminus (F\cup N)$ (or the roles of $y,z$ are reversed);
    \item[iii)] $y,z \in U \cap B( x, r ) \setminus F$ and $d(y,z) \geq 10\min\left\{ r(y), r(z) \right\}$;
    \item[iv)] $y,z \in U \cap B( x, r ) \setminus F$ and $d(y,z) < 10\min\left\{ r(y), r(z) \right\}$.
\end{itemize}
Case i) follows from the fact that
\begin{equation*}
    | u(y) - u(z) | \leq d(y,z) ( \widehat{g}(y) + \widehat{g}(z) )
    \quad\text{for every $y, z \in F \cap B( x, r ) \setminus N$}
\end{equation*}
and \[\widehat{g}(z) \leq \mathcal{M}_{4r}( \widehat{g} )(z)\] for every $z \in B( x, r ) \setminus N$, the inequalities valid by the Lebesgue differentiation theorem.

For the purpose of showing the remaining cases, observe that $10 r(w) \leq d( w, y )$ for every $w \in B( x, r )$ and every $y \in F$. Recalling the definition of $B_{w}$ from \eqref{eq:inclusion:mod}, we deduce that the radius of $B_w$ is at most $5r/2$ and $B_w \subset B( x, 4 r )$ for every $w\in B(x,r)\setminus F$. Consequently,
$$g|_{ ( B_{w} \cap F ) \cup ( B_{w'} \cap F ) } \in \mathcal{D}_{p}( u|_{ ( B_{w} \cap F ) \cup ( B_{w'} \cap F ) } ) \quad\text{for every $w, w' \in B( x, r ) \setminus F$.}$$

We proceed now to the proof of Case ii). Notice that if $y \in F \cap B(x,r) \setminus N$ and $z \in U \cap B(x,r) \setminus(F\cup N)$, then
\begin{equation*}
    | \widetilde{E}_{\mathbb{V}}(u)(y) - \widetilde{E}_{\mathbb{V}}(u)(z) |
    \leq
    | u(y) - u_{ B_{z} \cap F } |
    +
    | u_{ B_{z} \cap F } - \widetilde{E}_{\mathbb{V}}(u)(z) |.
\end{equation*}
We first estimate as follows
\begin{equation*}
    | u(y) - u_{ B_{z} \cap F } |
    \leq
    \aint{ B_{z} \cap F } | u(y) - u(w) | \,d\mu(w)
    \leq
    \aint{ B_{z} \cap F } d( y, w )( g(y) + g(w) ) \,d\mu(w).
\end{equation*}
Here, note that for each $w \in B_{z} \cap F $,
$$d(y, w ) \leq d(y,z)+ d(z,w)\leq d(y,z)+25r(z)\leq d(y,z)(1+5/2)\leq4d( y, z ).$$  
Hence, since $B_{z} \cap F \subset B(x,4r)$,
\begin{align*}
    | u(y) - u_{ B_{z} \cap F } |
    &\leq
    4d( y, z )\left( g(y) + \aint{ B_{z} \cap F } g(w) \,d\mu(w) \right)
    \\
    &\leq
    4 d( y, z )( \mathcal{M}_{4r}( \widehat{g} )(y) + \mathcal{M}_{4r}( \widehat{g} )(z) ).
\end{align*}
Next, we estimate $| u_{ B_{z} \cap F } - \widetilde E_{\mathbb V}(z) |$ as follows:
\begin{align}\label{eq:essentialestimate:one}
    | u_{ B_{z} \cap F } - \widetilde E_{\mathbb V}(z) |
    &=
    \left| \sum_{ i \in I_{z} } \phi_{i}(z) ( u_{ B_{z} \cap F } - u_{ B_{i}^{*} \cap F } u(z) ) \right|
    \\\notag
    &\leq
    \sum_{ i \in I_{z} } \phi_i(z)
    \aint{ B_{z} \cap F } \aint{ B_{i}^{*} \cap F } | u(w)-u(w') | \,d\mu(w)\,d\mu(w')
    \\\notag
    &\leq
    C(c_\mu,c_F)\aint{ B_{z} \cap F } \aint{ B_{z} \cap F } | u(w)-u(w') | \,d\mu(w)\,d\mu(w')
    \\\notag
    &\leq
    2C(c_\mu,c_F) \cdot 2 \cdot 25 r(z) \mathcal{M}_{4r}( \widehat{g} )(z)\\ \notag
    &\leq 10 d(y,z) C(c_\mu,c_F) \mathcal{M}_{4r}( \widehat{g} )(z).
\end{align}
The last three inequalities we conclude from the following facts: First, we apply \eqref{eq:inclusion:mod} and \eqref{eq:use_of_ahlfors:mod} for every $i \in I_z$, together with $\sum_{ i \in I_{z} } \phi_{i}(z) = 1$. Secondly, we apply that $d( w, w' ) \leq 2 \cdot 25 r(z)$ for all $w, w' \in B_{z} \cap F$ and that $ B_z\cap F\subset B(x,4r)$. Lastly, recall that $100 r(z) \leq 10 d( y, z )$. Case ii) follows.

In Case iii), up to relabeling $y$ and $z$, we may assume $r( y ) \leq r( z )$. We first estimate
\begin{equation*}
    | \widetilde{E}_{\mathbb{V}}(u)(y) - \widetilde{E}_{\mathbb{V}}(u)(z) |
    \leq
    |  \widetilde{E}_{\mathbb{V}}(u)(y) - u_{ B_{y} \cap F } |
    +
    | u_{ B_{y} \cap F } - u_{ B_{z} \cap F } |
    +
    | u_{ B_{z} \cap F } - \widetilde{E}_{\mathbb{V}}(u)(z) |.
\end{equation*}
The first and third terms can be estimated as in \eqref{eq:essentialestimate:one} by using the facts \[10 r(y) \leq d(y,z) \quad\text{and}\quad 10 r(z) \leq 10 r(y) + d(y,z).\] Next, we estimate the middle term. First, we have \[d( w, w' ) \leq d(w,y) + d(y,z) + d(z,w') \leq 25 r(y) + d(z,y) + 25r(z) \leq 8 d(z,y)\] for every $( w, w' ) \in ( B_{y} \cap F ) \times ( B_{z} \cap F )$. Then a simple computation shows that for some $C = C( c_\mu, c_F )>0$,
\begin{align*}
    | u_{ B_{y} \cap F } - u_{ B_{z} \cap F } |
    &\leq
    C\aint{ B_{z} \cap F } \aint{ B_{y} \cap F } | u(w)-u(w') | \,d\mu(w)\,d\mu(w')
    \\
    &\leq
    C\aint{ B_{z} \cap F } \aint{ B_{y} \cap F } d(w,w') ( g(w) + g(w') ) \,d\mu(w)\,d\mu(w')
    \\
    &\leq
    C d( y,z )\left(
        \mathcal{M}_{4r}( \widehat{g} ) )(y)
        +
        \mathcal{M}_{4r}( \widehat{g} )(z)
    \right).
\end{align*}
Combining the previous inequalities yields Case iii).

In the remaining Case iv), we once again assume $r( y ) \leq r(z)$. Since \[\sum_{ i \in I_{y} \cup I_{z} } ( \phi_{i}(y) -  \phi_{i}(z) ) = 0,\]
we obtain
\begin{align*}
    | \widetilde{E}_{\mathbb{V}}(u)(y) - \widetilde{E}_{\mathbb{V}}(u)(z) |
    =
    \left| \sum_{ i \in I_{y} \cup I_{z} }
    ( \phi_{i}(y) - \phi_{i}(z) )( u_{ B_{i}^{*} \cap F } - u_{ B_{y} \cap F } )  \right|. 
\end{align*}
We recall from \Cref{lemm:Whitney:partition:metric} that every $\phi_i$ is $K/r_i$-Lipschitz. Thus
\begin{align}\label{eq:casefour}
    | \widetilde{E}_{\mathbb{V}}(u)(y) - \widetilde{E}_{\mathbb{V}}(u)(z) |
    \leq
    \sum_{ i \in I_{y} \cup I_{z} }
    \frac{ K d(y,z) }{ r_i }
        \aint{ B_{i}^{*} \cap F }
        \aint{ B_{y} \cap F }
            |u(w)-u(w')|
        \,d\mu(w)\,d\mu(w').
\end{align}
If $i \in I_y$, then $C^{-1}r_i\leq r(y)\leq Cr_i$ and $d(y,z) \leq Cr_i$, e.g. for $C = 15$. Then, by arguing as in \eqref{eq:essentialestimate:one}, we obtain
\begin{align}\label{eq:penultimate_eq_prop_5.6}
    \frac{ K d(y,z) }{ r_i }
        \aint{ B_{i}^{*} \cap F }
        \aint{ B_{y} \cap F }
            |u(w)-u(w')|
        \,d\mu(w)\,d\mu(w')
    \leq
    C(c_\mu,c_F) K d( y, z ) \mathcal{M}_{4r}( \widehat{g} )(y).
\end{align}
If $i \in I_{z} \setminus I_y$, then for every $( w, w' ) \in ( B_y \cap F ) \times (  B_{i}^{*} \cap F )$, we have \[d( w, w' ) \leq 25 r(y) + d( y, z ) + d( z, w' ).\] Here $d( y, z ) \leq 10r(y) \leq10 r(z) \leq C r_i$ and $d( z, w' ) \leq C r_i$, e.g. with $C = 15$. Hence
\begin{align}\label{eq:last_estimate_prop5.6}
\dfrac{ K d(y,z) }{ r_i } &\aint{ B_{i}^{*} \cap F } \aint{ B_{y} \cap F }|u(w)-u(w')|\,d\mu(w)\,d\mu(w')&  \\
&\leq C K d(y,z)\aint{ B_{i}^{*} \cap F }\aint{ B_y \cap F } (g(w)+ g(w'))\,d\mu(w)\,d\mu(w')\nonumber \\
&\leq C(c_\mu,c_F) d(y,z) ( \mathcal{M}_{4r}( \widehat{g} )(y) +\mathcal{M}_{4r}( \widehat{g} )(z) ),\nonumber
\end{align}
the last inequality holding due to \eqref{eq:use_of_ahlfors:mod}. Now Case iv) follows by combining \eqref{eq:penultimate_eq_prop_5.6}, \eqref{eq:last_estimate_prop5.6},  and by using the cardinality upper bound $2M$ for $I_{y} \cup I_{z}$.
\end{proof}

Note that by letting $r$ tend to infinity in \Cref{proposition:local}, we obtain the following corollary. The dependence on $p$ in the constants below come from the operator norm of the maximal function.

\begin{corollary}\label{cor:HKT08}
Let $1 < p < \infty$. If $u \in M^{1,p}( F; \mathbb{V} )$ and $g \in L^{p}( F )$ is a Haj\l{}asz upper gradient of $u$, then $\widetilde{E}_{\mathbb{V}}(u) \in L^{p}( U; \mathbb{V} )$ with Haj\l{}asz upper gradient $C\mathcal{M}( \widehat{g} ) \in L^{p}( U )$ for the constant $C=C(c_\mu,c_F)$ as in \Cref{proposition:local}. Moreover, 
\begin{equation*}
    \| \widetilde{E}_{\mathbb{V}}( u ) \|_{ L^{p}( U; \mathbb{V} ) } \leq  C(c_\mu,c_F,p) \| u \|_{ L^{p}( F; \mathbb{V} ) }
    \quad\text{and}\quad
    \| \mathcal{M}( \widehat{g} ) \|_{ L^{p}( U ) } \leq C(c_\mu,c_F,p) \| g \|_{ L^{p}( F ) }.
\end{equation*}
\end{corollary}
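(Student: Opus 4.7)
My plan is to exhaust the open set $U$ by a countable family of balls centered at a dense subset of $F$, apply \Cref{proposition:local} to each such ball, and then pass to the global maximal function using the pointwise bound $\mathcal{M}_{4r}(\widehat{g}) \leq \mathcal{M}(\widehat{g})$. Since $Z$ is separable, so is $F$, and I fix a countable dense set $\{x_k\}_{k \in \mathbb{N}} \subset F$. For each $(k,m) \in \mathbb{N} \times \mathbb{N}$, I apply \Cref{proposition:local} with $x = x_k$ and $r = m$ (noting that the restriction $g|_{B(x_k,4m)\cap F}$ is automatically a Haj\l{}asz upper gradient of $u|_{B(x_k,4m)\cap F}$ since $g \in \mathcal{D}_p(u)$ on all of $F$) to obtain a negligible set $N_{k,m}$ such that the local Haj\l{}asz inequality with $\mathcal{M}_{4m}(\widehat{g})$ holds on $U \cap B(x_k, m) \setminus N_{k,m}$.

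Setting $N \coloneqq \bigcup_{k,m \in \mathbb{N}} N_{k,m}$, the set $N$ remains negligible. For any pair $y, z \in U \setminus N$, I observe that since $y \in U = B(F, 8)$ there exists $y^{\star} \in F$ with $d(y, y^{\star}) < 8$; by the density of $\{x_k\}$ in $F$, some $x_k$ satisfies $d(x_k, y^{\star}) < 1$, so $d(x_k, z) < 9 + d(y, z)$, and hence both $y$ and $z$ lie in $B(x_k, m)$ for any integer $m > 9 + d(y,z)$. Since $y, z \in U \cap B(x_k, m) \setminus N_{k,m}$, \Cref{proposition:local} yields
\begin{equation*}
    | \widetilde{E}_{\mathbb{V}}(u)(y) - \widetilde{E}_{\mathbb{V}}(u)(z) |
    \leq
    C(c_\mu, c_F)\, d(y,z) \bigl( \mathcal{M}_{4m}(\widehat{g})(y) + \mathcal{M}_{4m}(\widehat{g})(z) \bigr),
\end{equation*}
which, by the pointwise bound $\mathcal{M}_{4m}(\widehat{g}) \leq \mathcal{M}(\widehat{g})$, gives the desired Haj\l{}asz inequality for $\widetilde{E}_{\mathbb{V}}(u)$ with gradient $C(c_\mu,c_F)\mathcal{M}(\widehat{g})$. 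The maximal function $\mathcal{M}(\widehat{g})$ is Borel measurable (in fact lower semicontinuous), so this is an admissible Haj\l{}asz upper gradient.

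Finally, the two $L^{p}$-bounds are routine. The estimate $\| \widetilde{E}_{\mathbb{V}}(u) \|_{ L^{p}(U;\mathbb{V}) } \leq C(c_\mu,c_F,p)\| u \|_{ L^{p}(F;\mathbb{V}) }$ is already established in \Cref{lemm:maximal} for $p > 1$, and the bound
\begin{equation*}
    \| \mathcal{M}(\widehat{g}) \|_{L^{p}(U)}
    \leq
    \| \mathcal{M}(\widehat{g}) \|_{L^{p}(Z)}
    \leq
    C(p,c_\mu)\| \widehat{g} \|_{L^{p}(Z)}
    =
    C(p,c_\mu)\| g \|_{L^{p}(F)}
\end{equation*}
comes directly from the $L^p$-boundedness of the Hardy--Littlewood maximal operator in \Cref{lemm:maximalfunction:bounded} (which is exactly where $p > 1$ is needed). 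The only nontrivial point in the argument is the patching of the negligible sets via the covering by $F$-centered balls $B(x_k, m)$; once this is arranged, the passage from \Cref{proposition:local} to the global corollary is essentially automatic.
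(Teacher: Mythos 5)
Your proof is correct and fills out the paper's terse one-line argument (``letting $r$ tend to infinity in Proposition~\ref{proposition:local}''): patching the countably many negligible sets from an exhaustion of $U$ by balls and then passing from $\mathcal{M}_{4m}$ to $\mathcal{M}$ is exactly what that remark amounts to. The covering by balls centered at a countable dense subset of $F$ is slightly more elaborate than necessary---fixing a single base point $x_0 \in F$ and taking integer radii already suffices, since any pair $y,z\in U$ lies in $B(x_0,m)$ for $m$ large---but this does not affect the validity of the argument.
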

By the argument above, we have obtained an extension $\widetilde{E}_{\mathbb{V}}(u)$ of $u$ to $U$ so that
$$\|\widetilde{E}_{\mathbb{V}}(u)\|_{M^{1,p}(U;\mathbb V)}\leq C(c_\mu,c_F,p) \|u\|_{M^{1,p}(F;\mathbb V)}.$$ 
By using a cut-off function, we now extend $u$ to the whole space $Z$. More precisely, consider next a Lipschitz function $\phi \colon Z \rightarrow \mathbb{R}$ with $\phi|_{F} = 1$, $\phi|_{ Z \setminus U } = 0$, and $0 \leq \phi \leq 1$ otherwise. For example, $\phi(z) = \max\left\{0, 1 - d( F, z ) \right\}$ has the required properties and is $1$-Lipschitz. We define the extension operator by setting
$$E_{\mathbb{V}}(u) \coloneqq \phi \widetilde{E}_{\mathbb{V}}(u).$$

\begin{corollary}\label{corollary:extension:p>1}
Let $1 < p < \infty$ and $\mathbb{V}$ be a Banach space. Then \[E_{\mathbb{V}} \colon M^{1,p}( F; \mathbb{V} ) \rightarrow M^{1,p}( Z; \mathbb{V} )\] is a linear extension operator with the operator norm bounded from above by $C = C( c_\mu, c_F, p ) > 0$. Moreover, the extension operator commutes with continuous linear maps $T \colon \mathbb{V} \rightarrow \mathbb{W}$, for every Banach $\mathbb{V}, \mathbb{W}$.
\end{corollary}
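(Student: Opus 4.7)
The plan is to deduce the corollary from the already-established properties of $\widetilde{E}_{\mathbb{V}}$ on $U$ by multiplying with the $1$-Lipschitz cut-off $\phi$ and propagating all estimates to the whole space $Z$. Linearity of $E_{\mathbb{V}}$ is immediate from \eqref{eq:ext.op.}: restricted to $F$ the operator $\widetilde{E}_{\mathbb{V}}$ is the identity, while on $U \setminus F$ it equals $\sum_{i \in I_1} \phi_i(x) u_{B_i^\star \cap F}$, and each averaging map $u \mapsto u_{B_i^\star \cap F}$ is $\mathbb{R}$-linear; multiplying by the scalar function $\phi$ preserves linearity. The extension property $E_{\mathbb{V}}(u)|_F = u$ holds since $\phi|_F \equiv 1$ and $\widetilde{E}_{\mathbb{V}}(u)|_F = u$ by construction. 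The remaining work is to produce the quantitative Haj\l{}asz norm bound on all of $Z$ and to record the commutation with continuous linear maps.

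For the norm bound, I fix $u \in M^{1,p}(F; \mathbb{V})$ and an arbitrary $g \in \mathcal{D}_p(u)$. Corollary \ref{cor:HKT08} supplies $\widetilde{E}_{\mathbb{V}}(u) \in L^p(U; \mathbb{V})$, a global constant $C = C(c_\mu,c_F)$, and a Haj\l{}asz upper gradient $C\mathcal{M}(\widehat{g}) \in L^p(U)$ of $\widetilde{E}_{\mathbb{V}}(u)$ on $U$, together with
$$\|\widetilde{E}_{\mathbb{V}}(u)\|_{L^p(U; \mathbb{V})} \leq C(c_\mu, c_F, p)\|u\|_{L^p(F; \mathbb{V})}, \qquad \|\mathcal{M}(\widehat{g})\|_{L^p(U)} \leq C(c_\mu, c_F, p)\|g\|_{L^p(F)}.$$
Since $\phi(z) = \max\{0, 1-d(F,z)\}$ is $1$-Lipschitz with $\|\phi\|_\infty \leq 1$ and $K \coloneqq Z \setminus \{\phi = 0\} \subset U$, Lemma \ref{lemm:basic}(3) yields
$$\widetilde{g} \coloneqq \bigl(|\widetilde{E}_{\mathbb{V}}(u)| + C\mathcal{M}(\widehat{g})\bigr)\chi_K$$
as a global Haj\l{}asz upper gradient on $Z$ of $E_{\mathbb{V}}(u) = \phi\widetilde{E}_{\mathbb{V}}(u)$. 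Since $\widetilde{g}$ vanishes outside $U$, integration reduces to $U$, and the triangle inequality gives $\|\widetilde{g}\|_{L^p(Z)} \leq \|\widetilde{E}_{\mathbb{V}}(u)\|_{L^p(U; \mathbb{V})} + C\|\mathcal{M}(\widehat{g})\|_{L^p(U)} \leq C(c_\mu, c_F, p)(\|u\|_{L^p(F; \mathbb{V})} + \|g\|_{L^p(F)})$. Combined with the trivial bound $\|E_{\mathbb{V}}(u)\|_{L^p(Z; \mathbb{V})} \leq \|\widetilde{E}_{\mathbb{V}}(u)\|_{L^p(U; \mathbb{V})}$ and an infimum over $g \in \mathcal{D}_p(u)$, this delivers the operator-norm bound $\|E_{\mathbb{V}}\| \leq C(c_\mu, c_F, p)$.

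For the commutation property, let $T \colon \mathbb{V} \to \mathbb{W}$ be continuous and linear. Lemma \ref{lemm:commutes} already supplies $T(\widetilde{E}_{\mathbb{V}}(u)(z)) = \widetilde{E}_{\mathbb{W}}(T(u))(z)$ for every $z \in U$, while the scalar homogeneity of $T$ gives $T(\phi(z)v) = \phi(z)T(v)$ for every $v \in \mathbb{V}$ and $z \in Z$. Composing the two identities yields $T(E_{\mathbb{V}}(u))(z) = \phi(z)\widetilde{E}_{\mathbb{W}}(T(u))(z) = E_{\mathbb{W}}(T(u))(z)$ on $U$, and both sides vanish outside $U$, so the equality holds on all of $Z$. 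No step appears to offer real resistance; the only point requiring mild care is to verify that the Haj\l{}asz upper gradient inequality holds on all of $Z$ rather than only on $U$, which is precisely the content of Lemma \ref{lemm:basic}(3) applied to the cut-off $\phi$.
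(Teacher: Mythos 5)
Your proof is correct and takes essentially the same approach as the paper's: you multiply the local extension $\widetilde{E}_{\mathbb{V}}(u)$ by the cut-off $\phi$, invoke \Cref{lemm:basic}(3) together with the $L^p$-bounds from \Cref{cor:HKT08} to control the Haj\l{}asz norm of $\phi\widetilde{E}_{\mathbb{V}}(u)$ on $Z$, and read off the commutation from \Cref{lemm:commutes}. The only point you (and the paper) quietly elide is that \Cref{lemm:basic}(3) is stated for functions defined on all of $Z$, whereas $\widetilde{E}_{\mathbb{V}}(u)$ lives on $U$; the passage from a Haj\l{}asz gradient of $\phi\widetilde{E}_{\mathbb{V}}(u)$ on $U$ to one on $Z$ uses that $\phi$ is supported in $\{d(\cdot,F)<1\}$ while $U=\{d(\cdot,F)<8\}$, so points on opposite sides of the buffer zone are at distance $\geq 7 > 1$ and the inequality $|\phi(x)\widetilde{E}_{\mathbb{V}}(u)(x)|\leq d(x,y)\,\widetilde{g}(x)$ follows from $\widetilde{g}(x)\geq|\widetilde{E}_{\mathbb{V}}(u)(x)|$; this is at the same level of rigor as the paper.
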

\begin{proof}
Given that $\phi$ is $1$-Lipschitz and $0 \leq \phi \leq 1$, \Cref{lemm:basic} implies that 
\[\| E_{\mathbb{V}}(u) \|_{ M^{1,p}( Z; \mathbb{V}  ) } \leq 2 \| \widetilde{E}_{\mathbb{V}}(u) \|_{ M^{1,p}( U; \mathbb{V} ) }.\] \Cref{lemm:maximal} and \Cref{cor:HKT08} show that 
\[\| \widetilde{E}_{\mathbb{V}}(u) \|_{ M^{1,p}( U; \mathbb{V} ) } \leq C(c_\mu,c_F,p) \| u \|_{ M^{1,p}( F; \mathbb{V} ) }.\] The quantitativeness of the statement follows from the quoted results. The commuting property follows from \Cref{lemm:commutes}.
\end{proof}
The case $p > 1$ in \Cref{thm:intermediate} is proved by \Cref{corollary:extension:p>1}.

\subsection{Proof of Theorem \ref{thm:intermediate} for \texorpdfstring{$p = 1$}{Lg}}
We denote $q = s/(s+1)$, where $s = \log_2 c_\mu$ appears in \Cref{lemm:extendedmeasuredensity}. The ideas of the proof, for the $\R$-valued case, are contained in  \cite[pp. 656-660]{HKT2008:B}. However, everything translates to Banach-valued maps quite straightforwardly. Therefore we just give a quick sketch of the proof.

One first shows the following.
\begin{lemma}\label{lemm:localvariation}
Let $u \in M^{1,1}( F; \mathbb{V} )$ with a Haj\l{}asz upper gradient $g \in L^{1}( F )$.
\begin{enumerate}
    \item Then there exists a negligible set $N \subset U$ such that, for every $x, y \in U \setminus N$ with $d(x,y) < 1$, we have
\begin{equation*}
    |\widetilde{E}_{\mathbb{V}}(u)(x) - \widetilde{E}_{\mathbb{V}}(u)(y)|
    \leq
    d(x,y)
    C( c_\mu, c_F )
    \left( 
        \left( \mathcal{M}( \widehat{g}^{q} ) \right)^{1/q}(x)  + \left( \mathcal{M}( \widehat{g}^{q} ) \right)^{1/q}(y)
    \right).
\end{equation*}
\item Let $B = B(x_0,r_0)$ with $x_0 \in F$ and $r_0 = 1/10$. Then we have the inequality
\begin{equation*}
    \| \widetilde{E}_{\mathbb{V}}(u) \|_{ L^{1}( 5B ) }
    \leq
    C( c_\mu, c_F )
    \left(
        \|u\|_{ L^{1}( 5B \cap F ) }
        +
        \| ( \mathcal{M}( \widehat{g}^{1/q} ) )^{q} \|_{ L^{1}( 5B \cap F ) }
    \right).
\end{equation*}
\end{enumerate}
\end{lemma}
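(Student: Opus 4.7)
My approach is to revisit the proof of \Cref{proposition:local} verbatim, replacing every occurrence of the Hardy--Littlewood maximal function $\mathcal{M}(\widehat{g})$ by the subadditive surrogate $(\mathcal{M}(\widehat{g}^{q}))^{1/q}$. The substitution is justified by Jensen's inequality for the concave function $t \mapsto t^{q}$, $0 < q < 1$: for any ball $B$ and any $z \in B$,
\begin{equation*}
    \aint{B} g \, d\mu
    \leq \left( \aint{B} g^{q} \, d\mu \right)^{1/q}
    \leq (\mathcal{M}(\widehat{g}^{q}))^{1/q}(z).
\end{equation*}
Each of the Cases (i)--(iv) in the chaining argument of \Cref{proposition:local} reduces the estimate of $|\widetilde{E}_{\mathbb{V}}(u)(y) - \widetilde{E}_{\mathbb{V}}(u)(z)|$ to controlling averages of $g$ over auxiliary balls $B_{y}$, $B_{z}$, $B_{i}^{\star} \cap F$ of radii comparable to $\max\{r(y),r(z)\}$, all of which are controlled by the point values of $(\mathcal{M}(\widehat{g}^{q}))^{1/q}$ at $y$ or $z$ after the substitution. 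The negligible set $N$ will be taken as the union of the non-Lebesgue points of $\widehat{g}$, those of $\widehat{u}$, and the exceptional set in \eqref{eq:Hineq} for $u$; no use of $L^{1}$-unboundedness of $\mathcal{M}$ is required, since the estimate is purely pointwise. The hypothesis $d(x,y) < 1$ is needed so that the auxiliary ball radii are uniformly bounded (in $F$ they sit at scale $\lesssim 1$, outside $F$ they satisfy $r(y), r(z) < 8/10$), letting \Cref{lemm:extendedmeasuredensity} apply with a constant depending only on $c_\mu$ and $c_F$.

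\textbf{Plan for Part (2).} I will combine the pointwise estimate from Part (1) with integration. Because $5r_{0} = 1/2 \leq 1$ and $x_{0} \in F$, the measure-density condition gives $\mu(5B \cap F) \geq c_{F} \mu(5B)$. Any two points of $5B$ are at distance less than $1$, so Part (1) applies. For $x \in 5B \setminus N$, averaging over $y \in 5B \cap F \setminus N$ yields
\begin{equation*}
    |\widetilde{E}_{\mathbb{V}}(u)(x)|
    \leq \aint{5B \cap F} |u| \, d\mu
    + C\bigl( (\mathcal{M}(\widehat{g}^{q}))^{1/q}(x) + \aint{5B \cap F} (\mathcal{M}(\widehat{g}^{q}))^{1/q} \, d\mu \bigr).
\end{equation*}
Integrating in $x$ over $5B$ and using $\mu(5B) \leq C\mu(5B \cap F)$ controls the first and third terms directly by $\|u\|_{L^{1}(5B \cap F)}$ and $\|(\mathcal{M}(\widehat{g}^{q}))^{1/q}\|_{L^{1}(5B \cap F)}$, respectively.

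\textbf{Main obstacle.} The hard step is to absorb the residual $\int_{5B \setminus F} (\mathcal{M}(\widehat{g}^{q}))^{1/q} \, d\mu$ into the same $L^{1}$-norm over $5B \cap F$. My plan is to use the Whitney decomposition of $Z \setminus F$: for each $B_{i}$ with $B_{i} \cap 5B \neq \emptyset$, the properties (W3)--(W4) and the doubling of $\mu$ give, for every $x \in B_{i}$ and every $z \in B_{i}^{\star} \cap F$, the comparison $\mathcal{M}(\widehat{g}^{q})(x) \leq C\mathcal{M}(\widehat{g}^{q})(z)$ by enlarging balls centered at $x$ to balls centered at $z$ of comparable radius, while \Cref{lemm:extendedmeasuredensity} gives $\mu(B_{i}) \leq C \mu(B_{i}^{\star} \cap F)$. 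Summing over such Whitney balls, using the bounded overlap (W5) of the $5B_{i}$'s and hence of the $B_{i}^{\star}$'s, and observing that each relevant $B_{i}^{\star}$ lies in a fixed-factor enlargement of $5B$ intersected with $F$, will produce the desired estimate; keeping track of constants at this stage is where the argument is most delicate.
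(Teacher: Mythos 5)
Your Part (1) hinges on the inequality $\aint{B} g \,d\mu \leq \big( \aint{B} g^q \,d\mu \big)^{1/q}$ with $q = s/(s+1) < 1$, but this is Jensen's inequality applied in the wrong direction: $t \mapsto t^q$ is concave on $[0,\infty)$ for $0<q<1$, so Jensen gives $(\aint{B} g \,d\mu)^q \geq \aint{B} g^q \,d\mu$, hence $\aint{B} g \,d\mu \geq (\aint{B} g^q \,d\mu)^{1/q}$. With the correct direction, substituting $(\mathcal{M}(\widehat{g}^q))^{1/q}$ for $\mathcal{M}(\widehat{g})$ inside the proof of \Cref{proposition:local} only reproduces the bound by $\mathcal{M}(\widehat{g})$ — the $p>1$ estimate — and does \emph{not} give the sharper bound by $(\mathcal{M}(\widehat{g}^q))^{1/q}$, which is pointwise smaller (again by the corrected Jensen). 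That gain is exactly what makes the $p=1$ case nontrivial: $\mathcal{M}$ is unbounded on $L^1$, whereas $(\mathcal{M}(\widehat{g}^q))^{1/q}$ lies in $L^1$ when $g \in L^1$ because $g^q \in L^{1/q}$ with $1/q>1$. So no pointwise replacement of the averages of $g$ in the four cases of \Cref{proposition:local} can work; one needs the genuinely sharper chain estimate of \cite{HKT2008:B}, which exploits the Haj\l{}asz inequality as a \emph{pointwise} condition (not merely the integral $(1,1)$-Poincar\'e it implies) together with a truncation/level-set argument to push the exponent on the gradient side down to $q<1$.

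For context: the paper's own route to Part (1) is a reduction, not a re-proof. For each $w \in \mathbb{V}^*$ with $|w| \leq 1$, the scalar function $w(u)$ has the same Haj\l{}asz gradient $g$, and $w(\widetilde{E}_{\mathbb{V}}(u)) = \widetilde{E}_{\mathbb{R}}(w(u))$ by \Cref{lemm:commutes}; applying the known real-valued estimate of \cite{HKT2008:B} to each $w(u)$ and taking the supremum over a countable norming family of such $w$'s (furnished by \Cref{lemm:representation} and the equivalence (1)$\Leftrightarrow$(2) in \Cref{lemm:projection}) yields the $\mathbb{V}$-valued inequality without reopening the chain argument at all. Your Part (2) sketch — averaging the Part (1) bound over $5B\cap F$, then absorbing the residual integral over $5B\setminus F$ via the Whitney cover, bounded overlap, \Cref{lemm:extendedmeasuredensity}, and the fact that $\widehat{g}^q$ vanishes off $F$ — is structurally reasonable (and implicitly repairs what appears to be a typo, $(\mathcal{M}(\widehat{g}^{1/q}))^q$ in place of $(\mathcal{M}(\widehat{g}^{q}))^{1/q}$, in the statement of (2)), but it of course presupposes a correct Part (1).
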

Claim (1) can be proved directly by following the proof of \cite{HKT2008:B} or deduced from the real-valued statement by applying the equivalence of (1) and (2) in \Cref{lemm:projection}, together with the fact that $\widetilde{E}_\mathbb{V}$ commutes with linear maps. Next, claim (2) follows from (1) as in the real-valued case.

Now consider a maximal family of open balls $\left\{ B_{i} \right\}_{ i = 1 }^{ m }$ with non-overlapping interiors and radii $r_i =  1/10$ centered at points of $F$, with $m \in \mathbb{N} \cup \left\{\infty\right\}$. 

Since $\mu$ is a doubling measure, we have $\sum_{ i = 1 }^{ m } \chi_{ 5B_{i} } \leq \widetilde{M}$ for some $\widetilde{M} = \widetilde{M}( c_\mu ) >0$. Observe that
\begin{equation*}
    W \coloneqq \left\{ x \in Z \colon d( x, F ) < 1/10 \right\}
    \subset
    \bigcup_{ i = 1 }^{ m } 3B_{i}.
\end{equation*}
Next, consider Lipschitz mappings $\widetilde{\psi}_{i}$ equal to $1$ in $3B_{i}$, zero outside $5B_{i}$, being $5$-Lipschitz and with $0 \leq \widetilde{\psi}_{i} \leq 1$. Denote
\begin{equation*}
    \widetilde{\phi}_{i}(x)
    = 
    \begin{cases}
       \frac{ \widetilde{\psi}_{i}(x) }{ \sum_{ j = 1 }^{ m } \widetilde{\psi}_{i}(x) }, & \ {\rm if}\ \sum_{ j = 1 }^{ m } \widetilde{\psi}_{i}(x) > 0,\\
        0, & \ {\rm otherwise}.
    \end{cases}
\end{equation*}
Fix a Lipschitz function $\psi \colon Z \rightarrow \mathbb{R}$ with $\psi \equiv 1$ in $F$, $\psi \equiv 0$ outside $W$ and $0 \leq \psi \leq 1$.

Denote $\widetilde{\eta}_{i} = \psi \widetilde{\phi}_{i}$ for each $i$. Observe that each $\widetilde{\eta}_{i}$ is $L$-Lipschitz, for some uniform $L>0$ depending only on the doubling constant $c_\mu$, and $0 \leq \widetilde{\eta}_{i} \leq 1$ supported in $5B_{i}$. Setting $\widetilde{\eta}_{i} \widetilde{E}_{\mathbb{V}}(u)$ to be zero outside $U$, we have a measurable function on $Z$. Define finally
$$E_{\mathbb{V}}( u ) \coloneqq \sum_{ i = 1 }^{ m } \widetilde{\eta}_{i} \widetilde{E}_{\mathbb{V}}(u). $$
The following lemma is proved exactly as in \cite{HKT2008:B}.

\begin{lemma}\label{lemm:boundedoverlap}
Let $u \in M^{1,1}( F; \mathbb{V} )$ with Haj\l{}asz upper gradient $g \in L^{1}( F )$. Then 
\begin{equation}\label{eq:cauchy}
 \|E_{\mathbb{V}}( u )\|_{ M^{1,1}( Z; \mathbb{V} ) }
    \leq
    C(c_\mu,c_F)\left( \|u\|_{ L^{1}( F; \mathbb{V} ) } + \| g \|_{ L^{1}( F ) } \right).
\end{equation}
In particular, $E_{\mathbb{V}}( u )\in M^{1,1}( Z; \mathbb{V} )$.
\end{lemma}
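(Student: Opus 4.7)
The plan is to convert Lemma \ref{lemm:localvariation} into a global Haj{\l}asz upper gradient for $E_{\mathbb V}(u)$ whose $L^{1}$-norm is controlled by $\|u\|_{L^{1}(F;\mathbb V)}+\|g\|_{L^{1}(F)}$, and to combine it with a product-rule argument involving the cutoff $\psi$. The crucial analytic device is that $q=s/(s+1)\in(0,1)$, so $1/q>1$ and the Hardy--Littlewood maximal operator is bounded on $L^{1/q}(Z)$. Setting $G:=(\mathcal{M}(\widehat{g}^{q}))^{1/q}$, this yields
\[
\|G\|_{L^{1}(Z)}=\|\mathcal{M}(\widehat{g}^{q})\|_{L^{1/q}(Z)}^{1/q}\leq C(c_{\mu})\|g\|_{L^{1}(F)},
\]
which substitutes for the unavailable $L^{1}$-boundedness of $\mathcal{M}$.

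First I would bound $\|E_{\mathbb V}(u)\|_{L^{1}(Z;\mathbb V)}$. Since $\sum_{i=1}^{m}\widetilde{\eta}_{i}=\psi$ identically and $\psi$ is supported in $W:=\{d(\cdot,F)<1/10\}\subset\bigcup_{i=1}^{m}3B_{i}$, using bounded overlap of the $5B_{i}$'s and Lemma \ref{lemm:localvariation}(2) gives
\[
\|E_{\mathbb V}(u)\|_{L^{1}(Z;\mathbb V)}\leq\sum_{i=1}^{m}\|\widetilde{E}_{\mathbb V}(u)\|_{L^{1}(5B_{i};\mathbb V)}\leq C(c_{\mu},c_{F})\bigl(\|u\|_{L^{1}(F;\mathbb V)}+\|G\|_{L^{1}(F)}\bigr),
\]
and the maximal-function estimate above closes the bound. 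The same reasoning controls $\|\,|\widetilde{E}_{\mathbb V}(u)|\chi_{W}\|_{L^{1}(Z;\mathbb V)}$ by the same quantity.

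Next I would exhibit a Haj{\l}asz upper gradient of $E_{\mathbb V}(u)$ of the form
\[
H:=C_{0}G+L_{\psi}\,|\widetilde{E}_{\mathbb V}(u)|\,\chi_{W}+g\chi_{F}+|E_{\mathbb V}(u)|,
\]
where $C_{0}=C(c_{\mu},c_{F})$ is the constant from Lemma \ref{lemm:localvariation}(1) and $L_{\psi}$ is the Lipschitz constant of $\psi$. Outside a suitable negligible set, the defining inequality would be checked by cases based on $d(x,y)$. When $d(x,y)\geq1$, the triangle inequality applied to $E_{\mathbb V}(u)$ suffices thanks to the $|E_{\mathbb V}(u)|$-summand. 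When $d(x,y)<1$ and at least one of $x,y$ lies outside $U$, one checks directly that the narrow support of $\psi$ forces $E_{\mathbb V}(u)$ to vanish at both points. In the remaining case $x,y\in U$ with $d(x,y)<1$, the identity $\psi(x)v_{x}-\psi(y)v_{y}=\psi(x)(v_{x}-v_{y})+(\psi(x)-\psi(y))v_{y}$ with $v_{z}:=\widetilde{E}_{\mathbb V}(u)(z)$, together with Lemma \ref{lemm:localvariation}(1) for the first summand and the Lipschitz continuity of $\psi$ for the second, delivers the required estimate by $d(x,y)(H(x)+H(y))$; the bookkeeping simplifies on noting that whenever $\psi(y)=0$ and $\psi(x)>0$, the factor $\psi(x)\leq L_{\psi}d(x,y)$ absorbs $|v_{x}|\chi_{W}(x)$ into $H(x)$. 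Taking $L^{1}$-norms of the four summands of $H$ and using Step 1, the maximal-function estimate on $G$, and the trivial identity $\|g\chi_{F}\|_{L^{1}}=\|g\|_{L^{1}(F)}$, yields \eqref{eq:cauchy}.

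I expect the main obstacle to be that, in contrast to the $p>1$ argument of Corollary \ref{cor:HKT08}, here $\widetilde{E}_{\mathbb V}(u)$ admits no global $L^{1}$-bound on $U$---only local bounds on $5B_{i}$-type pieces through Lemma \ref{lemm:localvariation}(2). It is precisely the narrowness of the cutoff $\psi$, supported in the thin strip $W$ rather than all of $U$, that makes the bounded-overlap summation converge. The case analysis for $H$ therefore requires careful bookkeeping of which pointwise estimates are available on $F$, on $W$, and on $U$, respectively, and the product rule must be applied so that the $|\widetilde{E}_{\mathbb V}(u)|$-factor is multiplied only by $\chi_{W}$ in the final gradient.
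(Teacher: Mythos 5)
Your argument is correct and is precisely the route the paper intends: it gives only the construction of $E_{\mathbb V}$ and a citation to Haj\l{}asz--Koskela--Tuominen for the details, and your proof supplies those details faithfully. Two small remarks. First, the exponents in \Cref{lemm:localvariation}(2) as printed, $(\mathcal{M}(\widehat{g}^{1/q}))^{q}$, must be a typo for $(\mathcal{M}(\widehat{g}^{q}))^{1/q}$, since $q<1$ would make the maximal operator useless there; your $G=(\mathcal{M}(\widehat{g}^{q}))^{1/q}$ matches \Cref{lemm:localvariation}(1) and is the quantity one actually needs, both for the strong $(1/q,1/q)$ maximal bound and for combining with part (2). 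Second, the displayed decomposition $\psi(x)(v_x-v_y)+(\psi(x)-\psi(y))v_y$ places $v_y$ in the cross term, while your remark about absorbing $|v_x|\chi_{W}(x)$ into $H(x)$ when $\psi(y)=0$ implicitly uses the symmetric decomposition $\psi(y)(v_x-v_y)+(\psi(x)-\psi(y))v_x$ (or simply the identity $E_{\mathbb V}(u)(x)=\psi(x)v_x$ when $\psi(y)=0$); in the write-up it is cleaner to pick whichever of the two decompositions puts the surviving $v_z$ at the point where $\psi$ is positive, so that $z\in W$ and the $\chi_W$ factor is harmless. Otherwise the case analysis on $d(x,y)\geq 1$, $d(x,y)<1$ with a point outside $U$ (forcing both points outside $W$), and $d(x,y)<1$ with both points in $U$ is exactly right, as is the observation that the bounded-overlap sum of $\|\widetilde{E}_{\mathbb V}(u)\|_{L^1(5B_i)}$ circumvents the failure of the maximal function to be bounded on $L^1$.
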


\Cref{lemm:boundedoverlap} has the following immediate corollary which finishes the proof of Theorem \ref{thm:intermediate} for $p = 1$.
\begin{proposition}\label{prop:extension}
The $E_{\mathbb{V}} \colon M^{1,1}( F; \mathbb{V} ) \rightarrow M^{1,1}( Z; \mathbb{V} )$ is a linear extension operator with operator norm $C = C( c_\mu, c_F )$. Moreover, $E_{\mathbb{V}}$ commutes with linear maps $T \colon \mathbb{V} \rightarrow \mathbb{W}$ and the equality $E_{\mathbb{W}}( T(u) )(z) = T( E_{\mathbb{V}}(u) )(z)$ for every $z \in Z$.
\end{proposition}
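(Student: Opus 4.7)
The plan is to read off the four required properties (extension, linearity, norm bound, and commutation with continuous linear maps) from the definition of $E_{\mathbb{V}}$ together with the previously established lemmas, with minimal additional work.

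First, I would verify that $E_{\mathbb{V}}(u)|_F = u$. Since each Whitney-type ball $B_i$ used in the construction of $E_{\mathbb{V}}$ is centered at a point of $F$ and has radius $1/10$, maximality of $\{B_i\}_{i=1}^m$ (with disjoint interiors) implies that for every $x \in F$ one can find $j$ with $B(x,1/10) \cap B_j \neq \emptyset$, hence $x \in 3B_j$. Consequently $\widetilde{\psi}_j(x) = 1$, so $\sum_i \widetilde{\psi}_i(x) \geq 1$ and therefore $\sum_i \widetilde{\phi}_i(x) = 1$ at every $x \in F$. Combined with $\psi|_F \equiv 1$, this gives $\sum_i \widetilde{\eta}_i(x) = 1$ on $F$. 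Since $\widetilde{E}_{\mathbb{V}}(u)(x) = u(x)$ on $F$ by construction, we obtain $E_{\mathbb{V}}(u)(x) = u(x)$ for every $x \in F$.

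Next, linearity of $E_{\mathbb{V}}$ is immediate: the partial extension $\widetilde{E}_{\mathbb{V}}$ is linear in $u$ because Bochner integral averages $u \mapsto u_{B_i^\star \cap F}$ are linear, and the functions $\widetilde{\eta}_i$ do not depend on $u$; thus $E_{\mathbb{V}} = \sum_i \widetilde{\eta}_i \widetilde{E}_{\mathbb{V}}$ is linear. The norm bound $\|E_{\mathbb{V}}\| \leq C(c_\mu, c_F)$ is exactly the content of \Cref{lemm:boundedoverlap}.

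Finally, for commutation with a continuous linear map $T \colon \mathbb{V} \rightarrow \mathbb{W}$, I would combine \Cref{lemm:commutes} (which gives $T \circ \widetilde{E}_{\mathbb{V}} = \widetilde{E}_{\mathbb{W}} \circ T$ pointwise on $U$) with the fact that each $\widetilde{\eta}_i$ is a fixed real-valued scalar function supported in $5B_i \subset U$. For every $z \in Z$,
\begin{equation*}
    T(E_{\mathbb{V}}(u))(z)
    = \sum_{i=1}^m \widetilde{\eta}_i(z)\, T\bigl(\widetilde{E}_{\mathbb{V}}(u)(z)\bigr)
    = \sum_{i=1}^m \widetilde{\eta}_i(z)\, \widetilde{E}_{\mathbb{W}}(T(u))(z)
    = E_{\mathbb{W}}(T(u))(z),
\end{equation*}
using that $\widetilde{\eta}_i(z)\widetilde{E}_{\mathbb{V}}(u)(z)$ is defined to be $0$ outside $U$ and that the same convention applies on the $\mathbb{W}$-side. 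No step is genuinely hard here; the main subtlety is just to ensure the zero-extension convention outside $U$ is consistent on both sides of the commutation identity, which holds because $\mathrm{supp}(\widetilde{\eta}_i) \subset 5B_i \subset U$.
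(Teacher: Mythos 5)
Your proposal is correct and takes essentially the same approach as the paper, which treats \Cref{prop:extension} as an immediate corollary of \Cref{lemm:boundedoverlap} (for the norm bound) and \Cref{lemm:commutes} (for commutation), with the extension and linearity properties read off directly from the construction. Your verification that $\sum_i \widetilde{\eta}_i \equiv 1$ on $F$ via maximality of the ball family, and your observation that bounded overlap makes the sum locally finite so $T$ can be pulled through pointwise, are exactly the routine checks the authors are implicitly invoking.
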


\section[Newton-Sobolev extension domains]{Newton--Sobolev extension sets}\label{Section_Sob.ext.dom.}

In this section, we prove a stronger variant of \Cref{thm:extensionresults}, valid for a larger class of metric measure spaces $Z$. Namely $Z$ does not have to be complete. We also assume that $\mu$ is doubling with a doubling constant $c_\mu$,  and the existence of a measurable set $\Omega \subset Z$ satisfying the measure-density condition with constant $c_\Omega$. 

Recall that in this case we have $\mu( \overline{\Omega} \setminus \Omega ) = 0$ as a consequence of the Lebesgue differentiation theorem.

\begin{definition}\label{eq:qp-PI}
Given $1 \leq q < p$ and a metric measure space $Z$. We say $Z$ has the \emph{$(q,p)$-PI property} if there exists a constant $c_{PI} > 0$ such that, for every $h \in W^{1,p}( Z )$ and $p$-weak upper gradient $\rho$ of (a representative of) $h$, we have
\begin{equation*}
    g_{r}(z) \coloneqq c_{PI}\left( \mathcal{M}_{2r}( \rho^{q} ) \right)^{1/q}(z)
    \in \mathcal{D}_{p}( h|_{ B(z,r) } )
    \quad\text{for every $z \in Z$ and $r > 0$.}
\end{equation*}
In other words, $g_r \in \mathcal{D}_p( h, r )$ for every $0<r<\infty$ and $g_\infty = \lim_{r \rightarrow\infty }g_r  = c_{PI}\left( \mathcal{M}( \rho^{q} ) \right)^{1/q}\in \mathcal{D}_p( h )$.
\end{definition}
Recall the definition of the restricted maximal function $\mathcal{M}_{2r}$ from \eqref{eq:maximalfunction:restr}. 

\begin{remark}\label{rem:selfimprovement}
If $p > 1$, every $p$-PI space has the $(q,p)$-PI property for some $q < p$. Indeed, a deep result by Keith and Zhong shows that every $p$-PI space for $p > 1$, is $q$-PI space for some $1 \leq q < p$ \cite{KZ2008}. For their conclusion to hold, it is important to recall that $p$-PI spaces are complete. Moreover, if $1 \leq q < p$, every $q$-PI space has the $(q,p)$-PI property. We refer the interested reader to \cite{M=W,HK2000,HKST2015} for further details.
\end{remark}

The goal of this section is to prove the following result which directly implies \Cref{thm:extensionresults}.

\begin{theorem}\label{thm:W1p:metric}
Suppose that $Z$ is a doubling metric measure space satisfying the $(q,p)$-PI property, with $\Omega \subset Z$ being a measurable  set having the measure-density condition and $\mathbb{V}$ an arbitrary Banach space. Then a given function $u \colon \Omega \rightarrow \mathbb{V}$ satisfies $u = h|_{\Omega}$ for some $h \in W^{1,p}( Z; \mathbb{V} )$ if and only if $u\in m^{1,p}(\Omega;\mathbb V)$. Moreover, if such an $h$ exists, then $u \in M^{1,p}( \Omega; \mathbb{V} )$.
\end{theorem}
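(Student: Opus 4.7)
The plan is to handle the two directions separately, exploiting the tools assembled earlier. Throughout, since $\mu(\overline\Omega\setminus\Omega)=0$ and $\overline\Omega$ inherits a measure-density condition from $\Omega$, I identify $u\colon\Omega\to\mathbb V$ with an arbitrary measurable extension to $\overline\Omega$ and pass freely between the two sets.

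For the forward direction, suppose $h\in W^{1,p}(Z;\mathbb V)$ satisfies $h|_\Omega=u$, with minimal $p$-weak upper gradient $\rho_h\in L^p(Z)$. By \Cref{lemm:basic:new}\,(2), every projection $w(h)$ with $|w|\le 1$ has $|w|\rho_h$ as a $p$-weak upper gradient, so the $(q,p)$-PI property applied to $w(h)$ shows that $|w|\,c_{PI}\mathcal M(\rho_h^q)^{1/q}$ is a Haj\l{}asz upper gradient of $w(h)$. Consequently $g:=c_{PI}\mathcal M(\rho_h^q)^{1/q}\in\mathcal D(w(h))$ for every such $w$, and the equivalence $(2)\Leftrightarrow(1)$ in \Cref{lemm:projection} promotes this to $g\in\mathcal D(h)$. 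Since $p/q>1$, $L^{p/q}$-boundedness of the Hardy--Littlewood maximal operator places $g$ in $L^p(Z)$, and restricting to $\Omega$ yields $u\in M^{1,p}(\Omega;\mathbb V)$, which also sits inside $m^{1,p}(\Omega;\mathbb V)$.

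For the backward direction, $u\in m^{1,p}(\Omega;\mathbb V)$ and \Cref{lemm:localHaj} supply an $s\in(0,1/2]$ and a global Haj\l{}asz gradient $g:=Cu^\sharp_{2s}\in L^p(\Omega)$ of $u$ at scale $s$. I then invoke the Whitney extension $\widetilde E_\mathbb V(u)$ of \Cref{sec:proof:main}, defined on $U=B(\overline\Omega,8)$, and apply \Cref{proposition:local} at every $x\in\overline\Omega$ with radius $r=s/4$. A key observation is that the negligible exceptional set produced in that proof consists only of non-Lebesgue points of $g\chi_\Omega$ and $\widetilde E_\mathbb V(u)$, so it can be chosen once and for all, independently of the center $x$. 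Setting $G:=C\mathcal M_s(g\chi_\Omega)\in L^p(Z)$ and bounding the restricted maximal function of the localized gradient from above by $G$, one obtains, for every $x\in\overline\Omega$ and almost every $y,z\in U\cap B(x,s/4)$,
\begin{equation*}
    |\widetilde E_\mathbb V(u)(y)-\widetilde E_\mathbb V(u)(z)|\le Cd(y,z)\bigl(G(y)+G(z)\bigr),
\end{equation*}
while $\widetilde E_\mathbb V(u)\in L^p(U;\mathbb V)$ by \Cref{lemm:maximal}. To globalize, fix a $(16/s)$-Lipschitz cutoff $\phi\colon Z\to[0,1]$ with $\phi\equiv 1$ on $\overline\Omega$ and support in $V:=B(\overline\Omega,s/16)\subset U$, and define $h=\phi\widetilde E_\mathbb V(u)$ extended by zero outside $U$, so that $h|_\Omega=u$ almost everywhere. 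I then verify that
\begin{equation*}
    H:=(16/s)|\widetilde E_\mathbb V(u)|\chi_V+G+(32/s)|h|\in L^p(Z)
\end{equation*}
is a global Haj\l{}asz upper gradient of $h$ via a three-way case split: for $d(y,z)<s/16$ with $y\in V$ one picks $x_0\in\overline\Omega$ with $d(y,x_0)<s/16$, notes $y,z\in B(x_0,s/4)$, and combines the local estimate above with the Leibniz-type rule from \Cref{lemm:basic}\,(3); for $d(y,z)\ge s/16$ the trivial inequality $|h(y)-h(z)|\le|h(y)|+|h(z)|\le(32/s)d(y,z)(|h(y)|+|h(z)|)$ is absorbed into the $(32/s)|h|$ term; pairs outside $V$ are handled by $h\equiv 0$ there. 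Hence $h\in M^{1,p}(Z;\mathbb V)$, and \Cref{lemma:M:in:W} embeds this into $W^{1,p}(Z;\mathbb V)$.

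The main obstacle lies in this final assembly: one must combine the scale-$s/4$ local Haj\l{}asz estimate with the Lipschitz cutoff and with a large-distance argument to produce a genuinely global inequality holding off a \emph{single} negligible set, which forces the uniform-in-$x$ choice of exceptional set from \Cref{proposition:local}, and one must keep careful bookkeeping so that $H\in L^p(Z)$ with constants depending only on $s$, $c_\mu$, $c_\Omega$, and $p$.
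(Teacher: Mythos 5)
Your proof is correct, and the forward direction coincides with the paper's (which simply invokes \Cref{prop:extensiondomain:Sobolev} to write $h\in M^{1,p}(Z;\mathbb V)$ and restricts a Haj\l{}asz gradient). The backward direction, however, is a genuinely different route. The paper deduces $u\in M^{1,p}(\Omega;\mathbb V)$ from \Cref{lemm:restriction}: there one builds $h=\widetilde E_{\mathbb V}(u)$ on a neighbourhood $W$ of $\overline\Omega$, multiplies by a cut-off $\psi$ to get a \emph{local} Haj\l{}asz gradient on $Z$, and then invokes \Cref{prop:extensiondomain:Sobolev} (i.e.\ the $(q,p)$-PI property of the ambient space $Z$) to upgrade $m^{1,p}(Z;\mathbb V)$ to $M^{1,p}(Z;\mathbb V)$; after that \Cref{th:Mexten} is applied. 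You instead bypass the ambient PI property entirely in this direction: you combine the local product rule \Cref{lemm:basic}\,(3) with the observation that the exceptional set in \Cref{proposition:local} and the controlling function $G=C\mathcal M_s(\widehat g)$ can be chosen uniformly in the center $x$, and then close the argument with the elementary far-distance estimate $|h(y)-h(z)|\le |h(y)|+|h(z)|$ whenever $d(y,z)\ge s/16$, so that $H=(16/s)|\widetilde E_{\mathbb V}(u)|\chi_V+G+(32/s)|h|$ is already a \emph{global} Haj\l{}asz upper gradient of $h$. This is more elementary and exposes the fact that the $(q,p)$-PI property is only used for the ``only if'' implication; the price is a more delicate bookkeeping of exceptional sets and scale constants, whereas the paper's route is shorter because the PI machinery is already set up and reused. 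One small point worth making explicit in your Case~1 is that, after the Leibniz rule, the term $(L|\widetilde E_{\mathbb V}(u)|+G)$ appears multiplied by $\chi_K$ with $K=\{\phi\neq 0\}\subset V$, which is exactly why it suffices for $H$ to dominate $(16/s)|\widetilde E_{\mathbb V}(u)|\chi_V+G$ rather than the unlocalized quantity $(16/s)|\widetilde E_{\mathbb V}(u)|$; without this localization $H$ would not be in $L^p(Z)$.
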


We recall from \Cref{lemm:localHaj} that under the assumptions of \Cref{thm:W1p:metric}, $u \in m^{1,p}( \Omega; \mathbb{V} )$ if and only if $u \in L^{p}( \Omega; \mathbb{V} )$ and $u_{s}^{\sharp} \in L^{p}( \Omega )$ for some $s = s(u) > 0$. In the $\rr$-valued case, Shvartsman proved a similar result for $s=\fz$, see \cite[Theorem 1.2]{S2007}. As we can see, $u^\sharp_s$ is an increasing function of $s$, so our result implies the one by Shvartsman.

Before going into the proof of \Cref{thm:W1p:metric}, we establish further properties of the Banach-valued local Haj\l{}asz--Sobolev space $m^{1,p}$ introduced in \Cref{sec:locHaj}.

When $Z$ is a doubling metric measure space satisfying the $(q,p)$-PI property, it is well known that the spaces $M^{1,p}(Z;\mathbb V)$ and $W^{1,p}(Z;\mathbb V)$ coincide \cite{HKST2001,HKST2015}. Since $$M^{1,p}(Z;\mathbb V)\subset m^{1,p}(Z;\mathbb V)\subset W^{1,p}(Z;\mathbb V)$$ and 
$$\|u\|_{W^{1, p}(Z, \mathbb V)}\leq 4\|u\|_{m^{1, p}(Z, \mathbb V)}\leq 4\|u\|_{M^{1, p}(Z, \mathbb V)}$$
for every $u\in M^{1, p}(Z, \mathbb V)$, the three spaces are isomorphically equivalent, again by using the bounded inverse theorem. In fact, the following holds.

\begin{lemma}\label{prop:extensiondomain:Sobolev}
Let $Z$ is a doubling metric measure space satisfying the $(q,p)$-PI property for some $1\leq q<p$. Then for every $h \in W^{1,p}( Z; \mathbb{V} )$ and a $p$-weak upper gradient $\rho$ of (a representative of) $h$ the following holds:
\begin{enumerate}
    \item $g_r\in\mathcal D_p(h, r)$ for every $0<r<\infty$;
    \item $g_{\infty} \coloneqq \lim_{ r \rightarrow \infty } g_{r}$ satisfies $g_{\infty} \in \mathcal{D}_{p}( h )$;
    \item Moreover,
\begin{equation}\label{eq:Lp:inequalities}
    c_{PI}
    \| \rho\|_{ L^{p}(Z) }
    =
    \lim_{ r \rightarrow 0^{+} }\| g_{r} \|_{ L^{p}(Z) }
    \leq
    \| g_{r} \|_{ L^{p}(Z) } 
    \leq
    \| g_{\infty} \|_{ L^{p}(Z) }
    \leq
    C(c_\mu,p,q) c_{PI}\|  \rho\|_{ L^{p}(Z) }.
\end{equation}
\end{enumerate}
In particular, $M^{1,p}( Z; \mathbb{V} ) = m^{1,p}( Z; \mathbb{V} ) = W^{1,p}( Z; \mathbb{V} )$ as sets with bi-Lipschitz comparable norms.
\end{lemma}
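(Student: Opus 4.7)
The guiding idea is that every claim reduces to the real-valued statement already supplied by the $(q,p)$-PI property, via the projection principle \Cref{lemm:projection}, which says that for a measurable $g \colon Z \to [0,\infty]$, being a Haj\l{}asz upper gradient of a Banach-valued $u$ is equivalent to being a Haj\l{}asz upper gradient of every scalarization $w(u)$ with $w \in \mathbb V^*$, $|w|\leq 1$. Since $g_r$ and $g_\infty$ are constructed only from the real-valued function $\rho$, they are a priori universally available, and only their gradient property needs to be transferred.

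For (1), fix $h \in W^{1,p}(Z;\mathbb V)$, a $p$-weak upper gradient $\rho$ of $h$, and a ball $B(z,r)$. For each $w \in \mathbb V^*$ with $|w|\leq 1$, \Cref{lemm:basic:new}(2) gives $w(h) \in W^{1,p}(Z)$ and $\rho \in \mathcal{D}_{N,p}(w(h))$. Applying the $(q,p)$-PI property to the real-valued $w(h)$ yields $g_r|_{B(z,r)} \in \mathcal{D}(w(h)|_{B(z,r)})$ (with a $\mathcal{D}$-negligible exceptional set depending on $w$). Then \Cref{lemm:projection}, applied on the metric measure space $B(z,r)$, produces a common negligible exceptional set and gives $g_r|_{B(z,r)} \in \mathcal{D}(h|_{B(z,r)})$. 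The $L^p$-integrability of $g_r$ is immediate from $g_r \leq g_\infty$ and the bound in the next step. For (2), observe that $r \mapsto \mathcal M_{2r}(\rho^q)$ is monotone non-decreasing, so $g_r \uparrow g_\infty = c_{PI}(\mathcal M(\rho^q))^{1/q}$ pointwise, and $g_\infty$ is Borel (lower semicontinuity of $\mathcal M$). Pick a countable dense set $S \subset Z$; for each $(z,n) \in S\times \mathbb N$ let $N_{z,n} \subset B(z,n)$ be the negligible set provided by (1) with $r=n$. Then $N \coloneqq \bigcup_{(z,n)} N_{z,n}$ is negligible and for any $x,y \in Z\setminus N$ there exist $z\in S$ and $n\in\mathbb N$ with $x,y\in B(z,n)$, whence
\begin{equation*}
    |h(x)-h(y)|
    \leq
    d(x,y)\bigl(g_n(x)+g_n(y)\bigr)
    \leq
    d(x,y)\bigl(g_\infty(x)+g_\infty(y)\bigr),
\end{equation*}
so $g_\infty \in \mathcal D(h)$. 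Since $p/q > 1$, \Cref{lemm:maximalfunction:bounded} gives
\begin{equation*}
    \|g_\infty\|_{L^p(Z)}
    =
    c_{PI}\bigl\|\mathcal M(\rho^q)\bigr\|_{L^{p/q}(Z)}^{1/q}
    \leq
    C(c_\mu,p,q)\, c_{PI}\,\|\rho\|_{L^p(Z)},
\end{equation*}
so $g_\infty \in \mathcal{D}_p(h)$ and the upper bound in (3) holds.

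For (3), the monotonicity $g_r \uparrow g_\infty$ and monotone convergence yield $\|g_r\|_{L^p(Z)} \leq \|g_\infty\|_{L^p(Z)}$ with $\lim_{r\to\infty}\|g_r\|_{L^p(Z)}=\|g_\infty\|_{L^p(Z)}$. For the limit as $r \to 0^+$, the Lebesgue differentiation theorem applied to $\rho^q \in L^{p/q}_{\loc}(Z)$ shows that at any Lebesgue point $z$ of $\rho^q$ and every $\epsilon > 0$, one has $|\fint_{B(z,r')}\rho^q\,d\mu - \rho^q(z)| \leq \epsilon$ for all sufficiently small $r'$, hence $\mathcal M_{2r}(\rho^q)(z) \to \rho^q(z)$ as $r\to 0^+$. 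Therefore $g_r(z) \downarrow c_{PI}\rho(z)$ almost everywhere, and since $g_r \leq g_\infty \in L^p(Z)$, dominated convergence yields $\lim_{r\to 0^+}\|g_r\|_{L^p(Z)} = c_{PI}\|\rho\|_{L^p(Z)}$. Finally, for the coincidence statement: the embeddings $M^{1,p}(Z;\mathbb V) \subset m^{1,p}(Z;\mathbb V) \subset W^{1,p}(Z;\mathbb V)$ together with \Cref{lemma:M:in:W} and \Cref{lemma:m:into:N} already give $\|h\|_{W^{1,p}} \leq 4\|h\|_{m^{1,p}} \leq 4\|h\|_{M^{1,p}}$, while item (2) applied to the minimal $p$-weak upper gradient of $h$ produces $g_\infty \in \mathcal{D}_p(h)$ with $\|g_\infty\|_{L^p(Z)} \leq C(c_\mu,p,q) c_{PI}\|\rho_h\|_{L^p(Z)}$, so $\|h\|_{M^{1,p}} \leq \|h\|_{L^p}+\|g_\infty\|_{L^p} \leq C\|h\|_{W^{1,p}}$, closing the chain.

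\textbf{Main obstacle.} The only non-routine point is the patching argument in (2) producing a single negligible exceptional set for the Haj\l{}asz inequality of $h$ from the countable family of local exceptional sets provided by (1); this uses the separability of $Z$. All remaining steps amount to functional-analytic bookkeeping and standard maximal function / Lebesgue differentiation estimates. No extra Poincar\'e input is needed beyond what is already packaged into the $(q,p)$-PI hypothesis.
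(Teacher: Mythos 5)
Your proof is correct and follows essentially the same route as the paper: the real-valued $(q,p)$-PI property is transferred to the Banach-valued function via the scalarization/projection principle of \Cref{lemm:projection} (applied ball-by-ball for the local gradient property), and the $L^p$ estimates follow from the maximal-function bound, the Lebesgue differentiation theorem, and monotone/dominated convergence. The only cosmetic difference is that for $g_\infty \in \mathcal D_p(h)$ the paper applies \Cref{lemm:projection} once directly on $Z$ (since the $(q,p)$-PI hypothesis already gives $g_\infty \in \mathcal D_p(w(h))$ for each scalarization), whereas you deduce it by patching the countably many local exceptional sets from (1) over a countable dense set and integer radii — both are routine applications of the same projection lemma.
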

\begin{proof}
The inclusion mapping $\iota_M$ from $M^{1,p}( Z; \mathbb{V} )$ into $m^{1,p}( Z; \mathbb{V} )$ is a $1$-Lipschitz linear embedding. Also, \Cref{lemma:m:into:N} implies that $m^{1,p}( Z; \mathbb{V} )$ $4$-Lipschitz linearly embedds into $W^{1,p}( Z; \mathbb{V} )$, the embedding denoted by $\iota_m$.

Next, we claim that every $h \in W^{1,p}( Z; \mathbb{V} )$ defines an element of $M^{1,p}( Z; \mathbb{V} )$. We also claim that $g_{r} \in \mathcal{D}_{p}( h, r )$ for every $0<r<\fz$ and $g_{\infty} \in \mathcal{D}_{p}( h )$. If this fact and \eqref{eq:Lp:inequalities} are proved, then \eqref{eq:Lp:inequalities} implies that $\iota_{M}$ is $\max\left\{1,C(c_\mu,p,q)\right\}$-bi-Lipschitz and $\iota_{m}$ is $\max\left\{4,c_{PI}\right\}$-bi-Lipschitz.

To this end, fix $h \in W^{1,p}( Z; \mathbb{V} )$ and $\rho \in \mathcal{D}_{N,p}( h )$ (for some representative of $h$ in the Newton--Sobolev space).  The first inequality in \eqref{eq:Lp:inequalities} is a consequence of the dominated convergence and the Lebesgue differentiation theorem, while the last one  follows from the boundedness of the maximal operator.

It remains to prove $g_{r} \in \mathcal{D}_{p}( h, r )$ and $g_{\infty} \in \mathcal{D}_{p}( h )$. It follows from \Cref{lemm:projection:new} that $ \rho \in \mathcal{D}_{N,p}( w(h) )$ for every $w \in \mathbb{V}^{*}$ with $|w| = 1$. Then the defining property of the constant $c_{PI}$ yields $g_{r} \in \mathcal{D}_{p}( w(h), r )$ and $g_{\infty} \in \mathcal{D}_{p}( w(h) )$ for every such $w$. The implication "(2) $\Rightarrow$ (1)" in \Cref{lemm:projection} yields $g_{r}\in \mathcal{D}_{p}(h, r)$ and $g_\fz \in \mathcal{D}_{p}( h )$. The claim follows.
\end{proof}

Another important point is that whenever $Z$ is a doubling metric measure space and $\Omega$ has the measure-density condition, the spaces $M^{1,p}(\Omega;\mathbb V)$ and $M^{1,p}( \overline{\Omega}; \mathbb{V} )$ (resp. $m^{1,p}(\Omega;\mathbb V)$ and  $m^{1,p}(\overline\Omega;\mathbb V)$) are isometrically equal, respectively, because $\mu( \overline{\Omega} \setminus \Omega )=0$.
\begin{lemma}\label{lemm:closure}
Let $Z$ be a doubling metric measure space, $\Omega\subset Z$ be a measurable set which satisfies the measure-density condition and $\mathbb V$ a Banach space. Then the restriction mapping $R \colon m^{1,p}( \overline \Omega; \mathbb{V} ) \rightarrow m^{1,p}( \Omega; \mathbb{V} )$ defined by $\overline{u} \mapsto \overline{u}|_{ \Omega }$ is an isometric isomorphism. Similarly, the restriction map is an isometric isomorphism from $M^{1,p}( \overline \Omega; \mathbb{V} )$ onto $M^{1,p}( \Omega; \mathbb{V} )$.
\end{lemma}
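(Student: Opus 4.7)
The key observation, noted just before the statement, is that the measure-density condition together with the Lebesgue differentiation theorem forces $\mu(\overline{\Omega}\setminus\Omega)=0$. Consequently, zero-extending a measurable function from $\Omega$ to $\overline{\Omega}$, or restricting from $\overline{\Omega}$ to $\Omega$, preserves both $L^p$-norms and measurability. The plan is to verify that the Haj\l{}asz-gradient parts of both norms are also preserved under this correspondence, first for $M^{1,p}$ (which is essentially trivial) and then for $m^{1,p}$ (where the local Haj\l{}asz condition at points of $\overline{\Omega}\setminus\Omega$ needs a little care).

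For $M^{1,p}$, I would argue both directions directly. Given $\overline{u}\in M^{1,p}(\overline{\Omega};\mathbb V)$ with $\overline{g}\in\mathcal{D}_p(\overline{u})$, restricting the Haj\l{}asz inequality to $\Omega$ and intersecting the exceptional negligible set with $\Omega$ immediately gives $\overline{g}|_\Omega\in\mathcal{D}_p(\overline{u}|_\Omega)$. Conversely, given $u\in M^{1,p}(\Omega;\mathbb V)$ with $g\in\mathcal{D}_p(u)$ and exceptional set $N\subset\Omega$, the zero extensions $\overline{u}$ and $\overline{g}$ satisfy the Haj\l{}asz inequality on $\overline{\Omega}$ outside the negligible set $N\cup(\overline{\Omega}\setminus\Omega)$. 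Since both operations preserve the $L^p$-norms of functions and gradients, the infima in the $M^{1,p}$ norm match up, and $R$ is an isometric isomorphism on $M^{1,p}(\overline{\Omega};\mathbb{V})$.

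For $m^{1,p}$, the restriction direction is essentially identical: if $\overline{g}\in\mathcal{D}(\overline{u},r)$ on $\overline{\Omega}$, then for each $z\in\Omega\subset\overline{\Omega}$ the relation $\overline{g}|_{B(z,r)\cap\overline{\Omega}}\in\mathcal{D}(\overline{u}|_{B(z,r)\cap\overline{\Omega}})$ restricts to the corresponding relation on $B(z,r)\cap\Omega$. The main work lies in the extension direction. Given $g\in\mathcal{D}(u,r)$ on $\Omega$, I would first upgrade this \emph{local} condition to a \emph{uniform} one at a smaller scale, using separability of $Z$: fix a countable dense subset $\{z_i\}\subset\Omega$, take the union $N\coloneqq\bigcup_i N_{z_i}$ of the negligible exceptional sets provided by $g|_{B(z_i,r)\cap\Omega}\in\mathcal{D}(u|_{B(z_i,r)\cap\Omega})$, and verify by a standard covering argument that for any $x,y\in\Omega\setminus N$ with $d(x,y)<r/4$ the Haj\l{}asz inequality holds (choose $z_i$ with $d(z_i,x)<r/4$ so that $x,y\in B(z_i,r)$). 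Adjoining $\overline{\Omega}\setminus\Omega$ yields a negligible set $\overline{N}\subset\overline{\Omega}$ off which the Haj\l{}asz inequality in $\overline{\Omega}$ then holds for all pairs at distance less than $r/4$. From here, for any $z\in\overline{\Omega}$ and any $x,y\in(B(z,r/8)\cap\overline{\Omega})\setminus\overline{N}$ one has $x,y\in\Omega\setminus N$ with $d(x,y)<r/4$, so $\overline{g}\in\mathcal{D}(\overline{u},r/8)$ on $\overline{\Omega}$.

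The hard part, if any, is precisely this separability-based patching that handles the boundary points in $\overline{\Omega}\setminus\Omega$ in the $m^{1,p}$ case, and the accompanying observation that one may need to shrink the scale from $r$ to $r/8$. Since the $m^{1,p}$-norm only records the existence of \emph{some} scale, this shrinking is harmless, and combining the two directions with the $L^p$-norm equality produces the claimed isometric isomorphism for both spaces.
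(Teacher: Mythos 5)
Your proof is correct. The paper itself does not supply an argument; it states ``The proof of \Cref{lemm:closure} is a simple exercise and is left to the reader'', so there is no official proof to compare against. Both halves of your argument hold up: the $M^{1,p}$ case is indeed immediate from $\mu(\overline\Omega\setminus\Omega)=0$, and your handling of $m^{1,p}$ correctly addresses the genuine point, namely verifying the local Haj\l{}asz condition at the new centers $z\in\overline\Omega\setminus\Omega$ at a possibly smaller (but still uniform) scale, which is all the definition of $\mathcal{D}_{\loc,p}$ requires.

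One small remark: the separability-based patching into a single global exceptional set $N=\bigcup_i N_{z_i}$ is more than you need. The definition of $\mathcal{D}(\overline u, r')$ only asks that \emph{for each} $z\in\overline\Omega$ some negligible exceptional set works in $B(z,r')\cap\overline\Omega$. So for a boundary point $z\in\overline\Omega\setminus\Omega$ you can simply pick $z'\in\Omega$ with $d(z,z')<r/2$ (possible since $\Omega$ is dense in $\overline\Omega$, as $\mu(\overline\Omega\setminus\Omega)=0$ and balls have positive measure), observe $B(z,r/2)\subset B(z',r)$, and use the exceptional set $N_{z'}\cup(\overline\Omega\setminus\Omega)$. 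This gives $\overline g\in\mathcal{D}(\overline u, r/2)$ on $\overline\Omega$ directly, with no covering argument and a slightly better scale. Your version does give a uniform exceptional set, which is a mildly stronger conclusion, but it is not needed for the isometry, and as you note the scale shrinkage is harmless either way since $\mathcal{D}_{\loc,p}(u)=\bigcup_{r>0}\mathcal{D}_p(u,r)$.
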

The proof of \Cref{lemm:closure} is a simple exercise and is left to the reader. In fact, $\mu( \overline{\Omega} \setminus \Omega ) = 0$ is sufficient for the conclusion to hold.

The following lemma connects the local and global Haj\l{}asz--Sobolev spaces for measurable subsets satisfying the measure-density condition.
\begin{lemma}\label{lemm:restriction}
Let $Z$ be a doubling metric measure space satisfying the $(q,p)$-PI property, $\Omega\subset Z$ be a measurable set with the measure-density condition and $\mathbb V$ a Banach space. Then $m^{1,p}(  \Omega; \mathbb{V} ) = M^{1,p}(  \Omega; \mathbb{V} )$ as sets.
\end{lemma}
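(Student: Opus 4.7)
The inclusion $M^{1,p}(\Omega;\mathbb V) \subset m^{1,p}(\Omega;\mathbb V)$ is immediate from the definitions. For the reverse inclusion, using \Cref{lemm:closure} I reduce to proving the corresponding statement with $F := \overline{\Omega}$ in place of $\Omega$; the point of this reduction is that $F$ is closed, so the Whitney-type extension machinery of \Cref{sec:proof:main} applies directly. Given $\bar u \in m^{1,p}(F;\mathbb V)$ with some $g \in \mathcal D_{\loc,p}(\bar u)$ at scale $s \in (0, 1/2]$, I form the extension
\[
    h := \phi\, \widetilde E_{\mathbb V}(\bar u),
\]
with $\phi(z) = \max\{0, 1-d(F,z)\}$ as in \Cref{corollary:extension:p>1}.

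The central claim is that $h \in m^{1,p}(Z;\mathbb V)$. Once this is granted, \Cref{prop:extensiondomain:Sobolev} applies, since $Z$ is assumed to have the $(q,p)$-PI property, and gives $m^{1,p}(Z;\mathbb V) = M^{1,p}(Z;\mathbb V)$ as sets. Hence there is a \emph{global} Haj\l{}asz upper gradient $G^\star \in L^p(Z)$ of $h$; its restriction $G^\star|_F$ is then a global Haj\l{}asz upper gradient of $h|_F = \bar u$, so $\bar u \in M^{1,p}(F;\mathbb V)$, and a final appeal to \Cref{lemm:closure} yields $u \in M^{1,p}(\Omega;\mathbb V)$.

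To prove the claim I will exhibit a candidate function
\[
    G := C\bigl(\mathcal M_{2s}(\widehat g) + \mathcal M(\widehat{\bar u})\bigr),
\]
where $\widehat{(\cdot)}$ denotes zero extension to $Z$, and a scale $s_1 \in (0, s/100]$ such that $G \in \mathcal D(h, s_1)$. The $L^p$-integrability of $G$ is immediate from \Cref{lemm:maximalfunction:bounded}, since $p > 1$. The Haj\l{}asz inequality at scale $s_1$ is verified by a case analysis on the position of the base ball $B(w, s_1)$ relative to $F$: (i) if $d(w,F) < s/8$, then $B(w, s_1) \subset B(x, s/4)$ for some $x \in F$, and \Cref{proposition:local} with $r = s/4$ yields the required bound for $\widetilde E_{\mathbb V}(\bar u)$ with gradient $C\mathcal M_{2s}(\widehat g)$, which combines with the $1$-Lipschitz cutoff estimate via \Cref{lemm:basic}(3) and the pointwise bound $|\widetilde E_{\mathbb V}(\bar u)| \leq C\mathcal M(\widehat{\bar u})$ from \Cref{lemm:maximal}; (ii) if $d(w,F) \geq s/8$ but $B(w,s_1)$ still meets $B(F,1)$, then the Whitney balls appearing in the formula defining $\widetilde E_{\mathbb V}(\bar u)$ on $B(w,s_1)$ have radii bounded below by a constant multiple of $s$, so by \Cref{lemm:Whitney:metric} and \Cref{lemm:Whitney:partition:metric} the function $\widetilde E_{\mathbb V}(\bar u)$ is locally Lipschitz with constant controlled by $C\,\mathcal M(\widehat{\bar u})/s$ at nearby points; (iii) if $B(w,s_1)$ is disjoint from $B(F,1)$, then $h \equiv 0$ there and the inequality is trivial.

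The main obstacle is case (ii): \Cref{proposition:local} does not apply, and I must extract the desired bound directly from the Whitney formula. The key inputs there are the bounded overlap of the Whitney cover together with the fact that, for each $j \in I_y$, one has $B_j^\star \cap F \subset B(y, 25 r(y)) \cap F$ and $\mu(B(y,25 r(y))) \leq C \mu(B_j^\star \cap F)$ by \eqref{eq:use_of_ahlfors:mod}, so the coefficients $|\bar u_{B_j^\star \cap F}|$ appearing in the formula for $\widetilde E_\mathbb V(\bar u)(y)$ are all dominated by $\mathcal M(\widehat{\bar u})(y)$ up to a constant depending only on $c_\mu$ and $c_\Omega$. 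Combined with the Lipschitz bounds on the $\phi_j$'s, this converts the Whitney formula into a local Lipschitz estimate of $\widetilde E_\mathbb V(\bar u)$ with gradient bounded pointwise by $G$, completing the verification.
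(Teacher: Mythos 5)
Your proof is correct, and the overall strategy is the same as the paper's: reduce to $F = \overline{\Omega}$, extend by $\widetilde E_{\mathbb V}$, cut off, show the result lies in $m^{1,p}(Z;\mathbb V)$, invoke \Cref{prop:extensiondomain:Sobolev} to upgrade to $M^{1,p}(Z;\mathbb V)$, and restrict. The meaningful divergence is in the \emph{scale of the cutoff}. You keep the fixed cutoff $\phi(z)=\max\{0,1-d(F,z)\}$ from \Cref{corollary:extension:p>1}, whose support $B(F,1)$ is at scale $1$ regardless of $s$. Since the local Haj\l{}asz gradient $g$ only controls $\bar u$ at scales $\leq s$, this forces you to handle the annulus $s/8 \leq d(\cdot,F) \lesssim 1$ by hand (your case (ii)), where \Cref{proposition:local} cannot be invoked and you must redo a Whitney-overlap estimate that essentially reruns cases iii)--iv) of the proof of \Cref{proposition:local} with the maximal function of $|\bar u|$ in place of the gradient. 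The paper avoids this entirely by shrinking the cutoff to $W = B(\overline\Omega, R)$ with $R = r/8$: then every $w$ in the support satisfies $B(w,R) \subset B(w',2R)$ for some $w' \in \overline{\Omega}$, and \Cref{proposition:local} applies uniformly (at radius $2R$, which needs $g$ to be a gradient on $B(w',8R)=B(w',r)$ -- exactly what $g\in\mathcal D_p(\bar u,r)$ supplies). So the paper's choice makes your cases (ii) and (iii) vacuous. Your route works but duplicates effort; note also that your candidate $G = C(\mathcal M_{2s}(\widehat g) + \mathcal M(\widehat{\bar u}))$ should carry a factor $1/s$ (or, equivalently, $s^{-1}$ absorbed into $C$) on the $\mathcal M(\widehat{\bar u})$ term coming from the $\sim s^{-1}$ Whitney Lipschitz constants in case (ii) -- harmless here because $s$ is fixed once $\bar u$ is, and one only needs $G\in L^p$, but worth making explicit. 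The paper's cutoff has Lipschitz constant $\sim 1/R \sim 1/r$ and absorbs the analogous factor in the same way via \Cref{lemm:basic}(3).
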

\begin{proof}
Observe from \Cref{lemm:closure} that $m^{1,p}( \Omega; \mathbb{V} ) = m^{1,p}( \overline \Omega; \mathbb{V} )$ and $M^{1,p}( \Omega; \mathbb{V} ) = M^{1,p}( \overline \Omega; \mathbb{V} )$ as sets (using the identification from \Cref{lemm:closure}). Therefore it is enough to show that $m^{1,p}( \overline \Omega; \mathbb{V} ) = M^{1,p}( \overline \Omega; \mathbb{V} )$ as sets. In fact, it is enough to verify that each $u \in m^{1,p}( \overline{\Omega}; \mathbb{V} )$ is the restriction of some $h \in M^{1,p}( Z; \mathbb{V} )$.

Fix an element $u \in m^{1,p}( \overline \Omega; \mathbb{V} )$ and $g \in \mathcal{D}_{p}( u, r )$. We may assume that $r < 1$ and denote $R = r/8$ for convenience.

Consider the set $W = \bigcup_{ z \in \overline \Omega } B( z, R )\subset B (\overline \Omega,8)$. We denote $h = \widetilde{E}_{\mathbb{V}}(u)|_{W} \colon W\to \mathbb V$, where $\widetilde{E}_{\mathbb{V}}$ is the extension operator defined as in \eqref{eq:ext.op.}. \Cref{proposition:local} yields the existence of a constant $C=C(c_\mu,c_{\overline \Omega}) > 0$ for which $G \colon B( \overline{\Omega}, 8 ) \to \R$ defined as 
$$G \coloneqq C\mathcal{M}_{8R}(  \widehat{g} )$$
satisfies $G|_{W} \in \mathcal{D}_{p}(h, R)$. Indeed, since every $w \in W$ satisfies $B( w, R ) \subset B( w', 2R )$ for some $w' \in \overline \Omega$, the claim follows directly from \Cref{proposition:local}.

We consider next a Lipschitz function $\psi$ equal to one in $\overline \Omega$ and zero outside $W$. Then the zero extension $h'$ of $\psi h$ defines an element of $m^{1,p}( Z; \mathbb{V} )$ as a consequence of \Cref{lemm:basic}. \Cref{prop:extensiondomain:Sobolev} implies that $h' \in M^{1,p}( Z; \mathbb{V} )$ so $u = h'|_{\overline \Omega} \in M^{1,p}( \overline \Omega; \mathbb{V} )$. The claim follows.
\end{proof}

\begin{proof}[Proof of \Cref{thm:W1p:metric}]
Recall that $W^{1,p}( Z; \mathbb{V} ) = M^{1,p}( Z; \mathbb{V} )$ as sets by \Cref{prop:extensiondomain:Sobolev}.

Suppose that $u = h|_{\Omega}$ for some $h \in W^{1,p}( Z; \mathbb{V} )$. Then $u \in M^{1,p}( \Omega; \mathbb{V} ) \subset m^{1,p}( \Omega; \mathbb{V} )$ due to \Cref{prop:extensiondomain:Sobolev}.

Conversely, if $u \in m^{1,p}( \Omega; \mathbb{V} )$, then $u \in M^{1,p}( \Omega; \mathbb{V} )$ by \Cref{lemm:restriction}. Therefore $u$ admits an extension due to \Cref{th:Mexten}.
\end{proof}

\section[Banach property of the local Hajlasz-Sobolev space and some examples]{Banach property of the local Haj{\l}asz--Sobolev space and some examples}\label{Section_LAST} 
In this section we treat three topics. First we study the density of Lipschitz functions with bounded support in Sobolev spaces $W^{1,p}(\Omega)$, thereby proving \Cref{prop:density}. Then we investigate the completeness property of the local Haj{\l}asz--Sobolev space $m^{1,p}(\Omega;\mathbb V)$, and its relation to the density of Lipschitz functions and to Sobolev extension sets. Finally, we give some examples in the Euclidean space that clarify and highlight some subtleties of these properties.

\subsection{Density of Lipschitz functions}
\begin{definition}
Assume that $Z$ is a metric measure space and $\Omega \subset Z$ is measurable. Given $1 \leq p < \infty$, we say that $\Omega$ is \emph{$p$-path almost open} if, outside a $p$-negligible family of paths $\gamma \colon I_\gamma \to Z$, every nonconstant rectifiable $\gamma \colon [a,b] \rightarrow Z$ is such that $\gamma^{-1}( \Omega )$ is a union of an open set and a set negligible for the length measure of $\gamma$. 
\end{definition}
The main point for us about working with $p$-path almost open lies in the next lemma.

\begin{lemma}[Proposition 3.5, \cite{Bj:Bj:2015}]\label{lemm:stronglocality}
Let $Z$ be a metric measure space and $\Omega \subset Z$ a measurable $p$-path almost open set. If $h \in \widetilde{N}^{1,p}( Z; \mathbb{V} )$, then $u = h|_{\Omega} \in \widetilde{N}^{1,p}( \Omega; \mathbb{V} )$ and $\rho_{u} = \rho_{h}|_\Omega$.
\end{lemma}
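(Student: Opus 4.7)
The plan is to reduce the statement to two inclusions of minimal $p$-weak upper gradients: $\rho_{u} \le \rho_{h}|_{\Omega}$ and $\rho_{h}|_{\Omega} \le \rho_{u}$ almost everywhere on $\Omega$. The first is achieved by showing $\rho_{h}|_{\Omega} \in \mathcal{D}_{N,p}(u)$ and invoking \Cref{lemm:minimal}; for the second, I would define a Borel function $\tilde{\rho} \colon Z \to [0,\infty]$ by $\tilde{\rho} = \rho_{u}$ on $\Omega$ and $\tilde{\rho} = \rho_{h}$ on $Z \setminus \Omega$, verify $\tilde{\rho} \in \mathcal{D}_{N,p}(h)$, and again appeal to minimality.

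The first inclusion is the easier one and does not use the $p$-path almost openness. The key remark is that any $p$-negligible path family in $Z$, with Borel witness $\sigma \in L^{p}(Z)$, restricts to a $p$-negligible family of paths with image in $\Omega$, since $\sigma|_{\Omega} \in L^{p}(\Omega)$ and $\int_{\gamma} \sigma \, ds = \int_{\gamma} \sigma|_{\Omega} \, ds$ for any such path. Applying this to the exceptional family of $\rho_{h} \in \mathcal{D}_{N,p}(h)$ and using $u = h$ on $\Omega$ gives the upper gradient inequality for $(u, \rho_{h}|_{\Omega})$ along $p$-almost every path in $\Omega$.

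For the reverse inclusion, reparametrize to constant speed and discard the following $p$-negligible path families in $Z$: the family $\Gamma_{h,\rho_{h}}$ from \Cref{lemm:gradient:absolutecontinuity}; the $p$-negligible family of paths violating $p$-path almost openness; and a family $\Gamma_{u,\rho_{u}}^{Z}$ of paths $\gamma$ in $Z$ for which some compact subinterval $\gamma|_{[s,s']}$ has image in $\Omega$ yet fails the conclusion of \Cref{lemm:gradient:absolutecontinuity} applied to $(u,\rho_{u})$. The last family is $p$-negligible in $Z$: starting from a Borel witness $\sigma_{0} \in L^{p}(\Omega)$ for the corresponding $p$-negligible family in $\Omega$, extending $\sigma_{0}$ by zero gives an $L^{p}(Z)$ Borel function for which $\int_{\gamma}\widehat{\sigma}_{0}\,ds \geq \int_{\gamma|_{[s,s']}}\sigma_{0}\,ds = \infty$ whenever a bad subpath exists. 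For any $\gamma$ outside these families, $h \circ \gamma$ is absolutely continuous with $|(h \circ \gamma)'|(t) \le \rho_{h}(\gamma(t))\|\gamma'\|(t)$ almost everywhere, while the set $\gamma^{-1}(\Omega)$ is open modulo an $s_{\gamma}$-null set and decomposes into countably many open intervals; on each compact subinterval of such a component the pair $(u,\rho_{u})$ applies, and $u = h$ on $\Omega$ upgrades this to $|(h\circ\gamma)'|(t) \le \rho_{u}(\gamma(t))\|\gamma'\|(t)$ almost everywhere on $\gamma^{-1}(\Omega)$. Integration then yields
\[
|h(\gamma(b)) - h(\gamma(a))|
\le
\int_{a}^{b} |(h\circ\gamma)'|(t)\,dt
\le
\int_{\gamma} \tilde{\rho} \,ds,
\]
proving $\tilde{\rho} \in \mathcal{D}_{N,p}(h)$. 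The main obstacle is the construction of the auxiliary $p$-negligible family $\Gamma_{u,\rho_{u}}^{Z}$ in $Z$: one must pass from a "bad subpath" statement to a single $L^{p}(Z)$ modulus witness using additivity of path integrals, which is precisely what the zero-extension trick accomplishes.
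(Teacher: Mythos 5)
The paper does not give its own proof of this lemma; it cites it directly from Bj\"orn--Bj\"orn (\cite{Bj:Bj:2015}, Proposition 3.5), so there is no in-text argument to compare against. Your two-sided scheme --- first $\rho_h|_\Omega \in \mathcal D_{N,p}(u)$ by restriction of exceptional families, then the glued gradient $\tilde\rho$ in $\mathcal D_{N,p}(h)$ via the open-plus-null decomposition of $\gamma^{-1}(\Omega)$ --- is the standard route to this locality statement and the logical skeleton is sound, including the step you flag as the main obstacle (building $\Gamma^{Z}_{u,\rho_u}$ from a witness on $\Omega$ by zero extension and subadditivity of path integrals). The one place you should not gloss over is Borel measurability: $\Omega$ is only assumed $\mu$-measurable, so neither the glued function $\tilde\rho = \rho_u\chi_\Omega + \rho_h\chi_{Z\setminus\Omega}$ nor the zero extension $\widehat\sigma_0$ is automatically Borel on $Z$, and both the paper's notion of $p$-negligibility and \Cref{lemm:representative} require Borel witnesses. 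The repair is routine --- use Borel regularity of $\mu$ to pick a Borel majorant that agrees a.e.\ (for $\tilde\rho$, any Borel $\rho'\geq\tilde\rho$ with $\rho'=\tilde\rho$ a.e.\ still satisfies the path inequality; for $\widehat\sigma_0$, any Borel majorant still blows up on the bad paths) --- but since the delicacy of a merely measurable, non-open $\Omega$ is precisely what this lemma is designed to manage, those few lines are worth writing out rather than asserting ``$\tilde\rho$ is Borel.''
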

The density of Lipschitz functions in $W^{1,p}(\Omega)$ is related to the existence of extensions from $W^{1,p}(\Omega)$ to $W^{1,p}(\overline \Omega)$, especially when one works with $p$-path almost open sets. As a related result, we establish now \Cref{prop:density}.

\begin{proof}[Proof of \Cref{prop:density}]
Recall that we are assuming that $Z$ is a $p$-PI space, in particular, a complete metric measure space. We are also assuming that $\Omega \subset Z$ is measurable and satisfies $\mu( \overline{\Omega} \setminus \Omega ) = 0$.

Claim (1) follows from the $1$-Lipschitz property of the restriction map from $W^{1,p}( \overline{\Omega} )$ into $W^{1,p}( \Omega )$ and the fact that Lipschitz functions with bounded support are dense in $W^{1,p}( \overline{\Omega} )$: the latter fact can be deduced in many ways from \cite{EB:Sou:21}. For example, $\overline{\Omega}$ has finite Hausdorff dimension, so \cite[Theorem 1.5]{EB:Sou:21} implies that $\overline{\Omega}$ has what the authors call a $p$-weak differentiable structure. Moreover, for every complete metric measure space, the existence of such a $p$-weak differentiable structure implies the claimed norm-density of Lipschitz functions (with bounded support) \cite[Theorem 1.9]{EB:Sou:21}.

For Claim (2), we know that every Lipschitz $u \colon \Omega \rightarrow \mathbb{R}$ with bounded support is the restriction of a Lipschitz function $h \colon \overline{\Omega} \to \mathbb{R}$ with bounded support, by uniform continuity. Then $\mu( \overline{\Omega} \setminus \Omega ) = 0$ and \Cref{lemm:stronglocality} guarantee $\| h \|_{ W^{1,p}( \overline{\Omega} ) } = \| u \|_{ W^{1,p}( \Omega ) }$. Thus, by the assumed energy-density and \Cref{lemm:stronglocality}, for every $u \in \widetilde{N}^{1,p}( \Omega )$, there exists a sequence $( h_n )_{ n = 1 }^{ \infty }$ of Lipschitz functions with bounded support defined in $\overline{\Omega}$ such that
\begin{equation*}
    \| h_n|_{\Omega} - u \|_{ L^{p}(\Omega) }
    +
    \| \rho_{ h_{n} }|_{ \Omega } - \rho_{ u } \|_{ L^{p}(\Omega) }
    \leq
    2^{-n}
    \quad\text{for every $n \in \mathbb{N}$.}
\end{equation*}
This implies that $( h_n )_{ n = 1 }^{ \infty }$ is a bounded sequence in $\widetilde{N}^{1,p}( \overline{\Omega} )$ converging to the zero extension of $u$ in $L^{p}( \overline{\Omega} )$, and the weak upper gradients $\rho_{ h_n }$ converging to the zero extension of $\rho_{u}$ in $L^{p}( \overline{\Omega} )$. Then \cite[Proposition 7.3.7]{HKST2015} guarantees that the zero extension of $u$ has a Newton--Sobolev representative $h$ such that a Borel representative of the zero extension of $\rho_u$ is a $p$-weak upper gradient of $h$. \Cref{lemm:stronglocality} readily yields that $h$ and $u$ have equal norm. The claim follows.
\end{proof}

\subsection[Banach property]{Banach property }\label{sec:Banach:extension}
In this section, we work with $p$-PI spaces for $p > 1$. Such spaces have the $(q,p)$-property, cf. \Cref{rem:selfimprovement}, so the results of \Cref{Section_Sob.ext.dom.} are applicable.

We recall from \Cref{thm:W1p:metric} that the set of functions $u\in W^{1,p}(\Omega;\mathbb V)$ that can be extended to $W^{1,p}(Z;\mathbb V)$ is indeed a subspace isomorphic to $m^{1,p}(\Omega;\mathbb V)$ when $\Omega$ satisfies the measure-density condition and $Z$ is $p$-PI. If we add the further assumption of $p$-path almost openness, we obtain the following theorem, whose proof we leave for the end of this subsection.

\begin{theorem}\label{thm:V-extension:PI}
Assume that $Z$ is $p$-PI space for $\infty > p > 1$. Suppose that $\Omega \subset Z$ is a measurable $p$-path almost open set satisfying the measure-density condition. The following are equivalent:
\begin{enumerate}
    \item the $4$-Lipschitz linear embedding $M^{1,p}( \Omega; \mathbb{V} ) \subset W^{1,p}( \Omega; \mathbb{V} )$ has closed image.
    \item $m^{1,p}( \Omega; \mathbb{V} )$ is Banach.
\end{enumerate}
Moreover, if $\mathbb{V}$ has the metric approximation property, the following are equivalent:
\begin{enumerate}
    \setcounter{enumi}{2}
    \item $\Omega$ is a $\mathbb{V}$-valued $W^{1,p}$-extension set;
    \item $m^{1,p}( \Omega; \mathbb{V} )$ is Banach and Lipschitz functions with bounded support are dense in energy in $W^{1,p}( \Omega )$.
\end{enumerate}
Under assumption (3), $M^{1,p}(\Omega;\mathbb V)=m^{1,p}( \Omega; \mathbb{V} ) = W^{1,p}( \Omega; \mathbb{V} )$ as sets with comparable norms.
\end{theorem}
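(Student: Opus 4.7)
My plan is to treat $(1) \Leftrightarrow (2)$ via the bounded inverse theorem applied to the continuous inclusions $M^{1,p}(\Omega;\mathbb V) \subset m^{1,p}(\Omega;\mathbb V) \subset W^{1,p}(\Omega;\mathbb V)$ (the first an equality of sets by \Cref{lemm:restriction} with $\|u\|_{m^{1,p}} \leq \|u\|_{M^{1,p}}$ by definition, the second the embedding from \Cref{lemma:m:into:N} with $\|u\|_{W^{1,p}} \leq 4\|u\|_{m^{1,p}}$), and then to bootstrap to $(3) \Leftrightarrow (4)$ by combining with \Cref{thm:W1p:metric} and \Cref{cor:measuredensity:Sob}. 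Since $M^{1,p}(\Omega;\mathbb V)$ is always Banach, $(1) \Rightarrow (2)$ is immediate: closedness of the image yields $\|u\|_{M^{1,p}} \sim \|u\|_{W^{1,p}}$ on $M^{1,p}$ via bounded inverse, which combined with $\|u\|_{W^{1,p}} \leq 4\|u\|_{m^{1,p}} \leq 4\|u\|_{M^{1,p}}$ forces all three norms to be equivalent, making $m^{1,p}(\Omega;\mathbb V)$ Banach. The converse $(2) \Rightarrow (1)$ first uses bounded inverse on the bijection $M^{1,p} \to m^{1,p}$ (both Banach under $(2)$) to get $\|\cdot\|_{M^{1,p}} \sim \|\cdot\|_{m^{1,p}}$, and then uses the open mapping theorem applied to the continuous surjection $R \colon W^{1,p}(Z;\mathbb{V}) \to m^{1,p}(\Omega;\mathbb{V})$, $h \mapsto h|_\Omega$ (continuous by \Cref{prop:extensiondomain:Sobolev} and surjective by \Cref{thm:W1p:metric}), combined with reflexivity of $W^{1,p}(Z;\mathbb V)$ ($p > 1$) and Mazur's lemma to pass to the $W^{1,p}(\Omega;\mathbb V)$-limit of any Cauchy sequence in the image.

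For $(3) \Rightarrow (4)$, the extension property combined with \Cref{thm:W1p:metric} and the established $(1) \Leftrightarrow (2)$ gives $W^{1,p}(\Omega;\mathbb V) = m^{1,p}(\Omega;\mathbb V) = M^{1,p}(\Omega;\mathbb V)$ as sets with equivalent norms, so $m^{1,p}(\Omega;\mathbb V)$ is Banach. The real-valued Lipschitz energy-density then follows by extracting a real extension operator via $u \mapsto w(E_{\mathbb V}(uv))/|v|$ (for fixed nonzero $v \in \mathbb V$ and Hahn--Banach dual $w$ with $w(v) = |v|$, $|w| = 1$) and applying \Cref{cor:measuredensity:Sob} to get $W^{1,p}(\Omega) = M^{1,p}(\Omega)$. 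Every $u \in W^{1,p}(\Omega)$ then extends to $W^{1,p}(Z)$, where Lipschitz functions with bounded support are $W^{1,p}(Z)$-norm-dense (since $Z$ is complete $p$-PI), and restriction via \Cref{lemm:stronglocality} (valid by $p$-path almost openness of $\Omega$) transfers the norm-convergence back to $\Omega$, in particular giving energy-density.

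The main obstacle is $(4) \Rightarrow (3)$, which I attack in two stages. Stage one bootstraps to the real-valued statement $W^{1,p}(\Omega) = M^{1,p}(\Omega)$: a $v$-section argument ($u \mapsto uv$ embeds $m^{1,p}(\Omega)$ as a complemented subspace of $m^{1,p}(\Omega;\mathbb V)$, with continuous retraction $h \mapsto w(h)/|v|$ from Hahn--Banach preserving the local Haj\l{}asz gradient condition) shows $m^{1,p}(\Omega)$ is Banach, so by $(1) \Leftrightarrow (2)$ applied to $\mathbb R$ the image of $M^{1,p}(\Omega)$ in $W^{1,p}(\Omega)$ is closed with equivalent norms. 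For $u \in W^{1,p}(\Omega)$, energy-density from $(4)$ provides Lipschitz $h_n \in M^{1,p}(\Omega)$ with $h_n \to u$ in $L^p$ and $\rho_{h_n} \to \rho_u$ in $L^p$; the $h_n$ are $W^{1,p}$-bounded, hence $M^{1,p}$-bounded by the closed-image norm equivalence, so their Haj\l{}asz upper gradients $g_n$ can be chosen $L^p$-bounded. Reflexivity of $L^p(\Omega)$ ($p > 1$) combined with Mazur's lemma and pointwise a.e.\ passage in the Haj\l{}asz inequality $|h_n(x) - h_n(y)| \leq d(x,y)(g_n(x) + g_n(y))$ produces a Haj\l{}asz upper gradient of $u$, so $u \in M^{1,p}(\Omega)$.

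Stage two upgrades to $\mathbb V$-valued using metric approximation. For $u \in W^{1,p}(\Omega;\mathbb V)$, \Cref{lemm:densityoffinitedim} produces $u_j$ with image in finite-dimensional $\mathbb V_j \subset \mathbb V$, $\|u - u_j\|_{L^p} \to 0$, and $\rho_{u_j} \to \rho_u$ strongly in $L^p$. Expanding each $u_j = \sum_k c_k^{(j)} v_k^{(j)}$ in a basis of $\mathbb V_j$, the real coefficients $c_k^{(j)}$ lie in $W^{1,p}(\Omega) = M^{1,p}(\Omega)$ by Stage one, and the estimate $|u_j(x) - u_j(y)| \leq \sum_k |v_k^{(j)}|\,|c_k^{(j)}(x) - c_k^{(j)}(y)|$ places $u_j$ in $M^{1,p}(\Omega;\mathbb V)$. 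The sequence is $W^{1,p}$-bounded by energy convergence, hence $M^{1,p}$-bounded by the closed-image equivalence for $\mathbb V$ coming from $(1)$; a second weak compactness/Mazur argument delivers a Haj\l{}asz upper gradient of $u$, proving $W^{1,p}(\Omega;\mathbb V) = M^{1,p}(\Omega;\mathbb V)$ as sets. Composing the $M^{1,p}$-extension operator from \Cref{th:Mexten} with the identification $M^{1,p}(Z;\mathbb V) = W^{1,p}(Z;\mathbb V)$ from \Cref{prop:extensiondomain:Sobolev} then yields the desired $\mathbb V$-valued $W^{1,p}$-extension operator, and the \emph{moreover} assertion records the three norm equivalences established along the way.
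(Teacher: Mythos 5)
Your ``$(1)\Rightarrow(2)$'' argument is correct, and the ``$(3)\Leftrightarrow(4)$'' treatment is essentially an inline re-derivation of \Cref{prop:coincidence} (which the paper simply cites) combined with \Cref{cor:measuredensity:Sob}, \Cref{prop:density} and \Cref{lemm:densityoffinitedim}; that part works. The gap is in ``$(2)\Rightarrow(1)$'', and it is load-bearing since your Stage 2 of ``$(4)\Rightarrow(3)$'' invokes the resulting norm equivalence for $\mathbb V$-valued spaces.

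Concretely, two things fail in your ``$(2)\Rightarrow(1)$''. First, the invocation of reflexivity of $W^{1,p}(Z;\mathbb V)$ is false for a general Banach space $\mathbb V$: already $L^p(Z;\mathbb V)$ is not reflexive unless $\mathbb V$ is, and the cases of interest in this paper ($\mathbb V=c_0$, $\ell^\infty$) are non-reflexive. Second, even granting a workaround that only weakly-compactifies the gradients in $L^p(Z)$, the argument still needs a uniform $m^{1,p}$-bound on a sequence $(u_n)\subset m^{1,p}(\Omega;\mathbb V)$ that is merely Cauchy in $W^{1,p}(\Omega;\mathbb V)$ in order to bound the extensions $h_n$; the open mapping theorem applied to $R\colon W^{1,p}(Z;\mathbb V)\to m^{1,p}(\Omega;\mathbb V)$ gives $\|h_n\|_{W^{1,p}(Z;\mathbb V)}\lesssim\|u_n\|_{m^{1,p}(\Omega;\mathbb V)}$, which is the wrong direction. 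What you actually need is the a priori inequality $\|u\|_{m^{1,p}(\Omega;\mathbb V)}\lesssim\|u\|_{W^{1,p}(\Omega;\mathbb V)}$ for all $u\in m^{1,p}(\Omega;\mathbb V)$, and this is precisely \Cref{prop:extensiondomain:PI} (it is here that the $p$-path almost openness hypothesis enters, via \Cref{lemm:stronglocality}). Once you have that, the whole equivalence $(1)\Leftrightarrow(2)$ is immediate with no reflexivity, no surjection, and no Mazur argument: since $M^{1,p}(\Omega;\mathbb V)=m^{1,p}(\Omega;\mathbb V)$ as sets (\Cref{lemm:restriction}) and the $m^{1,p}$-norm is bi-Lipschitz to the $W^{1,p}$-norm on this set (\Cref{prop:extensiondomain:PI}), completeness of $m^{1,p}(\Omega;\mathbb V)$ is equivalent to closedness of the common image in $W^{1,p}(\Omega;\mathbb V)$. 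That is the paper's proof, and it is unavoidable -- your reflexivity-and-OMT route does not substitute for it.
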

\begin{remark}\label{rem:necessaryandsufficient}
By applying \Cref{prop:density}, the energy-density of Lipschitz functions with bounded support in (4) can be equivalently replaced by density in norm. Furthermore, the norm-density of Lipschitz functions with bounded support is true in $M^{1,p}( \Omega )$, cf. \cite{M=W,HKST2015}, and if $M^{1,p}( \Omega ) = W^{1,p}( \Omega )$ as sets, then the bounded inverse theorem yields the norm-density in $W^{1,p}(\Omega)$. Therefore, to solve \Cref{ques:startingpoint} positively in the setting of \Cref{thm:V-extension:PI}, it is necessary and sufficient to prove that $m^{1,p}( \Omega; c_0 )$ is Banach whenever $\Omega$ is a $W^{1,p}$-extension set.
\end{remark}
\begin{corollary}\label{cor:completion:PI}
Assume that $Z$ is $p$-PI space for $p > 1$. Suppose that $\Omega \subset Z$ is a measurable $p$-path almost open set satisfying the measure-density condition.

Then $\Omega$ is a $W^{1,p}$-extension set if and only if $m^{1,p}( \Omega )$ is a Banach space and every $u \in W^{1,p}( \Omega )$ is the restriction of some $h \in W^{1,p}( \overline{\Omega} )$.
\end{corollary}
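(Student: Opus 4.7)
The statement is essentially a bookkeeping combination of \Cref{thm:V-extension:PI} (applied with $\mathbb{V} = \mathbb{R}$, which trivially has the metric approximation property) and \Cref{prop:density}. I would prove the two implications of the biconditional separately, each by chaining one direction of \Cref{thm:V-extension:PI} with the appropriate half of \Cref{prop:density}.

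For the forward direction, assume $\Omega$ is a $W^{1,p}$-extension set. Since $\mathbb{R}$ has the metric approximation property, the equivalence "(3) $\Leftrightarrow$ (4)" in \Cref{thm:V-extension:PI}, taken with $\mathbb{V} = \mathbb{R}$, yields that $m^{1,p}( \Omega )$ is Banach and that Lipschitz functions with bounded support are dense in energy in $W^{1,p}( \Omega )$. The $p$-path almost openness of $\Omega$ in $Z$ (together with $\mu( \overline{\Omega} \setminus \Omega ) = 0$, which follows from the measure-density condition) transfers to $p$-path almost openness of $\Omega$ in $\overline{\Omega}$, because any Borel $\rho \in L^{p}( Z )$ witnessing $p$-negligibility of a path family restricts to $\rho|_{ \overline{\Omega} } \in L^{p}( \overline{\Omega} )$ with the same infinite path integrals. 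Hence the hypotheses of \Cref{prop:density}(2) are met, and every $u \in W^{1,p}( \Omega )$ is the restriction of some $h \in W^{1,p}( \overline{\Omega} )$ (in fact with equal norm).

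For the backward direction, assume $m^{1,p}( \Omega )$ is Banach and every $u \in W^{1,p}( \Omega )$ restricts from some $h \in W^{1,p}( \overline{\Omega} )$. Applying \Cref{prop:density}(1), the extension hypothesis gives norm-density of Lipschitz functions with bounded support in $W^{1,p}( \Omega )$, which in particular implies energy-density. Both conditions in "(4)" of \Cref{thm:V-extension:PI} with $\mathbb{V} = \mathbb{R}$ are therefore in place, and the implication "(4) $\Rightarrow$ (3)" concludes that $\Omega$ is a $W^{1,p}$-extension set.

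The argument involves no genuinely new ideas beyond the previously established theorems; the only point that merits a brief verification is the transfer of the $p$-path almost openness hypothesis from $Z$ to $\overline{\Omega}$, which is the one place where one must be slightly careful about the ambient space. Everything else is a direct application of the cited results.
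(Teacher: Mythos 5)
Your proof is correct and follows essentially the same route as the paper: both directions hinge on the equivalence of items (3) and (4) in \Cref{thm:V-extension:PI} applied with $\mathbb{V} = \mathbb{R}$, together with \Cref{prop:density} to convert between the extension-to-closure property and Lipschitz density. The only minor difference is in the forward direction: the paper obtains the extension to $\overline{\Omega}$ by simply restricting the extension $h \in W^{1,p}(Z)$ to $\overline{\Omega}$, whereas you re-derive it via \Cref{prop:density}(2), which is a slightly longer (but valid) detour; your brief verification that $p$-path almost openness transfers from $Z$ to $\overline{\Omega}$ is a sensible addition.
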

\begin{proof}
In both directions of the claim, we have that every $u \in W^{1,p}( \Omega )$ is the restriction of some $h \in W^{1,p}( \overline{\Omega} )$. Equivalently, Lipschitz functions with bounded support are norm-dense by \Cref{prop:density}. Thus the $4$-Lipschitz inclusion $m^{1,p}( \Omega ) \subset W^{1,p}( \Omega )$ has a norm-dense image. The equivalence between (3) and (4) in \Cref{thm:V-extension:PI} shows the claim.
\end{proof}

We apply the following lemma during the proof of \Cref{thm:V-extension:PI}.
\begin{lemma}\label{prop:extensiondomain:PI}
Assume that $Z$ is $p$-PI space for $p > 1$. Suppose that $\Omega \subset Z$ is a measurable $p$-path almost open set satisfying the measure-density condition.

Then every $u \in m^{1,p}( \Omega; \mathbb{V} )$ satisfies
\begin{equation*}
    4^{-1}\| u \|_{ W^{1,p}( \Omega; \mathbb{V} ) }
    \leq
    \| u \|_{ m^{1,p}( \Omega; \mathbb{V} ) }
    \leq
    c\| u \|_{ W^{1,p}( \Omega; \mathbb{V} ) },
\end{equation*}
for the constant $c = \max\left\{ 1, c_{PI} \right\}$.
\end{lemma}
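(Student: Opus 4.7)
My plan is as follows. The left inequality is immediate from \Cref{lemma:m:into:N}, which provides a $4$-Lipschitz linear embedding $m^{1,p} \hookrightarrow W^{1,p}$ in any metric measure space; applying it to $\Omega$ equipped with the restricted distance and measure gives $\|u\|_{W^{1,p}(\Omega;\mathbb{V})} \leq 4\|u\|_{m^{1,p}(\Omega;\mathbb{V})}$, which is exactly the left inequality. For the nontrivial right inequality, I would first exploit the fact that, since $Z$ is $p$-PI with $p>1$, the Keith--Zhong self-improvement recalled in \Cref{rem:selfimprovement} yields some $q \in [1,p)$ such that $Z$ has the $(q,p)$-PI property with the constant $c_{PI}$ appearing in the lemma.

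Given $u \in m^{1,p}(\Omega;\mathbb{V})$, I would invoke \Cref{thm:W1p:metric} to extend $u$ to a function $h \in W^{1,p}(Z;\mathbb{V})$ with $h|_\Omega = u$, and then apply \Cref{lemm:stronglocality} (using that $\Omega$ is $p$-path almost open) to obtain the crucial identity $\rho_h|_\Omega = \rho_u$ almost everywhere on $\Omega$. For each $r > 0$, I would then consider the candidate gradient
\[
g_r \coloneqq c_{PI}\bigl(\mathcal{M}_{2r}(\rho_h^{q})\bigr)^{1/q} \colon Z \to [0,\infty].
\]
By \Cref{prop:extensiondomain:Sobolev}\,(1), $g_r \in \mathcal{D}_p(h,r)$, and restricting to $\Omega$ (using $u = h|_\Omega$) yields $g_r|_\Omega \in \mathcal{D}(u,r) \subset \mathcal{D}_{\loc}(u)$. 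That $g_r \in L^p(Z)$ follows from \Cref{lemm:maximalfunction:bounded} applied to $\rho_h^{q} \in L^{p/q}(Z)$ (legitimate because $p/q > 1$), so in particular $g_r|_\Omega \in \mathcal{D}_{\loc,p}(u)$.

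The decisive step is to pass to the limit $r \to 0^+$. Since shrinking $r$ restricts the supremum defining $\mathcal{M}_{2r}$ to a smaller range of scales, $g_r$ is monotonically nonincreasing in $r$; the Lebesgue differentiation theorem (valid on the doubling space $Z$) then gives $\mathcal{M}_{2r}(\rho_h^q) \downarrow \rho_h^q$ a.e., so that $g_r \downarrow c_{PI}\rho_h$ a.e. Dominated convergence with majorant $c_{PI}(\mathcal{M}(\rho_h^q))^{1/q} \in L^p(\Omega)$ (again from \Cref{lemm:maximalfunction:bounded}) delivers
\[
\bigl\|g_r|_\Omega\bigr\|_{L^p(\Omega)} \longrightarrow c_{PI}\|\rho_h\|_{L^p(\Omega)} = c_{PI}\|\rho_u\|_{L^p(\Omega)},
\]
so that $\inf_{g \in \mathcal{D}_{\loc,p}(u)}\|g\|_{L^p(\Omega)} \leq c_{PI}\|\rho_u\|_{L^p(\Omega)}$. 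Adding $\|u\|_{L^p(\Omega)}$ to both sides yields the desired $\|u\|_{m^{1,p}(\Omega;\mathbb{V})} \leq \max\{1,c_{PI}\}\,\|u\|_{W^{1,p}(\Omega;\mathbb{V})}$. The main subtlety to be handled with care is the role of $p$-path almost openness: without the identification $\rho_h|_\Omega = \rho_u$ furnished by \Cref{lemm:stronglocality}, the limit would only control $\|\rho_h\|_{L^p(Z)}$, which depends on the chosen extension and can strictly exceed $\|\rho_u\|_{L^p(\Omega)}$, preventing recovery of the sharp constant $c_{PI}$.
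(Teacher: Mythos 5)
Your proof is correct and follows essentially the same route as the paper's: extend $u$ to $h\in W^{1,p}(Z;\mathbb V)$ via \Cref{thm:W1p:metric}, restrict the local Haj\l{}asz gradients $g_r = c_{PI}(\mathcal M_{2r}(\rho_h^q))^{1/q}$ from \Cref{prop:extensiondomain:Sobolev} to $\Omega$, let $r\to 0^+$ using Lebesgue differentiation and dominated convergence to obtain $c_{PI}\|\rho_h|_\Omega\|_{L^p}$, and finally identify $\rho_h|_\Omega = \rho_u$ via \Cref{lemm:stronglocality}. If anything, your attribution of the left inequality to the $4$-Lipschitz embedding $m^{1,p}\hookrightarrow W^{1,p}$ of \Cref{lemma:m:into:N} is the cleaner citation (the paper somewhat obliquely cites \Cref{lemm:restriction} at that step), and your explicit naming of the dominating majorant $c_{PI}(\mathcal M(\rho_h^q))^{1/q}$ makes the dominated-convergence step more transparent than in the paper.
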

\begin{proof}
We recall from \Cref{thm:W1p:metric} that $m^{1,p}( \Omega; \mathbb{V} )$ are precisely those functions that admit an extension to $h \in W^{1,p}( Z; \mathbb{V} )$.

Let $h$ be such an extension and $g \in \mathcal{D}_{N,p}( h )$. We consider the restricted maximal function of $g$ multiplied by a suitable factor $c_{PI}$, which we used to define $g_r$ in \Cref{prop:extensiondomain:Sobolev}. We recall from \Cref{prop:extensiondomain:Sobolev} that $g_r$ is a local Haj{\l}asz gradient of $h$ up to scale $r$. Then \Cref{lemm:restriction} yields the lower bound $4^{-1}\| u \|_{ W^{1,p}( \Omega; \mathbb{V} ) } \leq \| u \|_{ m^{1,p}( \Omega; \mathbb{V} ) }$.

The Lebesgue differentiation theorem implies that $\chi_{ \Omega }g_{r} \rightarrow \chi_{\Omega} c_{PI} g = c_{PI} \chi_{ \Omega } g$, monotonically decreasingly pointwise almost everywhere, as $r \rightarrow 0^{+}$. The dominated convergence theorem implies that the pointwise convergence improves to $L^{p}$-convergence. Hence 
\begin{equation*}
    \inf_{ g' \in  \mathcal{D}_{\loc, p}( u ) }
    \| g' \|_{ L^{p}( \Omega ) }
    \leq
    \lim_{ r \rightarrow 0^{+} }
    \| g_{r}|_{\Omega} \|_{ L^{p}( \Omega ) }
    =
    c_{PI}
    \| g|_{\Omega} \|_{ L^{p}( \Omega ) }.
\end{equation*}
Next we set $g = \rho_{h}$. Here $\rho_{h}|_{\Omega} = \rho_{u}$ follows from \Cref{lemm:stronglocality}. The claim follows from this observation.
\end{proof}
\begin{proof}[Proof of \Cref{thm:V-extension:PI}]
\Cref{lemm:restriction} shows that when $\Omega$ is measurable and satisfies the measure-density condition, the $1$-Lipschitz linear inclusion $M^{1,p}( \Omega; \mathbb{V} ) \subset m^{1,p}( \Omega; \mathbb{V} )$ is onto. On the other hand, \Cref{prop:extensiondomain:PI} shows that the embedding of $m^{1,p}( \Omega; \mathbb{V} )$ into $W^{1,p}( \Omega; \mathbb{V} )$ is bi-Lipschitz. Consequently, $M^{1,p}( \Omega; \mathbb{V} ) \subset W^{1,p}( \Omega; \mathbb{V} )$ has a closed image if and only if $m^{1,p}( \Omega; \mathbb{V} )$ is Banach, so "(1) $\Leftrightarrow$ (2)".

Towards proving "(3) $\Leftrightarrow$ (4)", we make three observations: First, by \Cref{prop:density}, the energy-density of Lipschitz functions with bounded support is equivalent to norm-density. Second, \Cref{prop:coincidence} shows that $M^{1,p}( \Omega; \mathbb{V} ) = W^{1,p}( \Omega; \mathbb{V} )$ as sets if and only if Lipschitz functions are norm-dense in $W^{1,p}( \Omega )$ and the linear $4$-embedding $M^{1,p}( \Omega; \mathbb{V} ) \subset W^{1,p}( \Omega; \mathbb{V} )$ has closed image, the latter fact being equivalent to $m^{1,p}( \Omega; \mathbb{V} )$ being Banach. Third, \Cref{cor:measuredensity:Sob} shows that $\Omega$ is a $\mathbb{V}$-valued $W^{1,p}$-extension set if and only if $M^{1,p}( \Omega; \mathbb{V} ) = W^{1,p}( \Omega; \mathbb{V} )$ as sets. These three observations show the equivalence.
\end{proof}

\subsection{Some examples}

Related to the discussion of this section and also to the topics of this paper, it is clear that it would be interesting to know the relations between the following properties, stated for $\R$-valued functions, a domain $\Omega\subset\R^n$ and $1 < p<\infty$. Note that domains are always $p$-path almost open sets.
\begin{enumerate}
    \item $\Omega$ is a $W^{1,p}$-extension domain.
    \item $\Omega$ has the measure-density condition.
    \item $M^{1,p}(\Omega)=m^{1,p}(\Omega)=W^{1,p}(\Omega)$ as sets and with comparable norms.
    \item $m^{1,p}(\Omega)$ is a Banach space.
     \item Every $u \in W^{1,p}(\Omega)$ is the restriction of some $h \in W^{1,p}( \overline{\Omega} )$.
    \item Density of Lipschitz functions with bounded support in $W^{1,p}(\Omega)$.
    \item Density of $m^{1,p}(\Omega)$ in $W^{1,p}(\Omega)$.
\end{enumerate}
In this section, we provide examples to show the relations between the aforementioned properties and at the same time bring out the sharpness of the previous results of this section. The following implications are clear using our previous results.

\begin{align*}
&(1)\Rightarrow (2),(3),(4),(5),(6),(7); \quad \quad &&(5)\Rightarrow (6) \Rightarrow (7);  \\
&(7) \text{ and }\mu( \partial \Omega ) = 0 \Rightarrow (5); \quad\quad
&&(2) + (3) \Leftrightarrow (1); &&&(2)+(4)+(7) \Leftrightarrow (1).
\end{align*}
Our first example shows (2) is not sufficient to guarantee $W^{1, p}$-extensions, the result being well-known among specialists in the field. We recall that by \cite[Theorem 5]{HKT2008:B}, a domain has the measure-density condition if and only if it is a $M^{1, p}$-extension domain for some (every) $1\leq p<\infty$.
\begin{example}\label{ex:slitdisk}
The slit disk
$$  \Omega := B(0,1)\setminus [0,1] \times \{0\} .$$
This is well-known to not be a $W^{1,p}$-extension domain for any $1< p<\infty$. For example, we may consider $u \in C( \Omega ) \cap W^{1,p}( \Omega )$ such that $u \equiv 1$ above $[1/2,1] \times \left\{0\right\}$ and $u \equiv 0$ below it. Then $u^{\sharp}_s \not\in L^{p}(\Omega)$ for every $p > 1$, which is necessary and sufficient for $u$ to admit an extension to $W^{1,p}( \Omega )$, recall \Cref{thm:W1p:metric}.

Note that $\Omega$ satisfies the measure-density condition. Moreover, since $m^{1,p}(\Omega)=m^{1,p}(\overline{\Omega})$ as sets by \Cref{lemm:closure} and also $\overline{\Omega}=\overline B (0,1)$ is a $W^{1,p}$-extension domain for every $p$, the space $m^{1,p}(\Omega)$ is Banach. On the other hand, neither Lipschitz functions nor $m^{1,p}( \Omega )$-functions can be (energy or norm) dense in $W^{1,p}(\Omega)$ for any $1 < p < \infty$, due to \Cref{cor:completion:PI}.
\end{example}
Our second example shows that without the measure-density condition (2), none of the conditions (3) to (7) are enough to guarantee (1).
\begin{example}
The outward cusp
$$ \Omega= B((2,0),\sqrt{2})\cup \{(x,y)\in (0,1]\times \R:\, |y|\leq x^2  \}$$
is another example of a domain that is not $W^{1,p}$-extension domain for any $1< p<\infty$. Observe that the measure-density condition fails at the cusp $(2,0)$. Nevertheless, in \cite{BKZheng,Zheng}, it was shown that $M^{1,p}(\Omega)=W^{1,p}(\Omega)$, so automatically we have $M^{1,p}(\Omega)=m^{1,p}(\Omega)=W^{1,p}(\Omega) $ as sets. Thus $m^{1,p}(\Omega)$ is Banach and Lipschitz functions, as well as the space $m^{1,p}(\Omega)$, are dense in $W^{1,p}(\Omega)$.
\end{example}

Our next example shows that a domain can satisfy (2) and (6) (thus also (7) and (5)) without (1) or (3) holding.
\begin{example}\label{ex:3}
Consider the unit square where we remove two central squares touching in one vertex. That is,
$$\Omega= ((-1,1)\times (-1,1)) \setminus \left( ([-1/2,0]\times [-1/2,0]) \cup ([0,1/2]\times [0,1/2] ) \right).$$
This is based on Example 2.5 \cite{Koskela}, see also \cite[Remark 5.2]{Bjo:Sha:07}. It turns out that $\Omega$ is a $W^{1,p}$-extension domain for every $1 < p < 2$ but not for any $p \geq 2$. For the proof of the case $1 < p < 2$ we refer the interested reader to \cite[Example 2.5]{Koskela}. On the other hand, a minor modification of the function $u$ in \Cref{ex:slitdisk} yields an example of $u \in W^{1,p}( \Omega )$ for which $u_{s}^{\sharp}$ is not $L^p(\Omega)$-integrable for any $(s,p) \in (0,\infty) \times [2, \infty]$. Thus $\Omega$ is not a $W^{1,p}$-extension set for any $p \geq 2$.

We consider next an arbitrary $1 < p \leq 2$. We claim that every $u \in W^{1,p}( \Omega )$ extends to some $h \in W^{1,p}( \overline{\Omega} )$; the point is that \begin{equation*}
    h(z) = \limsup_{ r \rightarrow 0^{+} } \aint{ B(z,r) \cap \Omega } u(y) \,dy
\end{equation*}
defines an extension to $\widetilde{N}^{1,p}( \overline{\Omega} \setminus \left\{0\right\} )$. Since $\left\{0\right\}$ has negligible measure and the collection of nonconstant paths intersecting the point is $p$-negligible (in $\mathbb{R}^2$ so also in $\overline{\Omega}$), extending $h$ as $0$ to $\overline{\Omega}$ defines an element of $\widetilde{N}^{1,p}( \overline{\Omega} )$. Then recalling \Cref{prop:density}, we have that Lipschitz functions are dense in $W^{1,p}( \Omega )$. Since $\Omega$ is not a $W^{1,2}$-extension domain, \Cref{thm:V-extension:PI} shows that the space $m^{1,2}( \Omega )$ cannot be Banach. Equivalently, $M^{1,2}( \Omega ) \neq W^{1,2}( \Omega )$. In fact, there exists a sequence of Lipschitz functions in $\Omega$ bounded in $W^{1,2}( \Omega )$ but unbounded in $M^{1,2}( \Omega )$, e.g., we may approximate any $u \in W^{1,2}( \Omega )$ for which $u^{\sharp}_1 \not\in L^{2}(\Omega)$ to obtain such a sequence.
\end{example}

In the following example, we recall results that guarantee the validity of (6) for many domains in the plane.
\begin{example}\label{ex:Jordan}
We consider an arbitrary Jordan domain $\Omega \subset \rr^2$, i.e., a simply connected domain whose boundary is homeomorphic to the unit circle. Then $W^{1,p}( \Omega )$ has a dense subspace consisting of those $u = h|_{\Omega}$, where $h \in C^{\infty}( \rr^2 )$; see \cite{Lew:87,Kos:Zh:16}. In particular, Lipschitz functions, so, in particular, $m^{1,p}( \Omega )$ is dense in $W^{1,p}( \Omega )$. 

Interestingly, whenever $\Omega \subset \rr^n$ is any bounded domain $\delta$-hyperbolic with respect to the quasihyperbolic metric, then $W^{1,\infty}( \Omega )$ is dense in $W^{1,p}( \Omega )$, see \cite{Ko:Ra:Zh:17} for the terminology and proof. In particular, when $\Omega$ is also quasiconvex, then Lipschitz functions with bounded support are dense in $W^{1,p}( \Omega )$.
\end{example}

\section[Weak Poincaré inequalities]{Weak Poincaré inequalities}\label{section:Corollary}
Our goal is to verify \Cref{thm:positive} and \Cref{thm:extension:topoincare} in this section, which connect the validity of the weak Poincaré inequality up to some scale to Banach-valued Sobolev extension domains. Proving \Cref{thm:positive} is equivalent to showing that $M^{1,p}(\Omega;\mathbb V)=m^{1,p}(\Omega;\mathbb V)=W^{1,p}(\Omega;\mathbb V)$ as sets under the measure-density condition by \Cref{cor:measuredensity:Sob}.

Related to \Cref{thm:positive}, we begin by commenting on some previous known results. First, for $p$-PI spaces, it is known that $M^{1,p}( Z; \mathbb{V} ) = m^{1,p}(Z;\mathbb V)=W^{1,p}(Z;\mathbb V)$ as sets, cf. \Cref{Section_Sob.ext.dom.}. Many of the proofs showing this fact involve the self-improvement of weak Poincaré inequalities on (complete) $p$-PI spaces. Similar improvement is quite delicate when we relax the completeness of $Z$, cf. \cite{Kos:99}, and this is one of the main subtleties when addressing \Cref{thm:positive}. Secondly, by \cite[Proposition 5.1]{Bjo:Sha:07} every domain  $\Omega\subset Z$ with the measure-density condition and that satisfies a global weak $(1,p)$-Poincaré inequality must be a $W^{1,p}$-extension set. So considering just weak Poincaré inequalities up to some scale is the main novelty here, apart from working with general measurable sets. We also note that this section is partly inspired by \cite{Bj:Bj:Lah:21}.

In this section, we consider an arbitrary  $p$-PI space $Z$ and a measurable subset $\Omega \subset Z$ satisfying the measure-density condition. To simplify notation we denote $C_{\mu,\Omega}=\max\{c_\mu,1/c_\Omega\}$.

The following self-improvement phenomenom of Poincaré inequalities is an immediate consequence of \cite[Theorem 5.1]{Bj:Bj:2019} and will be needed for the proof of \Cref{thm:positive}.
\begin{theorem}\label{thm:selfimprovement}
Let $1 < p < \infty$. The following are equivalent under the assumptions of this section.
\begin{enumerate}
    \item There exist constants $C_1 > 0$, $\lambda_1 \geq 1$ and $2/3 > \delta_1 > 0$ such that for every ball $B$ centered at $\Omega$ and radius at most $\delta_1$, 
    \begin{equation}\label{eq:PI-relative}
        \aint{ \Omega \cap B } |u-u_{\Omega \cap B} |\,d\mu
        \leq
        C_1\diam (\Omega \cap B)
        \left(\aint{\Omega \cap \lambda_1 B} \rho^p\,d\mu  \right)^{1/p}
    \end{equation}
    for every integrable function $u \colon \Omega \cap \lambda_{1} B \to \R$ and every $L^{p}$-integrable $p$-weak upper gradient $\rho \colon \Omega \cap \lambda_{1} B \rightarrow [0,\infty]$ of $u$.
    \item There exist constants $1 \leq q < p$, $C_2 > 0$, $\lambda_2 \geq 1$ and $2/3 > \delta_2 > 0$ such that for every ball $B$ centered at $\Omega$ and radius at most $\delta_2$, 
    \begin{equation*}
        \aint{ \Omega \cap B } |u-u_{\Omega \cap B} |\,d\mu
        \leq
        C_2\diam (\Omega \cap B)
        \left(\aint{\Omega \cap \lambda_2 B} \rho^q\,d\mu  \right)^{1/q}
    \end{equation*}
    for every integrable function $u \colon \Omega \cap \lambda_{2} B \to \R$ and every $L^{p}$-integrable $p$-weak upper gradient $\rho \colon \Omega \cap \lambda_{2} B \rightarrow [0,\infty]$ of $u$. 
\end{enumerate}
Moreover, the constants $q$, $C_2$, $\lambda_2$ and $\delta_2$ depend only on $p$ and $C_{\mu,\Omega}$, together with $C_1$, $\lambda_1$ and $\delta_1$, and vice versa.
\end{theorem}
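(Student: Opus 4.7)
The plan is to reduce the equivalence directly to the local self-improvement of Poincaré inequalities established in \cite[Theorem 5.1]{Bj:Bj:2019}, applied to the restricted metric measure space $(\Omega, d|_\Omega, \mu|_\Omega)$.

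The implication $(2) \Rightarrow (1)$ is immediate from Jensen's inequality: since $1 \leq q < p$, for every nonnegative measurable $\rho$,
\begin{equation*}
    \left(\aint{\Omega \cap \lambda_2 B} \rho^q\,d\mu\right)^{1/q}
    \leq
    \left(\aint{\Omega \cap \lambda_2 B} \rho^p\,d\mu\right)^{1/p},
\end{equation*}
so the $(1,q)$-inequality of (2) automatically yields the $(1,p)$-inequality of (1) with the same parameters $C_1 = C_2$, $\lambda_1 = \lambda_2$, $\delta_1 = \delta_2$.

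For $(1) \Rightarrow (2)$, the first step is to view $X \coloneqq (\Omega, d|_\Omega, \mu|_\Omega)$ as a metric measure space in its own right. The measure-density condition $\mu(B(x,r) \cap \Omega) \geq c_\Omega \mu(B(x,r))$ for $(x,r) \in \Omega \times (0,1]$, combined with the global doubling of $\mu$ on $Z$, implies that $\mu|_\Omega$ is doubling on $X$ up to scale $1/2$, with doubling constant controlled quantitatively by $C_{\mu,\Omega}$ (a straightforward nesting argument for concentric balls centered in $\Omega$). Moreover, hypothesis (1) is by definition precisely the statement that $X$ supports a weak $(1,p)$-Poincaré inequality up to scale $\delta_1$, with constants $C_1$ and dilation $\lambda_1$. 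Note that the $p$-weak upper gradients appearing in both (1) and (2) are to be interpreted intrinsically on $X$, and this intrinsic notion is what is used in the cited reference.

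The second step is to invoke \cite[Theorem 5.1]{Bj:Bj:2019}. That theorem, which builds on the Keith--Zhong self-improvement \cite{KZ2008} but is crucially formulated as a purely local statement that does not require completeness of the underlying space, yields the existence of some $1 \leq q < p$, together with constants $C_2, \lambda_2$ and a scale $\delta_2 \in (0, 2/3)$, all depending only on $p$, the doubling constant of $X$ (hence only on $C_{\mu,\Omega}$), and on $C_1, \lambda_1, \delta_1$, such that $X$ supports a weak $(1,q)$-Poincaré inequality up to scale $\delta_2$. This is exactly condition (2), and the quantitative dependence of the constants matches the stated one.

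The main point to check, and the only nontrivial aspect of the proof, is that the hypotheses of \cite[Theorem 5.1]{Bj:Bj:2019} are genuinely satisfied by the possibly incomplete space $X$: one needs doubling up to a scale and a weak $(1,p)$-Poincaré inequality up to a (possibly smaller) scale, and no further geometric assumption. Since $Z$ itself is a $p$-PI space and hence quasiconvex, one can, if necessary, transfer geometric data from small balls of $Z$ to $\Omega$ via the measure-density condition; but assuming the cited theorem is used directly as a local self-improvement statement, no further argument is needed, and the proof of $(1) \Rightarrow (2)$ is complete.
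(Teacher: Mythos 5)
Your proposal is correct and matches the paper's approach exactly: the paper simply asserts that the theorem is "an immediate consequence of \cite[Theorem 5.1]{Bj:Bj:2019}," which is precisely what you invoke after verifying that $(\Omega,d|_\Omega,\mu|_\Omega)$ is doubling up to a scale (via measure-density) and supports a weak $(1,p)$-Poincaré inequality up to scale $\delta_1$ (which is condition (1) read intrinsically); the easy direction is the same Jensen observation. Your write-up is, if anything, more explicit than the paper's one-line proof, correctly identifying that the cited local self-improvement result does not require completeness, which the paper also emphasizes is the key point.
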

We emphasize that the validity of \Cref{thm:selfimprovement} (2) only for $L^{p}$-integrable $p$-weak upper gradients is important and is the main place where the noncompleteness of $\Omega$ plays a significant role, cf. \cite[Corollary D]{Kos:99}.

\begin{proof}[Proof of \Cref{thm:positive}]
We recall that we are given a $p$-PI space $Z$ and a measurable set $\Omega \subset Z$ satisfying the measure-density condition and a weak $(1,p)$-Poincaré inequality up to some scale $r_0$. That is, we are in the setting of \Cref{thm:selfimprovement} and assumption (1) for some constants $ C_1>0$, $\lambda_1\geq 1$ and $0<\delta_1<\min\{r_0,2/3\}$. We claim that $\Omega$ is a $\mathbb{V}$-valued $W^{1,p}$-extension set.

We fix a real-valued $f \in W^{1,p}( \Omega)$ and a Newton--Sobolev representative of the function, which we continue denoting by $f$. Let also $\rho\in \mathcal D_{N,p}(f)$. \Cref{thm:selfimprovement} (2) yields the existence of $1 \leq q < p$, $\delta_2 > 0$, $C_2 > 0$ and $\lambda_2 \geq  1$ such that 
\begin{equation*}
    \aint{ \Omega \cap B } |f-f_{\Omega \cap B} |\,d\mu
    \leq
    C_2\diam (\Omega \cap B)
    \left(\aint{\Omega \cap \lambda_2 B} \rho^q\,d\mu  \right)^{1/q}
\end{equation*}
for every ball centered at $\Omega$ and of radius $r < \delta_2$. Then, by fixing $0 < s < \delta_2/\lambda_2$ (hence $\lambda_2 s<\delta_2<1$), we may take the supremum over $0< r < s$ and apply \Cref{lemm:maximalfunction:bounded} to conclude
\begin{equation}\label{eq:prop_1,9}
    f_{s}^{\sharp}
    \leq
    4 C_2 C_{\mu,\Omega}^{1/p}
    \left(
        \mathcal{M}_{ \lambda_2 s }( \widehat{\rho}^q )
    \right)^{1/q}
    \in
    L^{p}( \Omega ).
\end{equation}

Now we use observation \eqref{eq:prop_1,9} to conclude the proof by verifying that every $u \in W^{1,p}( \Omega; \mathbb{V} )$ is an element of $m^{1,p}( \Omega; \mathbb{V} )$. Note that for every $w \in \mathbb{V}^{*}$ with $|w| \leq 1$, we have $\rho_{u}\in \mathcal D_{N,p}(w(u))$ by \Cref{lemm:projection:new}. Therefore, an application of \eqref{eq:prop_1,9} implies that for every $0 < s < \delta_2/\lambda_2$,
\begin{equation}\label{eq:almostconclusion}
    ( w( u ) )_{s}^{\sharp}
    \leq
    4 C_2 C_{\mu,\Omega}
    \left(
        \mathcal{M}_{ \lambda_2 s }( \widehat{\rho_u}^q )
    \right)^{1/q}
    \in
    L^{p}( \Omega ).
\end{equation}
Recall from \Cref{lemm:localHaj} that a constant multiple of $( w( u ) )_{s}^{\sharp}$ is a Haj\l{}asz gradient of $w(u)$ up to scale $s/2$. Then  \Cref{lemm:projection} implies that the right-hand side of \eqref{eq:almostconclusion} is a Haj\l{}asz gradient of $u$ up to scale $s/2$ and up to the aforementioned constant. Therefore $u \in m^{1,p}( \Omega; \mathbb{V} )$. Since $M^{1,p}( \Omega; \mathbb{V} ) = m^{1,p}( \Omega; \mathbb{V} ) \subset W^{1,p}( \Omega; \mathbb{V} )$ by \Cref{lemm:restriction}, we conclude $M^{1,p}( \Omega; \mathbb{V} ) = W^{1,p}( \Omega; \mathbb{V} )$. Finally, given this equality, the fact that $\Omega$ is a $\mathbb{V}$-valued extension set follows from \Cref{cor:measuredensity:Sob}. 
\end{proof}
Towards proving \Cref{thm:extension:topoincare}, we establish the following proposition, closely related to \Cref{thm:selfimprovement}.
\begin{proposition}\label{prop:Poincare:sharpfunctional}
Let $1 < p < \infty$. The following are equivalent under the assumptions of this section.
\begin{enumerate}
    \item There exist constants $C_1 > 0$, $\lambda_1 \geq 1$ and $2/3 > \delta_1 > 0$ such that for every ball $B$ centered at $\Omega$ and radius at most $\delta_1$, 
    \begin{equation}\label{eq:PI-relative:prop}
        \aint{ \Omega \cap B } |u-u_{\Omega \cap B} |\,d\mu
        \leq
        C_1\diam (\Omega \cap B)
        \left(\aint{\Omega \cap \lambda_1 B} \rho^p\,d\mu  \right)^{1/p}
    \end{equation}
    for every integrable function $u \colon \Omega \cap \lambda_{1} B \to \R$ and every $L^{p}$-integrable $p$-weak upper gradient $\rho \colon \Omega \cap \lambda_{1} B \rightarrow [0,\infty]$ of $u$.
    \item There exist constants $C_2 > 0$, $\lambda_2 \geq 1$ and $2/3 > \delta_2 > 0$ such that every $u \in L^{1}( \Omega )$ with an $L^{p}$-integrable $p$-weak upper gradient $\rho \colon \Omega \rightarrow [0, \infty]$ satisfies
    \begin{equation}
        u_{s}^{\sharp}
        \leq
        C_2\left( \mathcal{M}_{ \lambda_2 s }( \widehat{\rho}^{p} ) \right)^{1/p}
        \quad\text{almost everywhere in $\Omega$ for every $0 < s < \delta_2$.}
    \end{equation}
    Here $\widehat \rho$ refers to the zero extension of $\rho$ into $Z$.
\end{enumerate}
The statements are quantitatively equivalent in the sense that $C_1$, $\lambda_1$ and $\delta_1$ determine $C_2$, $\lambda_2$ and $\delta_2$ up to a constant depending only on $C_{\mu,\Omega}$ and $p$ and vice versa.
\end{proposition}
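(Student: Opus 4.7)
The implication (1) $\Rightarrow$ (2) is the direct direction. For a Lebesgue point $x \in \Omega$ of $u$ and $0 < t < s$ with $\lambda_1 s \leq 1$ and $s \leq \delta_1$, the triangle inequality combined with the Poincaré inequality (1) applied to $B(x,t)$ yields
\begin{align*}
\frac{1}{t} \aint{\Omega \cap B(x,t)}\aint{\Omega \cap B(x,t)} |u(y)-u(z)|\, d\mu(y)\, d\mu(z)
&\leq \frac{2}{t} \aint{\Omega \cap B(x,t)} |u - u_{\Omega \cap B(x,t)}|\, d\mu \\
&\leq 4C_1 \left( \aint{\Omega \cap \lambda_1 B(x,t)} \rho^p\, d\mu \right)^{1/p}.
\end{align*}
The measure-density condition (valid since $\lambda_1 t \leq 1$) gives $\aint{\Omega \cap \lambda_1 B(x,t)} \rho^p\, d\mu \leq c_\Omega^{-1} \mathcal{M}_{\lambda_1 s}(\widehat\rho^{p})(x)$. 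Taking the supremum over $0 < t < s$ yields (2) with $C_2 = 4 C_1 c_\Omega^{-1/p}$, $\lambda_2 = \lambda_1$, and $\delta_2 = \min(\delta_1, 1/\lambda_1)$.

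For the converse (2) $\Rightarrow$ (1), fix $B_0 = B(z_0, r_0)$ with $z_0 \in \Omega$ and $r_0 > 0$ sufficiently small. The chief subtlety is that (2) assumes globally defined $(u,\rho)$ on $\Omega$ while (1) is local. I bridge this via a cutoff. Given $(u, \rho)$ on $\Omega \cap \lambda_1 B_0$ with $\lambda_1 \geq 2N$, where $N \geq 1 + 2\lambda_2$ is a fixed integer, choose a Lipschitz function $\phi \colon Z \to [0,1]$ with $\phi \equiv 1$ on $N B_0$ and $\phi \equiv 0$ outside $2N B_0$. Set $\tilde u = \phi u$ zero-extended to $\Omega$, and let $\tilde\rho$ be the associated $p$-weak upper gradient produced by \Cref{lemm:basic:new}(3); by construction, $\tilde u = u$ and $\tilde\rho = \rho$ on $\Omega \cap N B_0$. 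Applying (2) to $(\tilde u, \tilde \rho)$ gives $\tilde u^\sharp_s(y) \leq C_2 (\mathcal{M}_{\lambda_2 s}(\widehat{\tilde\rho}^{p}))^{1/p}(y)$ for a.e.\ $y \in \Omega$ and $s < \delta_2$. For $x \in \Omega \cap B_0$ and $s > 0$ satisfying $\lambda_2 s + r_0 \leq N r_0$, the balls implicit in both $\tilde u^\sharp_s(x)$ and $\mathcal{M}_{\lambda_2 s}(\widehat{\tilde\rho}^{p})(x)$ lie inside $NB_0$, so these reduce to $u^\sharp_s(x)$ and $\mathcal{M}_{\lambda_2 s}(\widehat\rho^{p})(x)$, respectively.

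The telescoping chain argument from the proof of \Cref{lemm:localHaj} (comparing $u(x)$ with $u_{\Omega \cap B(x, 2r_0)}$ through a geometric sequence of balls centered at $x$, then comparing $u_{\Omega \cap B(x,2r_0)}$ with $u_{\Omega \cap B_0}$ via the inclusion $B_0 \subset B(x, 2r_0)$) yields
\begin{equation*}
|u(x) - u_{\Omega \cap B_0}| \leq C r_0\, u^\sharp_{2 r_0}(x) \leq C C_2 r_0\, \bigl(\mathcal{M}_{2\lambda_2 r_0}(\widehat\rho^{p})(x)\bigr)^{1/p}
\end{equation*}
for almost every $x \in \Omega \cap B_0$. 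Since $B(x,t) \subset (1 + 2\lambda_2) B_0$ whenever $x \in B_0$ and $t < 2\lambda_2 r_0$, I may replace $\widehat\rho$ by $\tilde f \coloneqq \widehat\rho\, \chi_{(1+2\lambda_2) B_0}$. Integrating over $\Omega \cap B_0$ and invoking Kolmogorov's inequality with exponent $q = 1/p \in (0,1)$ applied to $\tilde f^{p} \in L^1(Z)$,
\begin{equation*}
\int_{B_0} \mathcal{M}(\tilde f^{p})^{1/p}\, d\mu \leq C \mu(B_0)^{1 - 1/p} \|\tilde f^{p}\|_{L^1(Z)}^{1/p},
\end{equation*}
followed by doubling and measure density to convert this into averages over $\Omega \cap (1 + 2\lambda_2)B_0$, produces (1) with $\lambda_1$ any constant $\geq \max\{2N, 1 + 2\lambda_2\}$, a computable $C_1 = C_1(C_2, c_\mu, c_\Omega, p)$, and $\delta_1 = \min(\delta_2/2, 1/\lambda_1, 2/3)$.

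The principal obstacle is the scale mismatch: to relate $u^\sharp_{2r_0}(x)$ at scale $r_0$ back to the hypothesis (2), whose conclusion averages $\rho^{p}$ at scale $\lambda_2 r_0$, the cutoff $\phi$ must be identically $1$ on a region comparable to $(1 + \lambda_2) r_0$, not merely on $B_0$. Equally essential is Kolmogorov's inequality, which substitutes for the unavailable endpoint $L^1$-boundedness of the Hardy--Littlewood maximal operator, and is the reason the argument genuinely relies on $p > 1$.
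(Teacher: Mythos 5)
The forward direction (1) $\Rightarrow$ (2) is correct and matches the paper's argument.

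For the converse (2) $\Rightarrow$ (1), your strategy is structurally the same as the paper's (cutoff, apply hypothesis (2), telescoping chain, then Kolmogorov's inequality in place of the unavailable $L^1$-boundedness of the maximal operator), but you have a genuine gap: you apply hypothesis (2) to the pair $(\tilde u, \tilde\rho)$ without verifying that $\tilde\rho$ is $L^p$-integrable. The upper gradient that \Cref{lemm:basic:new}(3) produces for $\phi u$ has the form $\tilde\rho = (L|u| + \|\phi\|_\infty \rho)\chi_K$, and since $u$ is only assumed $L^1$-integrable, the term $L|u|$ forces $\tilde\rho \notin L^p$ in general. The same obstruction hits the minimal $p$-weak upper gradient of $\tilde u$: on the annulus where $\phi$ varies, it is controlled only by $|\nabla\phi|\,|u| + |\phi|\,\rho$, so $\mathcal{D}_{N,p}(\tilde u)$ may be empty and hypothesis (2) simply does not apply. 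The paper handles this by first truncating, $u_k = \max\{-k,\min\{k,u\}\}$, so that $\rho_k = (|u_k|\,2/r + \rho)\chi_{B(z,r/2)}$ is $L^p$-integrable, applies (2) to each $\tilde u_k$, and then passes to the limit $k \to \infty$ using that $\rho_{\tilde u_k} \leq \rho_{\tilde u}$ and locality. Your argument needs this truncation-and-limit step; as written it is incomplete.

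A secondary, related imprecision: you assert that ``by construction, $\tilde\rho = \rho$ on $\Omega \cap NB_0$.'' This is false for the $\tilde\rho$ from \Cref{lemm:basic:new}(3), which on $NB_0$ equals $L|u| + \|\phi\|_\infty\rho$, not $\rho$. What you actually need (and what the paper uses) is the locality of the \emph{minimal} $p$-weak upper gradient, $\rho_{\tilde u}|_{\text{int}(NB_0)} = \rho_u|_{\text{int}(NB_0)}$ from \Cref{lemm:basic:new}(4), combined with $\rho_{\tilde u_k} \leq \rho_{\tilde u}$. Once the truncation is inserted and you are careful to invoke locality of the minimal gradient rather than equality for the Leibniz-rule gradient, your argument becomes essentially the paper's proof, with cosmetic differences in the choice of scale parameters ($N \geq 1 + 2\lambda_2$ versus $\lambda_1 = 4(2\lambda_2+1)$) and in whether one derives the pointwise Haj\l{}asz inequality via the identity $C\tilde u^\sharp_{2s} \in \mathcal{D}(\tilde u, s)$ (paper) or via the telescoping chain directly (you); both are equivalent uses of \Cref{lemm:localHaj}.
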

\begin{proof}
We assume (1) and consider $u \in L^{1}( \Omega )$ with an $L^{p}$-integrable $p$-weak upper gradient $\rho$. Fix $0 < s < \delta_1$ and $z \in \Omega$. Then, by taking the supremum in \eqref{eq:PI-relative:prop} over $B( z, r )$ and $0 < r < s$, the definition of $u_{s}^{\sharp}$ yields
\begin{equation*}
    u_{s}^{\sharp}(z)
    \leq
    4 C_1 C_{\mu,\Omega}^{1/p}\left( \mathcal{M}_{ \lambda_1 s }( \widehat{\rho}^{p} ) \right)^{1/p}(z).
\end{equation*}
The conclusion (2) follows by setting $C_2 = 4 C_1 C_{\mu,\Omega}^{1/p}$, $\lambda_2 = \lambda_1$ and $\delta_2 = \delta_1$.

We assume now (2). Fix constants $C_1>0$, $2/3 >\delta_1 > 0$, $\lambda_1\geq 1 $ yet to be determined. Fix now $z \in \Omega$ and $0 < r < \delta_1$ and set $B = B( z, r )$. 

Suppose for a moment that we consider $u \in L^1(\Omega\cap  B)$ and a Lipschitz function $\phi(y) = f( d( y, z ) )$ for a Lipschitz $f \colon [0,\infty) \rightarrow [0,1]$ equal to one in $[0, r/4]$, zero in $[r/2, \infty)$ and affine in $( r/4, r/2)$. In particular, $\phi$ is $( r /4  )^{-1}$-Lipschitz. Consider next $\phi u: \Omega \cap B\to \R $ and let $\widetilde{u} = \widehat{\phi u}$ be the zero extension to $\Omega$. Then $\widetilde{u} \in L^{1}( \Omega )$. We conclude from \Cref{lemm:localHaj} that there exists a constant $C \geq 1$, depending only on $C_{\mu,\Omega}$, for which
\begin{equation}\label{eq:conclusion2}
    C \widetilde{u}^{\sharp}_{2s} \in \mathcal{D}( \widetilde{u}, s )
    \quad\text{whenever $0 < s < 1/2$.}
\end{equation}
In order to prove (1), take $u\in L^1(\Omega\cap B)$ with an $L^{p}$-integrable $p$-weak upper gradient $\rho \colon \Omega\cap B\to [0,\infty]$. Then $u_k = \max\left\{ -k, \min\left\{k, u\right\} \right\}\in L^p(\Omega\cap B)$ has $\rho$ as an $L^{p}$-integrable $p$-weak upper gradient for every $k \in \mathbb{N}$ by \Cref{lemm:basic:new} (1). Consequently, $\phi u_k$ has a $p$-weak upper gradient
\begin{equation*}
    \rho_{k}=\left(|u_k| \frac{2}{r} + \rho \right)\chi_{B(z,r/2)} \in L^{p}( \Omega \cap B ),
    \quad\text{cf. \Cref{lemm:basic:new} (3).}
\end{equation*}
Also, the zero extension $\widetilde{u}_k=\widehat{\phi u_k}\in L^1(\Omega)$ has an $L^p$-integrable $p$-weak upper gradient coinciding with the zero extension of $\rho_{k} $ to $\Omega$. We may therefore apply assumption (2) for each $\widetilde{u}_k$ and their minimal $p$-weak upper gradients $\rho_{ \widetilde{u}_k }$. We may also apply \eqref{eq:conclusion2}. Therefore there exists a negligible set $N \subset \Omega \cap B$ such that, for every $x, y \in \Omega \cap B \setminus N$ with $d(x,y) < s$ and every $k \in \mathbb{N}$,
\begin{equation*}
    | \widetilde{u}_k(x) - \widetilde{u}_k(y) |
    \leq
    d( x, y )(
        C C_2 \left( \mathcal{M}_{ 2\lambda_2 s }( \widehat{\rho_{ \widetilde{u}_k }}^{p} ) \right)^{1/p}(x)
        +  
        C C_2 \left( \mathcal{M}_{ 2\lambda_2 s }( \widehat{\rho_{ \widetilde{u}_k }}^{p} ) \right)^{1/p}(y)
    ).
\end{equation*}
Here we need the further assumption that $2 s < \delta_2$.

We set $\lambda_1 = 4( 2 \lambda_2 + 1 )$ and $s = r/\lambda_1$. Whenever $x, y \in \Omega \cap B( z, s ) \setminus N$, we have $\Omega \cap B( x, 2 \lambda_2 s ) \subset \Omega \cap B( z, r/4 )$. For this reason, the inequality
\begin{equation*}
    \mathcal{M}_{ 2\lambda_2 s }( \widehat{\rho_{ \widetilde{u}_k }}^{p} )(x)
    \leq
    \mathcal{M}_{ 2\lambda_2 s }( \widehat{\rho_{ \widetilde{u} }}^{p} )(x)
    =
    \mathcal{M}_{ 2\lambda_2 s }( \widehat{\rho_{ u }}^{p} )(x),
\end{equation*}
holds for $x$, with the corresponding inequality valid for $y$ as well; the last equality applies the locality, namely \Cref{lemm:basic:new} (4), of minimal $p$-weak upper gradients. Hence, as $\lim_{k\to \infty}\widetilde u_k(x)=u(x)$ for almost every $x\in B(z,s)$, we obtain
\begin{equation*}
     C C_2 \left( \mathcal{M}_{ 2\lambda_2 s }( \widehat{\rho_{ u }}^{p} ) \right)^{1/p}
     \in
     \mathcal{D}( u|_{ B( z, s ) } ).
\end{equation*}
Consequently,
\begin{align*}
    \aint{ \Omega \cap B( z, s ) } |u-u_{\Omega \cap B(z,s)} |\,d\mu
    &\leq
    4 C C_2 
    \diam( \Omega \cap B( z, s ) ) 
    \aint{ \Omega \cap B( z, s) }
        \left( \mathcal{M}_{ 2\lambda_2 s }( \widehat{\rho_{ u }}^{p} ) \right)^{1/p}
    \,d\mu \\
    &\leq
    4 C C_2  C( C_{\mu,\Omega}, \lambda_2 )
    \diam( \Omega \cap B( z, s ) ) 
    \aint{ B(z, \lambda_2 s) }
        \left( \mathcal{M}_{ 2\lambda_2 s }( \widehat{\rho_{ u }}^{p} ) \right)^{1/p}
    \,d\mu.
\end{align*}
 Since $\mu$ is doubling on $Z$, a weak-type estimate \cite[Lemma 3.5.10]{HKST2015} for the maximal operator yields that
\begin{align*}
    \aint{ B(z, \lambda_2 s) }
        \left( \mathcal{M}_{ 2\lambda_2 s }( \widehat{\rho_{ u }}^{p} ) \right)^{1/p}
    \,d\mu
    &\leq
    C( C_{\mu,\Omega}, p )
    \left(
        \aint{ B(z,3 \lambda_2 s) }
          \widehat{\rho_{u}}^{p}
        \,d\mu
    \right)^{1/p}.
\end{align*}
Since $3 \lambda_2 s < 1$, we obtain \eqref{eq:PI-relative:prop} for each $0 < s < \delta_1$ for a constant $C_1 = C( C_{\mu,\Omega}, \lambda_2, p, C_2 )$, $\lambda_1 = 4( 2 \lambda_2 + 1 )$, and $\delta_1 = \delta_2 / \lambda_1$.
\end{proof}

\begin{proof}[Proof of \Cref{thm:extension:topoincare}]
We first consider the case $n<p<\infty $. Let us verify that the Sobolev embedding theorem is satisfied in $W^{1,p}(\Omega)$. We know that every $u \in W^{1,p}( \Omega )$ satisfies $u = h|_{\Omega}$ for some $h \in W^{1,p}( \mathbb{R}^n )$. By the Morrey's embedding theorem, there exists a continuous representative of $h$, still denoted the same way, so that for $\alpha = 1 - n/p$,
\begin{equation*}
    \| h \|_{ C^{0,\alpha}( \mathbb{R}^n ) }
    \coloneqq
    \| h \|_{L^{\infty}(\R^n) }
    +
    \sup_{ x \neq y }
        \frac{ |h(x)-h(y)| }{ |x-y|^{\alpha} }
    \leq
    C( p, n )\| h \|_{ W^{1,p}( \mathbb{R}^n ) }.
\end{equation*}
Hence $u \in W^{1,p}( \Omega )$ has a representative $h|_{\Omega}$, that we identify with $u$, for which
\begin{equation*}
    \| u \|_{ C^{0,\alpha}( \Omega ) }\leq \|h\|_{ C^{0,\alpha}( \Omega )}
    \leq C(p,n)\| h \|_{ W^{1,p}( \mathbb{R}^n ) }\leq
    C( p, n, \Omega ) \| u \|_{ W^{1,p}( \Omega ) }.
\end{equation*}
This implies the existence of a linear and bounded embedding of $W^{1,p}( \Omega )$ into $C^{0,\alpha}( \Omega )$. The latter space can be identified with $C^{0,\alpha}( \overline{\Omega} )$ by uniform continuity. With this embedding result, \cite[Theorem 2.4]{K1998JFA} directly yields the existence of $C_1 > 0$, $\lambda_1 \geq 1$ and $2/3 > \delta_1 > 0$ such that every $u \in W^{1,p}( \Omega )$ satisfies
\begin{equation*}
    | u(x) - u(y) |
    \leq
    d(x,y)\left( C_1\mathcal{M}_{ \lambda_1d(x,y) }( \rho_u^{p} )(x) + C_1\mathcal{M}_{ \lambda_1d(x,y) }( \rho_u^{p} )(y) \right)^{1/p},
\end{equation*}
whenever $x,y \in \Omega$ and $d(x,y) < \delta_1$. Indeed,
\begin{align*}
    u^{\sharp}_{s}(z)
    &=
    \sup_{0<r<s}
        \frac{1}{t}
        \aint{ \Omega \cap B(z,t) }\aint{ \Omega \cap B(z,t) }| u(y)-u(x) |\,d\mu(y) \,d\mu(x)
    \\
    &\leq
    \sup_{0<t<s}
        C_1\frac{2t}{t}\left(\aint{ \Omega \cap B(z,t) }({M}_{ \lambda_1 2t }( \widehat{\rho}_u^{p} )(y))^{1/p}  \,d\mu(y) +\aint{ \Omega \cap B(z,t) }   ({M}_{ \lambda_1 2t }( \widehat{\rho}_u^{p} )(x))^{1/p}  \,d\mu(x)\right)
    \\
    &\leq 4C_1 C(C_\mu, \lambda_2)\aint{ \Omega \cap B(z,\lambda_1 s) } ({M}_{ \lambda_1 2s }( \widehat{\rho}_u^{p} )(y))^{1/p}  \,d\mu(y) 
    \\
    &\leq 4C_1 C(C_\mu, \lambda_2) C( C_\mu, p )
    \left(
        \aint{ B(z,3 \lambda_1 s) }
            \widehat{\rho}_{u}^{p}
        \,d\mu
    \right)^{1/p},
\end{align*}
where once again, we apply the weak-type estimate \cite[Lemma 3.5.10]{HKST2015} for the maximal operator. Hence we obtain the conclusion \Cref{prop:Poincare:sharpfunctional} (1), so the proof is complete under the assumption $p > n$.

We next consider the case $p = n$. Here we need to recall an inequality obtained during the proof of \cite[Theorem B]{K1998JFA}. There Koskela proved the following statement: There exist constants $2/3 > \delta_1 > 0$, $C_1 > 0$ and $\lambda_1 \geq 1$ for which
\begin{equation*}
    | u(x) - u(y) |
    \leq
    d(x,y)\left( C_1\mathcal{M}_{ \lambda_1d(x,y) }( \widehat{\rho}_u^{n} )(x) + C_1\mathcal{M}_{ \lambda_1d(x,y) }( \widehat{\rho}_u^{n} )(y) \right)^{1/n},
\end{equation*}
whenever $x, y \in \Omega$, $d(x,y) < \delta_1$ and $u \in W^{1,p}( \Omega )$ for some $p > n$. Therefore, by modifying the argument above, there exist constants $1 > \delta_2 > 0$, $C_2 > 0$ and $\lambda_2 \geq 1$ for which for every $0 < s < \delta_1/( 2 \lambda_2 )$ 
\begin{equation}\label{eq:crucialinequality}
    u_{ s }^{\sharp}
    \leq
    C_2\left( \mathcal{M}_{ 2\lambda_2 s }( \widehat{\rho}_{u}^{n} ) \right)^{1/n}
    \quad\text{for every $u \in \bigcup_{ p > n } W^{1,p}( \Omega )$.}
\end{equation}
On the other hand, we claim that since Lipschitz functions with bounded supports are elements of $W^{1,p}( \Omega )$ for every $p > n$ and also dense in $W^{1,n}( \Omega )$-norm in $W^{1,n}( \Omega )$ (recall \Cref{prop:density}), \eqref{eq:crucialinequality} holds for every $u \in W^{1,n}( \Omega )$. To this end, there exists a sequence of Lipschitz functions $( u_k )_{ k = 1 }^{ \infty }$ satisfying
\begin{align}\label{eq:crucialinequality:step0}
    \lim_{ k \rightarrow \infty }
    \| u - u_k \|_{ W^{1,n}( \Omega ) }
    &=
    0
    \quad\text{and}\quad
    \lim_{ k \rightarrow \infty }
    \| u^{\sharp}_s - (u_k)^{\sharp}_s \|_{ L^n( \Omega ) }
    =
    0.
\end{align}
Observe that the first fact in \eqref{eq:crucialinequality:step0} follows from density of Lipschitz functions with bounded support and the second one from the subadditivity of the sharp functional together with the equivalence of the $M^{1,p}( \Omega )$- and $W^{1,p}( \Omega )$-convergences (due to the equality $M^{1,p}( \Omega ) = W^{1,p}( \Omega )$).

Next, we claim that up to passing to a subsequence and relabeling, we have
\begin{equation}\label{eq:crucialinequality:step1}
    \left( \mathcal{M}_{ 2\lambda_2 s }( \widehat{\rho}_{u}^{n} ) \right)^{1/n}
    =
    \lim_{ k \rightarrow \infty }
    \left( \mathcal{M}_{ 2\lambda_2 s }( \widehat{\rho}_{u_k}^{n} ) \right)^{1/n}
    \quad\text{almost everywhere}.
\end{equation}
Observe that if this can be proved, then \eqref{eq:crucialinequality} follows from \eqref{eq:crucialinequality:step0} and \eqref{eq:crucialinequality:step1} for every $u \in W^{1,n}( \Omega )$. 

Towards proving \eqref{eq:crucialinequality:step1}, we observe that, by the subadditivity of the restricted maximal function and the weak-type estimate \cite[Lemma 3.5.10]{HKST2015} of the maximal operator,
\begin{gather}\notag
    \aint{ B(z, \lambda_2 s) }
        \left(
            \left| \mathcal{M}_{ 2\lambda_2 s }( \widehat{\rho}_{ u }^{n} )
            - 
            \mathcal{M}_{ 2\lambda_2 s }( \widehat{\rho}_{ u_k }^{n} )
            \right|
        \right)^{1/n}
    \,d\mu
    \\\label{eq:controlfromabove}
    \leq\,
    \aint{ B(z, \lambda_2 s) }
        \left( \mathcal{M}_{ 2\lambda_2 s }( \widehat{\rho}_{ u }^{n} - \widehat{\rho}_{ u_k }^{n} ) \right)^{1/n}
    \,d\mu
    \leq
    C(c_\mu,c_\Omega,n)
    \left(
        \aint{ B(z,3 \lambda_2 s) }
            | \widehat{\rho}_{u}^{n} - \widehat{\rho}_{ u_k }^n |
        \,d\mu
    \right)^{1/n}.
\end{gather}
We may now pass to a subsequence such that $\rho_{ u_k }$ converges to $\rho_u$ pointwise almost everywhere. Then \eqref{eq:controlfromabove} implies that, up to passing to a subsequence, $\mathcal{M}_{ 2\lambda_2 s }( \rho_{ u }^{n} )$ is the pointwise limit of $\mathcal{M}_{ 2\lambda_2 s }( \rho_{ u_k }^{n} )$ for almost every point in $\Omega$. Thus the conclusion \eqref{eq:crucialinequality} holds for the limiting function $u \in W^{1,n}( \Omega )$. Now a simple cut-off argument implies \Cref{prop:Poincare:sharpfunctional} (2), so the conclusion \Cref{prop:Poincare:sharpfunctional} (1) follows as claimed. 
\end{proof}

\section{Planar Jordan domains}\label{section:LAST_LAST}
In this section, we consider the two-dimensional sphere identified with the extended plane $\widehat{\rr}^2$ using stereographic projection; we endow the sphere with the Riemannian area measure.
\begin{definition}
Let $\Omega \subset \widehat{\rr}^2$ be a Jordan domain. A homeomorphism $\mathcal{R} \colon \widehat{\rr}^2 \to \widehat{\rr}^2$ is a \emph{reflection} over the boundary $\partial\boz$, if $\mathcal{R}(\boz)=\widehat{\rr^2}\setminus\overline{\boz}$ and $\mathcal R(z)=z$ for every $z\in\partial\boz$.

We say that $\mathcal{R}$ is a \emph{$p$-morphism} if $\mathcal{R} \in W^{1,p}( \widehat{\rr}^2; \widehat{\rr}^2 )$ and 
\begin{equation}\label{eq:Jacobian-morphism}
    | D\mathcal{R} |^p(z) \leq K | J_{\mathcal{R}}|(z) \quad\text{for almost every $z \in\widehat{\rr}^2$}
\end{equation}
for some $K \geq 1$. Here $| D\mathcal{R} |$ and $J_{\mathcal{R}}$, respectively, are the operator norm and the Jacobian of the differential of $\mathcal{R}$.
\end{definition}
We recall that the minimal $p$-weak upper gradient of $\mathcal{R}$ coincides with $| D\mathcal{R}|$ almost everywhere in $\widehat{\rr}^2$.

Associated to each reflection, we obtain the following \emph{folding map}:
\begin{equation*}
    \mathcal{F} \colon \widehat{\rr}^2 \to \widehat{\rr}^2, 
\end{equation*}
where $z \mapsto z$ if $z \in \overline{\Omega}$ and $z \mapsto \mathcal{R}(z)$ otherwise. We say that $\mathcal{R}$ is a \emph{$p$-reflection over $\partial \Omega$} if $\mathcal{R}$ is a $p$-morphism and for any $u \in W^{1,p}( \boz )$, there exists some $\widehat{h} \colon \widehat{\rr}^2 \to \widehat{\rr}^2$ with a $L^{p}( \widehat{\rr}^2 )$-integrable upper gradient with $\widehat{h}|_{ \widehat{\rr}^2 \setminus \partial \Omega } = u \circ \mathcal{F}|_{ \widehat{\rr}^2 \setminus \partial \Omega }$ almost everywhere. 
\begin{theorem}[Corollary 1.4, \cite{PPZ}]\label{le:s6:ref}
Let $\boz\subset \widehat{\rr}^2$ be a Jordan domain. If $\boz$ is a $W^{1, p}$-extension domain, then there exists a $p$-reflection $\mathcal R$ over $\partial \Omega$.
\end{theorem}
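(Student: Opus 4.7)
The target is to construct, for every planar Jordan $W^{1,p}$-extension domain $\Omega \subset \widehat{\rr}^2$, a $p$-reflection $\mathcal{R}$ over $\partial\Omega$; that is, a homeomorphism $\mathcal{R}$ of $\widehat{\rr}^2$ swapping $\Omega$ and $\widehat{\rr}^2\setminus\overline{\Omega}$, fixing $\partial\Omega$ pointwise, satisfying $|D\mathcal{R}|^p \leq K|J_\mathcal{R}|$ almost everywhere, and with the property that $u \circ \mathcal{F}$ lies in $W^{1,p}(\widehat{\rr}^2)$ for every $u \in W^{1,p}(\Omega)$.

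The plan is to build $\mathcal{R}$ out of Riemann maps through the reflection $\sigma(z) = 1/\bar z$ of the unit disk. Fix conformal maps $\phi_1 \colon \mathbb{D} \to \Omega$ and $\phi_2 \colon \widehat{\cc}\setminus\overline{\mathbb{D}} \to \widehat{\rr}^2\setminus\overline{\Omega}$. Since $\Omega$ is Jordan, Carathéodory's theorem provides homeomorphic extensions $\overline{\phi_i}$ of the closures, and the boundary correspondence $h = \overline{\phi_2}^{-1}\circ \overline{\phi_1} \colon \partial\mathbb{D} \to \partial\mathbb{D}$ is a homeomorphism. The candidate reflection is
\begin{equation*}
    \mathcal{R}(z)
    \coloneqq
    \begin{cases}
        \phi_2\bigl(\sigma(H^{-1}(\phi_1^{-1}(z)))\bigr), & z \in \Omega, \\
        z, & z \in \partial\Omega, \\
        \phi_1\bigl(H(\sigma(\phi_2^{-1}(z)))\bigr), & z \in \widehat{\rr}^2\setminus\overline{\Omega},
    \end{cases}
\end{equation*}
where $H \colon \overline{\mathbb{D}} \to \overline{\mathbb{D}}$ is a homeomorphic extension of $h$ yet to be constructed. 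The main work is choosing $H$ so that the $p$-morphism and extension conditions hold.

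The first key step is to transfer the $W^{1,p}$-extension property of $\Omega$ into quantitative information on the Riemann maps. Using a test family of Sobolev functions produced by precomposing smooth functions on $\widehat{\rr}^2$ by $\phi_1$ and $\phi_2$, together with the conformal invariance of the Dirichlet integral for $p=2$ and its substitutes (via the Jacobian estimate $|D\phi_i|^p = |\phi_i'|^p$ and $|J_{\phi_i}| = |\phi_i'|^2$) for general $p$, one should extract a boundary comparison between the derivatives of $\phi_1$ and $\phi_2$: namely, a Muckenhoupt-type weight condition on $\partial\mathbb{D}$ comparing $|\phi_1'|$ with $|\phi_2'\circ h|\, |h'|$. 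Such a comparison is exactly the condition needed for the Beurling--Ahlfors style extension $H$ of $h$ to be regular enough for our purposes. This is the heart of the proof and should be extracted by plugging carefully chosen extremal Sobolev functions (e.g., capacitary potentials of boundary arcs) into the extension operator and using the characterization of planar simply connected $W^{1,p}$-extension domains from \cite{K1998JFA,S2005,Ko:Ra:Zh:planar,Ko:Ra:Zh:W11}.

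The second step is to construct $H$ and verify $\mathcal{R}$ is a $p$-morphism. Given the boundary weight comparison above, produce a homeomorphic extension $H$ (e.g., a Radó--Kneser--Choquet-type harmonic extension, or a piecewise radial extension on Whitney squares of $\mathbb{D}$) whose derivatives on $\mathbb{D}$ are controlled by $|h'|$ along radial fibers. The change of variables formula gives
\begin{equation*}
    |D\mathcal{R}|(z) \lesssim |\phi_2'(\sigma(H^{-1}(w)))| \cdot |DH^{-1}(w)| \cdot |\phi_1^{-1}{}'(z)|,
    \qquad w = \phi_1^{-1}(z),
\end{equation*}
and analogously for $|J_\mathcal{R}|$, so the pointwise bound $|D\mathcal{R}|^p \leq K |J_\mathcal{R}|$ reduces, after cancellations using conformality of $\phi_i$, to the corresponding bound on $H$, which in turn reduces to the weight comparison from the first step.

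Finally, one has to verify the extension property: for every $u \in W^{1,p}(\Omega)$, $u \circ \mathcal{F}$ admits an $L^p$-integrable upper gradient on $\widehat{\rr}^2$. Apply the given $W^{1,p}$-extension $E(u) \in W^{1,p}(\widehat{\rr}^2)$. On $\widehat{\rr}^2 \setminus \overline{\Omega}$, observe that $u\circ\mathcal{F} = u\circ \mathcal{R} = (E(u)|_\Omega)\circ \mathcal{R}$, and by the chain rule together with the $p$-morphism inequality, $E(u)\circ\mathcal{R}$ lies in $W^{1,p}(\widehat{\rr}^2\setminus\overline{\Omega})$ with norm controlled by $K^{1/p}\|E(u)\|_{W^{1,p}(\Omega)}$. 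Gluing across $\partial\Omega$ (which has measure zero and is $p$-negligible for path families since $\partial\Omega$ is a Jordan curve not charged by any $L^p$ Borel weight on $\widehat{\rr}^2$) yields the desired upper gradient for $u\circ\mathcal{F}$ on $\widehat{\rr}^2 \setminus \partial\Omega$.

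The main obstacle is the first step: extracting a quantitative weight comparison on the boundary from the abstract $W^{1,p}$-extension hypothesis. This is where the planar simple connectedness and the full strength of the characterizations of \cite{K1998JFA,Ko:Ra:Zh:planar,Ko:Ra:Zh:W11} enter essentially; in higher dimensions or without simple connectedness the analogous step is unknown and the theorem of \cite{PPZ} would not apply.
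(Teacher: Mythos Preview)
The paper does not prove this statement at all: it is quoted verbatim as Corollary~1.4 of \cite{PPZ} and used as a black box in the proof of \Cref{thm:s6:exten}. So there is no ``paper's own proof'' to compare against; the relevant question is whether your sketch is a viable route to the cited result.

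Your architecture is the right one and, as far as I am aware, is essentially how \cite{PPZ} proceeds: compose Riemann maps with an extension of the conformal welding $h$, and deduce the $p$-morphism inequality from a regularity estimate on that extension. You correctly identify the crux --- extracting from the $W^{1,p}$-extension hypothesis a quantitative weight condition on $\partial\mathbb{D}$ relating $|\phi_1'|$ and $|\phi_2'\circ h|\,|h'|$ --- but you do not carry it out; you only name the ingredients (capacitary test functions, the planar characterizations) and say the condition ``should be extracted''. That step is genuinely the whole theorem: without it, the construction of $H$ and the chain-rule reduction float free. As written, this is a plan rather than a proof.

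There is also a gap in your gluing step. The assertion that $\partial\Omega$ is ``$p$-negligible for path families since $\partial\Omega$ is a Jordan curve not charged by any $L^p$ Borel weight'' is not a valid argument: Jordan curves can have positive area, and even when $|\partial\Omega|=0$ (which does follow here from the measure-density condition), zero measure does not by itself give $p$-modulus negligibility of crossing paths. What actually makes the gluing work is that $\mathcal{F}$ fixes $\partial\Omega$ pointwise, so the two pieces $u$ and $u\circ\mathcal{R}$ have matching boundary behaviour; one then needs a removability or trace argument (using, e.g., that the extension $E(u)$ already lives on the whole sphere and that $\mathcal{R}$ is a $p$-morphism) to produce the global upper gradient. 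Your invocation of $E(u)$ is the right object, but the sentence about path negligibility should be replaced by this matching-of-traces reasoning.
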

We state the main theorem in this section, corresponding to \Cref{prop:planarjordan}. Indeed, \Cref{prop:planarjordan} follows from \Cref{thm:s6:exten} by observing two facts: First, the stereographic projection restricted to the complement of a small neighbourhood of the infinity point is bi-Lipschitz. Second, such a map preserves the $c_0$-valued $W^{1,p}$-extension property of Jordan domains, whose closure does not touch the infinity point. 
\begin{theorem}\label{thm:s6:exten}
    Let $\boz\subset\widehat{\rr}^2$ be an $\mathbb R$-valued $W^{1, p}$-extension Jordan domain for some $1<p<\fz$. Then $\Omega$ is a $\mathbb V$-valued $W^{1, p}$-extension domain for every Banach space $\mathbb V$.  
\end{theorem}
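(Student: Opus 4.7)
The plan is to run the folding construction associated with the $p$-reflection $\mathcal{R}$ provided by \Cref{le:s6:ref}, but now with values in an arbitrary Banach space $\mathbb{V}$, and reduce the verification of the weak upper gradient property to the real-valued case via \Cref{lemm:projection:new}. First, fix a Newton--Sobolev representative of $u \in W^{1,p}(\Omega;\mathbb{V})$ with minimal $p$-weak upper gradient $\rho_u$, and define $h \colon \widehat{\rr}^2 \to \mathbb{V}$ by $h = u$ on $\Omega$, $h = u \circ \mathcal{R}$ on $\widehat{\rr}^2 \setminus \overline{\Omega}$, and $h = 0$ on $\partial\Omega$. Since $\Omega$ is an $\R$-valued $W^{1,p}$-extension Jordan domain, it satisfies the measure-density condition by \cite{HKT2008}, so $\mu(\partial\Omega)=0$ and the definition on $\partial\Omega$ is irrelevant.

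The $L^p$-integrability of $h$ follows by applying the real-valued case of \Cref{le:s6:ref} to the norm function $|u| \in W^{1,p}(\Omega)$: the extension $|u| \circ \mathcal{F}$ lies in $L^p(\widehat{\rr}^2)$ and coincides with $|h|$ almost everywhere. For the weak upper gradient, the natural candidate is
\begin{equation*}
    g(z) \coloneqq
    \begin{cases}
        \rho_u(z), & z \in \Omega,\\
        |D\mathcal{R}|(z)\,\rho_u(\mathcal{R}(z)), & z \in \widehat{\rr}^2 \setminus \overline{\Omega},
    \end{cases}
\end{equation*}
with $g \equiv 0$ on $\partial\Omega$. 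The $p$-morphism inequality $|D\mathcal{R}|^p \leq K|J_{\mathcal{R}}|$ together with the area formula for the Sobolev homeomorphism $\mathcal{R}$ yields
\begin{equation*}
    \int_{\widehat{\rr}^2} g^p \,dm
    \leq \int_{\Omega} \rho_u^p \,dm + K\int_{\widehat{\rr}^2 \setminus \overline{\Omega}} |J_{\mathcal{R}}|\,(\rho_u \circ \mathcal{R})^p \,dm
    = (1+K)\int_{\Omega} \rho_u^p \,dm,
\end{equation*}
so $g \in L^{p}(\widehat{\rr}^2)$ with a controlled norm.

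To see that $g$ is a $p$-weak upper gradient of $h$, invoke \Cref{lemm:projection:new}: it suffices to show that $g$ is a $p$-weak upper gradient of $w(h) = w(u) \circ \mathcal{F}$ for every $w \in \mathbb{V}^{*}$ with $|w| \leq 1$. For each such $w$, the real-valued $w(u) \in W^{1,p}(\Omega)$ satisfies $\rho_{w(u)} \leq \rho_u$ by \Cref{lemm:basic:new}. Inspecting the construction in the proof of Corollary 1.4 of \cite{PPZ}, the upper gradient produced for $w(u) \circ \mathcal{F}$ has the explicit form $g_w = \rho_{w(u)}$ on $\Omega$ and $g_w = |D\mathcal{R}|\,(\rho_{w(u)} \circ \mathcal{R})$ on the complement; hence $g_w \leq g$ pointwise, and any Borel function dominating a $p$-weak upper gradient is itself a $p$-weak upper gradient. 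The extension operator $u \mapsto h$ is linear, and the bound just obtained gives $\|h\|_{W^{1,p}(\widehat{\rr}^2;\mathbb{V})} \leq C(K)\|u\|_{W^{1,p}(\Omega;\mathbb{V})}$.

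The main obstacle is isolating the precise form of the $p$-weak upper gradient in \Cref{le:s6:ref}, i.e., confirming that the construction in \cite{PPZ} produces $g_w$ of exactly the stated pointwise form so that the monotonicity argument applies. If this extraction from \cite{PPZ} is not direct, the alternative is to reprove in the real-valued setting that the explicit candidate $g$ above is a $p$-weak upper gradient of $u \circ \mathcal{F}$; this is a path-lifting argument across $\partial\Omega$, using that $p$-almost every path avoids the Jordan boundary or can be decomposed into subpaths lying in $\Omega$ and its complement where the Sobolev regularity of $\mathcal{R}$ transports $p$-weak upper gradients. Once this real-valued upper gradient statement is in hand, the reduction via \Cref{lemm:projection:new} delivers the Banach-valued extension for every $\mathbb{V}$.
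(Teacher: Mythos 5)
Your reduction strategy — coordinatewise verification via \Cref{lemm:projection:new} and the $L^p$-bound via the $p$-morphism inequality — is exactly the scaffolding the paper uses, and your $L^p$-estimate for $h$ via $|u|\in W^{1,p}(\Omega)$ is fine. But the gap you flag at the end is a genuine one, and the path you propose around it is not the route the paper takes. The definition of a $p$-reflection only guarantees the existence of \emph{some} function $\widehat h$ with an $L^p$-integrable upper gradient agreeing with $w(u)\circ\mathcal F$ off $\partial\Omega$ almost everywhere; it neither asserts that $w(u)\circ\mathcal F$ itself is Newton--Sobolev, nor does it produce the upper gradient in the pointwise form $g_w=(\rho_{w(u)}\circ\mathcal F)\,|D\mathcal F|$. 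Reverse-engineering that explicit form from the construction in \cite{PPZ} is precisely what you cannot assume, and your fallback "path-lifting argument across $\partial\Omega$" hides a nontrivial trace-matching problem: gluing the weak upper gradient inequality across the Jordan curve requires controlling the behavior of $h\circ\gamma$ on the boundary pieces, which is what the $p$-reflection property encodes in the first place.

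The paper closes this gap by a different, lower-tech move. It first restricts to Lipschitz $u\in W^{1,p}(\overline\Omega)$, for which $h=u\circ\mathcal F$ is globally \emph{continuous}. The $p$-reflection property then yields $\widehat h$ with $L^p$-integrable upper gradient and $\widehat h=h$ a.e.; since $h$ is continuous and $\widehat h$ is quasi-continuous, this upgrades to equality outside a set of zero $p$-capacity, so $h$ itself lies in $\widetilde N^{1,p}(\widehat{\rr}^2)$ by \cite[Proposition 7.1.31]{HKST2015}. At that point the chain rule for Newton--Sobolev maps applies and gives $\rho_{u\circ\mathcal F}^p\le K(\rho_u\circ\mathcal F)^p|J_{\mathcal F}|$ a.e.\ — your candidate $g$ arises as the intermediate term $(\rho_u\circ\mathcal F)|D\mathcal F|$, but the paper never needs it as an \emph{a priori} candidate — and an integral identity in terms of $\int_{E\cap\Omega}\rho_u^p$ follows. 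Density of Lipschitz functions in $W^{1,p}(\overline\Omega)$ (justified via \Cref{prop:density}) carries this estimate to all $u$. Finally, the paper applies this coordinatewise to $u=(u_i)\in W^{1,p}(\Omega;c_0)$, bounds $\sup_i\rho_{\overline u_i\circ\mathcal F}$ in $L^p$, concludes $u\circ\mathcal F\in W^{1,p}(\widehat{\rr}^2;c_0)$, and then invokes \Cref{cor:c_0:all} to pass from $c_0$ to all Banach spaces — whereas you target arbitrary $\mathbb V$ directly, which is equally valid given the reduction lemma but bypasses the $c_0$ machinery the rest of the paper is built around. The Lipschitz-then-density step is what you are missing, and it is exactly what makes the argument go through without ever opening up \cite{PPZ}.
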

\begin{proof}
Fix a $p$-reflection $\mathcal R$ over $\partial\boz$ obtained from \Cref{le:s6:ref}. Since $\Omega$ is a $W^{1,p}$-extension domain, we recall from \cite{HKT2008:B} that it satisfies the measure-density condition. Then \Cref{prop:density} (and \Cref{lemm:stronglocality}) implies that $W^{1,p}( \Omega ) = W^{1,p}( \overline{\Omega} )$ are isometrically isomorphic.

Consider next an arbitrary Lipschitz $u \in W^{1,p}( \overline{\Omega} )$. Then $h \coloneqq u \circ \mathcal{F}$ is uniquely defined in $\widehat{\rr}^2$ and continuous. Since $\mathcal{R}$ is a $p$-reflection over $\partial \Omega$, there exists some $\widehat{h} \colon \widehat{\rr}^2 \to \widehat{\rr}^2$ with a $L^{p}( \widehat{\rr}^2 )$-integrable upper gradient and $\widehat{h}|_{ \widehat{\rr}^2 \setminus \partial \Omega } = u \circ \mathcal{F}|_{ \widehat{\rr}^2 \setminus \partial \Omega } = h|_{ \widehat{\rr}^2 \setminus \partial \Omega }$. Therefore $\widehat{h} = h$ almost everywhere in $\widehat{\rr}^2$. Since $h$ is continuous, we have that $\widehat{h} = h$ at every Lebesgue  point of $\widehat{h}$. In other words, $h \equiv \widehat{h}$ up to a set negligible for $p$-Sobolev capacity in $\widehat{\rr}^2$ \cite[Theorem 9.2.8]{HKST2015}. Thus $h$ has an $L^{p}( \widehat{\rr}^2 )$-integrable $p$-weak upper gradient \cite[Proposition 7.1.31]{HKST2015}. Since $\partial \Omega$ has negligible measure, we conclude from \eqref{eq:Jacobian-morphism} that
\begin{equation}\label{eq:pointwiseinequality}
    \rho_{ u \circ \mathcal{F} }^p
    \leq
    ( \rho_{u} \circ \mathcal{F} )^p | D\mathcal{F} |^p
    \leq
    K ( \rho_u \circ \mathcal{F} )^p |J_{ \mathcal{F} }|
    \quad\text{almost every $x \in \widehat{\rr}^2$, see e.g. \cite{Wil:12}.}
\end{equation}
We also conclude from \eqref{eq:pointwiseinequality} and $\partial \Omega$ having zero measure that, for $C = K+1$,
\begin{align} \notag
    \int_{ \mathcal{F}^{-1}(E) }
        \rho_{ u \circ \mathcal{F} }^p(y)
    \,dy
    &=
    \int_{ E \cap \Omega }
        \rho_{ u }^p(y)
    \,dy
    +
    \int_{ \mathcal{F}^{-1}( E ) \setminus \overline{ \Omega } }
        \rho_{ u \circ \mathcal{F} }^p(y)
    \,dy
    \\ \label{eq:integralinequality}
    &\leq
    C\int_{ E \cap \Omega }
        \rho_{ u }^p(y)
    \,dy
    \quad\text{for every Borel set $E \subset \overline{\Omega}$.}
\end{align}
Since Lipschitz functions are dense in $W^{1,p}( \overline{ \Omega } )$ (recall \Cref{prop:density}, cf. \Cref{ex:Jordan}), we obtain \eqref{eq:integralinequality} for every $u \in W^{1,p}( \overline{ \Omega } )$. Also \eqref{eq:pointwiseinequality} holds for every $u \in W^{1,p}( \overline{\Omega} )$.

Next, consider $u = ( u_i )_{ i = 1 }^{ \infty } \in W^{1,p}( \Omega; c_0 )$. Let $\overline{u}_i \in W^{1,p}( \overline{\Omega} )$ denote the unique extension of $u_i$ to $\overline{\Omega}$. We denote $u \circ \mathcal{F} \coloneqq ( \overline{u}_i \circ \mathcal{F} )_{ i = 1 }^{ \infty }$. It follows from \eqref{eq:integralinequality} and  \Cref{lemm:projection:new} that
\begin{equation*}
    \int_{ \mathcal{F}^{-1}(E) }
        \rho_{ \overline{u}_i \circ \mathcal{F} }^p(y)
    \,dy
    \leq
    C\int_{ E \cap \Omega }
        \rho_{ u }^p(y)
    \,dy
    \quad\text{for every Borel set $E \subset \overline{\Omega}$.}
\end{equation*}
Since this inequality holds for every Borel set and $i\in\N$, we conclude that
\begin{equation*}
    \int_{ \mathcal{F}^{-1}(E) }
        \sup_{ i \in \mathbb{N} }\rho_{ \overline{u}_i \circ \mathcal{F} }^p(y)
    \,dy
    \leq
    C\int_{ E \cap \Omega }
        \rho_{ u }^p(y)
    \,dy
    \quad\text{for every Borel set $E \subset \overline{\Omega}$.}
\end{equation*}
Thus $\sup_{ i }\rho_{ \overline{u}_i \circ \mathcal{F} }$ is $L^{p}( \widehat{\rr}^2 )$-integrable, so the supremum is also a $p$-weak upper gradient of (a Newton--Sobolev representative of) $u \circ \mathcal{F}$ by \cite[Theorem 7.1.20]{HKST2015}. The $(1,p)$-Poincaré inequality and compactness of $\widehat{\rr}^2$ yields $u \circ \mathcal{F} \in W^{1,p}( \widehat{\rr}^2; c_0 )$ for which $( u \circ \mathcal{F} )|_{ \Omega } = u$. Therefore $\Omega$ is a $c_0$-valued $W^{1,p}$-extension domain. Recall that this implies the corresponding property for every Banach space due to \Cref{cor:c_0:all}.
\end{proof}

\section*{Acknowledgements}
We thank Kai Rajala and Tapio Rajala for comments on an earlier version of this manuscript. We also thank Pekka Koskela for discussions regarding \Cref{thm:extension:topoincare}.

\bibliographystyle{alpha}
\bibliography{Bibliography}

\begin{thebibliography}{EBKMZ22}

\bibitem[ACDM15]{Amb:Col:DiMa:15}
Luigi Ambrosio, Maria Colombo, and Simone Di~Marino.
\newblock Sobolev spaces in metric measure spaces: reflexivity and lower
  semicontinuity of slope.
\newblock In {\em Variational methods for evolving objects}, volume~67 of {\em
  Adv. Stud. Pure Math.}, pages 1--58. Math. Soc. Japan, [Tokyo], 2015.

\bibitem[AGS13]{Amb:Gig:Sav:13}
Luigi Ambrosio, Nicola Gigli, and Giuseppe Savar\'{e}.
\newblock Density of {L}ipschitz functions and equivalence of weak gradients in
  metric measure spaces.
\newblock {\em Rev. Mat. Iberoam.}, 29(3):969--996, 2013.

\bibitem[BB15]{Bj:Bj:2015}
Anders Bj\"{o}rn and Jana Bj\"{o}rn.
\newblock Obstacle and {D}irichlet problems on arbitrary nonopen sets in metric
  spaces, and fine topology.
\newblock {\em Rev. Mat. Iberoam.}, 31(1):161--214, 2015.

\bibitem[BB18]{Bj:Bj:18}
Anders Bj\"{o}rn and Jana Bj\"{o}rn.
\newblock Local and semilocal {P}oincar\'{e} inequalities on metric spaces.
\newblock {\em J. Math. Pures Appl. (9)}, 119:158--192, 2018.

\bibitem[BB19]{Bj:Bj:2019}
Anders Bj\"{o}rn and Jana Bj\"{o}rn.
\newblock Poincar\'{e} inequalities and {N}ewtonian {S}obolev functions on
  noncomplete metric spaces.
\newblock {\em J. Differential Equations}, 266(1):44--69, 2019.

\bibitem[BB21]{Bj:Bj:2019:corrig}
Anders Bj\"{o}rn and Jana Bj\"{o}rn.
\newblock Corrigendum to ``{P}oincar\'{e} inequalities and {N}ewtonian
  {S}obolev functions on noncomplete metric spaces'' [{J}. {D}iffer. {E}qu. 266
  (2019) 44--69].
\newblock {\em J. Differential Equations}, 285:493--495, 2021.

\bibitem[BBL21]{Bj:Bj:Lah:21}
Anders {Bj{\"o}rn}, Jana {Bj{\"o}rn}, and Panu {Lahti}.
\newblock {Removable sets for Newtonian Sobolev spaces and a characterization
  of $p$-path almost open sets}.
\newblock {\em arXiv e-prints}, page arXiv:2105.09012, May 2021.

\bibitem[BS07]{Bjo:Sha:07}
Jana Bj\"{o}rn and Nageswari Shanmugalingam.
\newblock Poincar\'{e} inequalities, uniform domains and extension properties
  for {N}ewton-{S}obolev functions in metric spaces.
\newblock {\em J. Math. Anal. Appl.}, 332(1):190--208, 2007.

\bibitem[Cal61]{Calderon}
A.-P. Calder\'{o}n.
\newblock Lebesgue spaces of differentiable functions and distributions.
\newblock In {\em Proc. {S}ympos. {P}ure {M}ath., {V}ol. {IV}}, pages 33--49.
  American Mathematical Society, Providence, R.I., 1961.

\bibitem[CE21]{Cre:Evs:21}
Paul {Creutz} and Nikita {Evseev}.
\newblock {An approach to metric space valued Sobolev maps via weak*
  derivatives}.
\newblock {\em arXiv e-prints}, page arXiv:2106.15449, June 2021.

\bibitem[Che99]{Cheeger}
J.~Cheeger.
\newblock Differentiability of {L}ipschitz functions on metric measure spaces.
\newblock {\em Geom. Funct. Anal.}, 9(3):428--517, 1999.

\bibitem[Con90]{Conway}
John~B. Conway.
\newblock {\em A course in functional analysis}, volume~96 of {\em Graduate
  Texts in Mathematics}.
\newblock Springer-Verlag, New York, second edition, 1990.

\bibitem[DCJ10]{Durand}
E.~Durand-Cartagena and J.~A. Jaramillo.
\newblock Pointwise {L}ipschitz functions on metric spaces.
\newblock {\em J. Math. Anal. Appl.}, 363(2):525--548, 2010.

\bibitem[DLS11]{Q-valued}
Camillo De~Lellis and Emanuele~Nunzio Spadaro.
\newblock {$Q$}-valued functions revisited.
\newblock {\em Mem. Amer. Math. Soc.}, 211(991):vi+79, 2011.

\bibitem[Dud07]{Dud:07}
Jakub Duda.
\newblock Absolutely continuous functions with values in a metric space.
\newblock {\em Real Anal. Exchange}, 32(2):569--581, 2007.

\bibitem[EB20]{EB:20:energy}
Sylvester Eriksson-Bique.
\newblock {Density of Lipschitz functions in Energy}.
\newblock {\em arXiv e-prints}, page arXiv:2012.01892, December 2020.

\bibitem[EBG21a]{EB:Go:21}
Sylvester Eriksson-Bique and Jasun Gong.
\newblock Almost uniform domains and {P}oincar\'{e} inequalities.
\newblock {\em Trans. London Math. Soc.}, 8(1):243--298, 2021.

\bibitem[EBG21b]{EB:Go:21:borderline}
Sylvester Eriksson-Bique and Jasun {Gong}.
\newblock {Isoperimetric and Poincar{\'e} inequalities on non-self-similar
  Sierpi{\'n}ski sponges: the borderline case}.
\newblock {\em arXiv e-prints}, page arXiv:2111.07861, November 2021.

\bibitem[EBKMZ22]{BKZheng}
Sylvester Eriksson-Bique, Pekka Koskela, Jan Mal\'{y}, and Zheng Zhu.
\newblock Pointwise {I}nequalities for {S}obolev {F}unctions on {O}utward
  {C}uspidal {D}omains.
\newblock {\em Int. Math. Res. Not. IMRN}, (5):3748--3759, 2022.

\bibitem[EBSar]{EB:Sou:21}
Sylvester Eriksson-Bique and Elefterios {Soultanis}.
\newblock {Curvewise characterizations of minimal upper gradients and the
  construction of a Sobolev differential}.
\newblock {\em Anal. PDE}, to appear.

\bibitem[Fed69]{Fed:69}
Herbert Federer.
\newblock {\em Geometric measure theory}.
\newblock Die Grundlehren der mathematischen Wissenschaften, Band 153.
  Springer-Verlag New York, Inc., New York, 1969.

\bibitem[Haj96]{M=W}
Piotr Haj{\l}asz.
\newblock Sobolev spaces on an arbitrary metric space.
\newblock {\em Potential Anal.}, 5(4):403--415, 1996.

\bibitem[HK98a]{Haj:Kin:98}
Piotr Haj{\l}asz and Juha Kinnunen.
\newblock H\"{o}lder quasicontinuity of {S}obolev functions on metric spaces.
\newblock {\em Rev. Mat. Iberoamericana}, 14(3):601--622, 1998.

\bibitem[HK98b]{Hei:Ko:98}
Juha Heinonen and Pekka Koskela.
\newblock Quasiconformal maps in metric spaces with controlled geometry.
\newblock {\em Acta Math.}, 181(1):1--61, 1998.

\bibitem[HK00]{HK2000}
Piotr Haj\l{}asz and Pekka Koskela.
\newblock Sobolev met {P}oincar\'{e}.
\newblock {\em Mem. Amer. Math. Soc.}, 145(688):x+101, 2000.

\bibitem[HKST01]{HKST2001}
Juha Heinonen, Pekka Koskela, Nageswari Shanmugalingam, and Jeremy~T. Tyson.
\newblock Sobolev classes of {B}anach space-valued functions and quasiconformal
  mappings.
\newblock {\em J. Anal. Math.}, 85:87--139, 2001.

\bibitem[HKST15]{HKST2015}
Juha Heinonen, Pekka Koskela, Nageswari Shanmugalingam, and Jeremy~T. Tyson.
\newblock {\em Sobolev spaces on metric measure spaces}, volume~27 of {\em New
  Mathematical Monographs}.
\newblock Cambridge University Press, Cambridge, 2015.
\newblock An approach based on upper gradients.

\bibitem[HKT08a]{HKT2008:B}
Piotr Haj\l{}asz, Pekka Koskela, and Heli Tuominen.
\newblock Measure density and extendability of {S}obolev functions.
\newblock {\em Rev. Mat. Iberoam.}, 24(2):645--669, 2008.

\bibitem[HKT08b]{HKT2008}
Piotr Haj\l{}asz, Pekka Koskela, and Heli Tuominen.
\newblock Sobolev embeddings, extensions and measure density condition.
\newblock {\em J. Funct. Anal.}, 254(5):1217--1234, 2008.

\bibitem[HS14]{Haj:Sch:14}
Piotr Haj{\l}asz and Armin Schikorra.
\newblock Lipschitz homotopy and density of {L}ipschitz mappings in {S}obolev
  spaces.
\newblock {\em Ann. Acad. Sci. Fenn. Math.}, 39(2):593--604, 2014.

\bibitem[IPS22]{Iko:Pas:Sou:22}
Toni Ikonen, Enrico Pasqualetto, and Elefterios Soultanis.
\newblock Abstract and concrete tangent modules on {L}ipschitz
  differentiability spaces.
\newblock {\em Proc. Amer. Math. Soc.}, 150(1):327--343, 2022.

\bibitem[JL01]{Joh:Lin:01}
William~B. Johnson and Joram Lindenstrauss.
\newblock Basic concepts in the geometry of {B}anach spaces.
\newblock In {\em Handbook of the geometry of {B}anach spaces, {V}ol. {I}},
  pages 1--84. North-Holland, Amsterdam, 2001.

\bibitem[Jon81]{Jones}
Peter~W. Jones.
\newblock Quasiconformal mappings and extendability of functions in {S}obolev
  spaces.
\newblock {\em Acta Math.}, 147(1-2):71--88, 1981.

\bibitem[JSYY15]{Ji:Sha:Ya:2015}
Renjin Jiang, Nageswari Shanmugalingam, Dachun Yang, and Wen Yuan.
\newblock Haj{\l}asz gradients are upper gradients.
\newblock {\em J. Math. Anal. Appl.}, 422(1):397--407, 2015.

\bibitem[KM98]{Ko:Mac:98}
Pekka Koskela and Paul MacManus.
\newblock Quasiconformal mappings and {S}obolev spaces.
\newblock {\em Studia Math.}, 131(1):1--17, 1998.

\bibitem[Kos90]{Koskela}
Pekka Koskela.
\newblock Capacity extension domains.
\newblock {\em Ann. Acad. Sci. Fenn. Ser. A I Math. Dissertationes}, (73):42,
  1990.
\newblock Dissertation, University of Jyv\"{a}skyl\"{a}, Jyv\"{a}skyl\"{a},
  1990.

\bibitem[Kos98]{K1998JFA}
P.~Koskela.
\newblock Extensions and imbeddings.
\newblock {\em J. Funct. Anal.}, 159(2):369--383, 1998.

\bibitem[Kos99]{Kos:99}
Pekka Koskela.
\newblock Removable sets for {S}obolev spaces.
\newblock {\em Ark. Mat.}, 37(2):291--304, 1999.

\bibitem[KPZ19]{PPZ}
Pekka {Koskela}, Pekka {Pankka}, and Yi~Ru-Ya {Zhang}.
\newblock {Ahlfors reflection theorem for $p$-morphisms}.
\newblock {\em arXiv e-prints}, page arXiv:1912.09200, December 2019.

\bibitem[KRZ15]{Ko:Ra:Zh:planar}
Pekka Koskela, Tapio Rajala, and Yi~Ru-Ya Zhang.
\newblock A geometric characterization of planar {S}obolev extension domains.
\newblock {\em arXiv:1502.04139}, 2015.

\bibitem[KRZ17a]{Ko:Ra:Zh:17}
Pekka Koskela, Tapio Rajala, and Yi~Ru-Ya Zhang.
\newblock A density problem for {S}obolev spaces on {G}romov hyperbolic
  domains.
\newblock {\em Nonlinear Anal.}, 154:189--209, 2017.

\bibitem[KRZ17b]{Ko:Ra:Zh:W11}
Pekka Koskela, Tapio Rajala, and Yi~Ru-Ya Zhang.
\newblock Planar ${W}^{1,1}$-extension domains.
\newblock {\em JYU-dissertation}, 2017.

\bibitem[KZ08]{KZ2008}
Stephen Keith and Xiao Zhong.
\newblock The {P}oincar\'{e} inequality is an open ended condition.
\newblock {\em Ann. of Math. (2)}, 167(2):575--599, 2008.

\bibitem[KZ16]{Kos:Zh:16}
Pekka Koskela and Yi~Ru-Ya Zhang.
\newblock A density problem for {S}obolev spaces on planar domains.
\newblock {\em Arch. Ration. Mech. Anal.}, 222(1):1--14, 2016.

\bibitem[Lew87]{Lew:87}
John~L. Lewis.
\newblock Approximation of {S}obolev functions in {J}ordan domains.
\newblock {\em Ark. Mat.}, 25(2):255--264, 1987.

\bibitem[LS05]{La:Sch:05}
Urs Lang and Thilo Schlichenmaier.
\newblock Nagata dimension, quasisymmetric embeddings, and {L}ipschitz
  extensions.
\newblock {\em Int. Math. Res. Not.}, (58):3625--3655, 2005.

\bibitem[Mal13]{M2013}
Luk\'{a}\v{s} Mal\'{y}.
\newblock Minimal weak upper gradients in {N}ewtonian spaces based on
  quasi-{B}anach function lattices.
\newblock {\em Ann. Acad. Sci. Fenn. Math.}, 38(2):727--745, 2013.

\bibitem[MV10]{Mu:Vi:10}
Jorge Mujica and Daniela~M. Vieira.
\newblock Schauder bases and the bounded approximation property in separable
  {B}anach spaces.
\newblock {\em Studia Math.}, 196(1):1--12, 2010.

\bibitem[Raj21]{Raj:21}
Tapio Rajala.
\newblock Approximation by uniform domains in doubling quasiconvex metric
  spaces.
\newblock {\em Complex Anal. Synerg.}, 7(1):Paper No. 4, 5, 2021.

\bibitem[Sch14]{Sch:14}
Rolf Schneider.
\newblock {\em Convex bodies: the {B}runn-{M}inkowski theory}, volume 151 of
  {\em Encyclopedia of Mathematics and its Applications}.
\newblock Cambridge University Press, Cambridge, expanded edition, 2014.

\bibitem[Sha00]{Shanmugalingam}
Nageswari Shanmugalingam.
\newblock Newtonian spaces: an extension of {S}obolev spaces to metric measure
  spaces.
\newblock {\em Rev. Mat. Iberoamericana}, 16(2):243--279, 2000.

\bibitem[Shv07]{S2007}
P.~Shvartsman.
\newblock On extensions of {S}obolev functions defined on regular subsets of
  metric measure spaces.
\newblock {\em J. Approx. Theory}, 144(2):139--161, 2007.

\bibitem[Shv10]{S2005}
Pavel Shvartsman.
\newblock On {S}obolev extension domains in {$\bold R^n$}.
\newblock {\em J. Funct. Anal.}, 258(7):2205--2245, 2010.

\bibitem[Ste70]{Stein:book}
Elias~M. Stein.
\newblock {\em Singular integrals and differentiability properties of
  functions}.
\newblock Princeton Mathematical Series, No. 30. Princeton University Press,
  Princeton, N.J., 1970.

\bibitem[TJ89]{Tom-Jae:89}
Nicole Tomczak-Jaegermann.
\newblock {\em Banach-{M}azur distances and finite-dimensional operator
  ideals}, volume~38 of {\em Pitman Monographs and Surveys in Pure and Applied
  Mathematics}.
\newblock Longman Scientific \& Technical, Harlow; copublished in the United
  States with John Wiley \& Sons, Inc., New York, 1989.

\bibitem[\v{S}39]{Smulian}
V.~\v{S}mulian.
\newblock On the principle of inclusion in the space of the type {$({\rm B})$}.
\newblock {\em Rec. Math. [Mat. Sbornik] N.S.}, 5(47):317--328, 1939.

\bibitem[Wil12]{Wil:12}
Marshall Williams.
\newblock Geometric and analytic quasiconformality in metric measure spaces.
\newblock {\em Proc. Amer. Math. Soc.}, 140(4):1251--1266, 2012.

\bibitem[Zhu22]{Zheng}
Zheng Zhu.
\newblock Pointwise inequalities for {S}obolev functions on generalized
  cuspidal domains.
\newblock {\em Ann. Fenn. Math.}, 47(2):747--757, 2022.

\end{thebibliography}

\end{document}